\newcommand{\Iota}{\mathrm{I}}
\newcommand{\assign}{:=}
\newcommand{\backassign}{=:}
\newcommand{\cdummy}{\cdot}
\newcommand{\mathd}{\mathrm{d}}
\newcommand{\nobracket}{}
\newcommand{\tmop}[1]{\ensuremath{\operatorname{#1}}}
\newcommand{\tmstrong}[1]{\textbf{#1}}
\newcommand{\tmtextbf}[1]{\text{{\bfseries{#1}}}}
\newcommand{\tmtextit}[1]{\text{{\itshape{#1}}}}
\newenvironment{enumeratealpha}{\begin{enumerate}[a{\textup{)}}] }{\end{enumerate}}
\newenvironment{enumerateroman}{\begin{enumerate}[i.] }{\end{enumerate}}
\newenvironment{proof}{\noindent\textbf{Proof\ }}{\hspace*{\fill}$\Box$\medskip}
\newenvironment{proof*}[1]{\noindent\textbf{#1\ }}{\hspace*{\fill}$\Box$\medskip}
\newcounter{nnacknowledgments}
\newtheorem{acknowledgments*}[nnacknowledgments]{Acknowledgments}}
\newtheorem{corollary}{Corollary}
\newtheorem{definition}{Definition}
\newtheorem{lemma}{Lemma}
\newtheorem{notation}{Notation}
\newtheorem{proposition}{Proposition}
{\theorembodyfont{\rmfamily}\newtheorem{remark}{Remark}}
\newtheorem{theorem}{Theorem}
\newcommand{\tmkeywords}{\textbf{Keywords:} }
\newcommand{\tmmsc}{\textbf{A.M.S. subject classification:} }
\begin{document}

\title{Invariant Gibbs measure for Anderson nonlinear wave equation}

\author{
  Nikolay Barashkov \thanks{Department of
  	Mathematics and Statistics, University of Helsinki, email:\tmtextit{nikolay.barashkov@helsinki.fi}} \and Francesco C. De Vecchi\thanks{Dipartimento di Matematica,
  	Universit{\`a} degli Studi di Pavia, email:\tmtextit{francescocarlo.devecchi@unipv.it}} \and Immanuel Zachhuber\thanks{Institut f{\"u}r Mathematik, FU
  	Berlin, email:\tmtextit{immanuel.zachhuber@fu-berlin.de}}
  }
  \date{}

\maketitle

\begin{abstract}
  We study the Gaussian measure whose covariance is related to the Anderson
  Hamiltonian operator, proving that it admits a regular coupling to the
  (standard) Gaussian free field exploiting the stochastic optimal control
  formulation of Gibbs measures. Using this coupling, we define the
  renormalized powers of the Anderson free field and we prove that the
  associated quartic Gibbs measure is invariant under the flow of a nonlinear
  wave equation with renormalized cubic nonlinearity.
  
  \ 
\end{abstract}

\tmmsc{60H30, 35L71 (Primary); 28C20, 60L40}

\tmkeywords{Anderson Hamiltonian, singular stochastic PDEs, nonlinear wave
equation, Gaussian measures, paracontrolled distributions}

{\tableofcontents}

\section{Introduction}

In this paper we want to study the invariant measure of the nonlinear Anderson
wave equation, namely we consider the hyperbolic PDE
\begin{equation}
  \partial_t^2 u (t, x) + \text{``$H^{\omega} u (t, x)$''} = - \text{``$u^3
  (t, x)$''}, \quad t \in \mathbb{R}, x \in \mathbb{T}^2, \omega \in \Omega
  \label{eq:main1}
\end{equation}
where $H^{\omega}$ is the operator usually called \tmtextit{Anderson
Hamiltonian} (formally) defined as a singular Schr{\"o}dinger operator
\[ H^{\omega} u (t, x) \text{``$=$''} - \Delta u (t, x) + \xi
   (\omega, x) u (x, t) \]
where $\xi (\cdot, x) : \Omega \rightarrow \mathcal{S}' (\mathbb{T}^2)$ is a
Gaussian white noise defined on some probability space $(\Omega, \mathcal{F},
\mathbb{P})$, and we aim to prove that the (formal) measure
\begin{equation}
  \exp \left( - \frac{1}{2} (\varphi, H^{\omega} (\varphi)) \mathd x -
  \frac{1}{2} \int_{\mathbb{T}^2} | \partial_t \varphi |^2 \mathd x - \frac{1}{4}
  \int_{\mathbb{T}^2} \varphi^4 \mathd x \right) \mathcal{D} (\varphi,
  \partial_t \varphi) \label{eq:heuristicmeasure},
\end{equation}
where $\mathcal{D} (\varphi, \partial_t \varphi)$ is some (non-existent)
Lebesgue measure on a function space, is invariant for the flow of equation
\eqref{eq:main1}. The above problem combines two different aspects of
hyperbolic (stochastic) PDEs: the invariant measure of infinite dimensional
dynamical systems, and the dispersive PDEs with singular stochastic
potentials. Note that in the absence of the noise
$\xi,$\eqref{eq:heuristicmeasure} would amount to the well-studied $\Phi_2^4$
measure {\cite{BG2020,DaP2003,GlimmJaffe,webermou2d,Simonbookphi}} in the
$\varphi$ integration variable and the white noise Gaussian measure in the
$\partial_t \varphi$ variable.

The $\Phi_2^4$ measure whose quadratic part is given by
the Anderson Hamiltonian may have applications in the study of suitable
continuum limits of Ising models with random bonds (see e.g.
{\cite{Chen2014,CometsGiacominLebowitz1999,CometsGuerraToninelli2005,Shalaev1994}},
see also
{\cite{GlimmJaffe,grazieschi2023dynamical,MourratWeber2017,Simonbookphi}} for
the relation between Ising models with $\Phi^4$ measures).

The study of invariant measures of Hamiltonian PDEs is at this point a
classical problem in dispersive PDEs, having been studied intensively after
the seminal papers by Bourgain from the 90's on nonlinear Schr{\"o}dinger and
KdV equations {\cite{bourgain1994periodic,bourgain1996invariant}} which itself
came after {\cite{LeRoSp}} and
{\cite{albeverio1990global}}. The analogue of the current result was shown in
{\cite{OhLa2020}}, i.e. the invariance of the Gibbs measure under the flow of
the Wick ordered cubic wave equation. Later this methodology was applied to
closely related topics like \tmtextit{almost sure well-posedness}
{\cite{BZ08I,BZ08II}} and \tmtextit{quasi-invariance} in settings when there
is no invariant measure {\cite{OhTzquasi,GOTW2022}} to name just a few. \

\

Due to the low regularity of the support of the (formal) invariant measure,
it is necessary to \tmtextit{Wick order} the cubic nonlinearity in
\eqref{eq:main1} which naively means that we have to subtract a diverging
linear counter term to make it well-defined (we can thus think of it as being
akin to \tmtextit{renormalization}), hence writing the cube is just formal.
This notion is similar to that appearing for example in
{\cite{bourgain1996invariant}} and {\cite{OhLa2020}}, where the Wick ordering
was done with respect to the Gaussian free field, as the invariant measure is
mutually absolutely continuous with respect to it. Our case is somewhat
analogous, except that our reference Gaussian measure is not the free field
but the Gaussian measure with covariance (formally) given by $(H^{\omega})^{-
1}$.

\

The second renormalization needed in equation \eqref{eq:main1} is  in the
rigorous definition of the Anderson Hamiltonian operator $H^{\omega}$. The
issue is that the very low regularity of the noise $\xi$ makes it necessary to
subtract an infinite correction term to make sense of the product between $u$
and $\xi .$ \

The parabolic equation whose linear part is given by the Anderson
Hamiltonian, called \tmtextit{parabolic Anderson model or PAM,} has many
applications, see e.g. {\cite{Kbook2016}} and the references therein. It was
studied both in the linear case (see, e.g.,
{\cite{CGP2017,HL2015,hairerlabbe3d,MP2019}}) and in its nonlinear
generalization (see, e.g., the gPAM equation
{\cite{CFG2017,CF2018,GIP2015,H2014}} or the rough super-Brownian motion
equation {\cite{PR2021}}) in two and three dimensions. More recently wave and
Schr{\"o}dinger equations with Anderson Hamiltonian linear part were
considered (see, e.g., {\cite{allez_continuous_2015}} for the linear case and
{\cite{DeWe2018,GUZ,mouzardstrichartz,TV2022(2),TV2022(1),Ugu2022,zachhuber2019strichartz,Z2021}}
for the nonlinear one, see also
{\cite{bailleul2022analysis,CvZ,KPvZ2022,labbe,mouzard2020weyl,matsuda2022integrated,matsuda2022anderson}}
for problems relating to the spectral properties of the Anderson Hamiltonian).

\

The first paper to construct the Anderson Hamiltonian rigorously as a
semi-bounded self-adjoint operator on $L^2 (\mathbb{T}^2)$ together with an
explicit description of its domain and a bound on the lowest eigenvalue was
{\cite{allez_continuous_2015}}. In particular, therein it was showed that one
can rigorously define it as a limit (taken in the norm resolvent sense) of
regularized operators via
\begin{equation}
  \mathbb{H}^{\omega} \assign \lim_{\varepsilon \rightarrow 0} (- \Delta +
  a_{\varepsilon} \ast \xi (\omega) - c_{\varepsilon}) \label{eq:AndersonH1},
\end{equation}
where $a_{\varepsilon} (x) \assign \varepsilon^{- 2} a (\varepsilon^{- 1} x)$
is a smooth mollifier, and $c_{\varepsilon} \in \mathbb{R}_+$ is a suitable
diverging sequence of real numbers (not depending on $\omega$). This approach
was reformulated and extended in {\cite{GUZ}} to 3 dimensions, where also some
associated semilinear SPDEs were solved. This formulation is in fact the
approach we follow in this work. Let us also mention the work {\cite{labbe}}
which deals with the construction of the operator with both periodic and
Dirichlet boundary conditions and {\cite{mouzard2020weyl}} where the operator
is defined on compact surfaces and also a Weyl law is proved for its
eigenvalues.

\

Let us emphasize the fact that for almost every realisation $\omega$ one can
define $\mathbb{H}^{\omega}$ as a self-adjoint operator on $L^2$ which is
bounded from below by a constant which of course also depends on $\omega$,
i.e.
\[ (u, \mathbb{H}^{\omega} u) \geqslant - K (\omega) \| u \|^2_{L^2}, \]
and one can quantify the (measurable) dependence, indeed it can be chosen as a
suitable norm of the enhanced noise, see Proposition 2.9 in
{\cite{mouzard2020weyl}} for an explicit expression of the constant. Often we
will shift the operator by the constant $K (\omega) + 1$ and define
\begin{equation}
  \mathbb{H}^{\omega, K} \assign \mathbb{H}^{\omega} + K (\omega) + 1
  \label{eqn:introHK}
\end{equation}
so as to make it uniformly positive and we can take fractional and negative
powers with impunity.

\

We consider the regularized equation, the motivation for which will be given
in Section \ref{sec:global}.
\begin{eqnarray}
  \partial_t^2 u_{\varepsilon} (t, x)&=&-\mathbb{H}^{\omega, K}
  u_{\varepsilon} (t, x) - \rho_{\varepsilon} \ast (\rho_{\varepsilon} \ast
  u_{\varepsilon})^3 (t, x) + (3 a_{\varepsilon} + 1 + K (\omega))
  \rho_{\varepsilon}^{\ast 2} \ast u_{\varepsilon} (t, x), \quad 
  \label{eq:main2}\\
  (u_{\varepsilon}, \partial_t u_{\varepsilon}) |_{t = 0}&=&(u_0, u_1)
  \nonumber
\end{eqnarray}
where $t \in \mathbb{R}$, $x \in \mathbb{T}^2$, $\rho_{\varepsilon} :
\mathbb{T}^2 \rightarrow \mathbb{R}_+$ is a symmetric mollifier and \
$a_{\varepsilon} = \sum_{k \in \mathbb{Z}^2} \frac{| \hat{\rho}_{\varepsilon}
(k) |^2}{| k |^2 + 1} = \tmop{Tr}_{\mathbb{T}^2} (\rho_{\varepsilon} \ast (-
\Delta + 1)^{- 1} \ast \rho_{\varepsilon})$, where the operator
$\mathbb{H}^{\omega, K}$ (and the corresponding linear evolution on $L^2
(\mathbb{T}^2)$) is defined as in equations \eqref{eq:AndersonH1} and
\eqref{eqn:introHK}.

\

First we prove a rigorous version of the heuristically defined measure
\eqref{eq:heuristicmeasure}.

\begin{proposition}
  \label{proposition:existencemeasure}Let $0 \leqslant K (\omega) < + \infty$
  be such that $\mathbb{H}^{\omega, K} =\mathbb{H}^{\omega} + K (\omega) + 1$
  is a (strictly) positive operator and let $\mu^{\mathbb{H}^{\omega, K}}$ be
  the law of the Gaussian field with covariance $(\mathbb{H}^{\omega, K})^{-
  1}$. Then, for almost every $\omega \in \Omega$, the (weak) limit of probability
  measures
  \begin{eqnarray*}
    \nu^{\omega} (\mathd \varphi) & \assign& \lim_{\varepsilon \rightarrow 0}
    \nu^{\omega, \varepsilon} (\mathd \varphi)\\
    & \assign& \lim_{\varepsilon \rightarrow 0} \frac{e^{- \frac{1}{4}
    \int_{\mathbb{T}^2} | \rho_{\varepsilon} \ast \varphi (x) |^4 \mathd x +
    \frac{(3 a_{\varepsilon} + K (\omega) + 1)}{2} \int_{\mathbb{T}^2} |
    \rho_{\varepsilon} \ast \varphi (x) |^2 \mathd
    x}}{\mathcal{Z}_{\varepsilon}} \mu^{\mathbb{H}^{\omega, K}} (\mathd
    \varphi) \otimes \mu^{\mathbb{I}_{L^2}} (\mathd \partial_t \varphi),
  \end{eqnarray*}
  where $\mathcal{Z}_{\varepsilon}$ is the normalization constant, exists
  in $\mathcal{P} (H^{- \kappa} (\mathbb{T}^2) \times H^{- 1 -
  \kappa} (\mathbb{T}^2))$ for $\kappa > 0$ and $\mu^{\mathbb{I}_{L^2}}$ is
  the white noise measure.
\end{proposition}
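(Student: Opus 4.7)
The measure $\nu^{\omega,\varepsilon}$ is, by construction, a product of its $\varphi$-marginal with $\mu^{\mathbb{I}_{L^2}}$, and the latter is independent of $\varepsilon$. It therefore suffices to establish weak convergence in $\mathcal{P}(H^{-\kappa}(\mathbb{T}^2))$ of the $\varphi$-marginals
\[
\tilde\nu^{\omega,\varepsilon}(\mathd\varphi) \assign \tilde{\mathcal{Z}}_\varepsilon^{-1}\, e^{-U_\varepsilon(\varphi)}\,\mu^{\mathbb{H}^{\omega,K}}(\mathd\varphi),\qquad U_\varepsilon(\varphi) = \tfrac{1}{4}\!\int (\rho_\varepsilon*\varphi)^4\,\mathd x - \tfrac{3a_\varepsilon+K(\omega)+1}{2}\!\int(\rho_\varepsilon*\varphi)^2\,\mathd x.
\]

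The heart of my strategy is to exploit the \emph{regular coupling} between $\mu^{\mathbb{H}^{\omega,K}}$ and the standard Gaussian free field $\mu^{(-\Delta+1)}$ that the abstract singles out as a main tool. Concretely, I would realise a sample $\varphi\sim\mu^{\mathbb{H}^{\omega,K}}$ as $\varphi = \Phi + R$, where $\Phi$ is a standard GFF and $R$ is almost surely of higher spatial regularity (morally in $\mathcal{C}^{1-\delta}$ for every $\delta>0$, controlled by a suitable norm of the enhanced noise). Expanding $(\rho_\varepsilon*\varphi)^2$ and $(\rho_\varepsilon*\varphi)^4$ in powers of $\rho_\varepsilon*\Phi$ and $\rho_\varepsilon*R$, the only singular contributions as $\varepsilon\to 0$ come from the pure-$\Phi$ terms, which are precisely the divergences killed by the counterterm $3a_\varepsilon + K(\omega)+1$: indeed $\mathbb{E}[(\rho_\varepsilon*\Phi)^2(x)] = a_\varepsilon + O(1)$, and the additional $K(\omega)+1$ absorbs the shift built into $\mathbb{H}^{\omega,K}$. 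All cross terms involving $R$ converge in the appropriate paraproduct/Besov sense.

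To turn this heuristic into genuine convergence (and not just tightness) I would apply the Bou\'e--Dupuis variational formula in the spirit of the Barashkov--Gubinelli construction of $\Phi^4_2$. Representing $\Phi$ as the terminal value of an It\^o process on $L^2$ driven by a cylindrical Brownian motion, one obtains
\[
-\log \tilde{\mathcal{Z}}_\varepsilon \;=\; \inf_v\,\mathbb{E}\!\left[U_\varepsilon(\Phi^v + R) + \tfrac{1}{2}\!\int_0^\infty \|v_s\|_{L^2}^2\,\mathd s\right],
\]
where $\Phi^v$ is the drifted cylindrical process. Coercivity of the quartic term combined with Young's inequality absorbs the (now finite) cross terms involving $R$ and the renormalised Wick powers of $\Phi$, yielding uniform-in-$\varepsilon$ two-sided bounds on $-\log\tilde{\mathcal{Z}}_\varepsilon$ and on any near-optimal drift $v$. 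Standard comparison arguments for the associated near-optimisers at parameters $\varepsilon, \varepsilon'$, combined with the (essentially probabilistic) convergence of the renormalised polynomials of $\Phi$, then give $L^p(\mu^{\mathbb{H}^{\omega,K}})$-convergence of the densities $e^{-U_\varepsilon}/\tilde{\mathcal{Z}}_\varepsilon$ for some $p>1$, hence convergence of $\tilde\nu^{\omega,\varepsilon}$ in total variation and a fortiori weakly on $H^{-\kappa}$.

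The main obstacle is clearly the first step: constructing the coupling $\varphi = \Phi + R$ and proving quantitative moment bounds for $R$ in a subcritical Besov space, uniformly and in a way that interacts cleanly with the variational formula above. The random constant $K(\omega)$ and the operator-valued shift from $-\Delta+1$ to $\mathbb{H}^{\omega,K}$ both have to be tracked through the Wick counterterms so that the divergences really cancel. Once this regular coupling is in hand, the remainder reduces to a perturbation of the by-now-standard variational construction of the $\Phi^4_2$ measure.
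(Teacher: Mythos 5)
Your proposal is correct in outline, and its first half coincides exactly with the paper's strategy: the paper likewise reduces everything to the regular coupling $\varphi^A=\varphi^G+h$ with $h\in H^{1-\delta}$ (Theorem \ref{theorem:coupling}), expands the regularized powers through Hermite/binomial identities so that the only divergences are the pure-GFF Wick constants cancelled by $3a_\varepsilon$, and proves $L^p$-convergence with an explicit rate $\varepsilon^c$ of the renormalized powers $(\varphi^A_\varepsilon)^{\circ M}$ (Lemma \ref{lemma:pseudoWickpower} and Remark \ref{remark:pseudoWickpower}). Where you genuinely diverge is in establishing integrability of the density: you propose a second Bou\'e--Dupuis argument for the quartic interaction itself, in the style of Barashkov--Gubinelli, whereas the paper uses the classical Nelson argument (Theorem \ref{theorem:Lp}): the pointwise bound $H_{4}\geqslant -K$ gives $\int(\varphi^A_\varepsilon)^{\circ 4}\gtrsim -(\log \varepsilon^{-1})^2$, hypercontractivity combined with the rate $\varepsilon^{c}$ yields a super-polynomially small probability for the limit to be very negative, and exponential integrability follows; total-variation (hence weak) convergence of $\nu^{\omega,\varepsilon}$ is then immediate from the $L^p$-convergence of the densities together with uniform integrability. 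Your route would also work and is more robust --- it is the one that survives when the limit measure is singular with respect to the reference Gaussian --- but it is heavier than needed here, and it carries one technical wrinkle you should address: to run Bou\'e--Dupuis on $U_\varepsilon(\Phi^v+R)$ you need $R=h$ to be realised as an adapted functional of the same cylindrical Brownian motion generating $\Phi$, while the coupling of Theorem \ref{theorem:coupling} is only produced as a weak limit of joint laws; either the coupling must be upgraded to an adapted one, or the quartic interaction should be folded into a single variational problem from the outset. Finally, the last step is simpler than a comparison of near-optimisers: a uniform-in-$\varepsilon$ lower bound on $-\log\int e^{-pU_\varepsilon}\,\mathd\mu^{\mathbb{H}^{\omega,K}}$ for some $p>1$, together with convergence in probability of $U_\varepsilon$, already gives $L^1$-convergence of the normalised densities by uniform integrability.
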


Having the candidate invariant measure $\nu^{\omega},$ we prove that it is
invariant with respect to the limit of the flow defined by \eqref{eq:main2}.
More precisely:

\begin{theorem}
  Using the notation of Proposition \ref{proposition:existencemeasure}, for
  almost all $\omega \in \Omega$, there is a set of full measure
  $\mathcal{A}^{\omega} \subset H^{- \kappa} (\mathbb{T}^2) \times H^{- 1 -
  \kappa} (\mathbb{T}^2)$ with respect to $\mu^{\mathbb{H}^{\omega, K}}
  \otimes \mu^{\mathbb{I}_{L^2}}$, for which equation \eqref{eq:main2} has a
  unique global in time solution, leaving the measure $\nu^{\omega,
  \varepsilon}$ invariant. Furthermore, for any $(u_0, \partial_t u_0) \in
  \mathcal{A}^{\omega}$, the solution $u_{\varepsilon}$ \eqref{eq:main2}
  admits a unique limit defining a one parameter group of maps on
  $\mathcal{A}^{\omega}$. The measure $\mu^{\omega}$ (defined as in
  Proposition \ref{proposition:existencemeasure}) is an invariant measure with
  respect to that one parameter group of maps.
\end{theorem}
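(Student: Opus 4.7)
The strategy is Bourgain's scheme for invariant Gibbs measures, adapted to the singular setting induced by the Anderson Hamiltonian. For every fixed $\varepsilon>0$, equation \eqref{eq:main2} is a genuine Hamiltonian system on the form domain of $\mathbb{H}^{\omega,K}$ times $L^{2}(\mathbb{T}^{2})$: the linear part generates a unitary group, and the mollified nonlinearity $\rho_{\varepsilon}\ast(\rho_{\varepsilon}\ast u_{\varepsilon})^{3}$ together with its linear counterterm is smooth and locally Lipschitz on finite-energy spaces. A standard fixed-point argument combined with conservation of the energy
\begin{equation*}
H_{\varepsilon}(\varphi,\pi)=\tfrac{1}{2}(\varphi,\mathbb{H}^{\omega,K}\varphi)+\tfrac{1}{2}\|\pi\|_{L^{2}}^{2}+\tfrac{1}{4}\!\int(\rho_{\varepsilon}\ast\varphi)^{4}\,\mathd x-\tfrac{3a_{\varepsilon}+K(\omega)+1}{2}\!\int(\rho_{\varepsilon}\ast\varphi)^{2}\,\mathd x
\end{equation*}
then produces a global flow $\Phi_{t}^{\varepsilon}$ for every initial datum of finite $H_{\varepsilon}$.

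Next, I would prove invariance of $\nu^{\omega,\varepsilon}$ under $\Phi_{t}^{\varepsilon}$. The density of $\nu^{\omega,\varepsilon}$ with respect to the Gaussian reference measure $\mu^{\mathbb{H}^{\omega,K}}\otimes\mu^{\mathbb{I}_{L^{2}}}$ is $\exp(-H_{\varepsilon})$ up to the quadratic part already encoded in the covariance, so invariance follows from the classical combination of energy conservation and Liouville's theorem on finite-dimensional spectral truncations of $\mathbb{H}^{\omega,K}$, followed by a passage to the limit in the truncation using that the truncated flows approximate $\Phi_{t}^{\varepsilon}$ on full-measure sets.

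The third step is to use invariance at the regularized level, in the spirit of Bourgain's globalization-via-invariant-measure argument, to derive uniform-in-$\varepsilon$ a priori estimates in the support topology $H^{-\kappa}\times H^{-1-\kappa}$ and combine them with a local well-posedness theory for the limit equation. I would construct this local theory along Da Prato--Debussche / paracontrolled lines built around the propagator of $\mathbb{H}^{\omega,K}$: write $u=v+w$ with $v$ solving the linear equation $(\partial_{t}^{2}+\mathbb{H}^{\omega,K})v=0$ and $w$ taking values in a more regular space; then $u^{3}=v^{3}+3v^{2}w+3vw^{2}+w^{3}$, and the first two terms must be interpreted as renormalized Wick powers of the Anderson free field. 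This is precisely where the coupling of $\mu^{\mathbb{H}^{\omega,K}}$ to the standard Gaussian free field constructed earlier in the paper enters, together with the functional calculus of $\mathbb{H}^{\omega,K}$: it defines the stochastic objects entering the equation for $w$ and provides the convergence $(v_{\varepsilon}^{\diamond k})\to(v^{\diamond k})$ as $\varepsilon\to 0$ that drives the convergence of $u_{\varepsilon}$.

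Finally, stability of the $v+w$ decomposition gives a limit flow $\Phi_{t}$ on a full-measure set $\mathcal{A}^{\omega}\subset H^{-\kappa}\times H^{-1-\kappa}$, and invariance of $\nu^{\omega}$ under $\Phi_{t}$ follows by passing to the limit in the identity $\int F\circ\Phi_{t}^{\varepsilon}\,\mathd\nu^{\omega,\varepsilon}=\int F\,\mathd\nu^{\omega,\varepsilon}$ for $F$ bounded continuous, using the weak convergence $\nu^{\omega,\varepsilon}\to\nu^{\omega}$ of Proposition \ref{proposition:existencemeasure}. I expect the main obstacle to be the construction of the limit dynamics: the renormalized cube is not a continuous functional of the initial data in the measure's support topology, so the paracontrolled decomposition must be set up carefully using the spectral calculus of $\mathbb{H}^{\omega,K}$ (not of $-\Delta$), and the local existence time must depend only on norms of the enhanced stochastic data, so that the invariance-based iteration can be closed to produce a global-in-time flow on a full-measure set for almost every $\omega$.
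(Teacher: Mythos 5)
Your architecture is essentially the paper's: Bourgain's scheme with a Galerkin truncation along the eigenfunctions of $\mathbb{H}^{\omega,K}$ combined with mollification, Liouville's theorem for the finite-dimensional system, a Da Prato--Debussche shift for the local theory built on the functional calculus of $\mathbb{H}^{\omega,K}$, and the GFF/AGFF coupling to define the Wick powers. Two steps as written would not close, though. First, energy conservation is a red herring for globalization: $\nu^{\omega,\varepsilon}$ is supported on $H^{-\kappa}\times H^{-1-\kappa}$, where the energy $H_{\varepsilon}$ is $+\infty$ almost surely, so ``a global flow for every initial datum of finite $H_{\varepsilon}$'' covers a null set of the Gibbs measure. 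Even at fixed $\varepsilon>0$ the global flow on a full-measure set must itself be extracted from the invariance of the truncated measures: one needs a local existence time depending polynomially on $\|\varphi\|_{H^{-\delta}}$ uniformly in $N$, a union bound over $\sim T(1+R)^{L}$ time steps using invariance of $\nu^{N,\varepsilon}$ under the truncated flow, and then a compactness argument in $N$. The logical order is therefore: truncate, Liouville, globalize via the invariant measure, pass $N\to\infty$; you cannot first globalize $\Phi^{\varepsilon}_t$ by conservation and only afterwards prove invariance.

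Second, weak convergence $\nu^{\omega,\varepsilon}\rightharpoonup\nu^{\omega}$ is not enough to pass to the limit in $\int F\circ\Phi^{\varepsilon}_t\,\mathd\nu^{\omega,\varepsilon}=\int F\,\mathd\nu^{\omega,\varepsilon}$: the integrand $F\circ\Phi^{\varepsilon}_t$ is only a bounded measurable function (the flow is defined almost surely, not continuously on the support), so one must compare $\int F\circ\Phi^{\varepsilon}_t\,\mathd\nu^{\omega,\varepsilon}$ with $\int F\circ\Phi^{\varepsilon}_t\,\mathd\nu^{\omega}$, which requires convergence in total variation. This is available precisely because all of $\nu^{N,\varepsilon}$, $\nu^{\omega,\varepsilon}$, $\nu^{\omega}$ are absolutely continuous with respect to the same reference measure $\mu^{\mathbb{H}^{\omega,K}}\otimes\mu^{\mathbb{I}_{L^2}}$ with densities converging in $L^p$ --- a consequence of the coupling construction of $(\varphi^A)^{\circ k}$ and of Theorem \ref{theorem:Lp} --- and should be invoked as such. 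A minor further point: no paracontrolled analysis is needed for the wave dynamics itself; after Wick ordering, $\theta^{\circ k}\in L^p_T\mathcal{C}^{-k\delta}$ while $v\in H^{1-\delta}$, so every product in the $v$-equation is classical and a plain contraction in $L^\infty_T H^{1-\delta}$ closes, using only the norm equivalence $\|u\|_{H^s}\approx\|(\mathbb{H}^{\omega,K})^{s/2}u\|_{L^2}$ and the smoothing of $\sin\left(t\sqrt{\mathbb{H}^{\omega,K}}\right)/\sqrt{\mathbb{H}^{\omega,K}}$.
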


It is important to note that the counter term $a_{\varepsilon}$ used in the
renormalization of the measure $\nu^{\omega, \varepsilon}$ and of the equation
\eqref{eq:main2} is the same one appearing in the renormalization of classical
Wick ordered wave equation (see {\cite{OhLa2020}}), and thus $a_{\varepsilon}$
is independent of $\omega \in \Omega$. In other words we do not use the Wick
renormalization with respect to the Gaussian measure $\mu^{\mathbb{H}^{\omega,
K}}$ (as one might expect), whose counter term is
\[ \tilde{a}_{\varepsilon} (\omega, x) \assign \int(\rho_{\varepsilon} \ast
   \varphi (x))^2 \mu^{\mathbb{H}^{\omega, K}} (\mathd \varphi), \]
which depends on both $x \in \mathbb{T}^2$ and $\omega \in \Omega$. The
possibility of choosing $a_{\varepsilon}$ constant is nontrivial since we are
practically saying that we can use the (standard) free field Wick cube in
order to define the nonlinearity ``$u^3$'' in equation \eqref{eq:main1}, while
the Wick power with respect to the free field is not a priori well defined on
a set of measure one with respect to $\mu^{\mathbb{H}^{\omega, K}}$ since
$\mu^{\mathbb{H}^{\omega, K}}$ are mutually singular for almost all $\omega$.

\

The reason why we can nonetheless choose $a_{\varepsilon}$ constant is due to
the fact that, despite their mutual singularity, we can obtain the Gaussian
field with law $\mu^{\mathbb{H}^{\omega, K}}$ as a Gaussian free field
translated by a shift which has regularity just below that of the Cameron
Martin space. More precisely,if we denote by $\mu^{- \Delta}$ the measure on
$H^{- \varepsilon} (\mathbb{T}^2)$ of the Gaussian free field with covariance
$(- \Delta + 1)^{- 1}$, we obtain the following result.

\begin{theorem}
  \label{theorem:shift}For almost all $\omega \in \Omega$, there is a coupling
  $\eta^{\omega} \in \mathcal{P} (H^{- \varepsilon} (\mathbb{T}^2) \times H^{-
  \varepsilon} (\mathbb{T}^2))$ between the Gaussian measures
  $\mu^{\mathbb{H}^{\omega, K}} \in \mathcal{P} (H^{- \varepsilon}
  (\mathbb{T}^2))$ and $\mu^{- \Delta} \in \mathcal{P} (H^{- \varepsilon}
  (\mathbb{T}^2))$ such that if $(\varphi^{\mathbb{H}^{\omega, K}}, \varphi^{-
  \Delta}) \sim \eta^{\omega}$ then $\varphi^{\mathbb{H}^{\omega}_A} -
  \varphi^{- \Delta} \in H^{1 - \delta} (\mathbb{T}^2)$ almost surely for any
  $\delta > 0$.
\end{theorem}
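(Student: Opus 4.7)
The plan is to construct the coupling $\eta^\omega$ dynamically via the stochastic optimal control (Bou\'e--Dupuis) representation of Gibbs measures alluded to in the abstract, working first at a regularised level and then passing to the limit. After replacing $\xi$ by its mollification $\xi_\varepsilon := a_\varepsilon\ast\xi$ (with the counter-term $c_\varepsilon$ of \eqref{eq:AndersonH1}) the perturbed Gaussian $\mu^{\mathbb{H}^{\omega,K}_\varepsilon}$ is equivalent to $\mu^{-\Delta}$ with an explicit log-density which, up to a deterministic counter-term, is the \emph{quadratic} functional $F^{\omega,\varepsilon}(\varphi) = \tfrac12\langle\varphi,(\xi_\varepsilon+K(\omega))\varphi\rangle$. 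The goal is to obtain $\mu^{\mathbb{H}^{\omega,K}_\varepsilon}$ as the law of $\mu^{-\Delta}$ shifted by an optimal drift whose size in $H^{1-\delta}$ is uniformly controlled in $\varepsilon$, and then to take $\varepsilon\to 0$.

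Concretely, fix a cylindrical Wiener process $(W_t)_{t\ge 0}$ on $L^2(\mathbb{T}^2)$ and the resolvent profile $J_t := (-\Delta+1+t)^{-1}$, so that $\int_0^\infty J_t^2\,dt=(-\Delta+1)^{-1}$ and the martingale $X_t := \int_0^t J_s\,dW_s$ satisfies $X_\infty \sim \mu^{-\Delta}$. The Bou\'e--Dupuis formula then reads
\[
-\log\int e^{-F^{\omega,\varepsilon}(\varphi)}\,\mu^{-\Delta}(d\varphi) \;=\; \inf_v\mathbb{E}\!\left[F^{\omega,\varepsilon}\!\Big(X_\infty + \int_0^\infty J_s v_s\,ds\Big) + \tfrac12\int_0^\infty\|v_s\|_{L^2}^2\,ds\right],
\]
and the underlying Girsanov calculation identifies $X_\infty + Z^{\omega,\varepsilon}$, where $Z^{\omega,\varepsilon} := \int_0^\infty J_s v_s^{\omega,\varepsilon}\,ds$ for the optimiser $v^{\omega,\varepsilon}$, as distributed according to $\mu^{\mathbb{H}^{\omega,K}_\varepsilon}$. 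Because $F^{\omega,\varepsilon}$ is quadratic, the optimiser is a \emph{linear} random feedback law, and the $\varepsilon$-level coupling $\eta^{\omega,\varepsilon} := \mathrm{law}(X_\infty + Z^{\omega,\varepsilon},\,X_\infty)$ is completely explicit.

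The main technical obstacle is the uniform-in-$\varepsilon$ estimate
\[
\sup_{\varepsilon > 0}\mathbb{E}\!\left[\|Z^{\omega,\varepsilon}\|_{H^{1-\delta}(\mathbb{T}^2)}^{2}\right] \;\le\; C(\omega) \;<\; \infty.
\]
Heuristically $Z^{\omega,\varepsilon}\approx (\mathbb{H}^{\omega,K}_\varepsilon)^{-1}(\xi_\varepsilon+K(\omega))X_\infty$; since $X_\infty\in H^{-\varepsilon}$ and the Anderson resolvent gains almost two derivatives while the product against $\xi$ costs $1+\varepsilon$, the natural outcome is a gain of almost one derivative, placing $Z^{\omega,\varepsilon}$ in $H^{1-\delta}$. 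Making this rigorous uniformly in $\varepsilon$ is the hard part: the drift is itself paracontrolled by $\xi$, and closing the fixed-point for the feedback equation requires the resonant product of $\xi$ with a paracontrolled distribution, defined from the enhanced-noise data constructed in \cite{allez_continuous_2015,GUZ} (which is bounded uniformly in $\varepsilon$). A naive energy bound only delivers $\|v^{\omega,\varepsilon}\|_{L^2(dt;L^2)}^2 \lesssim \|Z^{\omega,\varepsilon}\|_{H^1}^2$, which blows up as $\varepsilon\to 0$ because the Wick-renormalised quadratic form $F^{\omega,\varepsilon}$ is not controlled by the Cameron--Martin norm alone; the Anderson paracontrolled structure must genuinely be exploited, rather than treating $\xi$ as a perturbation of $-\Delta$.

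Once this uniform bound is secured, weak compactness in $L^2(\mathbb{P};H^{1-\delta})$ extracts a limit shift $Z^\omega$; the couplings $\eta^{\omega,\varepsilon}$ converge to $\eta^\omega := \mathrm{law}(X_\infty + Z^\omega,\,X_\infty)$, and the norm-resolvent convergence $\mathbb{H}^{\omega,K}_\varepsilon\to\mathbb{H}^{\omega,K}$ from \cite{allez_continuous_2015,GUZ} identifies the first marginal as $\mu^{\mathbb{H}^{\omega,K}}$. Consequently $\varphi^{\mathbb{H}^{\omega,K}}-\varphi^{-\Delta} = Z^\omega \in H^{1-\delta}$ almost surely, for every $\delta>0$, which is the statement of the theorem.
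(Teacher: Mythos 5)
Your strategy is the same as the paper's: represent the regularised Anderson free field via the Bou\'e--Dupuis formula applied to the Wick-quadratic density relative to $\mu^{-\Delta+K}$, take the coupling to be the joint law of $(W_\infty+Z_\infty,\,W_\infty)$, prove a uniform-in-$\varepsilon$ bound $\mathbb{E}\|Z_\infty\|^2_{H^{1-\delta}}\le C(\omega)$, and pass to the limit using tightness and the norm-resolvent convergence $\mathbb{H}^{\omega,K}_\varepsilon\to\mathbb{H}^{\omega,K}$. The choice of scale decomposition (your resolvent profile versus the paper's smooth Fourier cut-off $J_s$) is immaterial.

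The genuine gap is that the step you yourself identify as ``the hard part'' --- the uniform bound on the drift --- is asserted rather than proved, and it is the entire substance of the argument (Sections \ref{section:stochasticestimates}--\ref{subsection:analyticalestimates} and Theorem \ref{theorem:Fbounds} of the paper). After substituting the paracontrolled ansatz $u_s=-J_s(\xi_\varepsilon W_s)-J_s(\xi_\varepsilon\succ Z_s)+l_s$, two things must actually be done. First, the cross term $\int_0^\infty\!\!\int J_s(\xi_\varepsilon W_s)\,J_s(\xi_\varepsilon\succ Z_s)$ and the square $\int_0^\infty\!\!\int (J_s(\xi_\varepsilon\succ Z_s))^2$ produce the genuinely divergent object $J_s\xi_\varepsilon\circ J_s\xi_\varepsilon$, which must be renormalised by a \emph{time-dependent} counterterm $\dot\gamma^{(2)}_{\varepsilon,s}=\mathbb{E}[J_s\xi_\varepsilon\circ J_s\xi_\varepsilon]$, and one needs the stochastic estimates with integrable-in-$s$ decay $(1+s)^{-1-\kappa}$ for $J_s(\xi_\varepsilon W_s)J_s\xi_\varepsilon-\dot\gamma^{(2)}_{\varepsilon,s}W_s$ (Lemmas \ref{lemma:stochasticestimates1}--\ref{lemma:stochasticestimates3}); without the decay the $s$-integrals do not close. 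Second, the leftover terms carrying $\|Z_t\|^2_{L^2}$ with $\omega$-dependent, $\varepsilon$-uniform prefactors must be absorbed by an increasing process $\gamma^{(1)}_t(\omega)$ constructed as a solution of a differential inequality (Lemma \ref{lemma:differentialinequality}); this is exactly what produces a finite $\gamma^{(1)}(\omega)\ge K(\omega)$ and the coercivity $F^{\varepsilon}(u)-F^{\varepsilon}(u^{\varepsilon})\ge\tfrac14\mathbb{E}\int\|l_s\|^2_{L^2}\,\mathrm{d}s-C(\omega)$, from which the $H^{1-\delta}$ bound on $Z_\infty$ follows via Lemma \ref{lemma:inequalityZu}. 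A secondary inaccuracy: the optimiser is not obtained as an explicit linear feedback law; one has to relax to laws of pairs $(\mathbb{W},u)$ and use lower semicontinuity and the direct method to get a minimiser at all, and tightness of the joint laws on $\mathcal{C}^{-\delta}\times H^{1-\delta}$ (not merely weak compactness of $Z^{\omega,\varepsilon}$ in $L^2(\mathbb{P};H^{1-\delta})$) is what lets you identify the limiting coupling and its marginals.
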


The proof of Theorem \ref{theorem:shift} is based on the variational
formulation of Gibbs measure first proved by Bou{\'e} and Dupuis
{\cite{BD1998}} (see also {\cite{U2014,Z2009}}) and then applied first to
quantum field theory in {\cite{BG2020}} and used in
{\cite{BG2022,Barashkovwholespace,BG2021,BaGuHo2022,BauHof2022,CGW2022,OhSeTo2020}}
for studying measures related to quantum field theory. This technique was
already used in the context of invariant measures of PDEs in
{\cite{B2022,BDNY2022,OOT2020}} where the invariant measure considered was
singular with respect to the Gaussian free field. Thanks to the decomposition
proved in Theorem \ref{theorem:shift}, by writing
$\varphi^{\mathbb{H}^{\omega, K}} - \varphi^{- \Delta} = h \in H^{1 -
\delta}$, we are able to prove that
\[ (\varphi^{\mathbb{H}^{\omega, K}})^{\circ 3} \assign \lim_{\varepsilon
   \rightarrow 0} (\rho_{\varepsilon} \ast (\varphi^{\mathbb{H}^{\omega,
   K}}))^3 - 3 \alpha_{\varepsilon} \rho_{\varepsilon} \ast
   \varphi^{\mathbb{H}^{\omega .K}} = : (\varphi^{- \Delta})^3 : + 3 :
   (\varphi^{- \Delta})^2 : \cdot h + 3 \varphi^{- \Delta} \cdot h^2 + h^3 \]
holds, where $: (\varphi^{- \Delta})^3 :$ and $: (\varphi^{- \Delta})^2 :$ are
the standard Wick powers of Gaussian free field $\varphi^{- \Delta}$, and the
products $: (\varphi^{- \Delta})^2 : \cdot h$ and $\varphi^{- \Delta} \cdot
h^2$ are well-defined and lie in $H^{- \varepsilon}$. This allows us to write
\begin{eqnarray}
  \partial_t^2 u  (t, x) &= & -\mathbb{H}^{\omega, K} u (t, x) - u ^{\circ 3}
  (t, x) \label{eqn:intro:NLW} \\
  (u , \partial_t u) |_{t = 0} &= & (u_0, u_1) \nonumber
\end{eqnarray}
as a rigorous version of \eqref{eq:main1} and the limit of \eqref{eq:main2}.
We think that Theorem \ref{theorem:shift} can be of independent interest,
since it is useful to understand better the Gaussian measure with covariance
$(\mathbb{H}^{\omega, K})^{- 1}$ and the related Wick product (see also
{\cite{bailleul2022analysis}} where the Wick square of the Anderson Gaussian
free field appears in the description of the polymer measure).
As a byproduct of the proof of Theorem
\ref{theorem:shift}, we also obtain a different proof of the existence of the
Anderson Hamiltonian operator using only the variational formulation of
{\cite{BG2020}}, see Remark \ref{rem:variationalAH}.

\begin{remark}
  \label{rem:scaletoscale}We expect the coupling of Theorem
  \ref{theorem:shift} to have the ``scale to scale property'' which means that
  on large scales the coupling is independent from the underlying GFF on small
  scales. In particular $\Delta_i h$ and $\Delta_j \varphi^{- \Delta}$ are
  independent if $j > i$. In {\cite{BauHof2022,BaGuHo2022}} such a
  coupling was used, to establish that the recentred maximum of the associated
  log-correlated field (in this case $\mu^{\mathbb{H}^{w, K}}$) behaves
  similarly to the Gaussian free field {\cite{Ding_2017}}. 
\end{remark}

The paper is organized as follows: In Section \ref{sec:AHamilton} we recall
the rigorous construction of the operator $\mathbb{H}^{\omega}$ generally
following {\cite{GUZ}} and collect some salient results about it. Next we
provide some results on Gaussian measures on function spaces in Section
\ref{sec:Wick}. Section \ref{section:coupling} is dedicated to the
construction of the coupling between $\mu^{- \Delta}$ and
$\mu^{\mathbb{H}^{\omega, K}}$, namely the proof of Theorem
\ref{theorem:shift} which allows us to define the Wick powers. Section
\ref{sec:localsol} details the local-in-time well-posedness theory of the Wick
ordered Anderson wave equation \eqref{eqn:intro:NLW} as well as the
convergence of approximations. Finally Section \ref{sec:global} combines the
local well-posedness and the Hamiltonian structure of the equation to prove
invariance of the Gibbs measure via a Bourgain-type argument.
Finally, in Appendix \ref{app:AGFFdiff} we give an
alternative proof of the coupling following ideas from
{\cite{bailleul2022analysis}} which works well for Gaussian measures (which is
sufficient for our current setting) but is less general than the method from
Section \ref{section:coupling} which does not assume Gaussianity.

\

{\tmstrong{Notation}}: We frequently use function spaces which are
either Lebesgue spaces denoted, as per usual, by $L^p$ i.e. $u \in L^p$ if $\|
u \|^p_{L^p} = \int_{\mathbb{T}^2 } | u |^p < \infty$ with the usual modification for $p =
\infty$ or Sobolev spaces $H^{\sigma}$ whose norm is given by $\| v
\|_{H^{\sigma}} = \left\| (1 - \Delta)^{\frac{\sigma}{2}} v \right\|_{L^2} .$
Moreover, we frequently employ the Besov spaces $B_{p, q}^s$ and the
H{\"o}lder-Besov spaces $\mathcal{C}^s = B_{\infty \infty}^s$ whose definition
together with related concepts is recalled for the reader's convenience in
Appendix \ref{app:Besov}. As we are exclusively working on the two-dimensional
torus $\mathbb{T}^2 = (\mathbb{R}/\mathbb{Z})^2$ we sometimes write
$H^{\sigma} = H^{\sigma} (\mathbb{T}^2)$ etc and occasionally, when we want to
differentiate between space and time, write $L^p ([0, T], H^{\sigma})$ for the
space of space-time functions $u (t, x)$ with finite norm $\| u \|_{L^p ([0,
T], H^{\sigma})} \assign \left( \int^T_0 \left\| u (t)
\right\|^p_{H^{\sigma}}  \right)^{\frac{1}{p}} < \infty$ and sometimes we
use short-hand notations such as $L_t^p H_x^{\sigma}$ to differentiate between
the variables.\\

Furthermore, we frequently use the notation $\lesssim$ to mean a bound up to
an implicit constant that may change from line to line, relatively we use
$\lesssim_{\rho}$ to mean a bound that may depend on $\rho$ explicitly.
Similarly $C, c > 0$ may frequently denote implicit constants which we allow
to change from line to line and $C (\alpha)$ may denote a changing constant
with a dependence on the quantity $\alpha$.\\

As a general rule, we write $\mathbb{E}$ for an expectation and $\mathbb{P}$
for a probability. More concretely we have two different probability spaces,
which we denote by $(\Omega, \mathcal{F}, \mathbb{P})$ for the definition of
the Anderson Hamiltonian (which we construct for almost every $\omega \in
\Omega$) and $(\Omega', \mathcal{F}', \mathbb{P}')$ for other randomnesses
appearing after Section \ref{sec:prel}. On occasion we will use notations like
$\mathbb{E}^{\omega}, \mathbb{E}^{\omega'}$for expectations w.r.t. those
probabilities and $\mathbb{E}_{\mu}$ as the expectation w.r.t. a probability
measure $\mu$ etc.

\begin{acknowledgments*}
  The first author gratefully acknowleges support by the European Research
  Council Grant 741487 ``Quantum Fields and Probability''. The second author
  was partially supported by INdAM (Istituto Nazionale di Alta Matematica,
  Gruppo Nazionale per l'Analisi Matematica, la Probabilit{\`a} e le loro
  Applicazioni and Gruppo Nazionale per la Fisica Matematica), Italy. The last
  author would like to thank Ismael Bailleul and Antoine Mouzard for some
  helpful discussions that led to the alternative proof of coupling of the
  Gaussian measures in Appendix \ref{app:AGFFdiff}.
\end{acknowledgments*}

\section{Preliminaries}\label{sec:prel}

\subsection{Anderson Hamiltonian and Paracontrolled
Calculus}\label{sec:AHamilton}

In this section we briefly recall the salient properties of the
\tmtextit{Anderson Hamiltonian} on $\mathbb{T}^2$ which will be required in
the following sections. We will largely follow {\cite{GUZ}} where the
interested reader can also find the details omitted here.

Formally we can see the (continuum) Anderson Hamiltonian on $\mathbb{T}^2$ as
a Schr{\"o}dinger operator with a spatial white noise potential, i.e.
\begin{equation}
  H^{\omega}  \text{``$=$''} - \Delta + \xi (\omega, \cdummy), \label{Aformal}
\end{equation}
where $\xi : \Omega \rightarrow \mathcal{S}' (\mathbb{T}^2)$ (where $(\Omega,
\mathcal{F}, \mathbb{P})$ is a probability space and $\mathcal{S}'
(\mathbb{T}^2)$ is the space of distributions on $\mathbb{T}^2$) is a random
distribution satisfying the formal property $\mathbb{E} [\xi (\cdot, x) \xi
(\cdot, y)] = \delta (x - y)$, see {\cite{GUZ}} for a rigorous definition. In
particular, the spatial white noise can be written as the following random
Fourier series (or Karhunen-Lo{\`e}ve expansion)
\begin{equation}
  \xi (\omega, x) = \underset{k \in \mathbb{Z}^2}{\sum} e_k (x) \xi_k
  (\omega), \label{xisum}
\end{equation}
where $e_k (x) = e^{2 \pi i k \cdummy x}$ and the $\xi_k =
\overline{\hat{\xi}_{- k}} : \Omega \rightarrow \mathbb{R}$ are i.i.d.
standard complex Gaussians. Note that the sum in \eqref{xisum} converges at
best a.s. in $\mathcal{C}^{- 1 - \varepsilon} (\mathbb{T}^2)$ for any
$\varepsilon > 0$ which means that $\xi (\omega, \cdot) \in \mathcal{C}^{- 1 -
\varepsilon} (\mathbb{T}^2)$ for almost all $\omega \in \Omega$ (sometimes
this is suggestively written as $\xi (\omega) \in \mathcal{C}^{- 1 -}$), see
\eqref{def:Besov} in the appendix for the definition of the H{\"o}lder-Besov
spaces $\mathcal{C}^{\alpha} (\mathbb{T}^d)$. Hereafter we sometimes use the
notation $\xi (\omega) = \xi (\omega, \cdot) \in \mathcal{S}' (\mathbb{T}^2)$
or $\xi (x) = \xi (\cdot, x)$ when we need to stress the dependence of $\xi$
on $\omega \in \Omega$ or $x \in \mathbb{T}^2$.

Due to the low regularity of $\xi$, one can not classically make sense of
$H^{\omega}$ as an (unbounded) operator on $L^2$. However, for almost all
$\omega \in \Omega$, it is possible to rigorously construct a
\tmtextit{renormalized} version $\mathbb{H}^{\omega}$ of the formal operator
$H^{\omega}$ as a self-adjoint, unbounded operator on $L^2 (\mathbb{T}^2)$
which is bounded from below by a constant $- K (\omega)$ (which can be chosen
$\mathcal{F}$-measurably as a random variable $K : \Omega \rightarrow
\mathbb{R}_+$). In addition, one can give a domain and a form domain for this
renormalized operator. Due to these properties, one can define a functional
calculus for the positive self-adjoint operator $\mathbb{H}^{\omega, K}
=\mathbb{H}^{\omega} + (K (\omega) + 1) \mathbb{I}_{L^2}$, which allows us to
define operators like
\begin{equation}
  e^{i t\mathbb{H}^{\omega, K}}, \sin \left( t \sqrt{\mathbb{H}^{\omega, K}}
  \right), \cos \left( t \sqrt{\mathbb{H}^{\omega, K}} \right)  \text{for } t
  \in \mathbb{R}, \label{func:cal}
\end{equation}
as bounded operators on $L^2 (\mathbb{T}^2)$ which are strongly continuous in
time, see Section 3 of {\cite{GUZ}}.

The functional calculus \eqref{func:cal} allows us to solve linear wave-/ and
Schr{\"o}dinger equations whose linear part is given by $\mathbb{H}^{\omega}$
as was done in {\cite{GUZ}} which corresponds to solving the SPDE with a white
noise potential (sometimes called \tmtextit{multiplicative stochastic
wave/Schr{\"o}dinger equations}).

\

In order to make rigorous sense of \eqref{Aformal} in $L^2 (\mathbb{T}^2)$,
we introduce the final definition of the operator and the noise space and then
motivate this by a formal derivation. We begin by recalling the correct notion
of \tmtextit{noise space} which contains all the needed ``higher-order''
information on the noise term $\xi (\omega)$.

\begin{definition}[and Lemma]
  \label{def:noisespace}For $\alpha = 1 + \varepsilon$ for very small
  $\varepsilon > 0$ we define the noise space
  \[ \mathcal{X}^{\alpha} \assign \overline{\{ (\psi, (1 - \Delta)^{- 1} \psi
     \circ \psi - a), \psi \in \mathcal{S} (\mathbb{T}^2), a \in \mathbb{R}
     \}} |_{\mathcal{C}^{- \alpha} \times \mathcal{C}^{2 - 2 \alpha}} \]
  i.e. the closure of tuples of the form $(\psi, (1 - \Delta)^{- 1} \psi \circ
  \psi - a)$ w.r.t. the $\mathcal{C}^{- \alpha} \times \mathcal{C}^{2 - 2
  \alpha}$-norm for smooth functions $\psi \in \mathcal{S} (\mathbb{T}^2)$ and
  constants $a \in \mathbb{R}$. See also equation \eqref{eq:resonantproduct}
  for the definition of the resonant product $\circ$.
  
  For $\xi (\omega)$ the spatial noise as introduced in \eqref{xisum} and
  $\xi_{\varepsilon} (x, \omega) = \eta_{\varepsilon} \ast \xi (x, \omega)$ a
  smooth regularization one has that
  \begin{equation}
    \Xi_{\varepsilon} = (\xi_{\varepsilon} (\omega), (1 - \Delta)^{- 1}
    \xi_{\varepsilon} (\omega) \circ \xi_{\varepsilon} (\omega) -
    c_{\varepsilon}) \rightarrow \Xi = (\xi (\omega), \Xi^2 (\omega)) 
    \text{a.s. in } \mathcal{X}^{\alpha}  \text{as } \varepsilon \rightarrow 0
    \label{def:Xi}
  \end{equation}
  where
  \[ c_{\varepsilon} \assign \mathbb{E} (\xi_{\varepsilon} (\omega) \circ (1 -
     \Delta)^{- 1} \xi_{\varepsilon} (\omega)) \sim \log \left(
     \frac{1}{\varepsilon} \right) \]
  is a diverging sequence.
  
  Often we drop the $\omega$ dependence for brevity when there is no confusion
  i.e. $\Xi^2 = \Xi^2 (\omega)$ etc. It will be important in later sections to
  track this dependence but it is not pertinent to the discussion in this
  section.
\end{definition}

\begin{proof}
  See Theorem 5.1 in {\cite{allez_continuous_2015}}.
\end{proof}

Next we define the space of functions paracontrolled by the enhanced noise
$\Xi \in \mathcal{X}^{\alpha}$ which is the limit from \eqref{def:Xi}.

\begin{definition}
  \label{def:Ham}Let $\alpha$ as above, then we define the paracontrolled (by
  $\Xi$) space $\mathcal{D}_{\Xi}^{\alpha} \subset L^2 (\mathbb{T}^2)$ as the
  space of functions $u$ of the form
  \begin{equation}
    u - (1 - \Delta)^{- 1} ((\xi + \Xi^2) \succ u + \xi \prec u) \backassign
    u^{\sharp} \in H^2 . \label{eqn:Ansatzusharp}
  \end{equation}
  On such functions we define the operator, called (renormalized) Anderson
  Hamiltonian \
  \begin{equation}
    \mathbb{H}^{\omega} (u) \assign (1 - \Delta) u^{\sharp} + u^{\sharp} \circ
    \xi + B (u, \Xi),
  \end{equation}
  where
  \[ B (u, \Xi) \assign \Xi^2 \prec u + \Xi^2 \circ u + C (\xi, (1 -
     \Delta)^{- 1} \xi, u) + ((1 - \Delta)^{- 1} (\Xi^2 \succ u + \xi \prec
     u)) \circ \xi, \]
  see Proposition \ref{prop:commu} for the trilinear commutator $C$. Similarly
  we set
  \begin{equation}
    \mathbb{H}_{\varepsilon}^{\omega} (u_{\varepsilon}) \assign (1 - \Delta)
    u_{\varepsilon}^{\sharp} + u_{\varepsilon}^{\sharp} \circ
    \xi_{\varepsilon} + B (u_{\varepsilon}, \Xi_{\varepsilon}),
  \end{equation}
  where the noise $\Xi_{\varepsilon}$ is as in \eqref{def:Xi} and
  $u_{\varepsilon}$ and $u_{\varepsilon}^{\sharp}$ are as in
  \eqref{eqn:Ansatzusharp} with $\Xi_{\varepsilon}$ instead of $\Xi .$
\end{definition}

Now we give a formal derivation of the form of the paracontrolled space
$\mathcal{D}_{\Xi}^{\alpha}$ and the operator $\mathbb{H}^{\omega} .$

\

We formally start by decomposing the product $u \cdummy \xi$ into para- and
resonant products, see equation \eqref{eq:resonantproduct} and Lemma
\ref{lem:para} from the appendix. For brevity we will write things like ``$f
\in H^{s -}$'' meaning $f \in H^{s - \varepsilon}$ for any $\varepsilon > 0$
etc.

The aim is to construct a space of functions $u \in L^2 (\mathbb{T}^2)$ s.t.
\[ (1 - \Delta) u + \xi u \in L^2 (\mathbb{T}^2), \]
where $\xi \in \mathcal{C}^{- 1 -}$ a.s. This ansatz tells us that for this to
be possible, one would need $u \in H^{1 -}$ but not better. In order to
proceed, we decompose the product via paraproducts, see equation
\eqref{eq:resonantproduct} and Lemma \ref{lem:para},
\begin{eqnarray*}
  (1 - \Delta) u + \xi \cdummy u & = & (1 - \Delta) u + \xi \prec u + \xi
  \circ u + \xi \succ u,\\
  &  & \text{where one expects}\\
  \xi \succ u & \in & H^{- 1 -} (\mathbb{T}^2)\\
  \xi \circ u & \in & H^{0 -} (\mathbb{T}^2)  \text{ but it is not defined!}\\
  \xi \prec u & \in & H^{0 -} (\mathbb{T}^2)\\
  (1 - \Delta) u & \in & H^{- 1 -} (\mathbb{T}^2) .
\end{eqnarray*}
Now the point is to consider the functions $u$ for which $(1 - \Delta) u$
cancels the parts of the product $\xi \cdummy u$ which are worse than $L^2 .$
In addition, we have to ensure that the resonant product $\xi \circ u$ can be
defined somehow; this will actually lead to the necessity to renormalize.

This is where the theory of Paracontrolled Distributions (originally
introduced in {\cite{GIP2015}}) enters, which in this context was first used
by Allez and Chouk in {\cite{allez_continuous_2015}}. The idea is (we will
have to refine this slightly) to consider functions $u \in L^2 (\mathbb{T}^2)$
for which
\[ u + (1 - \Delta)^{- 1} (\xi \succ u + \xi \prec u) \backassign u^{\sharp}
   \in H^2 . \]
For such functions we have
\begin{eqnarray*}
  (1 - \Delta) u + \xi \cdummy u & = & (1 - \Delta) u + \xi \prec u + \xi
  \circ u + \xi \succ u\\
  & = & (1 - \Delta) u^{\sharp} + \xi \circ u\\
  & = & (1 - \Delta) u^{\sharp} + \xi \circ u^{\sharp} - \xi \circ (1 -
  \Delta)^{- 1} (\xi \succ u + \xi \prec u),
\end{eqnarray*}
and we see that the situation is much improved since one has the regularities
\begin{eqnarray*}
  (1 - \Delta) u^{\sharp} & \in & L^2\\
  \xi \circ u^{\sharp} & \in & H^{1 -}\\
  \xi \circ (1 - \Delta)^{- 1} (\xi \prec u) & \in & H^{1 -},
\end{eqnarray*}
with only the term $\xi \circ (1 - \Delta)^{- 1} (u \prec \xi)$ being
problematic. Thanks to the commutator lemma from {\cite{GIP2015}}, see
Proposition \ref{prop:commu}, one can however transform this term as follows
\begin{equation}
  \xi \circ (1 - \Delta)^{- 1} (\xi \prec u) = C (\xi, (1 - \Delta)^{- 1} \xi,
  u) + u (\xi \circ (1 - \Delta)^{- 1} \xi), \label{Acommu}
\end{equation}
where $C (\xi, (1 - \Delta)^{- 1} \xi, u) \in H^{1 -}$.

\

The second term in \eqref{Acommu} is now a classically defined product
provided we can make sense of the purely stochastic term
\[ \xi \circ (1 - \Delta)^{- 1} \xi \overset{!}{\in} \mathcal{C}^{0 -} . \]
The issue here is that this object does not exist in any reasonable sense,
unless we \tmtextit{renormalize} it, meaning --naively-- that we replace it by
an almost surely well-defined object
\begin{equation}
  \xi \diamondsuit (1 - \Delta)^{- 1} \xi \assign \underset{\varepsilon
  \rightarrow 0}{\lim} \xi_{\varepsilon} \circ (1 - \Delta)^{- 1}
  \xi_{\varepsilon} - c_{\varepsilon} = \Xi^2 \in \mathcal{C}^{0 -}
  \label{def:diam}
\end{equation}

which is precisely the second component of the noise term $\Xi$ from
Definition \ref{def:noisespace} where the $\xi_{\varepsilon}$ is the noise
mollified by a standard test function and the constants $c_{\varepsilon}$
satisfy
\begin{equation}
  c_{\varepsilon} \assign \mathbb{E} (\xi_{\varepsilon} \circ (1 - \Delta)^{-
  1} \xi_{\varepsilon}) \sim \log \left( \frac{1}{\varepsilon} \right)
  \label{def:ceps}
\end{equation}
i.e. they diverge logarithmically. This is intimately related to Wick ordering
which will also appear in a different context later on in Section
\ref{sec:Wick}.

\

Thanks to \eqref{def:diam}, we can rigorously repeat the above computation
with the regularized noise for smooth functions $u$ setting
\begin{equation}
  u - (1 - \Delta)^{- 1} ((\xi_{\varepsilon} + \Xi^2_{\varepsilon}) \succ u +
  \xi_{\varepsilon} \prec u) \backassign u^{\sharp} \in H^2 . \label{def:ueps}
\end{equation}
This yields
\begin{eqnarray*}
  (1 - \Delta) u + \xi_{\varepsilon} \cdummy u & = & (1 - \Delta) u^{\sharp} +
  \Xi^2_{\varepsilon} \succ u + u \circ \xi_{\varepsilon}\\
  & = & (1 - \Delta) u^{\sharp} + \Xi^2_{\varepsilon} \succ u + u^{\sharp}
  \circ \xi_{\varepsilon} + ((1 - \Delta)^{- 1} \xi_{\varepsilon} \succ u)
  \circ \xi_{\varepsilon} + \tilde{B} (u, \Xi_{\varepsilon}^2)\\
  & = & (1 - \Delta) u^{\sharp} + u^{\sharp} \circ \xi_{\varepsilon} +
  \Xi^2_{\varepsilon} u - u ((1 - \Delta)^{- 1} \xi_{\varepsilon} \circ
  \xi_{\varepsilon}) + B (u, \Xi_{\varepsilon}^2)\\
  & = & (1 - \Delta) u^{\sharp} + u^{\sharp} \circ \xi_{\varepsilon} -
  c_{\varepsilon} u + B (u, \Xi_{\varepsilon}^2)
\end{eqnarray*}
which is rearranged to
\begin{equation} \mathbb{H}^{\omega}_{\varepsilon} u \assign (1 - \Delta) u +
   \xi_{\varepsilon} u + c_{\varepsilon} u = \Delta u^{\sharp} + u^{\sharp}
   \circ \xi_{\varepsilon} + B (u, \Xi_{\varepsilon}^2), 
   \label{def:Heps} \end{equation}
where $B, \tilde{B}$, given explicitly below, are bounded bilinear maps from
$H^{1 -} \times \mathcal{X}^{\alpha} \rightarrow H^{1 -}$.
\begin{eqnarray*}
  \tilde{B} (u, \Xi_{\varepsilon}) & \assign & ((1 - \Delta)^{- 1}
  (\Xi^2_{\varepsilon} \succ u + \xi_{\varepsilon} \prec u)) \circ
  \xi_{\varepsilon}\\
  B (u, \Xi_{\varepsilon}) & \assign & \Xi_{\varepsilon}^2 \prec u +
  \Xi_{\varepsilon}^2 \circ u + C (\xi_{\varepsilon}, (1 - \Delta)^{- 1}
  \xi_{\varepsilon}, u) + \tilde{B} (u, \Xi_{\varepsilon}) .
\end{eqnarray*}
These maps satisfy the following continuity property.

\begin{lemma}
  The bilinear maps
  \begin{eqnarray*}
    \tilde{B} (u, \Xi) & : & H^{\sigma} \times \mathcal{X}^{\alpha}
    \rightarrow H^{2 - 2 \alpha + \sigma}\\
    B (u, \Xi) & : & H^{\sigma} \times \mathcal{X}^{\alpha} \rightarrow H^{2 -
    2 \alpha + \sigma},
  \end{eqnarray*}
  are bounded for $2 \alpha - 2 < \sigma < 1,$ in particular this implies
  \[ B (u_{\varepsilon}, \Xi_{\varepsilon}) \rightarrow B (u, \Xi)  \text{in
     } H^{2 - 2 \alpha + \sigma}  \text{as } \varepsilon \rightarrow 0 \text{
     for } u_{\varepsilon} \rightarrow u \text{ in } H^{\sigma} . \]
\end{lemma}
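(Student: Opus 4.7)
The plan is to treat each of the four summands defining $B$ (and the separate definition of $\tilde{B}$) individually, showing that each lives in a space at least as regular as $H^{2-2\alpha+\sigma}$ under the stated constraint $2\alpha-2<\sigma<1$, and then to obtain the continuity/convergence statement by bilinearity.

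First I would invoke the standard paraproduct and resonant product estimates recalled in the Besov-space appendix (Lemma \ref{lem:para} / Bony's decomposition), together with the continuity of $(1-\Delta)^{-1}:H^{s}\to H^{s+2}$. Concretely, writing $\Xi=(\xi,\Xi^{2})$ with $\xi\in\mathcal{C}^{-\alpha}$ and $\Xi^{2}\in\mathcal{C}^{2-2\alpha}$:
\begin{itemize}
\item $\Xi^{2}\prec u$ is bounded by $\|\Xi^{2}\|_{\mathcal{C}^{2-2\alpha}}\|u\|_{H^{\sigma}}$ in $H^{2-2\alpha+\sigma}$ (since $2-2\alpha<0$);
\item $\Xi^{2}\circ u$ is well defined in $H^{2-2\alpha+\sigma}$ precisely when $(2-2\alpha)+\sigma>0$, i.e.\ $\sigma>2\alpha-2$, which is exactly the lower bound in the hypothesis;
\item $C(\xi,(1-\Delta)^{-1}\xi,u)$ is handled by the trilinear commutator estimate of Proposition \ref{prop:commu}, which requires $-\alpha+(2-\alpha)+\sigma>0$ (again $\sigma>2\alpha-2$) and delivers a bound in $H^{2-2\alpha+\sigma}$;
\item for $\tilde B$, I would first estimate the inner bracket: $\Xi^{2}\succ u+\xi\prec u\in H^{-\alpha+\sigma}$ thanks to the paraproduct estimates (using that $-\alpha$ is the worse exponent, since $2-2\alpha>-\alpha$ is false for $\alpha>2$, but here $\alpha=1+\varepsilon$ so $-\alpha<2-2\alpha$ is false; one keeps the minimum of the two output regularities). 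Then $(1-\Delta)^{-1}$ promotes this to $H^{2-\alpha+\sigma}$, and the resonant product with $\xi\in\mathcal{C}^{-\alpha}$ is well defined when $(2-\alpha+\sigma)-\alpha=2-2\alpha+\sigma>0$, giving the claimed target regularity.
\end{itemize}

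Each of these estimates is multiplicative in $(u,\Xi)$, so combining them yields the bilinear bound $\|B(u,\Xi)\|_{H^{2-2\alpha+\sigma}}\lesssim \|u\|_{H^{\sigma}}(1+\|\Xi\|_{\mathcal{X}^{\alpha}})\|\Xi\|_{\mathcal{X}^{\alpha}}$, and similarly for $\tilde B$. The upper bound $\sigma<1$ comes in (for instance in the paraproduct $\xi\prec u$) to ensure the standard paraproduct estimates apply without a loss, and it matches the natural regularity of the paracontrolled ansatz.

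For the convergence statement, I would simply write the telescoping identity
\[
B(u_{\varepsilon},\Xi_{\varepsilon})-B(u,\Xi)=B(u_{\varepsilon}-u,\Xi_{\varepsilon})+B(u,\Xi_{\varepsilon}-\Xi),
\]
and apply the bilinear bound just established, using that $\Xi_{\varepsilon}\to\Xi$ in $\mathcal{X}^{\alpha}$ (Definition \ref{def:noisespace}) and $u_{\varepsilon}\to u$ in $H^{\sigma}$.

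I expect the main technical nuisance to be the $\tilde B$ term: it is the one that mixes a paraproduct/``transport'' step, a gain of two derivatives through $(1-\Delta)^{-1}$, and a resonant product against the very irregular $\xi$, so keeping track of which regularity is the binding constraint at each stage requires care. Everything else is a direct application of the Bony-paraproduct toolbox and the commutator lemma quoted from \cite{GIP2015}.
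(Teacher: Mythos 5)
Your proof follows essentially the same route as the paper, whose own argument consists of citing the paraproduct estimates (Lemma \ref{lem:para}) and the commutator bound (Proposition \ref{prop:commu}) and leaving the term-by-term bookkeeping implicit; your regularity count for each summand is the right one, with the constraint $\sigma>2\alpha-2$ entering exactly where you place it (the resonant product $\Xi^2\circ u$, the commutator, and the outer resonant product in $\tilde B$). Two small blemishes worth fixing: the parenthetical in the $\tilde B$ bullet is garbled --- for $\alpha=1+\varepsilon$ one \emph{does} have $-\alpha<2-2\alpha$, and the relevant comparison is between the output regularities $-\alpha+\sigma$ and $2-2\alpha$, of which $-\alpha+\sigma$ is indeed the smaller for $\sigma<2-\alpha$ --- and the telescoping identity you write for the convergence is not literally valid, since $B$ and $\tilde B$ are linear in $u$ but \emph{quadratic} in the components of $\Xi$ (e.g.\ $\xi$ appears twice in $C(\xi,(1-\Delta)^{-1}\xi,u)$ and twice in $\tilde B$), so one must telescope over each multilinear factor separately, a fact your final bound $\lesssim\|u\|_{H^{\sigma}}(1+\|\Xi\|_{\mathcal{X}^{\alpha}})\|\Xi\|_{\mathcal{X}^{\alpha}}$ already implicitly acknowledges.
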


\begin{proof}
  This follows from the bounds on the paraproducts and the commutator from
  Lemmas \ref{lem:para} and Proposition \ref{prop:commu}.
\end{proof}

The point of this computation is that the right-hand side of \eqref{def:Heps}
is now continuous w.r.t. $(\xi_{\varepsilon}, \Xi^2_{\varepsilon})$ in the
noise space $\mathcal{X}^{\alpha}$ so it allows us to pass to the limit in
some sense. For now, we can rigorously define the operator
\begin{eqnarray}
  \mathbb{H}^{\omega} u &\assign & (1 - \Delta) u^{\sharp} + u^{\sharp} \circ
  \xi + B (u, \Xi)  \label{def:Homega}\\
  && \text{for} \\
  u \in L^2 (\mathbb{T}^2) && \text{s.t.} \nonumber\\
  u - (1 - \Delta)^{- 1} ((\xi + \Xi^2) \succ u + \xi \prec u) & \backassign&
  u^{\sharp} \in H^2 (\mathbb{T}^2)  \label{eqn:ansatzlimit}
\end{eqnarray}
The ansatz \eqref{eqn:ansatzlimit} is of course the limit of the ansatz
\eqref{def:ueps} and one has the following rigorous result.

\begin{theorem}[Self-adjointness and (Form-)Domain of the Anderson
Hamiltonian]
  \label{thm:anderson-K}The operator $(\mathbb{H}^{\omega},
  \mathcal{D}_{\Xi}^{\alpha})$ is an unbounded self-adjoint semi-bounded
  operator on $L^2 (\mathbb{T}^2)$. One has the norm equivalence
  \[ \| \mathbb{H}^{\omega} u \|_{L^2} + \| u \|_{L^2} \approx \| u^{\sharp}
     \|_{H^2} . \]
  Moreover, one has that if the remainder $u^{\sharp}$ in the paracontrolled
  ansatz is only in $H^1$, i.e. it satisfies the paracontrolled ansatz
  \[ u - (1 - \Delta)^{- 1} ((\xi + \Xi^2) \succ u + \xi \prec u) \backassign
     u^{\sharp} \in H^1 (\mathbb{T}^2), \]
  such a paracontrolled function $u$ is in the form domain of
  $\mathbb{H}^{\omega}$ meaning $| (u, \mathbb{H}^{\omega} u) | < \infty$. In
  fact one has the norm equivalence
  \[ | (u, \mathbb{H}^{\omega} u) | + \| u \|^2_{L^2} \approx \| u^{\sharp}
     \|^2_{H^1} . \]
  The operator $\mathbb{H}^{\omega}$ is bounded from below, meaning there
  exists a constant $K (\omega) > 0$ depending polynomially on the
  $\mathcal{X}$-norm of the enhanced noise $\Xi$ s.t.
  \[ (\mathbb{H}^{\omega} u, u) \geqslant - K (\omega) \| u \|^2_{L^2} 
     \text{for all } u \in \mathcal{D}_{\Xi}^{\alpha} \]
  and we define the shifted operators
  \begin{eqnarray}
    \mathbb{H}^{\omega, K} \assign & \mathbb{H}^{\omega} + K (\omega) + 1 
    \label{def:HK}\\
    \mathbb{H}_{\varepsilon}^{\omega, K} \assign &
    \mathbb{H}_{\varepsilon}^{\omega} + K (\omega) + 1  \label{def:HKeps}
  \end{eqnarray}
  which is now uniformly positive and self-adjoint so one can define its
  square root and other fractional powers without issues.
\end{theorem}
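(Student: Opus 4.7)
The plan is to piggy-back on the regularized operators $\mathbb{H}^{\omega}_\varepsilon = -\Delta + \xi_\varepsilon + c_\varepsilon \mathbb{I}_{L^2}$, which are genuinely self-adjoint (they are bounded perturbations of $-\Delta$ by a smooth multiplication operator plus a constant, defined on $H^2$), and transfer the structural properties to $\mathbb{H}^{\omega}$ by passing to the limit in the paracontrolled formulation. Concretely, I would first establish the well-posedness of the ansatz map $u \leftrightarrow u^\sharp$ and the norm equivalence, then prove symmetry, then promote symmetry to self-adjointness via resolvent convergence, and finally obtain semi-boundedness together with the form-domain description.

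For the norm equivalence, given $u^\sharp \in H^2$ I would solve the fixed-point equation $u = u^\sharp + (1-\Delta)^{-1}\bigl((\xi + \Xi^2) \succ u + \xi \prec u\bigr)$ in $L^2$. Using the paraproduct estimates of Lemma \ref{lem:para} together with $\xi \in \mathcal{C}^{-\alpha}$ and $\Xi^2 \in \mathcal{C}^{2-2\alpha}$, the right-hand operator gains regularity (mapping $L^2 \to H^{1-\kappa}$ for any small $\kappa>0$), so after shifting by $-\Delta + M$ for $M$ large one gets a contraction. Plugging the solution into the defining formula and using the continuity of $B$ shown in the lemma, together with the schematic identity ``$(1-\Delta)u + \xi u + c_\varepsilon u = (1-\Delta)u^\sharp + u^\sharp\circ \xi + B(u,\Xi)$'' derived in the preceding formal computation, yields the two-sided bound $\|\mathbb{H}^{\omega} u\|_{L^2} + \|u\|_{L^2} \approx \|u^\sharp\|_{H^2}$.

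Symmetry is verified first at the regularized level, where $\langle \mathbb{H}^{\omega}_\varepsilon u_\varepsilon, v_\varepsilon\rangle = \langle u_\varepsilon, \mathbb{H}^{\omega}_\varepsilon v_\varepsilon\rangle$ trivially, and then passed to the limit using the continuity of $B$ and the convergence $\Xi_\varepsilon \to \Xi$ in $\mathcal{X}^\alpha$ from Definition \ref{def:noisespace}. To upgrade to self-adjointness I would show that for some $\lambda \in \mathbb{R}$ the range of $\mathbb{H}^{\omega} - \lambda$ is all of $L^2$: since the regularized operators converge in norm resolvent sense (their inverses $(\mathbb{H}^{\omega}_\varepsilon + M)^{-1}$ converge strongly in $L^2$ thanks to the paracontrolled fixed point, the uniform-in-$\varepsilon$ norm equivalence, and compactness in $H^{1-\kappa}$), the limit operator inherits self-adjointness.

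For semi-boundedness and the form-domain statement, I would relax the ansatz to $u^\sharp \in H^1$ and compute $(u,\mathbb{H}^{\omega}u)$ by expanding $u$ and $\mathbb{H}^{\omega}u$ through the paracontrolled decomposition: the term $(u,(1-\Delta)u^\sharp)$ produces $\|u^\sharp\|^2_{H^1}$ after integration by parts and absorbing lower-order paraproducts of $u$, while $(u,u^\sharp\circ \xi)$ and $(u,B(u,\Xi))$ are estimated by Lemma \ref{lem:para} and Proposition \ref{prop:commu} in terms of $\|u^\sharp\|_{H^1}\|u\|_{L^2}\,\mathrm{poly}(\|\Xi\|_{\mathcal{X}^\alpha})$. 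Young's inequality then splits these into $\tfrac{1}{2}\|u^\sharp\|^2_{H^1}$ (absorbed on the left) and $K(\omega)\|u\|^2_{L^2}$, giving both the form-domain equivalence $|(u,\mathbb{H}^{\omega}u)| + \|u\|^2_{L^2} \approx \|u^\sharp\|^2_{H^1}$ and the lower bound. The main obstacle, as usual in the paracontrolled framework, is bookkeeping the regularity of the residual resonant products $u^\sharp \circ \xi$ and of the commutator inside $B$, and ensuring that $K(\omega)$ depends only polynomially on $\|\Xi\|_{\mathcal{X}^\alpha}$ so that the shift $\mathbb{H}^{\omega,K}$ in \eqref{def:HK} is $\mathcal{F}$-measurable and finite for a.e.\ $\omega$.
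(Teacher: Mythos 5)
The paper does not prove this theorem itself but defers to Section~2.1 of \cite{GUZ}, and your sketch is a faithful reconstruction of exactly that route: regularize, set up the paracontrolled ansatz as a fixed point (your large-mass shift plays the role of the frequency cutoff $\Pi_{>N}$ used there), derive the norm equivalences from the continuity of $B$ and the commutator lemma, pass symmetry and the resolvent to the limit, and obtain semi-boundedness and the form domain by expanding $(u,\mathbb{H}^{\omega}u)$ and absorbing via Young's inequality. The only place your sketch is thinner than the cited argument is the form-domain pairing for $u^{\sharp}\in H^{1}$, where the individual terms such as $(\nabla u,\nabla u^{\sharp})$ are borderline ill-defined and one must exploit the cancellation structure of the ansatz rather than crude duality, but you correctly flag this bookkeeping as the main obstacle.
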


\begin{proof}
  See Section 2.1 in {\cite{GUZ}}.
\end{proof}

Moreover we can quantify exactly in which way the regularized operators
$\mathbb{H}_{\varepsilon}^{\omega}$ converge to $\mathbb{H}^{\omega} .$

\begin{proposition}[Norm resolvent convergence of approximate operators]
  \label{prop:normconv}For the operators \ $\mathbb{H}_{\varepsilon}^{\omega}$
  and $\mathbb{H}^{\omega}$ as above we have that there exists a constant $K
  (\omega)$, which is a polynomial in the $\mathcal{X}^{\alpha}$ norm of
  $\Xi$, for which
  \[ (K (\omega) +\mathbb{H}_{\varepsilon}^{\omega})^{- 1} \rightarrow (K
     (\omega) +\mathbb{H}^{\omega})^{- 1}  \text{as $\varepsilon \rightarrow
     0$ in } \mathcal{L} (L^2 (\mathbb{T}^2) ; L^2 (\mathbb{T}^2)) . \]
  We may choose the constants $K (\omega) $as in the previous theorem.
\end{proposition}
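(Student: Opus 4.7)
The plan is to first prove strong convergence of the resolvents $(K(\omega)+\mathbb{H}_\varepsilon^\omega)^{-1} \to (K(\omega)+\mathbb{H}^\omega)^{-1}$ on $L^2(\mathbb{T}^2)$ via a compactness-and-uniqueness argument, and then upgrade it to operator norm convergence using the uniform smoothing provided by the paracontrolled estimates. Fix $f \in L^2(\mathbb{T}^2)$ with $\|f\|_{L^2} \leq 1$ and set $u_\varepsilon \assign (K(\omega)+\mathbb{H}_\varepsilon^\omega)^{-1} f$. Since $\Xi_\varepsilon \to \Xi$ in $\mathcal{X}^\alpha$, the quantity $M \assign \sup_\varepsilon \|\Xi_\varepsilon\|_{\mathcal{X}^\alpha}$ is finite, and by Theorem \ref{thm:anderson-K} the constant $K(\omega)$ may be chosen (as a polynomial in $M$) to dominate all the regularised operators simultaneously; the norm equivalence stated therein also depends only on $M$, so one obtains the uniform bound
\[
  \|u_\varepsilon\|_{L^2} + \|u_\varepsilon^\sharp\|_{H^2} \lesssim_M \|f\|_{L^2} \leq 1.
\]

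By Rellich compactness, extract a subsequence $\varepsilon_k \to 0$ along which $u_{\varepsilon_k}^\sharp \to v^\sharp$ strongly in $H^{2-\delta}(\mathbb{T}^2)$ for arbitrarily small $\delta>0$. Solving the $\Xi$-paracontrolled fixed point $v - (1-\Delta)^{-1}((\xi+\Xi^2)\succ v + \xi \prec v) = v^\sharp$ by contraction in $H^{1-\delta'}$ yields a unique $v \in \mathcal{D}_\Xi^\alpha$, and because this reconstruction depends continuously on $(v^\sharp, \Xi)$ we get $u_{\varepsilon_k} \to v$ in $H^{1-\delta'}$. Passing to the limit in
\[
  (1-\Delta) u_{\varepsilon_k}^\sharp + u_{\varepsilon_k}^\sharp \circ \xi_{\varepsilon_k} + B(u_{\varepsilon_k},\Xi_{\varepsilon_k}) + (K(\omega)+1)\, u_{\varepsilon_k} = f,
\]
we use that the resonant product $u^\sharp \circ \xi$ is continuous on $H^{2-\delta} \times \mathcal{C}^{-1-\kappa}$ (the total regularity being strictly positive) and that $B$ is jointly continuous on $H^\sigma \times \mathcal{X}^\alpha$ by the Lemma preceding the proposition; the limiting equation identifies $v$ as the $\Xi$-paracontrolled solution of $(K(\omega)+\mathbb{H}^\omega)v=f$, i.e.\ $v = (K(\omega)+\mathbb{H}^\omega)^{-1} f$. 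Subsequence-independence then gives $u_\varepsilon \to (K(\omega)+\mathbb{H}^\omega)^{-1} f$ strongly in $L^2$ for each fixed $f$.

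To upgrade pointwise strong convergence to convergence in the operator norm of $\mathcal{L}(L^2,L^2)$, note that the uniform bound $\|u_\varepsilon\|_{H^{1-\delta'}} \lesssim_M \|f\|_{L^2}$ (obtained by substituting the $H^2$-bound on $u_\varepsilon^\sharp$ into the paracontrolled ansatz) makes $\{(K(\omega)+\mathbb{H}_\varepsilon^\omega)^{-1}\}_\varepsilon$ a collectively compact family $L^2 \to L^2$. A standard weak--strong argument then finishes the proof: if there existed $\|f_n\|_{L^2}=1$ and $\varepsilon_n \to 0$ with $\|(K(\omega)+\mathbb{H}_{\varepsilon_n}^\omega)^{-1} f_n - (K(\omega)+\mathbb{H}^\omega)^{-1} f_n\|_{L^2} \geq \delta$, we extract $f_n \rightharpoonup f$ weakly in $L^2$; compactness of $(K(\omega)+\mathbb{H}^\omega)^{-1}$ handles the second term, while self-adjointness (hence strong convergence of adjoints when tested against $\phi\in L^2$) combined with the uniform $H^{1-\delta'}$ smoothing forces $(K(\omega)+\mathbb{H}_{\varepsilon_n}^\omega)^{-1} f_n \to (K(\omega)+\mathbb{H}^\omega)^{-1} f$ as well, a contradiction. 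The main obstacle throughout is tracking that every constant -- the lower bound, the norm equivalence in Theorem \ref{thm:anderson-K}, and the continuity bounds for $B$ and the resonant product -- depends only on $M=\sup_\varepsilon \|\Xi_\varepsilon\|_{\mathcal{X}^\alpha}$ and not on $\varepsilon$ itself; once that uniformity is in hand, compactness does the rest.
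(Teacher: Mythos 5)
Your argument is essentially correct, but it takes a genuinely different route from the paper. The paper simply cites Proposition 2.23 of \cite{GUZ}, and the mechanism there (as recalled just after the proposition) is quantitative: one shows $\mathbb{H}^{\omega}_{\varepsilon}\Gamma_{\varepsilon}\rightarrow\mathbb{H}^{\omega}\Gamma$ in $\mathcal{L}(H^2;L^2)$ directly from the paraproduct/commutator bounds and the convergence $\Xi_{\varepsilon}\to\Xi$ in $\mathcal{X}^{\alpha}$, and then deduces norm resolvent convergence via the second resolvent identity, with a rate controlled by $\|\Xi_{\varepsilon}-\Xi\|_{\mathcal{X}^{\alpha}}$. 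You instead run a soft compactness--uniqueness argument for strong resolvent convergence and then upgrade to operator-norm convergence by collective compactness plus self-adjointness. Your route buys simplicity and avoids writing out the resolvent identity, but it gives no rate and leans on the compactness of the resolvent (fine on $\mathbb{T}^2$, but it would not survive a non-compact setting where the quantitative argument still works). Both approaches hinge on the same uniformity: every constant (semiboundedness, the norm equivalence $\|u\|_{L^2}+\|\mathbb{H}^{\omega}_{\varepsilon}u\|_{L^2}\approx\|u^{\sharp}\|_{H^2}$, the bounds on $B$ and the resonant product) must depend only on $\sup_{\varepsilon}\|\Xi_{\varepsilon}\|_{\mathcal{X}^{\alpha}}$; you flag this correctly, though note the paper states the norm equivalence only for the limit operator, so the uniform-in-$\varepsilon$ version is an (true, but unproved here) input from \cite{GUZ}.

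One detail needs repair: you claim to reconstruct $v$ from $v^{\sharp}$ by solving the paracontrolled ansatz ``by contraction in $H^{1-\delta'}$''. Without the frequency cut-off $\Pi_{>N}$ the map $u\mapsto(1-\Delta)^{-1}((\xi+\Xi^2)\succ u+\xi\prec u)$ has no reason to have small norm, so the fixed point is not a contraction; this is precisely why the paper introduces $\Phi_N$ and its inverse $\Gamma$ in \eqref{def:Phi}--\eqref{def:Gamma}. The fix is easy: either invoke $\Gamma$ (with $N$ chosen uniformly in $\varepsilon$, as the paper remarks is possible), or skip the reconstruction entirely and pass to the limit directly in the ansatz, using that $u_{\varepsilon_k}$ is uniformly bounded in $H^{1-\delta}$ (which follows from the ansatz and the $L^2$ bound) and hence converges strongly in $H^{1-2\delta}$ along a further subsequence, so that all paraproduct terms converge. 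With that adjustment the proof goes through.
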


\begin{proof}
  See Proposition 2.23 in {\cite{GUZ}}.
\end{proof}

For reasons which will become apparent later, we introduce a frequency
cut-off,
\[ \Pi_{> N} \assign \mathcal{F}^{- 1} \mathbb{I}_{| \cdummy | > 2^N}
   \mathcal{F} \quad \text{for } N \in \mathbb{N}, \]
as in {\cite{GUZ}} namely we define
\begin{equation}
  \Phi_N (u) \assign u - \Pi_{> N} ((1 - \Delta)^{- 1} ((\xi + \Xi^2) \succ u
  + \xi \prec u)) \label{def:Phi}
\end{equation}
as a bounded operator on $L^2 (\mathbb{T}^2)$ which admits an inverse for $N$
large enough depending on the $\mathcal{X}-$norm of $(\xi, \Xi^2)$ which we
denote by
\begin{equation}
  \Gamma v = v + \Pi_{> N} ((1 - \Delta)^{- 1} ((\xi + \Xi^2) \succ \Gamma v +
  \xi \prec \Gamma v)) \label{def:Gamma}
\end{equation}
having omitted the $N$ in the notation as was done in {\cite{GUZ}}. In
precisely the same way, we can define $\Phi^{\varepsilon}$ and
$\Gamma^{\varepsilon}$ analogously to \eqref{def:Phi} and \eqref{def:Gamma}
respectively by replacing $(\xi, \Xi^2)$ by $(\xi_{\varepsilon},
\Xi^2_{\varepsilon})$. As was remarked in {\cite{GUZ}}, one may choose the
same $N$ independently of $\varepsilon$ (but of course depending measurably on
$\omega$).

\

We think of the $\Gamma$ map in the following way:
\[ \Gamma : \text{``Remainder''} \mapsto \text{``Paracontrolled function with
   that remainder''} \]
and it exactly parameterizes a paracontrolled space like the one in Definition
\ref{def:Ham}, concretely one has
\[ \mathcal{D}_{\Xi}^{\alpha} = \Gamma H^2 . \]
With this notation in place, we collect some results on the maps $\Gamma$ and
$\Gamma^{\varepsilon}$ as well as their convergence properties.

\begin{proposition}
  There is a choice of $N \in 2^{\mathbb{N}}$for which the maps $\Gamma,
  \Gamma^{\varepsilon} : L^2 (\mathbb{T}^2) \rightarrow L^2 (\mathbb{T}^2)$ in
  \eqref{def:Gamma} exists, i.e. as the inverse of the $\Phi_N$ defined in
  \eqref{def:Phi}. One has the properties:
  
  Let $s \in [0, 1)$, then $\Gamma$ is a homeomorphism on the following spaces
  \begin{eqnarray*}
    \Gamma : H^s & \rightarrow & H^s\\
    \Gamma : H^1 & \rightarrow & \mathcal{D} \left( \sqrt{\mathbb{H}^{\omega}}
    \right)\\
    \Gamma : H^2 & \rightarrow & \mathcal{D} (\mathbb{H}^{\omega})\\
    \Gamma  : \mathcal{C}^s & \rightarrow & \mathcal{C}^s .
  \end{eqnarray*}
  $\Gamma^{\varepsilon} $is also a homeomorphism on $H^s $and $\mathcal{C}^s
  .$
  
  Furthermore we have $\Gamma^{\varepsilon} \rightarrow \Gamma$ for
  $\varepsilon \rightarrow 0$ as bounded operators on $H^s$or $\mathcal{C}^s$.
\end{proposition}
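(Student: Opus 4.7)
The central idea is to read $\Phi_N = \mathrm{I} - K_N$, where
\[
K_N u \assign \Pi_{> N}\bigl((1-\Delta)^{-1}((\xi+\Xi^2)\succ u + \xi\prec u)\bigr),
\]
and to invert $\Phi_N$ by a Neumann series once $N$ is chosen large enough to make $\|K_N\|$ strictly less than $1$ on each of the target function spaces. The first step would therefore be to establish an estimate of the form
\[
\|K_N u\|_{X} \;\lesssim\; 2^{-N\delta}\, P\!\bigl(\|\Xi\|_{\mathcal{X}^\alpha}\bigr)\, \|u\|_{X}, \qquad X\in\{H^s,\ \mathcal{C}^s\},
\]
for some $\delta>0$ and some polynomial $P$. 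This is obtained by combining (i) the paraproduct estimates recalled in the appendix to bound $(\xi+\Xi^2)\succ u + \xi\prec u$ in $X^{-1-\kappa}$ (using $\xi\in\mathcal{C}^{-1-\kappa}$, $\Xi^2\in\mathcal{C}^{-\kappa}$), (ii) the two-derivative gain from $(1-\Delta)^{-1}$, which places the argument of $\Pi_{>N}$ in $X^{1-\kappa}$, and (iii) the standard Besov-type bound $\|\Pi_{>N} v\|_{X}\lesssim 2^{-N\delta}\|v\|_{X^\delta}$ with $\delta = 1-\kappa$. The same estimate, uniform in $\varepsilon$, holds for the operator $K_N^{\varepsilon}$ obtained from $(\xi_\varepsilon,\Xi^2_\varepsilon)$, because $\|\Xi_\varepsilon\|_{\mathcal{X}^\alpha}$ is a.s.\ bounded (it converges).

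Choosing $N = N(\omega)$ large enough that $2^{-N\delta}P(\|\Xi\|_{\mathcal{X}^\alpha})\leq 1/2$ on all the spaces of interest simultaneously, the Neumann series
\[
\Gamma \;=\; \Phi_N^{-1} \;=\; \sum_{k\geq 0} K_N^k
\]
converges in operator norm on each of $H^s$ ($s\in[0,1)$) and $\mathcal{C}^s$, giving the first four homeomorphism statements at the level of these base spaces (and likewise for $\Gamma^\varepsilon$, with the \emph{same} $N$ by the uniform bound). To upgrade from $H^s$ to the (form) domain identifications $\Gamma H^1 \cong \mathcal{D}(\sqrt{\mathbb{H}^\omega})$ and $\Gamma H^2 \cong \mathcal{D}(\mathbb{H}^\omega)$, I would observe that for any $u\in L^2$ one has
\[
\Phi_N u - u^\sharp \;=\; (\mathrm{I}-\Pi_{>N})\bigl((1-\Delta)^{-1}((\xi+\Xi^2)\succ u + \xi\prec u)\bigr) \;\in\; C^\infty(\mathbb{T}^2),
\]
since it has spectrum supported in $\{|k|\leq 2^N\}$. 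Hence $\Phi_N u\in H^2$ (resp.\ $H^1$) if and only if $u^\sharp\in H^2$ (resp.\ $H^1$), which by Theorem \ref{thm:anderson-K} is exactly the characterisation of $\mathcal{D}(\mathbb{H}^\omega)$ (resp.\ of the form domain). Applying this to $u=\Gamma v$ with $v\in H^2$ (resp.\ $H^1$) identifies $\Gamma H^2$ with $\mathcal{D}(\mathbb{H}^\omega)$ and $\Gamma H^1$ with $\mathcal{D}(\sqrt{\mathbb{H}^\omega})$; boundedness of the inverse on these spaces follows from the corresponding norm equivalence in Theorem \ref{thm:anderson-K}.

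For the convergence $\Gamma^\varepsilon\to\Gamma$, I would write the resolvent-type identity
\[
\Gamma^\varepsilon - \Gamma \;=\; \Gamma^\varepsilon\,(\Phi_N - \Phi_N^{\varepsilon})\,\Gamma,
\]
so that it suffices to show $\Phi_N^{\varepsilon}\to\Phi_N$ in the operator norm on $H^s$ or $\mathcal{C}^s$. But $\Phi_N - \Phi_N^{\varepsilon}$ is built from paraproducts against $(\xi-\xi_\varepsilon, \Xi^2-\Xi^2_\varepsilon)\to 0$ in $\mathcal{X}^\alpha$, and the estimate from Step 1 applied to this difference yields a bound in terms of $\|\Xi-\Xi_\varepsilon\|_{\mathcal{X}^\alpha}\to 0$. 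Combined with the uniform bound $\sup_\varepsilon\|\Gamma^\varepsilon\|_{X\to X}\leq 2$ (from the uniform Neumann series), this gives $\Gamma^\varepsilon\to\Gamma$ in $\mathcal{L}(X)$.

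The main obstacle I anticipate is purely technical: verifying that a single $N$ can be chosen so that the contraction estimate for $K_N$ holds simultaneously on $H^s$ for all $s\in[0,1)$ and on $\mathcal{C}^s$, and uniformly in $\varepsilon$. Since the smallness factor $2^{-N\delta}$ is uniform in $s$ in a closed subinterval $[0,1-\kappa']$ and the polynomial $P(\|\Xi_\varepsilon\|_{\mathcal{X}^\alpha})$ is a.s.\ bounded uniformly in $\varepsilon$, this reduces to keeping careful track of the $\kappa$'s in the paraproduct bounds; once this bookkeeping is done, everything else is a clean application of the Neumann series.
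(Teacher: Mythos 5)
Your proposal is correct and follows essentially the same route as the paper, which simply defers to the reference \cite{GUZ}: there (and implicitly here) $\Gamma$ is obtained exactly as the Neumann-series inverse of $\Phi_N = \mathrm{I}-K_N$, with the smallness of $K_N$ coming from the paraproduct estimates, the two-derivative gain of $(1-\Delta)^{-1}$, and the factor $2^{-N\delta}$ from $\Pi_{>N}$, and the domain identifications and the convergence $\Gamma^{\varepsilon}\to\Gamma$ are handled as you describe. Your write-up supplies precisely the details the paper omits, so no changes are needed.
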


\begin{proof}
  See {\cite{GUZ}} but this follows from the paraproduct estimates and the
  fact that $\Gamma$ was already defined as an inverse.
\end{proof}

Using these maps, we actually have the stronger convergence
\[ \mathbb{H}^{\omega}_{\varepsilon} \Gamma_{\varepsilon} \rightarrow
   \mathbb{H}^{\omega} \Gamma \text{in } \mathcal{L} (H^2 ; L^2) . \]
which implies Proposition \ref{prop:normconv}, see Proposition 2.19 in
{\cite{GUZ}}.

We finish off the section by collecting some other properties of the Anderson
Hamiltonian which we will need in the remainder of the paper.

\begin{theorem}[Properties of the Anderson Hamiltonian]
  \label{thm:AHprop}With the notations as above, we have
  \begin{enumerate}
    \item {\tmstrong{Embeddings:}} For $\mathcal{D} (\mathbb{H}^{\omega}) =
    \Gamma H^2$, the domain of $\mathbb{H}^{\omega}$, we have
    \[ \mathcal{D} (\mathbb{H}^{\omega}) \cap H^2 = \{ 0 \}, \text{but }
       \mathcal{D} (\mathbb{H}^{\omega}) \hookrightarrow H^{1 - \varepsilon} 
       \text{and $\mathcal{D} (\mathbb{H}^{\omega}) \hookrightarrow
       \mathcal{C}^{1 - \varepsilon}$ for any } \varepsilon > 0. \]
    For $\mathcal{D} \left( \sqrt{\mathbb{H}^{\omega}} \right) = \Gamma H^1$,
    the form domain of $\mathbb{H}^{\omega},$we have
    \[ \mathcal{D} \left( \sqrt{\mathbb{H}^{\omega}} \right) \cap H^1 = \{ 0
       \}, \text{but } \mathcal{D} \left( \sqrt{\mathbb{H}^{\omega}} \right)
       \hookrightarrow H^{1 - \varepsilon}  \text{for any } \varepsilon > 0.
    \]
    \item \tmtextbf{Functional calculus:} For any bounded continuous function
    \[ g : \mathbb{R}_+ \rightarrow \mathbb{R}, \]
    one has (using the shifted operators from \eqref{def:HK},
    \eqref{def:HKeps})
    \[ g (\mathbb{H}^{\omega, K}_{\varepsilon}) \rightarrow g
       (\mathbb{H}^{\omega, K})  \text{in } \mathcal{L} (L^2 ; L^2) . \]
    In particular one has for all times $t \in \mathbb{R}$
    \begin{eqnarray}
      \sin \left( t \sqrt{\mathbb{H}^{\omega, K}} \right), \cos \left( t
      \sqrt{\mathbb{H}^{\omega, K}} \right) & \in & \mathcal{L} (L^2 ; L^2) 
      \label{eqn:cosL2}\\
      \frac{\sin \left( t \sqrt{\mathbb{H}^{\omega, K}}
      \right)}{\sqrt{\mathbb{H}^{\omega, K}}} & \in & \mathcal{L} \left( L^2 ;
      \mathcal{D} \left( \sqrt{\mathbb{H}^{\omega, K}} \right) \right), \\
      & \text{and} &  \nonumber\\
      \cos \left( t \sqrt{\mathbb{H}^{\omega, K}_{\varepsilon}} \right)
      \rightarrow \cos \left( t \sqrt{\mathbb{H}^{\omega, K}} \right) &
      \text{in} & \mathcal{L} (L^2 ; L^2) \\
      \frac{\sin \left( t \sqrt{\mathbb{H}^{\omega, K}_{\varepsilon}}
      \right)}{\sqrt{\mathbb{H}^{\omega, K}_{\varepsilon}}} \rightarrow
      \frac{\sin \left( t \sqrt{\mathbb{H}^{\omega, K}}
      \right)}{\sqrt{\mathbb{H}^{\omega, K}}} & \text{in} & \mathcal{L} (L^2 ;
      H^{1 - \varepsilon})  \text{for any } \varepsilon > 0. 
    \end{eqnarray}
    Moreover these operators are strongly continuous in $t.$
    
    \item \tmtextbf{Eigenvalues and Weyl Law:} $\mathbb{H}^{\omega, K}$ has
    discrete spectrum and it has eigenvalues
    \[ 0 < \lambda_1 (\omega) \leqslant \lambda_2 (\omega) \leqslant \ldots
       \lambda_n (\omega) \leqslant \cdots \rightarrow + \infty \text{as } n
       \rightarrow \infty \]
    and $L^2 $normalized eigenfunctions $f_n \in \mathcal{D}
    (\mathbb{H}^{\omega})$
    \begin{equation}
      \mathbb{H}^{\omega, K} f_n = \lambda_n (\omega) f_n . \label{eqn:Heigen}
    \end{equation}
    Moreover, $\mathbb{H}^{w, K}$(and thus $\mathbb{H}^{\omega}$) satisfies a
    \tmtextit{Weyl law}, meaning almost surely
    \[ \underset{n \rightarrow \infty}{\lim} \frac{\lambda_n (\omega)}{n} = C
       (\omega), \]
    i.e. the eigenvalues grow like the eigenvalues of the Laplacian.
    
    \item \tmtextbf{Equivalence of fractional norms:} For $s \in (- 1, 1)$ we
    have the norm equivalence
    \begin{equation}
      \| u \|_{H^s} \approx \left\| \left( {\mathbb{H}^{w, K}} 
      \right)^{\frac{s}{2}} u \right\|_{L^2} . \label{eqn:normequ}
    \end{equation}
  \end{enumerate}
\end{theorem}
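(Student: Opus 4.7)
The theorem packages four distinct facts, and I would prove them in the order (1), (2), (4), (3), deferring the Weyl law to last because it is by far the most delicate. Most of the ingredients are already encoded in the construction of $\mathbb{H}^{\omega}$ recalled above, so the task reduces to assembling them, and I would cite \cite{GUZ} for items (1), (2), (4) and \cite{mouzard2020weyl} for the Weyl asymptotic in item (3).

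For item (1), every $u\in \mathcal{D}(\mathbb{H}^{\omega})=\Gamma H^2$ decomposes via \eqref{def:Gamma} as $u = u^{\sharp}+\Pi_{>N}(1-\Delta)^{-1}((\xi+\Xi^2)\succ u + \xi\prec u)$ with $u^{\sharp}\in H^2$ and the second summand in $H^{1-\varepsilon}\cap \mathcal{C}^{1-\varepsilon}$ by the paraproduct bounds applied to $\xi+\Xi^2\in \mathcal{C}^{-1-\varepsilon}$ and $u\in L^2$, giving both embedding claims. The fact $\mathcal{D}(\mathbb{H}^{\omega})\cap H^2=\{0\}$ comes from the observation that the paracontrolled tail is genuinely of regularity $H^{1-\varepsilon}$ and no better for almost every realization of $\Xi$, so only $u=0$ survives; the form domain case is identical with $u^{\sharp}\in H^1$. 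For item (2), Proposition \ref{prop:normconv} together with the spectral theorem yields $g(\mathbb{H}^{\omega,K}_{\varepsilon})\to g(\mathbb{H}^{\omega,K})$ strongly on $L^2$ for every bounded continuous $g:\mathbb{R}_+\to\mathbb{R}$; applied to $\cos(t\sqrt{\cdot})$ and $\sin(t\sqrt{\cdot})/\sqrt{\cdot}$ this gives the stated $L^2\to L^2$ convergences together with strong continuity in $t$. The mapping $\sin(t\sqrt{\mathbb{H}^{\omega,K}})/\sqrt{\mathbb{H}^{\omega,K}}\in \mathcal{L}(L^2;\mathcal{D}(\sqrt{\mathbb{H}^{\omega,K}}))$ is because post-composition with $\sqrt{\mathbb{H}^{\omega,K}}$ recovers the $L^2$-bounded $\sin(t\sqrt{\mathbb{H}^{\omega,K}})$, and the upgrade to $\mathcal{L}(L^2;H^{1-\varepsilon})$ uses the embedding $\mathcal{D}(\sqrt{\mathbb{H}^{\omega,K}_{\varepsilon}})\hookrightarrow H^{1-\varepsilon}$ from item (1) with uniform constants.

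For item (4), the case $s=0$ is trivial. For $s\in (0,1)$ I would write $u=\Gamma u^{\sharp}$, use the homeomorphism $\Gamma:H^s\to H^s$ to obtain $\|u\|_{H^s}\approx \|u^{\sharp}\|_{H^s}$, and then interpolate $(\mathbb{H}^{\omega,K})^{s/2}$ between the two endpoints $s=0$ (both sides equal $\|u^{\sharp}\|_{L^2}$) and $s=1$, where the form norm equivalence from Theorem \ref{thm:anderson-K}, namely $(\mathbb{H}^{\omega,K}u,u)\approx \|u^{\sharp}\|_{H^1}^2$, gives $\|\sqrt{\mathbb{H}^{\omega,K}}u\|_{L^2}\approx \|u^{\sharp}\|_{H^1}$; negative $s\in(-1,0)$ then follows by duality. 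Item (3) is the main obstacle. Discreteness of the spectrum with an orthonormal basis of eigenfunctions is immediate from the compact embedding $\mathcal{D}(\mathbb{H}^{\omega})\hookrightarrow H^{1-\varepsilon}\hookrightarrow L^2$ (by Rellich), which makes $(\mathbb{H}^{\omega,K})^{-1}$ compact self-adjoint and strictly positive. For the Weyl asymptotic $\lambda_n(\omega)/n\to C(\omega)$, I would invoke the min-max principle
\begin{equation*}
\lambda_n(\omega)=\min_{\substack{V_n\subset \Gamma H^1\\ \dim V_n=n}}\ \max_{u\in V_n\setminus\{0\}}\frac{(\mathbb{H}^{\omega,K}u,u)}{\|u\|_{L^2}^2},
\end{equation*}
with trial subspaces obtained by applying $\Gamma$ to spans of Laplacian eigenfunctions, using the form norm equivalence to sandwich $\lambda_n$ by constants times the $n$-th eigenvalue of $-\Delta$. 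This yields two-sided bounds of order $n$; extracting the sharp asymptotic constant $C(\omega)$ is the technical heart and is precisely what is carried out in \cite{mouzard2020weyl}, which I would cite.
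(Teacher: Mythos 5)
Your proposal is correct and follows essentially the same route as the paper, which proves this theorem purely by citation: points (1) and (2) to \cite{GUZ}, point (3) to \cite{mouzard2020weyl}, and point (4) to Proposition 1.14 of \cite{mouzardstrichartz} (rather than \cite{GUZ} as you attribute it); your sketches are consistent with how those references argue. The one detail worth flagging is that the embedding $\mathcal{D} (\mathbb{H}^{\omega}) \hookrightarrow \mathcal{C}^{1 - \varepsilon}$ does not follow in a single pass from $u \in L^2$ --- with that input the paracontrolled tail lands only in $H^{1 - \varepsilon} \hookrightarrow \mathcal{C}^{- \varepsilon}$ --- so one must first bootstrap the regularity of $u$ and then rerun the paraproduct estimate to reach $\mathcal{C}^{1 - \varepsilon}$.
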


\begin{proof}
  The first two points are found in {\cite{GUZ}} the third point is from
  {\cite{mouzard2020weyl}} and the last point was proved in Proposition 1.14
  in {\cite{mouzardstrichartz}}.
\end{proof}

\begin{remark}
  \label{remark:N-delta}A consequence of Theorem \ref{thm:AHprop} Point 3 and
  Point 4 is that for almost every $\omega \in \Omega$, writing $P_{\leqslant
  N} : H^s (\mathbb{T}^2) \rightarrow H^s (\mathbb{T}^2)$, with $N \in
  \mathbb{N}$ and $s, s' \in (- 1, 1)$, $s \leqslant s'$, the orthogonal
  projection on the first $N$ eigenvectors of $\mathbb{H}^{\omega}$, we have
  \begin{equation}
    \| (I - P_{\leqslant N}) f \|_{H^s} \lesssim
     N^{s - s'} \| f
    \|_{H^{s'}},
  \end{equation}
  and similarly also
  \begin{equation}
    \| P_{\leqslant N} f \|_{H^{s'}} \lesssim N^{s' - s} \| f \|_{H^s}
  \end{equation}
  see also Lemma 1.3 of {\cite{mouzardstrichartz}}.
\end{remark}

\subsection{Gaussian measures and Wick powers}\label{sec:Wick}

A Gaussian measure $\mu$ on the space of tempered distribution $\mathcal{S}'
(\mathbb{T}^d)$ on the $d \in \mathbb{N}$ dimensional torus, is a Radon
measure $\mu$ on $\mathcal{S}' (\mathbb{T}^d)$ (with respect its strong
topology) such that for any smooth function $f \in C^{\infty} (\mathbb{T}^d)
\backassign \mathcal{S} (\mathbb{T}^d)$ the (real valued) random variable $x
\mapsto \langle x, f \rangle_{\mathcal{S}', \mathcal{S}}$ (where $x \in
\mathcal{S}' (\mathbb{T}^d)$) is a Gaussian random variable.

A Gaussian measure is completely characterized by its mean $m \in
\mathcal{S}' (\mathbb{T}^d)$ and its covariance operator $\Sigma : \mathcal{S}
(\mathbb{T}^d) \rightarrow \mathcal{S}' (\mathbb{T}^d)$ (which is a linear
positive operator), which are the two unique objects appearing in the
characteristic function $\hat{\mu} : \mathcal{S} (\mathbb{T}^d) \rightarrow
\mathbb{C}$ of $\mu$, which takes the form
\[ \hat{\mu} (f) \assign \int_{\mathcal{S}' (\mathbb{T}^d)} e^{i \langle x, f
   \rangle_{\mathcal{S}', \mathcal{S}}} \mu (\mathd x) = \exp \left( i \langle
   m, f \rangle_{\mathcal{S}', \mathcal{S}} - \frac{1}{2} \langle \Sigma (f),
   f \rangle \right) . \]
A consequence of Minlos-Sazonov theorem (see, e.g., Theorem 20.1 in
{\cite{Yam1985}}) is that for any $m \in \mathcal{S}' (\mathbb{T}^d)$ and any
\tmtextit{continuous, linear, positive and symmetric operator} $\Sigma$ there
is a unique Gaussian measure $\mu$ on $\mathcal{S}' (\mathbb{T}^d)$ with mean
$m$ and covariance $\Sigma$. Hereafter we mainly focus on the case where $m =
0$ and we write $\mu^{\Sigma^{- 1}}$ for the Gaussian measure on $\mathcal{S}'
(\mathbb{T}^d)$ with variance $\Sigma$ (the reason of the presence of $- 1$
will become apparent later). We also often identify $\mathcal{S}
(\mathbb{T}^d)$ with a subset of $\mathcal{S}' (\mathbb{T}^d)$ thanks to the
$L^2 (\mathbb{T}^d)$ scalar product. In this paper we are mainly interested in
three cases of Gaussian measures:
\begin{enumerate}
  \item when $\langle \Sigma (f), g \rangle \assign \int_{\mathbb{T}^d} f (x)
  g (x) \mathd x$, namely $\Sigma = I_{L^2}$, which is the
  \tmtextit{(Gaussian) white noise measure} on $\mathbb{T}^d$;
  
  \item when $\langle \Sigma (f), g \rangle \assign \int_{\mathbb{T}^d} (-
  \Delta + K)^{- 1} (f) (x) g (x) \mathd x$, where $K > 0$, namely $\Sigma =
  (- \Delta + K)^{- 1}$, which is usually called \tmtextit{Gaussian free
  field} on $\mathbb{T}^d$ with mass $K$;
  
  \item when $\langle \Sigma (f), g \rangle \assign \int_{\mathbb{T}^d}
  (\mathbb{H}^{\omega} + K)^{- 1} (f) (x) g (x) \mathd x$, for $K > K
  (\omega)$ (see Section \ref{sec:AHamilton}), namely $\Sigma =
  (\mathbb{H}^{\omega} + K)^{- 1}$, which henceforth we will call
  \tmtextit{Anderson Gaussian free field} with mass $K$.
\end{enumerate}
We also recall two convenient facts about Gaussian
measures. Firstly the fact that convergence of the covariance operators
implies weak convergence of the Gaussian measures and secondly that one has
precise knowledge of whether two Gaussian measures are mutually singular or
absolutely continuous.

\begin{lemma} \label{lem:abscont}
  \begin{enumeratealpha}
    \item If the operators
    \[ \Sigma^{\varepsilon} \rightarrow \Sigma \text{ in } \mathcal{L} (L^2
       (\mathbb{T}^d) ; L^2 (\mathbb{T}^d)) \]
    then we have
    \[ \mu^{(\Sigma^{\varepsilon})^{- 1}} \rightharpoonup \mu^{\Sigma^{- 1}}
    \]
    weakly in the sense of measures on $\mathcal{S}' (\mathbb{T}^d)$.
    
    Consequently we have
    \begin{equation}
      \mu^{\mathbb{H}_{\varepsilon}^{\omega, K}} \rightharpoonup
      \mu^{\mathbb{H}^{\omega, K}}  \text{weakly in the sense of measures on
      $\mathcal{S}' (\mathbb{T}^d)$ as } \varepsilon \rightarrow 0,
      \label{lem:Hepsweak}
    \end{equation}
    see Proposition \ref{prop:normconv}.
    
    \
    
    \item If $\sqrt{\Sigma}^{- 1} (L^2) \neq \sqrt{\Lambda}^{- 1} (L^2)$ then
    the Gaussian measures $\mu^{\Sigma^{- 1}}$ and $\mu^{\Lambda^{- 1}}$ are
    mutually singular. In particular this implies that the Gaussian measure
    $\mu^{\mathbb{H}^{\omega, K}}$ is mutually singular with respect to
    $\mu^{- \Delta + K}$ and also $\mu^{\mathbb{H}_{\varepsilon}^{\omega, K}}
    .$
  \end{enumeratealpha}
\end{lemma}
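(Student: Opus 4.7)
For part (a), the plan is to use characteristic functions. The measure $\mu^{(\Sigma^{\varepsilon})^{-1}}$ has characteristic function $f \mapsto \exp(-\tfrac{1}{2}\langle \Sigma^{\varepsilon} f, f\rangle)$ on $\mathcal{S}(\mathbb{T}^d)$, and the hypothesis $\Sigma^{\varepsilon} \to \Sigma$ in $\mathcal{L}(L^2; L^2)$ immediately yields pointwise convergence $\langle \Sigma^{\varepsilon} f, f\rangle \to \langle \Sigma f, f\rangle$ for every $f$, hence pointwise convergence of the characteristic functions. To upgrade this into weak convergence on $\mathcal{S}'$, I would work in a concrete Hilbert space: for $s > d/2$, the operators $(1-\Delta)^{-s/2}\Sigma^{\varepsilon}(1-\Delta)^{-s/2}$ are trace class on $L^2$ with traces bounded by $\|\Sigma^{\varepsilon}\|_{\mathcal{L}(L^2;L^2)}\,\operatorname{Tr}((1-\Delta)^{-s})$, uniformly in $\varepsilon$. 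Consequently the family $\{\mu^{(\Sigma^{\varepsilon})^{-1}}\}$ has uniformly bounded second moment on $H^{-s}(\mathbb{T}^d)$ and is tight there; combined with pointwise convergence of the characteristic functions (which controls all finite-dimensional distributions) this gives weak convergence on $H^{-s}$, hence on $\mathcal{S}'$. The corollary for $\mu^{\mathbb{H}_{\varepsilon}^{\omega, K}}$ then follows at once from Proposition \ref{prop:normconv}.

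For part (b), I would invoke the classical Feldman--Hájek dichotomy: two centered Gaussian measures on a separable Hilbert space are either mutually equivalent or mutually singular, and equivalence requires in particular that their Cameron--Martin spaces coincide as subsets. For $\mu^{\Sigma^{-1}}$, whose covariance is $\Sigma$, this Cameron--Martin space is the form domain of $\Sigma^{-1}$, which matches the notation of the statement. Hence distinct form domains force mutual singularity. In the concrete comparison of $\mu^{\mathbb{H}^{\omega, K}}$ with $\mu^{-\Delta + K}$, the form domains are $\Gamma H^1$ and $H^1$ respectively, and by Theorem \ref{thm:AHprop} Point 1 we have $\Gamma H^1 \cap H^1 = \{0\}$, so they differ. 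For the regularized comparison $\mu^{\mathbb{H}_{\varepsilon}^{\omega, K}}$ versus $\mu^{\mathbb{H}^{\omega, K}}$, the operator $\mathbb{H}_{\varepsilon}^{\omega, K}$ is a bounded multiplicative perturbation of $-\Delta$ by a smooth $L^{\infty}$ potential, so its form domain is again $H^1$, disjoint from $\Gamma H^1$ by the same theorem; Feldman--Hájek then yields singularity.

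The main technical obstacle I anticipate is the tightness argument in part (a): one must choose an ambient Hilbert space on which the Gaussian family has uniformly controlled second moment, balancing the regularity of the covariances against the dimension of the torus. Part (b) is essentially a bookkeeping application of a classical Gaussian dichotomy, once the Cameron--Martin spaces are correctly identified via the description of the form domain of $\mathbb{H}^{\omega,K}$ from Section \ref{sec:AHamilton}.
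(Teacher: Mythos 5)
Your proof is correct and follows essentially the same route as the paper, which for part (a) simply cites Section 5 of \cite{BDW18} and for part (b) invokes Feldman--H\'ajek together with the observation that $\mathcal{D}(\sqrt{\mathbb{H}^{\omega,K}}) \neq H^1 = \mathcal{D}(\sqrt{\mathbb{H}^{\omega,K}_\varepsilon}) = \mathcal{D}(\sqrt{-\Delta+K})$ from Theorem \ref{thm:AHprop}; your identification of the Cameron--Martin spaces with the form domains and the use of $\Gamma H^1 \cap H^1 = \{0\}$ is exactly that argument. One small imprecision in part (a): a uniform second-moment bound on $H^{-s}$ gives boundedness, not tightness, in $H^{-s}$ itself (balls of $H^{-s}$ are not compact in $H^{-s}$); you should take $d/2 < s_0 < s$, prove the uniform bound in $H^{-s_0}$, and deduce tightness in $H^{-s}$ from the compact embedding $H^{-s_0} \hookrightarrow H^{-s}$ --- a trivial fix that does not affect the conclusion of weak convergence on $\mathcal{S}'(\mathbb{T}^d)$.
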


\begin{proof}
  \begin{enumeratealpha}
    \item The general statement is in Section 5 of {\cite{BDW18}}. The
    statement \eqref{lem:Hepsweak} then follows from Proposition
    \ref{prop:normconv}.
    
    \item  The main statement is a consequence of the Feldman-Hajek theorem,
    see e.g. Theorem 2.23 in {\cite{DaZa}}, the other statement follows from the
    fact that
    \[ \mathcal{D} \left( \sqrt{\mathbb{H}^{\omega, K}} \right) \neq H^1
       =\mathcal{D} \left( \sqrt{\mathbb{H}_{\varepsilon}^{\omega, K}} \right)
       =\mathcal{D} \left( \sqrt{- \Delta + K} \right), \]
    which is contained in Theorem \ref{thm:AHprop}.
  \end{enumeratealpha}
\end{proof}

Usually the support (not understood in a topological sense but more
generically as a subset of full measure) of a Gaussian measure $\mu^{\Sigma^{-
1}}$ on $\mathcal{S}' (\mathbb{T}^d)$ is not the whole space $\mathcal{S}'
(\mathbb{T}^d)$ but there is a proper (Banach) subspace $W \subset
\mathcal{S}' (\mathbb{T}^d)$ supporting $\mu^{\Sigma^{- 1}}$. In the case of
the white noise and Gaussian free field the support is well known.

\begin{proposition}
  For any $\delta > 0$ and $1 \leqslant p \leqslant + \infty$ we have
  \[ \mu^{- \Delta + K} \left( B^{1 - \frac{d}{2} - \delta}_{p, p}
     (\mathbb{T}^d) \right) = \mu^{I_{L^2}} \left( B^{- \frac{d}{2} -
     \delta}_{p, p} (\mathbb{T}^d) \right) = 1. \]
\end{proposition}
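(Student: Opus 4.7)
The proposition asserts the almost-sure Besov regularity of the Gaussian free field (of mass $K$) and the white noise on $\mathbb{T}^d$. The plan is a direct computation using the Fourier/Karhunen--Lo\`eve representation combined with Gaussian hypercontractivity, block-by-block via the Littlewood--Paley decomposition.

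First I would recall that, writing $\varphi = \sum_{k \in \mathbb{Z}^d} \sigma_k\, \xi_k\, e_k$ with $\xi_k$ i.i.d.\ standard (complex) Gaussians, one has $\sigma_k = 1$ for the white noise and $\sigma_k = (|k|^2 + K)^{-1/2}$ for the GFF. For a Littlewood--Paley block $\Delta_j$ with Fourier multiplier $\phi_j$ supported essentially on $|k| \sim 2^j$, the random variable $\Delta_j \varphi(x) = \sum_k \phi_j(k)\, \sigma_k\, \xi_k\, e_k(x)$ is Gaussian for each $x$, with variance
\[ \mathbb{E}|\Delta_j \varphi(x)|^2 = \sum_{k} |\phi_j(k)|^2\, \sigma_k^2 \sim 2^{jd}\, \bar\sigma_j^2, \]
where $\bar\sigma_j^2 \sim 1$ in the white noise case and $\bar\sigma_j^2 \sim 2^{-2j}$ in the GFF case (uniformly in $x$, by translation invariance). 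This is the only ``computation'' needed.

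Next, for any finite $p \in [1,\infty)$, Gaussian hypercontractivity (equivalence of all $L^p_\omega$ moments of elements of the first Wiener chaos) gives $\mathbb{E}|\Delta_j \varphi(x)|^p \lesssim_p (2^{jd}\bar\sigma_j^2)^{p/2}$. Integrating in $x$ over $\mathbb{T}^d$ and using Fubini,
\[ \mathbb{E} \|\Delta_j \varphi\|_{L^p}^p \lesssim_p 2^{jdp/2}\, \bar\sigma_j^p. \]
Hence for $s = 1 - d/2 - \delta$ (GFF) or $s = -d/2 - \delta$ (white noise),
\[ \mathbb{E} \|\varphi\|_{B_{p,p}^s}^p = \sum_j 2^{jsp}\, \mathbb{E}\|\Delta_j\varphi\|_{L^p}^p \lesssim \sum_j 2^{-jp\delta} < \infty, \]
so $\varphi \in B_{p,p}^s(\mathbb{T}^d)$ almost surely.

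The case $p = \infty$ requires a short extra step, since hypercontractivity does not give the right bound on $\|\Delta_j \varphi\|_{L^\infty}$ directly. I would simply use the Besov embedding $B_{q,q}^{s+d/q} \hookrightarrow B_{\infty,\infty}^s$ valid for any finite $q$; picking $q$ large enough that $(s+d/q) + d/2 - 1 < -\delta/2$ (resp.\ $<-\delta/2$ in the white noise case) and applying the previous step produces $\varphi \in B_{\infty,\infty}^s$ almost surely. The only mild obstacle is being careful that $B_{p,p}^s$ is separable and that the Minlos--Sazonov construction actually places full measure on this subspace; this is standard and follows from the general fact that $\mathbb{E}\|\varphi\|_{B_{p,p}^s}^p < \infty$ implies $\mu^{\Sigma^{-1}}(B_{p,p}^s) = 1$.
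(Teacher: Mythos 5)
Your argument is correct and is exactly the standard Littlewood--Paley/Gaussian-moment computation that the paper delegates to its citation (Lemma 3.2 of Da Prato's notes), so you have essentially reproduced the intended proof rather than found a new route. The only cosmetic remark is that for $1\leqslant p<2$ the bound $\mathbb{E}|\Delta_j\varphi(x)|^p\lesssim(\mathbb{E}|\Delta_j\varphi(x)|^2)^{p/2}$ is just Jensen's inequality rather than hypercontractivity, and the final measurability/full-measure point you flag is indeed standard.
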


\begin{proof}
  See, e.g., Lemma 3.2 in {\cite{DaP2003}}.
\end{proof}

From now on if $(\Omega, \mathcal{F}, \mathbb{P})$ is a probability space and
$\psi : \Omega \rightarrow W \subset \mathcal{S}' (\mathbb{T}^d)$ (where $W$
is a Banach space) is a measurable function, we say that $\psi$ is a
\tmtextit{Gaussian random field with covariance $\Sigma$} if the law of $\psi$
is $\mu^{\Sigma^{- 1}}$, written in symbols $\tmop{Law} (\psi) =
\mu^{\Sigma^{- 1}}$. \

In the following we will use the notion of Wick products of Gaussian random
variables: let $X_1, \ldots, X_n$, $n \in \mathbb{N}$, be a set of (jointly)
Gaussian random variables with zero mean. We call the \tmtextit{Wick products}
of $X_1, \ldots, X_n$ the random variable
\begin{equation}
  : X_1 \cdots X_n : \assign \left( \frac{\partial^n}{\partial t_1 \cdots
  \partial t_n} \left( \frac{\exp \left( \sum_{i = 1}^n t_i X_i
  \right)}{\mathbb{E} \left[ \exp \left( \sum_{i = 1}^n t_i X_i \right)
  \right]} \right) \right)_{t_1 = \cdots = t_n = 0} \label{eq:Wickproduct}
\end{equation}
In the case where $X_1 = \cdots = X_n = Z$ then
\[ : Z^n \assign (\tmop{var} (Z))^{\frac{n}{2}} H_n \left(
   \frac{Z}{\sqrt{\tmop{var} (Z)}} \right), \]
where $H_n$ is the $n$-th Hermite polynomial.

\begin{proposition}
  \label{prop:Wickexp}Let $X_1, \ldots, X_n$, $Y_1, \ldots, Y_m$ be some
  jointly Gaussian random variables with zero mean then
  \[ \mathbb{E} [: X_1 \cdots X_n : : Y_1 \cdots Y_m :] = \delta_{n, m}
     \sum_{\sigma \in \mathcal{S}_n} \prod_{j = 1}^m \mathbb{E} [X_j Y_{\sigma
     (j)}] \]
  where $\mathcal{S}_n$ is the set of permutations of $\{ 1, \ldots, n \}$.
\end{proposition}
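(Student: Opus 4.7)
The plan is to use the generating-function definition of the Wick product directly, exchange expectation and differentiation, and reduce the claim to a combinatorial coefficient extraction. Concretely, introduce
\[ W(t) \assign \frac{\exp\!\left(\sum_{i=1}^n t_i X_i\right)}{\mathbb{E}\!\left[\exp\!\left(\sum_{i=1}^n t_i X_i\right)\right]}, \qquad V(s) \assign \frac{\exp\!\left(\sum_{j=1}^m s_j Y_j\right)}{\mathbb{E}\!\left[\exp\!\left(\sum_{j=1}^m s_j Y_j\right)\right]}, \]
so that by \eqref{eq:Wickproduct}
\[ :X_1 \cdots X_n: \; = \; \partial_{t_1}\cdots\partial_{t_n} W(t)\big|_{t=0}, \qquad :Y_1\cdots Y_m:\; =\; \partial_{s_1}\cdots\partial_{s_m} V(s)\big|_{s=0}. \]
Since $W(t)V(s)$ is a polynomial (in $t,s$) times an exponential of a Gaussian functional, all mixed derivatives are dominated, uniformly in a neighbourhood of the origin, by an integrable random variable; hence expectation and differentiation can be exchanged. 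Therefore the identity to prove reduces to computing $\mathbb{E}[W(t)V(s)]$ and extracting the coefficient of $t_1\cdots t_n s_1\cdots s_m$ in its Taylor expansion.

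The first step is the explicit Gaussian computation. Using the moment generating function of a centred Gaussian vector, $\mathbb{E}[\exp(\sum t_i X_i)] = \exp\!\big(\tfrac12\sum_{i,j} t_i t_j \mathbb{E}[X_iX_j]\big)$, and similarly for $Y$, and also for the combined vector $(X,Y)$, the quadratic ``self-interaction'' terms cancel against the normalizers, leaving
\[ \mathbb{E}[W(t)V(s)] = \exp\!\left(\sum_{i=1}^n\sum_{j=1}^m t_i s_j\,\mathbb{E}[X_i Y_j]\right). \]
This is the key identity: the mixed generating function depends only on the cross-covariances $C_{ij}\assign\mathbb{E}[X_iY_j]$.

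Finally, expanding the exponential as a power series in $t,s$, the monomial $t_1\cdots t_n s_1\cdots s_m$ can only appear if $n=m$, since each term in $(\sum t_i s_j C_{ij})^k/k!$ is homogeneous of degree $k$ both in $t$ and in $s$; this gives the factor $\delta_{n,m}$. When $n=m$, picking the multilinear coefficient $t_1\cdots t_n s_1\cdots s_n$ from $\frac{1}{n!}\bigl(\sum_{i,j} t_is_j C_{ij}\bigr)^n$ amounts to choosing, among the $n$ factors, a bijection between the indices on the $t$-side and the $s$-side: the $n!$ orderings cancel the $1/n!$, and what remains is precisely $\sum_{\sigma\in\mathcal{S}_n}\prod_{j=1}^n C_{j,\sigma(j)}$. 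Differentiating $t_1\cdots t_n s_1\cdots s_n$ with $\partial_{t_1}\cdots\partial_{t_n}\partial_{s_1}\cdots\partial_{s_n}$ and evaluating at zero yields $1$, giving the claimed formula. There is no real obstacle here; the only point requiring mild care is the justification of the exchange of expectation and derivative, which is standard by dominated convergence applied to the Gaussian tail bounds on $W(t)V(s)$.
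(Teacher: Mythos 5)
Your argument is correct. The paper does not actually prove this proposition — it simply cites Theorem 3.9 of Janson's book on Gaussian Hilbert spaces — so your proposal supplies a genuine, self-contained proof where the paper offers only a reference. The key computation $\mathbb{E}[W(t)V(s)]=\exp\bigl(\sum_{i,j}t_is_j\,\mathbb{E}[X_iY_j]\bigr)$ is exactly right: the self-covariance quadratic forms in the joint moment generating function cancel against the two normalizers, leaving only the cross-covariances. The coefficient extraction is also correct: homogeneity in $t$ and $s$ forces $n=m$, and in the term $\tfrac{1}{n!}\bigl(\sum_{i,j}t_is_jC_{ij}\bigr)^n$ the multilinear coefficient of $t_1\cdots t_n\,s_1\cdots s_n$ is obtained by summing over pairs of orderings, which collapse to a single sum over permutations $\sigma$ after the $1/n!$ cancels. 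The interchange of $\partial_{t}\partial_{s}$ with $\mathbb{E}$ is legitimate since all mixed derivatives of $\exp(\sum t_iX_i+\sum s_jY_j)$ are polynomials in the Gaussians times that exponential, which have moments of all orders uniformly for $(t,s)$ near the origin. This generating-function route is essentially Wick's theorem in its cleanest form and is arguably preferable to the citation, since it makes transparent why only the cross-covariances $\mathbb{E}[X_iY_j]$ survive; Janson's treatment reaches the same identity through the Fock-space/orthogonality structure of the chaoses, which buys more general machinery but is overkill for this single formula.
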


\begin{proof}
  See Theorem 3.9 in {\cite{Jan1997}}.
\end{proof}

Let $L^2 (W, \mu^{\Sigma^{- 1}})$ (where $W \subset \mathcal{S}'
(\mathbb{T}^d)$ is a support of $\mu^{\Sigma^{- 1}}$) be the set of $L^2$
random variables with defined on $W$ with respect the Gaussian measure
$\mu^{\Sigma^{- 1}}$. For every $n \in \mathbb{N}$, We define
\[ \Gamma_n = \overline{\tmop{span} \{ : \langle w, f_1 \rangle \cdots
   \langle w, f_n \rangle : |f_1, \ldots, f_n \in \mathcal{S} (\mathbb{T}^d)
   \}} \subset L^2 (W, \mu^{\Sigma^{- 1}}) \]
where the closure is taken with respect the natural topology of $L^2 (W,
\mu^{\Sigma^{- 1}})$. We write $\Gamma_0 =\mathbb{R}$. With these definition
we have:

\begin{proposition}[Chaos decomposition and hypercontractivity]
  We have that
  \[ L^2 (W, \mu^{\Sigma^{- 1}}) = \bigoplus_{n = 0}^{\infty} \Gamma_n . \]
  Furthermore for every $G \in \Gamma_n$ and $p \geqslant 2$ we have
  \[ \mathbb{E} [| G |^p] \leqslant (p - 1)^{\frac{n p}{2}} (\mathbb{E} [| G
     |^2])^{\frac{p}{2}} . \]
\end{proposition}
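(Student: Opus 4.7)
The plan is to treat the two assertions separately, both being classical results from Gaussian analysis that I expect to reduce to known theorems.

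For the chaos decomposition $L^2(W, \mu^{\Sigma^{-1}}) = \bigoplus_{n \geq 0} \Gamma_n$, orthogonality of $\Gamma_n$ and $\Gamma_m$ for $n \neq m$ is an immediate consequence of Proposition \ref{prop:Wickexp}: it gives $\mathbb{E}[: X_1 \cdots X_n : : Y_1 \cdots Y_m :] = 0$ whenever $n \neq m$, and this extends to the closures by continuity of the $L^2$ inner product. The nontrivial content is density of $\bigoplus_n \Gamma_n$. For this I would first argue that the complex cylindrical exponentials $\{ e^{i \langle w, f \rangle} : f \in \mathcal{S}(\mathbb{T}^d) \}$ are total in $L^2(W, \mu^{\Sigma^{-1}})$: their linear span is a conjugation-closed algebra separating points, and totality follows from a standard Stone--Weierstrass-type argument combined with the fact that the Gaussian characteristic functional uniquely determines $\mu^{\Sigma^{-1}}$. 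Then, for a centered Gaussian $Z$, the generating-function identity for Hermite polynomials (which is exactly what \eqref{eq:Wickproduct} encodes) yields an $L^2$-convergent expansion $e^{tZ} = \sum_n \frac{t^n}{n!} : Z^n :$, so each exponential lies in $\overline{\bigoplus_n \Gamma_n}$, completing the density step.

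For the hypercontractivity bound, I would invoke Nelson's theorem. The Ornstein--Uhlenbeck semigroup $(T_t)_{t \geq 0}$ on $L^2(W, \mu^{\Sigma^{-1}})$ acts as $T_t|_{\Gamma_n} = e^{-nt}\,\mathrm{Id}$, and satisfies $\|T_t\|_{L^2 \to L^q} \leq 1$ whenever $q \leq 1 + e^{2t}$. Choosing $t$ with $e^{2t} = p-1$ and applying the inequality to $G \in \Gamma_n$ yields
\[
\|G\|_{L^p} = e^{nt}\, \|T_t G\|_{L^p} \leq e^{nt}\, \|G\|_{L^2} = (p-1)^{n/2}\, \|G\|_{L^2},
\]
which, raised to the $p$-th power, is exactly the claimed estimate. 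The only technical subtlety is that Nelson's contraction estimate is classically proved first on $\mathbb{R}^N$ (via the Gaussian log-Sobolev inequality and tensorization) and then transported to our abstract Wiener space setting by approximation through finite-dimensional projections onto $\mathrm{span}\{ : \langle w, f_1 \rangle \cdots \langle w, f_k \rangle : \}$; since these projections commute with $T_t$ and converge strongly to the identity on each $\Gamma_n$, the limit is harmless.

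The main obstacle here is purely expository rather than mathematical: neither step is novel. Given that the authors have already cited Janson's monograph for Proposition \ref{prop:Wickexp}, the cleanest presentation is simply to invoke Theorem 2.6 of that reference for the chaos decomposition and Theorem 5.10 (Nelson's hypercontractivity) for the bound, rather than reproducing these proofs. The only thing one must verify explicitly in our situation is that the setup of $W$, $\mu^{\Sigma^{-1}}$ and the $\Gamma_n$ matches the abstract Wiener space framework used in Janson, which follows from our Gaussian field being a centered Borel Gaussian measure on a separable Banach space.
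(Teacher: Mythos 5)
Your proposal is correct, and it lands on exactly what the paper does: the authors simply cite Janson's monograph for both statements (Theorem 4.1 for the chaos decomposition and Theorem 5.1 with Remark 5.11 for hypercontractivity), which matches your own recommendation to invoke those results rather than reprove them. The standard arguments you sketch (orthogonality via the Wick pairing formula, density via cylindrical exponentials and the Hermite generating function, and Nelson's hypercontractivity through the Ornstein--Uhlenbeck semigroup) are the correct underpinnings of those citations.
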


\begin{proof}
  For the first statement see, e.g., Theorem 4.1 in {\cite{Jan1997}}. For the
  second statement see, e.g., Theorem 5.1 and Remark 5.11 in {\cite{Jan1997}}.
\end{proof}

Let $\varphi : \Omega \rightarrow W \subset \mathcal{S}' (\mathbb{T}^d)$ be a
Gaussian free field of mass $m$ (namely $\tmop{Law} (\varphi) = \mu^{- \Delta
+ m^2}$) and consider $a_{\varepsilon} (x)$ a smooth mollifier, where $a :
\mathbb{T}^d \rightarrow \mathbb{R}_+$ is a smooth function with compact
support such that $\int_{\mathbb{T}^d } a (x) \mathd x = 1$. We defined $\varphi_{\varepsilon}
= a_{\varepsilon} \ast \varphi$, which is Gaussian random field taking values
in $C^{\infty} (\mathbb{T}^d)$. Since $\varphi_{\varepsilon}$ takes values in
a space of functions, we can define $: \varphi_{\varepsilon}^n :$ as a smooth
random function given by
\[ : \varphi^n_{\varepsilon} : (x) = : (\varphi_{\varepsilon} (x))^n :
   =\mathbb{E} [| \varphi_{\varepsilon} (x) |^2]^{\frac{n}{2}} H_n \left(
   \frac{\varphi_{\varepsilon} (x)}{\sqrt{\mathbb{E} [| \varphi_{\varepsilon}
   (x) |^2]}} \right) . \]
\begin{theorem}
  \label{theorem:Wickproduct}In the above setting, taking $d = 2$, for any $1
  \leqslant p < + \infty$ and $\delta > 0$ the sequence of random functions $:
  \varphi^n_{\varepsilon} :$ converges in
  \[ L^p ((\Omega, \mathcal{F}, \mathbb{P}), B^{- \delta}_{p, p}
     (\mathbb{T}^2)) \]
  to some random distribution $: \varphi^n :$ defined on $(\Omega,
  \mathcal{F}, \mathbb{P})$ and taking values in $B^{- \delta}_{p, p}
  (\mathbb{T}^2)$. Furthermore the random distribution $: \varphi^n :$ does
  not depend on the mollifier $a_{\varepsilon}$.
\end{theorem}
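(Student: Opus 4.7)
The plan is to combine the Gaussian hypercontractivity stated earlier with an explicit two-point covariance calculation in each Littlewood--Paley block, and then show that the resulting sequence is Cauchy in $L^p(\Omega, B^{-\delta}_{p,p}(\mathbb{T}^2))$. The key structural fact I will exploit is that $:\varphi_\varepsilon^n:$ lies in the $n$-th inhomogeneous Wiener chaos $\Gamma_n$ pointwise, and this property is preserved by the linear operator $\Delta_j$. By hypercontractivity, for any fixed $x$ and $j$,
\[
\mathbb{E}\bigl[|\Delta_j :\!\varphi_\varepsilon^n\!:(x)|^p\bigr] \leq (p-1)^{\frac{np}{2}}\, \mathbb{E}\bigl[|\Delta_j :\!\varphi_\varepsilon^n\!:(x)|^2\bigr]^{p/2},
\]
and analogously for differences $:\!\varphi_\varepsilon^n\!:-:\!\varphi_{\varepsilon'}^n\!:$. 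After Fubini, this reduces everything to controlling a second moment.

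The main computation is the diagonal covariance. Writing $C_{\varepsilon,\varepsilon'}(x-y) \assign \mathbb{E}[\varphi_\varepsilon(x)\varphi_{\varepsilon'}(y)] = (a_\varepsilon * a_{\varepsilon'} * G)(x-y)$, with $G$ the Green's function of $-\Delta+m^2$ on $\mathbb{T}^2$, Proposition \ref{prop:Wickexp} gives
\[
\mathbb{E}\bigl[:\!\varphi_\varepsilon^n\!:(y_1):\!\varphi_{\varepsilon'}^n\!:(y_2)\bigr] = n!\,C_{\varepsilon,\varepsilon'}(y_1-y_2)^n.
\]
In $d=2$, $G$ has a logarithmic singularity at the origin, so $|C_{\varepsilon,\varepsilon'}(z)| \lesssim 1 + |\log(|z|+\varepsilon+\varepsilon')|$ uniformly, and, more importantly, $C_\varepsilon \to C_0$ locally uniformly away from the origin with explicit rate. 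Writing $K_j$ for the Littlewood--Paley kernel at scale $j$, I then estimate
\[
\mathbb{E}\bigl[|\Delta_j :\!\varphi_\varepsilon^n\!:(x)|^2\bigr] = n!\int K_j(x-y_1)K_j(x-y_2)\,C_\varepsilon(y_1-y_2)^n\,\mathd y_1\mathd y_2,
\]
and, using that $K_j$ has unit $L^1$ mass and concentrates at scale $2^{-j}$, one obtains a bound of order $j^n$ uniformly in $\varepsilon$. The difference is handled in the same way: the integrand becomes $C_\varepsilon^n - 2C_{\varepsilon,\varepsilon'}^n + C_{\varepsilon'}^n$, which vanishes pointwise as $\varepsilon,\varepsilon'\to 0$, and dominated convergence (against the integrable majorant produced by the logarithmic bound above) gives convergence to zero, with an $\varepsilon$-dependent prefactor that beats the $j^n$ growth after including a single factor $2^{-j\delta'}$ for any $\delta' > 0$.

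Putting these pieces together, the Besov norm is controlled by
\[
\mathbb{E}\bigl[\|\!:\!\varphi_\varepsilon^n\!:-:\!\varphi_{\varepsilon'}^n\!:\!\|_{B^{-\delta}_{p,p}}^p\bigr] = \sum_{j\geq -1} 2^{-j\delta p}\,\mathbb{E}\bigl[\|\Delta_j(:\!\varphi_\varepsilon^n\!:-:\!\varphi_{\varepsilon'}^n\!:)\|_{L^p}^p\bigr],
\]
and the above shows each term tends to zero with a summable majorant, proving Cauchy convergence in $L^p(\Omega, B^{-\delta}_{p,p})$ and identifying the limit $:\!\varphi^n\!:$. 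For independence of the mollifier, I would apply the same argument to the coupled pair $(\varphi^{(a)}_\varepsilon, \varphi^{(b)}_\varepsilon)$ built from two different mollifiers $a,b$ but the same underlying Gaussian field, deducing that their Wick powers have the same limit. The main obstacle is the uniform-in-$\varepsilon$ control of the diagonal blowup: the bound $|C_\varepsilon(0)|\sim \log(1/\varepsilon)$ forbids naive estimates, and the argument crucially uses that Wick ordering removes precisely the diagonal contributions, so that only off-diagonal $C_\varepsilon$ values enter the stochastic calculation.
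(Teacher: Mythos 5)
Your proposal is correct and is essentially the standard construction of 2d Wick powers (chaos decomposition, hypercontractivity to reduce $L^p$ to $L^2$, the covariance identity from Proposition \ref{prop:Wickexp}, and the integrability of $|\log|z||^n$ against the Littlewood--Paley kernels), which is exactly the argument behind the reference the paper cites for this statement (Lemma 3.2 of Da Prato; see also Simon's book) rather than reproving it. The only point I would tighten is the final Cauchy step: rather than invoking an ``$\varepsilon$-dependent prefactor that beats $j^n$,'' it suffices to note that each block expectation tends to zero while being dominated uniformly in $\varepsilon,\varepsilon'$ by the summable sequence $2^{-j\delta p}(1+j)^{np/2}$, so dominated convergence applies directly to the sum over $j$.
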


\begin{proof}
  See, e.g., Lemma 3.2 in {\cite{DaP2003}}.
\end{proof}

\begin{remark}
  By Besov embedding, see Lemma \ref{lem:besovem}, this implies that the Wick
  powers lie even in $L^p ((\Omega, \mathcal{F}, \mathbb{P}), C^{- \delta}
  (\mathbb{T}^2))$ for any $p \geqslant 2$ and $\delta > 0.$
\end{remark}

\section{A (regular) coupling between GFF and AGFF}\label{section:coupling}

From Lemma \ref{lem:abscont} we know that the Gaussian measures $\mu^{- \Delta
+ K}$ and $\mu^{\mathbb{H}^{\omega} + K}$ are mutually singular for almost
every $\omega$ and for any $K \geqslant K (\omega)$ (where $K (\omega)$ is the
almost surely positive random variable in Section \ref{sec:AHamilton} which
ensures that $\mathbb{H}^{\omega} + K$ is invertible and thus
$\mu^{\mathbb{H}^{\omega} + K}$ is well-defined). In this section we prove a
weaker but interesting result, namely the existence of a regular coupling
between the measure $\mu^{- \Delta + K}$ and $\mu^{\mathbb{H}^{\omega} + K}$
for almost every $\omega \in \Omega$ and for any $K \geqslant K (\omega)$.
This coupling permits us to extend the definition of the Wick product of the
Gaussian free field to the support of the Anderson free field, see Section
\ref{sec:AndersonWick}.

\subsection{Variational formulation of the coupling problem}

First we want to give a variational representation of the Gaussian measure
$\mu^{\varepsilon} = \mu^{\mathbb{H}_{\varepsilon}^{\omega, K}}$ (or more
generally $\mu^{\varepsilon}$'s Laplace transform) in a way similar to what
was done in {\cite{BG2022,BG2020,Barashkovwholespace,BG2021}}. The first step
is to write the Radon-Nikodym derivatives of $\mu^{\varepsilon}$ with respect
to $\mu^{- \Delta + K}$ and to prove that it has some good properties
permitting the variational representation.

\

So, consider a Gaussian white noise $\xi : \Omega \rightarrow \mathcal{S}'
(\mathbb{T}^2)$ defined on the probability space $(\Omega, \mathcal{F},
\mathbb{P})$. We define the measure
\begin{equation}
  \frac{\mathd \mu^{\varepsilon} (\omega)}{\mathd \mu^{- \Delta + K}}
  (\varphi) = \frac{1}{\mathcal{Z}^{\varepsilon} (\omega)} \exp \left( -
  \int_{\mathbb{T}^2} (\xi_{\varepsilon} (x, \omega) - (\gamma^{(1)} (\omega)
  - \gamma_{\varepsilon}^{(2)})) : \varphi^2 : (x) \mathd x \right)
  \label{eq:nuepsilon}
\end{equation}
where
\[ \mathcal{Z}^{\varepsilon} (\omega) = \int \exp \left( -
   \int_{\mathbb{T}^2} (\xi_{\varepsilon} (x, \omega) - (\gamma^{(1)} (\omega)
   - \gamma_{\varepsilon}^{(2)})) : \varphi^2 : (x) \mathd x \right) \mathd
   \mu^{- \Delta + K} (\varphi) \]
and where $\gamma^{(1)} : \Omega \rightarrow \mathbb{R}_+$ is a positive
random variable (not depending on $\varepsilon$ which
plays a role analogous to the $K (\omega)$ from Proposition
\ref{prop:normconv}) and $\gamma_{\varepsilon}^{(2)}$ is a real number
(depending on $\varepsilon > 0$ but not depending on $\omega$), and $:
\varphi^2 :$ denotes the Wick product of the random field $\varphi$ with
respect to the Gaussian measure $\mu^{- \Delta + K}$. As in prior sections, \
$\xi : \Omega \rightarrow \mathcal{S}' (\mathbb{T}^2)$ is a Gaussian white
noise and $\xi_{\varepsilon}$ is a smooth approximation of $\xi$ (for example
$\xi_{\varepsilon} = \rho_{\varepsilon} \ast \xi$ where $\rho_{\varepsilon} =
\varepsilon^{- 2} \rho (\cdot / \varepsilon)$ and $\rho \in C^{\infty}
(\mathbb{T}^2)$ $\int_{\mathbb{T}^2 } \rho (x) \mathd x = 1$).

\

The constant $\gamma_{\varepsilon}^{(2)} \rightarrow + \infty$ as
$\varepsilon \rightarrow 0$ such that $- \Delta + \xi_{\varepsilon} -
\gamma^{(2)}_{\varepsilon} \rightarrow \mathbb{H}^{\omega}$ as $\varepsilon
\rightarrow 0$ in the norm resolvent sense. The constant $\gamma^{(1)}
(\omega)$ is chosen in such a way, that there is $\gamma^{(1)} (\omega)
\geqslant K (\omega)$ (where $K (\omega) > 0$ is the random variable defined
in Theorem \ref{thm:anderson-K}), for which, for every $\omega \in \Omega$, we
have
\[ - \Delta + \xi_{\varepsilon} (\omega) + \gamma^{(1)} (\omega) \geqslant
   \mathbb{I}_{L^2} \]
as self-adjoint operators on the domain of $- \Delta$ (i.e. we suppose that
the lowest eigenvalue of $- \Delta + \xi_{\varepsilon} (\omega) + \gamma^{(1)}
(\omega)$ is greater than $1$ almost surely). We suppose also that for any $p
\in \mathbb{R}_+$
\[ \mathbb{E} [| \gamma^{(1)} |^p] < + \infty . \]
Under these notations and setting we prove the following lemma.

\begin{lemma}
  \label{lemma:absolutecontinuity}Let $g : \mathbb{T}^2 \rightarrow
  \mathbb{R}$ be a smooth function such that the operator $H_g = - \Delta + K
  + g (x)$ (on the domain of $- \Delta + K$) is positive, then if $\mu^{H_g}$
  is the Gaussian measure with covariance $(- \Delta + K + g (x))^{- 1}$ and
  $\mu^{- \Delta + K}$ is the measure of the Gaussian free field we have
  \begin{equation}
    \log \left( \frac{\mathd \mu^{H_g}}{\mathd \mu^{- \Delta + K}} (\varphi)
    \right) = - \int_{\mathbb{T}^2} g (x) : \varphi^2 : (x) \mathd x + \log
    (\mathcal{Z}_{H_g}) \label{eq:density2}
  \end{equation}
  where $: \varphi^2 :$ denotes the Wick product of the field $\varphi$ with
  respect to the Gaussian free field measure $\mu^{- \Delta + K}$, and
  \[ \mathcal{Z}_{H_g} = \int \exp \left( - \int_{\mathbb{T}^2} g (x) :
     \varphi^2 : (x) \mathd x \right) \mu^{- \Delta + K} (\mathd \varphi) . \]
\end{lemma}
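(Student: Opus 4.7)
The plan is to prove this by finite-dimensional projection followed by passage to the limit, identifying the density via its characteristic functional. Absolute continuity is immediate from Feldman--H\'ajek: since $g$ is smooth, multiplication by $g$ is a Hilbert--Schmidt perturbation on the Cameron--Martin space $H^1$, so both $\mu^{H_g}$ and $\mu^{-\Delta+K}$ share this Cameron--Martin space, hence are equivalent (the contrapositive of Lemma \ref{lem:abscont}(b)). Thus a strictly positive density exists and the work is only to identify it.

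Let $P_N$ denote the orthogonal projection of $L^2(\mathbb{T}^2)$ onto the span of the first $N$ eigenfunctions of $-\Delta$, and set $\varphi_N = P_N\varphi$. A direct Gaussian calculation on the finite-dimensional subspace $P_N L^2$ yields, for every $f \in \mathcal{S}(\mathbb{T}^2)$, the identity
\begin{equation*}
\int \exp(i\langle \varphi_N, f\rangle)\, \exp\!\Bigl(-\tfrac{1}{2} \int_{\mathbb{T}^2} g\,\varphi_N^2\,\mathd x\Bigr)\,\mathd \mu^{-\Delta+K}(\varphi) \;=\; \mathcal{Z}_N\, \exp\!\Bigl(-\tfrac{1}{2}\bigl\langle (P_N H_g P_N)^{-1} P_N f, P_N f\bigr\rangle\Bigr),
\end{equation*}
with $\mathcal{Z}_N < \infty$ because $H_g > 0$ forces $P_N H_g P_N > 0$ on $P_N L^2$. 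Rewriting with Wick ordering, $\int g\, \varphi_N^2\,\mathd x = \int g :\!\varphi_N^2\!:\mathd x + \int g(x)\,\mathbb{E}[\varphi_N(x)^2]\,\mathd x$, and absorbing the divergent deterministic trace into the normalizing constant (now $\widetilde{\mathcal{Z}}_N$), Theorem \ref{theorem:Wickproduct} together with the smoothness of $g$ gives $\int g :\!\varphi_N^2\!:\mathd x \to \int g :\!\varphi^2\!:\mathd x$ in $L^2(\mu^{-\Delta+K})$.

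The \emph{main obstacle} is to upgrade this $L^2$-convergence to $L^1$-convergence of the exponentials $\exp(-\tfrac{1}{2}\int g :\!\varphi_N^2\!:\mathd x)$, so that passing to the limit in the characteristic functional identity above is legitimate. Since $g$ need not be of one sign, no pointwise domination is available. The hypothesis $H_g > 0$ supplies the required control: on one hand the finite-dimensional Gaussian determinant formula of the previous step gives $\sup_N \widetilde{\mathcal{Z}}_N < \infty$ (the ratio of determinants converges to a finite Carleman-type determinant, since $g(-\Delta+K)^{-1}$ is trace-class on $L^2$); on the other hand, combining this uniform bound with hypercontractivity in the second Wiener chaos $\Gamma_2$ yields uniform $L^p$-integrability of the exponentials for some $p>1$, after which Vitali's theorem closes the argument.

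Passing to the limit on both sides of the identity, and using that $P_N \to \operatorname{Id}$ strongly together with norm-resolvent convergence $(P_N H_g P_N)^{-1} \to H_g^{-1}$ on $L^2$, one concludes that the probability measure with density $\widetilde{\mathcal{Z}}_{H_g}^{-1}\exp(-\int g :\!\varphi^2\!:\mathd x)$ relative to $\mu^{-\Delta+K}$ has characteristic functional $\exp(-\tfrac{1}{2}\langle H_g^{-1} f, f\rangle)$, which is exactly that of $\mu^{H_g}$. L\'evy's continuity theorem identifies the two measures, yielding \eqref{eq:density2} with $\mathcal{Z}_{H_g} = \lim_{N\to\infty}\widetilde{\mathcal{Z}}_N$.
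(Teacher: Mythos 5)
Your proposal is correct in its overall architecture but follows a genuinely different route from the paper. The paper realises $\mu^{H_g}$ as the push-forward of $\mu^{-\Delta+K}$ under the linear map $T=H_g^{-1/2}(-\Delta+K)^{1/2}$, checks that $T-I$ restricted to the Cameron--Martin space is Hilbert--Schmidt, invokes the \"Ust\"unel--Zakai change-of-variables theorem to express the density through a Carleman determinant $\det_2$ and a Skorokhod integral, and only then uses a finite-dimensional approximation of the operator to recognise the exponent as $\int g\,{:}\varphi^2{:}$ (the trace subtraction in the Skorokhod integral producing the Wick ordering). You instead bypass Malliavin calculus entirely: Galerkin projection, an explicit finite-dimensional Gaussian identity for the characteristic functional, convergence of the Wick squares from Theorem \ref{theorem:Wickproduct}, uniform integrability of the exponential weights, and L\'evy's continuity theorem. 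What your approach buys is self-containedness (only Gaussian linear algebra and standard Wick-power convergence); what the paper's buys is a formula for the density that comes packaged with the absolute continuity statement, at the price of quoting a heavier theorem. Both are legitimate, and the hard analytic content (positivity of $H_g$ controlling the exponential moments; Hilbert--Schmidt-ness of $(-\Delta+K)^{-1/2}g(-\Delta+K)^{-1/2}$ making $\det_2$ converge) is the same in both.

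Two points need repair. First, and more seriously, the justification of uniform $L^p$-integrability is not right as stated: a uniform bound on $\widetilde{\mathcal{Z}}_N=\mathbb{E}[\exp(-\tfrac12\int g\,{:}\varphi_N^2{:})]$ together with hypercontractivity does \emph{not} yield a uniform $L^p$ bound on the exponentials for $p>1$ --- hypercontractivity controls polynomial moments of the second-chaos variable $\int g\,{:}\varphi_N^2{:}$, never the exponential of its negative (recall $\mathbb{E}[e^{\lambda{:}X^2{:}}]=\infty$ for $\lambda\geqslant 1/2$). The correct argument is to rerun your own determinant computation with $g$ replaced by $pg$: since $H_g$ has discrete spectrum on $\mathbb{T}^2$, positivity gives $H_g\geqslant R>0$, hence $-\Delta+K+pg\geqslant R-(p-1)\|g\|_{L^\infty}>0$ for $p-1$ small, and the Gaussian integral at exponent $p$ is finite uniformly in $N$. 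This is exactly the content of the paper's Lemma \ref{lemma:tameg}, so the gap is fixable within your framework; but as written the step would fail. Second, a minor factual slip: on $\mathbb{T}^2$ the operator $g(-\Delta+K)^{-1}$ is Hilbert--Schmidt but \emph{not} trace class ($\sum_k(|k|^2+K)^{-1}=\infty$); this is precisely why the Fredholm determinant must be replaced by $\det_2$ and why the Wick subtraction is needed at all, so your conclusion stands but for the opposite reason to the one you give.
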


\begin{proof}
  First we prove that $\mu^{H_g}$ is absolutely continuous with respect to
  $\mu^{- \Delta + K}$. The measure $\mu^{H_g}$ can be obtained as the
  push-forward of the measure $\mu^{- \Delta + K}$ through the transformation
  \[ T (\varphi) = (- \Delta + K + g (x))^{- 1 / 2} (- \Delta + K)^{1 / 2}
     (\varphi), \]
  with inverse $S (w) = (- \Delta + K)^{- 1 / 2} (- \Delta + K + g (x))^{1 /
  2} (\varphi)$. We have that
  \begin{eqnarray}
    v (\varphi) & = & (- \Delta + K)^{- 1 / 2} (- \Delta + K + g (x))^{1 / 2}
    (\varphi) - \varphi \nonumber\\
    & = & ((I + (- \Delta + K)^{- 1} g (x))^{1 / 2} - I) (\varphi) \nonumber
  \end{eqnarray}
  where $I$ is the identity operator on $\mathcal{C}^{- \delta}
  (\mathbb{T}^2)$. The (linear) operator $v$ restricted on $H^1$ (i.e. the
  Cameron-Martin space of the free field having law $\mu^{- \Delta + K}$) is a
  Hilbert-Schmidt operator. Indeed we have
  \begin{eqnarray}
    v (\varphi) & = & \left( (I + (- \Delta + K)^{- 1} g (x))^{1 / 2} - I -
    \frac{1}{2} (- \Delta + K)^{- 1} g (x) \right) (\varphi) + \frac{1}{2} (-
    \Delta + K)^{- 1} (g (x) \varphi) \nonumber\\
    & = & \frac{1}{4} \left( (I + (- \Delta + K)^{- 1} g (x))^{1 / 2} + I +
    \frac{1}{2} (- \Delta + K)^{- 1} g (x) \right)^{- 1} ((- \Delta + K)^{- 1}
    g (x))^2 (\varphi) + \nonumber\\
    &  & + \frac{1}{2} (- \Delta + K)^{- 1} (g (x) \varphi) \nonumber
  \end{eqnarray}
  which shows that $v (\varphi)$ is the sum of the Hilbert-Schmidt operator
  $\frac{1}{2} (- \Delta + m^2)^{- 1} g (x)$ and of a trace class operator
  (being the remainder the product of a bounded operator and the square of a
  Hilbert-Schmidt operator). Since $v$ is linear (and thus differentiable in
  $\mathcal{C}^{- \delta} (\mathbb{T}^2)$ with derivative equal to $v$), by
  Theorem 3.5.3 of {\cite{Ustunelbook}}, we get
  \begin{equation}
    \frac{\mathd \mu^{H_g}}{\mathd \mu^{- \Delta + K}} = \frac{\mathd T_{\ast}
    \mu^{- \Delta + K}}{\mathd \mu^{- \Delta + K}} = {\det}_{2} (I + v) \exp
    \left( - \delta (v (\varphi)) - \frac{1}{2} \left\| v (\varphi)
    \right\|_{H^1}^2 \right) \label{eq:density1}
  \end{equation}
  where $\delta$ is the Skorokhod integral with respect to the Gaussian
  measure $\mu^{- \Delta + K}$ on $\mathcal{C}^{- \delta} (\mathbb{T}^2)$, and
  $\det_2$ is the regularized determinat (see, e.g., Chapter 9 of
  {\cite{SimonIdeals}}). What remains to be shown is that the term in the
  exponential in \eqref{eq:density1} is (up to some finite additional
  constant) equal to $\int_{\mathbb{T}^2 } g (x) \ :\varphi^2 (x): d x$. In order to show this
  equality, we consider a (finite dimensional) approximation of $v$ given by
  \begin{eqnarray}
    v_N (\varphi) & = & ((I + (- \Delta + K)^{- 1} \Pi_N g (x) \Pi_N)^{1 / 2}
    - I) (\varphi) \nonumber
  \end{eqnarray}
  where $\Pi_N$ is the $L^2 (\mathbb{T}^2)$ projection onto the subspace of
  trigonometric polynomial of degree less or equal than $N$. We have that, by
  definition of the Skorokhod integral for functions having trace class
  derivatives in the Cameron-Martin space (see, e.g., Section B.4 of
  {\cite{Ustunelbook}}), i.e. the formula
  \[ \delta (u (\varphi)) = \langle i^{\ast} (u (\varphi)), \varphi
     \rangle_{H^1 (\mathbb{T}^2)} - \tmop{Tr}_{H^1 (\mathbb{T}^2)} (\nabla u
     (\varphi)), \]
  the following holds
  \[ \delta (v_N (\varphi)) = \int (- \Delta + K) (v_N (\varphi)) (x) \varphi
     (x) - \tmop{Tr}_{L^2} ((- \Delta + K)^{1 / 2} v_N (- \Delta + K)^{- 1 /
     2}) . \]
  We also obtain
  \begin{eqnarray}
    &  & \tmop{Tr}_{L^2} ((- \Delta + K)^{1 / 2} v_N (- \Delta + K)^{- 1 /
    2}) \nonumber\\
    & = & \tmop{Tr}_{L^2} (v_N) \nonumber\\
    & = & \frac{1}{4} \tmop{Tr}_{L^2} \left. \left( \left( (I + (- \Delta +
    K)^{- 1} \Pi_N g (x) \Pi_N)^{1 / 2} + I + \frac{1}{2} (- \Delta + K)^{- 1}
    \Pi_N g (x) \Pi_N \right)^{- 1} \times \right. \right. \nonumber\\
    &  & \left. \phantom{\frac{1}{2}} ((- \Delta + K)^{- 1} \Pi_N g (x)
    \Pi_N)^2 (\cdot) \right) + \frac{1}{2} \tmop{Tr}_{L^2} ((- \Delta + K)^{-
    1} (\Pi_N g (x) \Pi_N)) . \nonumber
  \end{eqnarray}
  The first term in the previous sum is uniformly bounded in $N$, for the
  second term we get
  \begin{eqnarray}
    \tmop{Tr}_{L^2} ((- \Delta + K)^{- 1} (\Pi_N f (x))) & = & \tmop{Tr}_{L^2}
    (\Pi_N ((- \Delta + K)^{- 1} (\Pi_N f (x)))) \nonumber\\
    & = & \int_{\mathbb{T}^2} \mathcal{G}_N (x - x) f (x) \mathd x =
    \mathcal{G}_N (0) \int_{\mathbb{T}^2} f (x) \mathd x \nonumber\\
    & = & \mathbb{E} [(\Pi_N \varphi (0))^2] \int_{\mathbb{T}^2} f (x) \mathd
    x \nonumber
  \end{eqnarray}
  where $\mathcal{G}_N$ is the integral kernel of the operator $\Pi_N (-
  \Delta + m^2)^{- 1} \Pi_N$ and $\varphi$ is a random distribution with law
  $\mu^{- \Delta + K}$. On the other hand, by an explicit computation, we get
  \[ - \int (- \Delta + K) (v_N (\varphi)) (x) \varphi (x) \mu^{-\Delta+K}(\mathd \varphi) - \frac{1}{2} \|
     v_N (\varphi) \|_{H^1}^2 = \int_{\mathbb{T}^2} g (x) (\Pi_N \varphi)^2
     (x) \mathd x. \]
  Putting it all together, we get
  \begin{eqnarray}
    - \delta (v_N (\varphi)) - \frac{1}{2} \| v_N (\varphi) \|_{H^1}^2 & = &
    \int_{\mathbb{T}^2} g (x) (\Pi_N \varphi)^2 (x) \mathd x -\mathbb{E}
    [(\Pi_N \varphi (0))^2] \int_{\mathbb{T}^2} g (x) \mathd x + C_N
    \nonumber\\
    & = & \int_{\mathbb{T}^2} g (x) : (\Pi_N \varphi)^2 : (x) \mathd x + C_N
    \nonumber
  \end{eqnarray}
  where $C_N$ is a suitable constant converging to some $C \in \mathbb{R}$ as
  $N \rightarrow + \infty .$ Taking the limit on both sides of the previous
  expression we get the thesis.
\end{proof}

Thus the previous lemma proves expression \eqref{eq:nuepsilon}. We now
introduce the following useful definition describing the key property for the
variational representation of an exponential functional.

\begin{definition}
  \label{definition:tame}Let $W \subset \mathcal{S}' (\mathbb{T}^2)$ be a
  Banach space supporting the law of Gaussian free field $\mu^{- \Delta + K}$.
  We say that a measurable function $G : W \rightarrow \mathbb{R}$ is tame
  (with respect to the law of the Gaussian free field $\mu^{- \Delta + K}$) if
  there are $p, q \geqslant 1$, $\frac{1}{p} + \frac{1}{q} = 1$, such that
  \[ \int \exp (p G (\varphi)) \mu^{- \Delta + K} (\mathd \varphi) + \int
     | G (\varphi) |^q \mu^{- \Delta + K} (\mathd \varphi) < + \infty . \]
\end{definition}

Under the previous hypotheses the Radon-Nikodym derivative $\frac{\mathd
\mu^{\varepsilon}}{\mathd \mu^{- \Delta + K}}$ is a tame function. Indeed, we
have the following result.

\begin{lemma}
  \label{lemma:tameg}Suppose that $g$ is smooth and that $(- \Delta + m^2 + g
  (x)) \geqslant R\mathbb{I}_{L^2}$ for some $R > 0,$then the functional
  $\varphi \rightarrow \int_{\mathbb{T}^2} g (x) : \varphi^2 (x) : \mathd x$
  is tame.
\end{lemma}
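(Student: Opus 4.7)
The plan is to verify separately the two integrability conditions from Definition~\ref{definition:tame} for $G(\varphi) := \int_{\mathbb{T}^2} g(x) : \varphi^2(x) : \mathrm{d}x$, namely an $L^q$-moment bound on $G$ and an exponential bound $\int e^{pG} \, \mathrm{d}\mu^{-\Delta+K} < \infty$ for some conjugate pair $(p,q)$ with $p, q > 1$.

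For the $L^q$-moment I would first observe that $G$ lies in the second homogeneous Wiener chaos associated to $\mu^{-\Delta+K}$. Applying Proposition~\ref{prop:Wickexp} one computes
\[
\mathbb{E}_{\mu^{-\Delta+K}}[G^2] = 2 \iint_{\mathbb{T}^2 \times \mathbb{T}^2} g(x) g(y) \, \mathcal{G}_K(x,y)^2 \, \mathrm{d}x \, \mathrm{d}y,
\]
where $\mathcal{G}_K$ denotes the integral kernel of $(-\Delta + K)^{-1}$. Since $\mathcal{G}_K$ has only a logarithmic singularity on the diagonal in dimension two and $g$ is smooth (hence bounded on the compact torus), the double integral is finite. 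Gaussian hypercontractivity applied to the second chaos then yields $\|G\|_{L^q(\mu^{-\Delta+K})} \lesssim (q-1) \|G\|_{L^2(\mu^{-\Delta+K})} < \infty$ for every $q \geq 2$.

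For the exponential moment I would appeal to Lemma~\ref{lemma:absolutecontinuity} applied with $g$ replaced by $-pg$, which identifies
\[
\int e^{p G(\varphi)} \, \mathrm{d}\mu^{-\Delta+K}(\varphi) = \mathcal{Z}_{H_{-pg}},
\]
a finite positive number provided the Schr\"odinger operator $H_{-pg} = -\Delta + K - p g$ is strictly positive on $L^2(\mathbb{T}^2)$. Decomposing $H_{-pg} = (-\Delta + K + g) - (1+p) g$ and combining the hypothesis $-\Delta + K + g \geq R \mathbb{I}_{L^2}$ with the bound $\|g\|_{L^\infty(\mathbb{T}^2)} < \infty$ coming from smoothness, one obtains strict positivity of $H_{-pg}$ for $p$ in an open interval around $1$. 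Picking a conjugate pair $p = 1 + \epsilon$, $q = (1+\epsilon)/\epsilon$ with $\epsilon > 0$ small enough then delivers tameness.

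The main technical obstacle is this last step: translating the one-sided operator bound $-\Delta + K + g \geq R \mathbb{I}_{L^2}$ into positivity of $H_{-pg}$ for some $p > 1$. The hypothesis only controls one sign of $g$, whereas the exponential integrability requires controlling the opposite sign; this is precisely where the smoothness (and hence uniform $L^\infty$-boundedness) of $g$ on the compact torus is genuinely used, through the elementary spectral perturbation of the given operator inequality sketched above.
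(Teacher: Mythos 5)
Your treatment of the $L^q$-moment is fine and matches the paper's (the paper simply invokes hypercontractivity; your explicit second-moment computation via Proposition \ref{prop:Wickexp} and the logarithmic singularity of the Green function is a reasonable way to justify the finiteness of the $L^2$ norm). The problem is in the exponential moment, and it is a genuine gap rather than a presentational one.

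You apply Lemma \ref{lemma:absolutecontinuity} to the operator $H_{-pg} = -\Delta + K - pg$ and claim its strict positivity follows, for $p$ in a neighbourhood of $1$, from the decomposition $H_{-pg} = (-\Delta + K + g) - (1+p)g \geqslant \left( R - (1+p)\|g\|_{L^\infty} \right)\mathbb{I}_{L^2}$. This lower bound is positive only if $(1+p)\|g\|_{L^\infty} < R$, i.e.\ essentially $2\|g\|_{L^\infty} < R$, which is \emph{not} implied by the hypothesis: the perturbation $(1+p)g$ is of size comparable to $g$ itself, not small. Take $g \equiv c$ a large positive constant; then $-\Delta + K + g \geqslant (K+c)\mathbb{I}_{L^2}$ so the hypothesis holds with $R = K + c$, yet $-\Delta + K - pg = -\Delta + K - pc$ fails to be positive once $c \geqslant K$, and indeed $\int \exp\left( p \int_{\mathbb{T}^2} g : \varphi^2 : \right) \mathd\mu^{-\Delta+K}$ diverges for such $g$ (the zero mode already contributes $\mathbb{E}[e^{\lambda |\hat{\varphi}(0)|^2}] = \infty$ for $\lambda$ large). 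So the statement you are trying to prove — integrability of $\exp(+pG)$ — is actually false under the stated hypothesis, and no repair of the perturbation argument can save it.

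The paper perturbs in the opposite direction: it applies Lemma \ref{lemma:absolutecontinuity} to $-\Delta + K + pg$ with $1 < p < 1 + R/\|g\|_{L^\infty}$, so that the perturbation is only $(p-1)g$ and the bound $-\Delta + K + pg \geqslant \left( R - (p-1)\|g\|_{L^\infty} \right)\mathbb{I}_{L^2} > 0$ does close. This yields finiteness of $\mathcal{Z}_{H_{pg}} = \int \exp\left( -p\int_{\mathbb{T}^2} g:\varphi^2: \right)\mathd\mu^{-\Delta+K}$, i.e.\ integrability of $\exp(-pG)$. This is the sign that is actually used downstream (the interaction $\int (\xi_\varepsilon - \gamma^{(1)} + \gamma^{(2)}_\varepsilon):\varphi^2:$ enters the Gibbs density with a minus sign, and the positivity hypothesis is imposed on $-\Delta + \xi_\varepsilon + \gamma^{(1)}$); the literal "$\exp(+pG)$" in Definition \ref{definition:tame} that you followed is a sign slip in the paper. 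To fix your proof, replace $-pg$ by $+pg$ in the application of Lemma \ref{lemma:absolutecontinuity} and conclude the exponential integrability of $-G$ rather than of $G$.
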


\begin{proof}
  If $(- \Delta + m^2 + g (x)) \geqslant R\mathbb{I}_{L^2}$ then there is $p >
  1$ such that $(- \Delta + m^2 + p g (x)) > 0$. Indeed we have that
  \[ (- \Delta + m^2 + p g (x)) \geqslant (- \Delta + m^2 + g (x)) - (p - 1)
     \| g \|_{L^{\infty}} \mathbb{I}_{L^2} \geqslant (R - (p - 1) \| g
     \|_{L^{\infty}}) \mathbb{I}_{L^2} \]
  which is strictly positive whenever $p - 1 < \frac{R}{\| g
  \|_{L^{\infty}}}$. This means that we can apply Lemma
  \ref{lemma:absolutecontinuity} to the operator $(- \Delta + m^2 + p g (x))$,
  obtaining that
  \[ \exp \left( p \int_{\mathbb{T}^2} g (x) : \varphi^2 (x) : \mathd x
     \right) \in L^1 (\mu) . \]
  Since, by hypercontractivity, $\int_{\mathbb{T}^2} g (x) : \varphi^2 (x) :
  \mathd x \in L^q (\mu)$ for any $1 \leqslant q < + \infty$ the thesis is
  proved. 
\end{proof}

\

If we consider $W = \mathcal{C}^{- \delta} (\mathbb{T}^2) = B^{-
\delta}_{\infty, \infty} (\mathbb{T}^2)$ (for some $\delta > 0$ small enough),
under the previous conditions on $\gamma^{(1)}$ we will show that the
functional
\[ G^{\varepsilon, f} (\varphi, \omega) = f (\varphi) + \int_{\mathbb{T}^2}
   (\xi_{\varepsilon} (x, \omega) - (\gamma^{(1)} (\omega) -
   \gamma_{\varepsilon}^{(2)})) : \varphi^2 (x) : \mathd x \]
is tame, whenever $f (\cdummy)$ is tame in the sense of Definition
\ref{definition:tame}.

\

\begin{notation}
  \label{notation:omegaprime}In order to provide the variational representation
  of the formula \eqref{eq:nuepsilon}, we need to introduce a Gaussian white
  noise $X_t : \Omega' \times \mathbb{R}_+ \rightarrow \mathcal{S}'
  (\mathbb{T}^2)$ defined on the probability space $(\Omega', \mathcal{F}_t',
  \mathbb{P}')$. We define also the product space $\bar{\Omega} = \Omega
  \times \Omega'$ with the product $\sigma$-algebra and equipped with the
  filtration $\overline{\mathcal{F}}_t = \mathcal{F} \vee \mathcal{F}_t$ and
  the product probability measure $\bar{\mathbb{P}} =\mathbb{P} \otimes
  \mathbb{P}'$ (under this measure the white noise $\xi$ and $X_t$ are
  independent). We consider the operator $J_s : \mathcal{S}' (\mathbb{T}^2)
  \rightarrow C^{\infty} (\mathbb{T}^2)$ given by the expression
  \[ J_s (f) = \mathcal{F}^{- 1} \left( \frac{\sigma_s (| k |^2)}{\sqrt{m^2 +
     | k |^2}} \mathcal{F} (f) (k) \right) \]
  where $\sigma_{\cdot} : \mathbb{R}_+ \times \mathbb{R}^2 \rightarrow
  \mathbb{R}_+$ is a smooth function with compact support such that $\int_0^t
  \sigma_s^2 (k) \mathd s = \rho_t (k)$ where $\rho_t$ is a smooth cut-off
  function of the ball of radius $t > 0$. In order to distinguish between the
  expectation with respect the $\omega \in \Omega$ variable and the
  probability law $\mathbb{P}$, with respect the $\omega' \in \Omega'$
  variable and the probability law $\mathbb{P}'$, we will write
  $\mathbb{E}^{\omega} [\cdot]$ and $\mathbb{E}^{\omega'} [\cdot]$
  respectively. We write also $\mathbb{E}$ for the expectation with respect to
  both the variables (namely on the probability space $(\bar{\Omega},
  \overline{\mathcal{F}}_t, \bar{\mathbb{P}})$ described above).
\end{notation}

\begin{notation}
  \label{notation:W}We use the following the notations
  \[ W_t = \int^t_0 J_s \mathd X_s \qquad Z_t (u) = \int^t_0 J_s u_s \mathd s
  \]
  where $u : \Omega' \times \mathbb{R}_+ \rightarrow L^2 (\mathbb{T}^2)$ is a
  measurable function adapted with respect to the filtration $\mathcal{F}_t'$
  (when it is clear from the context we drop the dependence on $u$ in the
  random process $Z_t$).
\end{notation}

The next theorem gives a variational representation of the Laplace transform
of $\nu^{\varepsilon}$.

\begin{theorem}
  \label{theorem:functionalF}Let $\delta > 0.$ For every tame function $f :
  \mathcal{C}^{- \delta} \rightarrow \mathbb{R}$ and for every $\omega \in
  \Omega$ we have
  \begin{equation}
    \begin{array}{ll}
      & - \log \int \exp (- f (\varphi)) \nu^{\varepsilon} (\omega, \mathd
      \varphi) + \log (\mathcal{Z}^{\varepsilon} (\omega))\\
      = & - \log \int \exp (- G^{\varepsilon, f} (\varphi, \omega)) \mu^{-
      \Delta + 1} (\mathd \varphi)\\
      = & \inf_{u \in \mathbb{H}_a} \mathbb{E}^{\omega'} \left[ f (W_{\infty}
      + Z_{\infty} (u)) + \int \xi_{\varepsilon} (\omega) W_{\infty}
      Z_{\infty} (u) \mathd x + \int \xi_{\varepsilon} (\omega) Z^2_{\infty}
      (u) \mathd x \right.\\
      & \left. \nobracket \nobracket + \int ((\gamma^{(1)} (\omega) +
      \gamma_{\varepsilon}^{(2)})) W_{\infty} Z_{\infty} (u) + \int \left(
      \left( \gamma^{(1)} (\omega) + \gamma_{\varepsilon}^{(2)} \right)
      \right) Z^2_{\infty} (u) \mathd x + \frac{1}{2} \int^{\infty}_0 \| u_s
      \|^2_{L^2} \mathd s \right]\\
      = & \inf_{u \in \mathbb{H}_a} F^{\varepsilon} (f, \omega) .
    \end{array}
  \end{equation}
  \label{eq:optimalFepsilon}
\end{theorem}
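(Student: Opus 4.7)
The plan is to prove the two displayed identities in sequence: the first by a direct change of reference measure, the second by the Boué--Dupuis variational formula.

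The first equality is essentially bookkeeping. From the definition \eqref{eq:nuepsilon} of $\nu^{\varepsilon}$ one has
$\int e^{-f(\varphi)}\,\nu^{\varepsilon}(\omega,\mathd\varphi) = (\mathcal{Z}^{\varepsilon}(\omega))^{-1} \int e^{-G^{\varepsilon,f}(\varphi,\omega)}\,\mu^{-\Delta+1}(\mathd\varphi)$,
so taking logarithms and cancelling $\log\mathcal{Z}^{\varepsilon}(\omega)$ yields the first identity.

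For the variational identity I fix $\omega\in\Omega$ and work on the auxiliary probability space $(\Omega',\mathcal{F}',\mathbb{P}')$ of Notation \ref{notation:omegaprime}. By construction (Notation \ref{notation:W}), $W_{\infty}$ has law $\mu^{-\Delta+1}$ on $\mathcal{C}^{-\delta}(\mathbb{T}^{2})$, and the drifts $Z_{\infty}(u)$ with adapted $u\in\mathbb{H}_{a}$ parametrize the admissible Cameron--Martin perturbations. Before invoking Boué--Dupuis with $F=G^{\varepsilon,f}(\cdot,\omega)$, I verify that this functional is tame in the sense of Definition \ref{definition:tame}: since $f$ is assumed tame, it suffices by Hölder to check that $\varphi\mapsto\int_{\mathbb{T}^{2}}(\xi_{\varepsilon}(\omega)-\gamma^{(1)}(\omega)+\gamma^{(2)}_{\varepsilon}):\varphi^{2}:\,\mathd x$ is tame, and this follows from Lemma \ref{lemma:tameg}. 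The positivity hypothesis in that lemma is secured by combining the standing assumption $-\Delta+\xi_{\varepsilon}+\gamma^{(1)}\geq\mathbb{I}_{L^{2}}$ with the divergence $\gamma^{(2)}_{\varepsilon}\to+\infty$ (used together with a reflection argument, exploiting that $\xi$ and $-\xi$ have the same law, to handle the opposite sign); the uniform integrability in $\omega$ uses the moment bounds on $\gamma^{(1)}$.

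Once tameness is established, the Boué--Dupuis formula gives
$-\log\int e^{-G^{\varepsilon,f}}\,\mathd\mu^{-\Delta+1}=\inf_{u\in\mathbb{H}_{a}}\mathbb{E}^{\omega'}\bigl[G^{\varepsilon,f}(W_{\infty}+Z_{\infty}(u),\omega)+\tfrac{1}{2}\textstyle\int_{0}^{\infty}\|u_{s}\|_{L^{2}}^{2}\,\mathd s\bigr]$.
It remains to expand $G^{\varepsilon,f}(W_{\infty}+Z_{\infty}(u),\omega)$. Because $J_{s}$ regularizes by one derivative and $u$ is $L^{2}$-valued, the drift $Z_{\infty}(u)$ is a.s. in $H^{1}$, hence regular enough that it requires no renormalization. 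Consequently the binomial identity $:(W_{\infty}+Z_{\infty}(u))^{2}:\,=\,:W_{\infty}^{2}:+2W_{\infty}Z_{\infty}(u)+Z_{\infty}(u)^{2}$ holds in $\mathcal{C}^{-\delta}$ (pass to the limit in the regularized version, using that $Z_{\infty}$ multiplies into $\mathcal{C}^{-\delta}$ without issue). Substituting and taking $\mathbb{E}^{\omega'}$, the pure $:W_{\infty}^{2}:$ contribution drops out, because the coefficient $\xi_{\varepsilon}(\omega)-\gamma^{(1)}(\omega)+\gamma^{(2)}_{\varepsilon}$ is $\omega'$-independent and $\mathbb{E}^{\omega'}[:W_{\infty}^{2}:]=0$ in the distributional sense (Fubini is legal since the coefficient is smooth for $\varepsilon>0$). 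Collecting the surviving $W_{\infty}Z_{\infty}$ and $Z_{\infty}^{2}$ pieces produces the expression on the right-hand side.

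The main obstacle is the tameness verification, since it is where the positivity of the regularized Anderson Hamiltonian enters in the guise required by Lemma \ref{lemma:tameg}; the rest (the Wick binomial, the vanishing of $\mathbb{E}^{\omega'}[:W_{\infty}^{2}:]$, and the identification of $W_{\infty}$ with a GFF) is essentially routine once the ingredients of Sections \ref{sec:AHamilton}--\ref{sec:Wick} are in place.
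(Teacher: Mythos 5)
Your proposal is correct and follows essentially the same route as the paper: the first identity is immediate from the definition \eqref{eq:nuepsilon} of $\nu^{\varepsilon}$, tameness of $G^{\varepsilon,f}$ is checked via Lemma \ref{lemma:tameg} together with the standing positivity assumption on $-\Delta+\xi_{\varepsilon}+\gamma^{(1)}$, the Bou\'e--Dupuis/\"Ust\"unel formula is then applied, and the pure $: W_{\infty}^2 :$ contribution is removed by taking $\mathbb{E}^{\omega'}$. The only superfluous step is your parenthetical reflection argument, which is not needed (and would not yield the pointwise-in-$\omega$ statement the theorem asserts): for fixed $\omega$ and $\varepsilon>0$ the potential is smooth and the required lower bound follows directly from the standing assumptions, which is all Lemma \ref{lemma:tameg} requires.
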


\begin{proof*}{Proof}
  Fix an $\varepsilon > 0$, then $G^{\varepsilon, f}$ is a tame functional
  since is the sum of $f (\varphi)$ (which is tame by hypothesis) and
  \[ V (\varphi) = \int (\xi_{\varepsilon} (\omega, x) + (\gamma^{(1)}
     (\omega) + \gamma_{\varepsilon}^{(2)})) : \varphi^2 (x) : \mathd x \]
  which is tame by Lemma \ref{lemma:tameg}. Thus we can apply the main result
  of {\cite{U2014}} and hence we have
  \begin{eqnarray}
    &  & - \log \int \exp (- G^{\varepsilon, f} (\varphi, \omega)) \mu^{-
    \Delta + 1} (\mathd \varphi) \nonumber\\
    & = & \inf_{u \in \mathbb{H}_a} \mathbb{E}^{\omega'} \left[ f (W_{\infty}
    + Z_{\infty} (u)) + \frac{1}{2} \int^{\infty}_0 \| u_s \|^2_{L^2} \mathd s
    + V (W_{\infty} + Z_{\infty} (u)) \right] \nonumber\\
    & = & \inf_{u \in \mathbb{H}_a} \mathbb{E}^{\omega'} \left[ f (W_{\infty}
    + Z_{\infty} (u)) + \int \xi_{\varepsilon} (\omega) W_{\infty} Z_{\infty}
    (u) \mathd x + \int \xi_{\varepsilon} (\omega) Z^2_{\infty} (u) \mathd x
    \right. \nonumber\\
    &  & \left. \nobracket \nobracket + \int ((\gamma^{(1)} (\omega) +
    \gamma_{\varepsilon}^{(2)})) W_{\infty} Z_{\infty} (u) + \int \left(
    \left( \gamma^{(1)} (\omega) + \gamma_{\varepsilon}^{(2)} \right) \right)
    Z^2_{\infty} (u) \mathd x + \frac{1}{2} \int^{\infty}_0 \| u_s \|^2_{L^2}
    \mathd s \right] \nonumber
  \end{eqnarray}
  where we use the fact that the expectation of $: W_{\infty}^2 : (x)$ is
  zero, giving us the desired result.
\end{proof*}

It is convenient to rewrite the functional $F^{\varepsilon} (\omega, u)$ in a
more useful form for what follows.

\begin{proposition}
  \label{proposition:reformulationF} Let $u : \bar{\Omega} \times \mathbb{R}_+
  \rightarrow L^2 (\mathbb{T}^2)$ be an adapted process.
  
  We have the following identity
  \begin{eqnarray}
    F^{\varepsilon} (u, \omega) & = & \mathbb{E}^{\omega'} \left[ \sum_{i =
    1}^7 \Gamma_i + \mathfrak{G} + \frac{1}{2} \int^{\infty}_0 \| l_t (u)
    \|^2_{L^2} \mathd s \right] + \frac{1}{2} \mathbb{E}^{\omega'} \left[
    \int^{\infty}_0 \left\| {J_s}  \xi_{\varepsilon} W_s \right\|^2_{L^2}
    \mathd s \right]  \label{eq:functional-renom}
  \end{eqnarray}
  where
  \[ l_s (u) {= J_s}  \xi_{\varepsilon} W_s + J_s (\xi_{\varepsilon} \succ
     Z_s) - u_s \]
  and
  \begin{eqnarray*}
    \Gamma_1 & = & 2 \int^{\infty}_0 \int J_t (\xi_{\varepsilon} \preccurlyeq
    Z_t) u_t \mathd t \mathd x\\
    \Gamma_2 & = & 2 \int^{\infty}_0 \int \left( {J_s}  \xi_{\varepsilon} W_s
    \circ J_s \xi_{\varepsilon} - \dot{\gamma}_{\varepsilon, s}^{(2)} W_s
    \right) Z_s \mathd x \mathd s\\
    \Gamma_3 & = & \int^{\infty}_0 \int ((J_s \xi_{\varepsilon} \circ J_s
    \xi_{\varepsilon}) - \dot{\gamma}_{\varepsilon, s}^{(2)}) Z^2_s \mathd x
    \mathd s\\
    \Gamma_4 & = & \int^{\infty}_0 \int (J_s (\xi_{\varepsilon} \succ Z_s))^2
    \mathd x \mathd s - \int^{\infty}_0 \int (J_s \xi_{\varepsilon} \circ J_s
    \xi_{\varepsilon}) Z^2_s \mathd x \mathd s\\
    \Gamma_5 & = & 2 \int^{\infty}_0 \int {J_s}  (\xi_{\varepsilon} W_s) J_s
    (\xi_{\varepsilon} \succ Z_s) \mathd x \mathd s - 2 \int^{\infty}_0 \int
    \left( {J_s}  \xi_{\varepsilon} W_s \circ J_s \xi_{\varepsilon} \right)
    Z_s \mathd x \mathd s\\
    \Gamma_6 & = & - 2 \int^{\infty}_0 \int \gamma_{\varepsilon, t}^{(2)} W_t
    J_t u_t \mathd x \mathd t - 2 \int^{\infty}_0 \int \gamma_{\varepsilon,
    t}^{(2)} Z_t J_t u_t \mathd x \mathd t\\
    \Gamma_7 & = & 2 \int_0^{\infty} \dot{\gamma}_t^{(1)} W_t Z_t \mathd t + 2
    \int^{\infty}_0 \int \gamma_t^{(1)} W_t J_t u_t \mathd x d t + 2
    \int^{\infty}_0 \int \gamma_t^{(1)} Z_t J_t u_t \mathd x \mathd t\\
    \mathfrak{G} & = & \int_0^{\infty} \int \dot{\gamma}^{(1)}_t (\omega)
    Z_t^2 \mathd x \mathd t
  \end{eqnarray*}
  where $\gamma_t^{(1)} : \mathbb{R}_+ \times \Omega \rightarrow \mathbb{R}$
  and $\gamma^{(2)}_{\varepsilon, t} : \mathbb{R}_+ \rightarrow \mathbb{R}$
  are $C^1$ functions such that $\lim_{t \rightarrow + \infty} \gamma_t^{(1)}
  (\omega) = \gamma^{(1)} (\omega)$ and $\lim_{t \rightarrow + \infty}
  \gamma^{(2)}_{\varepsilon, t} = \gamma^{(2)}_{\varepsilon}$.
\end{proposition}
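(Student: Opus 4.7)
The statement is a purely algebraic reorganization of the functional $F^{\varepsilon}(u,\omega)$ from Theorem \ref{theorem:functionalF} into a form where the control $u$ enters only through $\tfrac12\mathbb{E}^{\omega'}\!\int_0^{\infty}\|l_s(u)\|_{L^2}^{2}\,ds$ and terms that are at most linear in $u$. My plan is to proceed in three steps: (i) use It\^{o}'s product formula to rewrite every quantity evaluated at $t=\infty$ as a time integral; (ii) decompose each product against $\xi_{\varepsilon}$ into paraproducts and isolate the bad ``high $\succ$ low'' piece; (iii) complete the square in $u_{s}$, peeling off the divergent Wick counter-term.

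For step (i), since $\dot W_{t}=J_{t}\dot X_{t}$ is an It\^{o} integrand while $\dot Z_{t}=J_{t}u_{t}$ has bounded variation, It\^{o}'s product rule gives
\[
W_{\infty}Z_{\infty}=\int_{0}^{\infty}W_{t}J_{t}u_{t}\,dt+\int_{0}^{\infty}Z_{t}J_{t}\,dX_{t},\qquad Z_{\infty}^{2}=2\int_{0}^{\infty}Z_{t}J_{t}u_{t}\,dt,
\]
and the It\^{o} integral disappears under $\mathbb{E}^{\omega'}$. For the pieces weighted by the constants $\gamma^{(1)}(\omega)$ and $\gamma^{(2)}_{\varepsilon}$ I would introduce the $C^{1}$ interpolations $\gamma^{(i)}_{t}$ with $\gamma^{(i)}_{0}=0$ and $\gamma^{(i)}_{\infty}=\gamma^{(i)}$, and apply
\[
\gamma^{(i)}A_{\infty}B_{\infty}=\int_{0}^{\infty}\dot\gamma^{(i)}_{t}A_{t}B_{t}\,dt+\int_{0}^{\infty}\gamma^{(i)}_{t}\,d(A_{t}B_{t}),
\]
which produces the $\dot\gamma$-terms in $\mathfrak{G}$, in $\Gamma_{7}$, and (as counter-terms) in $\Gamma_{2},\Gamma_{3}$, together with the $\gamma_{t}J_{t}u_{t}$ terms of $\Gamma_{6}$ and $\Gamma_{7}$.

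For steps (ii)--(iii), I would decompose $\xi_{\varepsilon}Z_{t}=\xi_{\varepsilon}\succ Z_{t}+\xi_{\varepsilon}\preccurlyeq Z_{t}$ and, using that $J_{t}$ is $L^{2}$-self-adjoint, rewrite each $\int\xi_{\varepsilon}A_{t}J_{t}u_{t}\,dx$ as $\langle J_{t}(\xi_{\varepsilon}A_{t}),u_{t}\rangle_{L^{2}}$. The bounded low-high part $\xi_{\varepsilon}\preccurlyeq Z_{t}$ is precisely what $\Gamma_{1}$ collects. Setting $a_{s}:=J_{s}\xi_{\varepsilon}W_{s}+J_{s}(\xi_{\varepsilon}\succ Z_{s})$, so that $l_{s}(u)=a_{s}-u_{s}$, the elementary identity
\[
\tfrac12\|u_{s}\|_{L^{2}}^{2}-\langle a_{s},u_{s}\rangle_{L^{2}}=\tfrac12\|l_{s}(u)\|_{L^{2}}^{2}-\tfrac12\|a_{s}\|_{L^{2}}^{2}
\]
lets me absorb all the $u$-linear terms coming from (i)--(ii) into $\tfrac12\|l_{s}(u)\|^{2}$. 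Expanding $\|a_{s}\|^{2}$ produces a diagonal piece $\|J_{s}\xi_{\varepsilon}W_{s}\|^{2}$, a cross term $\langle J_{s}\xi_{\varepsilon}W_{s},J_{s}(\xi_{\varepsilon}\succ Z_{s})\rangle$, and a diagonal piece $\|J_{s}(\xi_{\varepsilon}\succ Z_{s})\|^{2}$; after rewriting each in terms of the resonant product $J_{s}\xi_{\varepsilon}\circ J_{s}\xi_{\varepsilon}$ paired against $W_{s}^{2}$, $W_{s}Z_{s}$ and $Z_{s}^{2}$, one obtains exactly the ``regular'' halves of $\Gamma_{4}$ and $\Gamma_{5}$.

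The main obstacle is the consistent handling of the renormalizations. Both $J_{s}\xi_{\varepsilon}\circ J_{s}\xi_{\varepsilon}$ and $\|J_{s}\xi_{\varepsilon}W_{s}\|_{L^{2}}^{2}$ diverge as $\varepsilon\to0$, and only the specific combinations placed inside each $\Gamma_{i}$, together with the prefactor $\tfrac12\mathbb{E}^{\omega'}\!\int\|J_{s}\xi_{\varepsilon}W_{s}\|^{2}\,ds$ outside the infimum, remain finite in the limit. The key is to choose $\gamma^{(2)}_{\varepsilon,t}$ so that $\dot\gamma^{(2)}_{\varepsilon,s}$ equals $\mathbb{E}[J_{s}\xi_{\varepsilon}\circ J_{s}\xi_{\varepsilon}]$ at every scale $s$; this turns the differences $(J_{s}\xi_{\varepsilon}\circ J_{s}\xi_{\varepsilon})-\dot\gamma^{(2)}_{\varepsilon,s}$ appearing in $\Gamma_{2},\Gamma_{3}$ into a.s.\ finite stochastic objects, and forces the $W_{s}^{2}$-contribution of $\|a_{s}\|^{2}$ to split as (a.s.\ finite centred piece) $+\,\mathbb{E}^{\omega'}[\|J_{s}\xi_{\varepsilon}W_{s}\|^{2}]$, the latter reappearing outside the infimum in \eqref{eq:functional-renom}. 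Once this splitting is in place, the identity follows by collecting the pieces; the remainder is tedious but routine bookkeeping.
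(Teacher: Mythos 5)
Your proposal is correct and follows essentially the same route as the paper: It\^{o}'s formula to convert the terminal-time quantities into time integrals, the paraproduct split isolating $\xi_{\varepsilon}\preccurlyeq Z_t$ as $\Gamma_1$, completion of the square around $a_s=J_s\xi_{\varepsilon}W_s+J_s(\xi_{\varepsilon}\succ Z_s)$, expansion of $\|a_s\|_{L^2}^2$ into the diagonal $W$-term, the cross term ($\Gamma_5$) and the $Z$-term ($\Gamma_4$), and $C^1$ interpolation of the counterterms to produce $\Gamma_2,\Gamma_3,\Gamma_6,\Gamma_7,\mathfrak{G}$. The only small remark is that the specific choice $\dot{\gamma}^{(2)}_{\varepsilon,s}=\mathbb{E}[J_s\xi_{\varepsilon}\circ J_s\xi_{\varepsilon}]$ is not needed for this purely algebraic identity (any $C^1$ interpolation with the right limit works, as the statement indicates); the paper defers that choice to the subsequent stochastic estimates.
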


\begin{proof}
  Observe that by Ito's formula
  \[ \mathbb{E} \left[ \int \xi_{\varepsilon} W_{\infty} Z_{\infty} \mathd x
     \right] =\mathbb{E} \left[ \int^{\infty}_0 \int \xi_{\varepsilon} W_t J_t
     u_t \mathd t \mathd x \right] =\mathbb{E} \left[ \int^{\infty}_0 J_t
     (\xi_{\varepsilon} W_t) u_t \mathd t \right] \]
  and
  \[ \int \xi_{\varepsilon} Z^2_{\infty} \mathd x = 2 \int^{\infty}_0 \int
     J_t (\xi_{\varepsilon} Z_t) u_t \mathd t \mathd x = 2 \int^{\infty}_0
     \int J_t (\xi_{\varepsilon} \succ Z_t) u_t \mathd t \mathd x + 2
     \int^{\infty}_0 \int J_t (\xi_{\varepsilon} \preccurlyeq Z_t) u_t \mathd
     t \mathd x \]
  now consider the ansatz
  \[ u_s {= - J_s}  \xi_{\varepsilon} W_s - J_s (\xi_{\varepsilon} \succ Z_s
     (u)) + l_s \]
  Then we compute
  \begin{eqnarray*}
    \frac{1}{2} \int^{\infty}_0 \| u_s \|^2_{L^2} \mathd s & = & \frac{1}{2}
    \int^{\infty}_0 \left\| {J_s}  \xi_{\varepsilon} W_s + J_s
    (\xi_{\varepsilon} \succ Z_s) \right\|^2_{L^2} \mathd s\\
    &  & - \int^{\infty}_0 \left( {J_s}  \xi_{\varepsilon} W_s + J_s
    (\xi_{\varepsilon} \succ Z_s) \right) l_s \mathd s\\
    &  & + \frac{1}{2} \int^{\infty}_0 \| l_s \|^2_{L^2} \mathd s\\
    & = & - \frac{1}{2} \int^{\infty}_0 \left\| {J_s}  \xi_{\varepsilon} W_s
    + J_s (\xi_{\varepsilon} \succ Z_s) \right\|^2_{L^2} \mathd s\\
    &  & - \int^{\infty}_0 \left( {J_s}  \xi_{\varepsilon} W_s + J_s
    (\xi_{\varepsilon} \succ Z_s) \right) u_s \mathd s\\
    &  & + \frac{1}{2} \int^{\infty}_0 \| l_s \|^2_{L^2} \mathd s
  \end{eqnarray*}
  where we have used that
  \begin{eqnarray*}
    &  & - \int^{\infty}_0 \left\| \left( {J_s}  \xi_{\varepsilon} W_s + J_s
    (\xi_{\varepsilon} \succ Z_s) \right) \right\|^2_{L^2} \mathd x -
    \int^{\infty}_0 \left( {J_s}  \xi_{\varepsilon} W_s + J_s
    (\xi_{\varepsilon} \succ Z_s) \right) u_s \mathd s\\
    & = & - \int^{\infty}_0 \left( {J_s}  \xi_{\varepsilon} W_s + J_s
    (\xi_{\varepsilon} \succ Z_s) \right) l_s \mathd x.
  \end{eqnarray*}
  We compute that
  \begin{eqnarray*}
    &  & \int^{\infty}_0 \left\| {J_s}  \xi_{\varepsilon} W_s + J_s
    (\xi_{\varepsilon} \succ Z_s) \right\|^2_{L^2} \mathd x \mathd s\\
    & = & \int^{\infty}_0 \int \left( {J_s}  \xi_{\varepsilon} W_s \right)^2
    + 2 \left( {J_s}  \xi_{\varepsilon} W_s \right) (J_s (\xi_{\varepsilon}
    \succ Z_s)) + (J_s (\xi_{\varepsilon} \succ Z_s))^2 \mathd x \mathd s
  \end{eqnarray*}
  Now the first term is the last term on the r.h.s of
  \eqref{eq:functional-renom}. We also have that
  \[ \int^{\infty}_0 \int 2 \left( {J_s}  \xi_{\varepsilon} W_s \right) (J_s
     (\xi_{\varepsilon} \succ Z_s)) \mathd x \mathd s = 2 \int^{\infty}_0 \int
     \left( {J_s}  \xi_{\varepsilon} W_s \circ J_s \xi_{\varepsilon} \right)
     Z_s \mathd x \mathd s + \Gamma_5 \label{eq:weird-term} \]
  and
  \[ \int^{\infty}_0 \int (J_s (\xi_{\varepsilon} \succ Z_s))^2 \mathd x
     \mathd s = \int^{\infty}_0 \int (J_s \xi_{\varepsilon} \circ J_s
     \xi_{\varepsilon}) Z^2_s \mathd x \mathd s + \Gamma_4 . \label{eq:square}
  \]
  Recall that we also have the counter terms
  \[ 2 (\gamma^{(1)} (\omega) - \gamma_{\varepsilon}^{(2)}) \int W_{\infty}
     Z_{\infty} \mathd x + (\gamma^{(1)} (\omega) -
     \gamma_{\varepsilon}^{(2)}) \int Z^2_{\infty} \mathd x \]
  available. Writing $\gamma_{\varepsilon, \infty} = \gamma^{(1)} (\omega) -
  \gamma_{\varepsilon}^{(2)}$ and using that $\gamma_{\varepsilon, t} :
  \mathbb{R}_+ \times \Omega \rightarrow \mathbb{R}$ is a $C^1$ function, by
  Ito's formula we get
  \[ 2 \gamma_{\varepsilon, \infty} \int W_{\infty} Z_{\infty} \mathd x = 2
     \int^{\infty}_0 \int \dot{\gamma}_{\varepsilon, t} W_t Z_t \mathd x + 2
     \int^{\infty}_0 \int \gamma_{\varepsilon, t} W_t J_t u_t \mathd x +
     \text{martingale} \]
  Now the first term on the r.h.s. is put together with the first term on the
  r.h.s of \eqref{eq:weird-term} to form $\Gamma_2$. We have also
  \[ \gamma_{\varepsilon, \infty} \int Z^2_{\infty} \mathd x =
     \int^{\infty}_0 \int \dot{\gamma}_{\varepsilon, t} Z^2_t \mathd x + 2
     \int^{\infty}_0 \int \gamma_{\varepsilon, t} Z_t J_t u_t \mathd x \mathd
     t \]
  and the first term on the r.h.s goes together with the first term on the
  r.h.s of \eqref{eq:square} to form $\Gamma_3$. The respective remainders are
  collected in $\Gamma_6$, $\Gamma_7$ and $\mathfrak{G}$. \ 
\end{proof}

\subsection{Some stochastic estimates }\label{section:stochasticestimates}

The main reason for the formulation in Proposition
\ref{proposition:reformulationF} is that, up to a diverging constant, every
term of $\Gamma_i$ converges to something finite as $\varepsilon \rightarrow
0$. In order to guarantee this convergence we need to give some estimates for
the stochastic terms involving $\xi_{\varepsilon}$ and $W_s$ in equation
\eqref{eq:functional-renom}. The proof of this kind of estimates is the chief
aim of the current section.

\begin{lemma}
  \label{lemma:stochasticestimates1}Consider
  \begin{equation}
    \gamma_{\varepsilon, t}^{(2)} \assign \int_0^t \mathbb{E} [J_s
    (\xi_{\varepsilon}) \circ J_s (\xi_{\varepsilon})] \mathd s
    \label{eq:definitiongamma2}
  \end{equation}
  then for every for any $0 < \kappa < \delta$, $\alpha > 0$ such that $\kappa
  + \alpha < \delta$ small enough, and for every $p \geqslant 1$, we have that
  \begin{equation}
    \sup_{\varepsilon \in (0, 1)} \mathbb{E} [\| J_s (\xi_{\varepsilon}) \circ
    J_s (\xi_{\varepsilon}) - \dot{\gamma}_{\varepsilon, t}^{(2)} \|_{B^{-
    \delta}_{p, p}}^p] \lesssim (1 + s)^{- (1 + \kappa) p}
    \label{eq:stochasticbound1}
  \end{equation}
  
  \begin{equation}
    \begin{array}{cc}
      & \sup_{\varepsilon \in (0, 1)} \mathbb{E} \left[ \int_0^1 \frac{\| J_s
      (\xi_{\varepsilon}) \circ J_s (\xi_{\varepsilon}) -
      \dot{\gamma}_{\varepsilon, s}^{(2)} - J_{s + \Delta s}
      (\xi_{\varepsilon}) \circ J_{s + \Delta s} (\xi_{\varepsilon}) -
      \dot{\gamma}_{\varepsilon, s + \Delta s}^{(2)} \|^p_{B^{- \delta}_{p,
      p}}}{(\Delta s)^{1 + \alpha p / 2}} \mathd \Delta s \right]\\
      & \lesssim (1 + s)^{(- 1 - \kappa + \alpha) p} .
    \end{array} \label{eq:stochasticbound2}
  \end{equation}
\end{lemma}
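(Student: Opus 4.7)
The plan is to exploit that the recentred object $F_s^\varepsilon \assign J_s(\xi_\varepsilon)\circ J_s(\xi_\varepsilon) - \dot\gamma_{\varepsilon,s}^{(2)}$ lies in the second Wiener chaos of $\xi$. Indeed, by translation invariance of the law of the white noise, $\dot\gamma_{\varepsilon,s}^{(2)} = \mathbb{E}[J_s\xi_\varepsilon \circ J_s\xi_\varepsilon]$ is a deterministic constant, so $F_s^\varepsilon$ is precisely the Wick square of the smooth Gaussian field $J_s\xi_\varepsilon$ taken with respect to its own covariance. Gaussian hypercontractivity inside this fixed chaos (Section \ref{sec:Wick}) lets me reduce $L^p(\mathbb{P})$ bounds to $L^2(\mathbb{P})$ bounds, and translation invariance turns the spatial $L^p$ in $B^{-\delta}_{p,p}$ into a pointwise Fourier computation on each Littlewood--Paley block.

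For \eqref{eq:stochasticbound1}, the next step is to use the spectral localisation of $J_s$: since $\int_0^t \sigma_s^2(|k|^2)\,\mathd s = \rho_t(k)$ is a cutoff of the ball of radius $t$, the function $\sigma_s(|k|^2)$ is supported in an $O(1)$-thick annulus at $|k|\sim s$, so the symbol of $J_s$ has size $O(s^{-1})$ there and $J_s\xi_\varepsilon$ is concentrated in a single dyadic block $2^j\sim s$. Its resonant self-product then coincides with the pointwise square (the paraproducts collapse for spectrally comparable factors), and Wick's formula yields
\[
\mathbb{E}\bigl[|\widehat{F_s^\varepsilon}(k)|^2\bigr] = 2\sum_{k_1+k_2=k} \frac{\sigma_s^2(|k_1|^2)\sigma_s^2(|k_2|^2)|\hat\rho_\varepsilon(k_1)|^2|\hat\rho_\varepsilon(k_2)|^2}{(m^2+|k_1|^2)(m^2+|k_2|^2)}.
\]
Bounding $|\hat\rho_\varepsilon|\leq 1$ for $\varepsilon$-uniformity, counting the $O(1)$ lattice pairs $(k_1,k_2)$ with $|k_i|\sim s$ and $k_1+k_2=k$ for each $|k|\lesssim s$, and weighting by $(1+|k|^2)^{-\delta}$ produces a per-block second moment of order $2^{2j}s^{-4}$. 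Hypercontractivity upgrades this to $\mathbb{E}[\|\Delta_j F_s^\varepsilon\|_{L^p}^p]\lesssim (2^{2j}s^{-4})^{p/2}$, and summing against $2^{-\delta p j}$ over $2^j\lesssim s$ yields $\mathbb{E}[\|F_s^\varepsilon\|_{B^{-\delta}_{p,p}}^p]\lesssim s^{-(1+\delta)p}$, giving \eqref{eq:stochasticbound1} for any $\kappa<\delta$.

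For the time regularity \eqref{eq:stochasticbound2} the same computation applies after the bilinear telescoping
\[
J_{s+\Delta s}\xi_\varepsilon\circ J_{s+\Delta s}\xi_\varepsilon - J_s\xi_\varepsilon\circ J_s\xi_\varepsilon = (J_{s+\Delta s}-J_s)\xi_\varepsilon \circ J_{s+\Delta s}\xi_\varepsilon + J_s\xi_\varepsilon \circ (J_{s+\Delta s}-J_s)\xi_\varepsilon,
\]
together with the analogous identity for $\dot\gamma_{\varepsilon,s}^{(2)}$. Smoothness of $s\mapsto\sigma_s$ furnishes the symbol bound $|\sigma_{s+\Delta s}(|k|^2)-\sigma_s(|k|^2)|\lesssim \Delta s$ uniformly on $|k|\sim s$, so one bilinear slot in each summand gains a factor of $\Delta s$. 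Repeating the Fourier/hypercontractivity argument then produces a bound of order $(\Delta s)^p s^{-(1+\delta)p}$, so the integrand in \eqref{eq:stochasticbound2} becomes $(\Delta s)^{p-1-\alpha p/2}s^{-(1+\delta)p}$, which is integrable in $\Delta s\in[0,1]$ for $\alpha<2$ and delivers the claim for any $\kappa+\alpha<\delta$. The main technical care I anticipate is the paraproduct/resonant bookkeeping at two junctures: checking that $\dot\gamma_{\varepsilon,s}^{(2)}$ removes exactly the zeroth-chaos component uniformly in $\varepsilon$, and that the hypercontractivity step commutes with the Littlewood--Paley decomposition with a constant independent of $\varepsilon$ so the dyadic sum genuinely converges uniformly in $\varepsilon$.
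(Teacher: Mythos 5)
Your strategy is exactly the paper's: identify $F^\varepsilon_s = J_s\xi_\varepsilon\circ J_s\xi_\varepsilon-\dot\gamma^{(2)}_{\varepsilon,s}$ as the Wick square of the spectrally localised Gaussian field $J_s\xi_\varepsilon$ (so that the resonant product is the full product and the subtraction removes the zeroth chaos), compute the $p=2$ case by a Wick/Fourier calculation, upgrade to $p>2$ by hypercontractivity in the second chaos, and handle the time increment by bilinear telescoping. Your final bounds ($\mathbb{E}\|F_s^\varepsilon\|^p_{B^{-\delta}_{p,p}}\lesssim s^{-(1+\delta)p}$, and the gain of a power of $\Delta s$ after telescoping) agree with the paper's.

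However, the central quantitative step as you have written it contains two errors that happen to cancel. First, the symbol of $J_s$ on the annulus $|k|\sim s$ has size $s^{-3/2}$, not $s^{-1}$: since $\sigma_s^2(k)=\tfrac{\mathd}{\mathd s}\rho_s(k)\sim s^{-1}$ there, one has $\sigma_s(k)\sim s^{-1/2}$ and hence $\sigma_s(k)(m^2+|k|^2)^{-1/2}\sim s^{-3/2}$ (the paper records $J_s(k)\lesssim t^{-3/2}$ explicitly in the proof of Lemma \ref{lemma:inequalitydoubleexpectation1}). Second, the number of lattice pairs $(k_1,k_2)$ with $k_1+k_2=k$ and both $|k_i|\sim s$ is $O(s^2)$, not $O(1)$: the annulus supporting $\sigma_s$ has thickness $\sim s$, so the intersection of the two shifted annuli has area $\sim s^2$ for $|k|\lesssim s$. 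The correct bookkeeping is $O(s^2)$ pairs each contributing $(s^{-3})^2$, giving $\mathbb{E}|\widehat{F^\varepsilon_s}(k)|^2\sim s^{-4}$ — the same answer your $O(1)\times(s^{-2})^2$ produces, but for the wrong reason. You should redo this count (or, as the paper does, work in physical space with the covariance bound $\mathbb{E}[J_s\xi_\varepsilon(x)J_s\xi_\varepsilon(y)]\lesssim s^{-1}$ and square it via Proposition \ref{prop:Wickexp}). For \eqref{eq:stochasticbound2}, your Lipschitz bound on $s\mapsto\sigma_s$ is admissible under the standing smoothness assumption on $\sigma$ in Notation \ref{notation:omegaprime}, but you must track the correct size $|\sigma_{s+\Delta s}(k)-\sigma_s(k)|\lesssim\Delta s\, s^{-3/2}$ rather than just $\lesssim\Delta s$ to recover the stated decay in $s$; the paper instead interpolates $|\sigma_s-\sigma_{s+\Delta s}|^2\leqslant|\sigma_s-\sigma_{s+\Delta s}|^{2\alpha}(|\sigma_s|^2+|\sigma_{s+\Delta s}|^2)^{1-\alpha}$, which only requires a Hölder modulus and is the more robust route.
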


\begin{remark}
  \label{remark:gamma2}From equation \eqref{eq:definitiongamma2}, defining the
  function $\gamma^{(2)}_{\varepsilon, t}$, we get the bound
  \[ \gamma^{(2)}_{\varepsilon, t} \lesssim \log (\min (2 + t, \varepsilon^{-
     1})) . \]
\end{remark}

\begin{proof*}{Proof of Lemma \ref{lemma:stochasticestimates1}}
  We start the proof in the case that $p = 2$. In order to simplify the
  notation we drop the upper index $(2)$ from $\gamma^{(2)}_{\varepsilon}$.
  First we prove that
  \begin{equation}
    \sup_{\varepsilon \in (0, 1)} \mathbb{E} [(\| J_s (\xi_{\varepsilon})
    \circ J_s (\xi_{\varepsilon}) - \dot{\gamma}_{\varepsilon, s} \|_{H^{-
    \delta}})^2] \lesssim (1 + s)^{- 2 - 2 \kappa} \label{eq:stochasticbound3}
    .
  \end{equation}
  Indeed, if $K_j = \mathcal{F}^{- 1} (\varphi_j)$ (where $\{ \varphi_j \}_{j
  \geqslant - 1}$ is the dyadic partition of unity in the definition of Besov
  space $B^{\kappa}_{p, p}$, see Appendix \ref{app:Besov}) we have
  \begin{eqnarray}
    &  & \mathbb{E} [| K_r \ast (J_s (\xi_{\varepsilon}) \circ J_s
    (\xi_{\varepsilon}) - \dot{\gamma}_{\varepsilon, s}) |^2] \nonumber\\
    & \lesssim & \sum_{j \sim \log (s), r \lesssim j} \int_{\mathbb{T}^2}
    \int_{\mathbb{T}^2} K_r (x) K_r (y) (\mathbb{E} [J_s (\Delta_j
    (\xi_{\varepsilon})) (x) J_s (\Delta_j (\xi_{\varepsilon})) (y)])^2
    \nonumber
  \end{eqnarray}
  Clearly
  \[ J_s (\xi_{\varepsilon}) \circ J_s (\xi_{\varepsilon}) -
     \dot{\gamma}_{\varepsilon, s} = \sum_{j \sim \log (s)} (J_s (\Delta_j
     (\xi_{\varepsilon})))^2 - \dot{\gamma}_{\varepsilon, s} = \sum_{j \sim
     \log (s)} : J_s (\Delta_j (\xi_{\varepsilon}))^2 : \]
  and, by the properties of Wick product (see Proposition \ref{prop:Wickexp}),
  \begin{eqnarray}
    &  & \mathbb{E} [J_s (\Delta_j \xi_{\varepsilon}) (x) J_s (\Delta_j
    \xi_{\varepsilon}) (y)] \nonumber\\
    & \lesssim & \sum_{k \in \mathbb{Z}^2} \frac{\sigma_s^2 (k)  | \varphi_j
    (k) |^2}{(| k |^2 + m^2)} \lesssim - \int_{R' s}^{R s} \frac{r}{s^2}
    \left( \frac{\mathd}{\mathd x} \rho \right) \left( \frac{r}{s} \right)
    \frac{r}{r^2 + m^2} \mathd r \nonumber\\
    & \lesssim & \left( \frac{1}{s} \int_{R' s}^{R s} \rho \left( \frac{r}{s}
    \right) \frac{\mathd}{\mathd r} \left( \frac{r^2}{r^2 + m^2} \right)
    \mathd r + \frac{{R'}^2 s^2}{{R'}^2 s^2 + m^2} \right) \nonumber\\
    & \lesssim & \frac{1}{s} \int_{R' s}^{R s} \frac{\mathd}{\mathd r} \left(
    \frac{r^2}{r^2 + m^2} \right) \mathd r \lesssim \frac{1}{s} \left(
    \frac{R^2 s^2}{R^2 s^2 + m^2} \right)  \label{eq:inequalityJS}
  \end{eqnarray}
  and the bound is uniform in $0 \leqslant \varepsilon < 1$. Recalling that
  \begin{align*}
  	\| J_s (\xi_{\varepsilon}) \circ J_s (\xi_{\varepsilon}) -
  	\dot{\gamma}_{\varepsilon, s} \|_{H^{- \delta}}^2 &\sim \sum_{j \sim \log
  		(s)} 2^{- 2 \delta j} \| K_j \ast (J_s (\xi_{\varepsilon}) \circ J_s
  	(\xi_{\varepsilon}) - \dot{\gamma}_{\varepsilon, s}) \|_{L^2}^2 \\
  	&\lesssim_{\kappa, \delta} \frac{1}{s^{2 + 2 \kappa}} \sum_{j \geqslant -
  		1} {2^{- 2 (\delta - \kappa) j}},
  \end{align*}
  where $\kappa < \delta$, inequality \eqref{eq:inequalityJS} implies the
  bound \eqref{eq:stochasticbound1}. For the bound
  \eqref{eq:stochasticbound2}, it is sufficient to consider the case $\log (s)
  \sim \log (s + 1)$ (which holds whenever $s$ is big enough)
  \begin{eqnarray}
    &  & \mathbb{E} [\| K_j \ast [(J_s (\xi_{\varepsilon}) \circ J_s
    (\xi_{\varepsilon}) - \gamma_{\varepsilon, s}) - (J_{s + \Delta s}
    (\xi_{\varepsilon}) \circ J_{s + \Delta s} (\xi_{\varepsilon}) -
    \gamma_{\varepsilon, s + \Delta s})] \|^2_{L^2}] \nonumber\\
    & \lesssim & \sum_{j \sim \log (s)} \int_{\mathbb{T}^2}
    \int_{\mathbb{T}^2} | K_j (x) | | K_j (y) | (\mathbb{E} [\Delta_j (J_s
    (\xi_{\varepsilon}) - J_{s + \Delta s} (\xi_{\varepsilon}))^2])^{1 / 2}
    \times \nonumber\\
    &  & (\mathbb{E} [| \Delta_j J_s (\xi_{\varepsilon}) |^2] +\mathbb{E} [|
    \Delta_j J_{s + \Delta s} (\xi_{\varepsilon}) |^2])^{3 / 2} \nonumber
  \end{eqnarray}
  In order to estimate the term $\mathbb{E} [| \Delta_j (J_s
  (\xi_{\varepsilon}) - J_{s + \Delta s} (\xi_{\varepsilon})) |^2]$ we note
  that
  \begin{eqnarray}
    \mathbb{E} [| \Delta_j (J_s (\xi_{\varepsilon}) - J_{s + \Delta s}
    (\xi_{\varepsilon})) |^2] & = & \sum_{k \in \mathbb{Z}^2} \frac{| \sigma_s
    (k) - \sigma_{s + \Delta s} (k) |^2 \varphi_j (k)}{(| k |^2 + m^2)}
    \nonumber\\
    & \lesssim & \int_0^{R (s + \Delta s)} | \sigma_s (r) - \sigma_{s +
    \Delta s} (r) |^{2 \alpha} \frac{r (| \sigma_s (r) |^2 + | \sigma_{s +
    \Delta s} (r) |^2)^{1 - \alpha}}{(r^2 + m^2)} \mathd r \nonumber\\
    & \lesssim & | \Delta s |^{\alpha} (\log (s + \Delta s))^{\alpha}
    \nonumber\\
    &  & \left( \int_0^{R (s + \Delta s)} \frac{r (| \sigma_s (r) |^2 + |
    \sigma_{s + \Delta s} (r) |^2)^{1 - \alpha}}{(r^2 + m^2)} \mathd r \right)
    \nonumber\\
    & \lesssim & | \Delta s |^{\alpha} \frac{(\log (s + \Delta
    s))^{\alpha}}{(1 + s)^{(1 - \alpha)}} \lesssim | \Delta s |^{\alpha}
    \frac{1}{(1 + s)^{1 - 2 \alpha}},  \label{eq:inequalityJs2}
  \end{eqnarray}
  where in the last step we do the same computation of
  \eqref{eq:inequalityJS}, taking into account the loss of a power $\alpha$.
  Furthermore we have
  \begin{eqnarray}
    &  & : J_s (\xi_{\varepsilon}) \circ J_s (\xi_{\varepsilon}) - J_{s +
    \Delta s} (\xi_{\varepsilon}) \circ J_{s + \Delta s} (\xi_{\varepsilon}) :
    \nonumber\\
    & = & : (J_s (\xi_{\varepsilon}) - J_{s + \Delta s} (\xi_{\varepsilon}))
    \circ J_s (\xi_{\varepsilon}) : + : J_{s + \Delta s} (\xi_{\varepsilon})
    \circ (J_s (\xi_{\varepsilon}) - J_{s + \Delta s} (\xi_{\varepsilon})) :
    \nonumber
  \end{eqnarray}
  Thus, by the proof of the first part of the present lemma, inequality
  \eqref{eq:inequalityJS} and inequality \eqref{eq:inequalityJs2},
  \begin{eqnarray}
    &  & \mathbb{E} [\| K_j \ast [(J_s (\xi_{\varepsilon}) \circ J_s
    (\xi_{\varepsilon}) - \gamma_{\varepsilon, s}) - (J_{s + \Delta s}
    (\xi_{\varepsilon}) \circ J_{s + \Delta s} (\xi_{\varepsilon}) -
    \gamma_{\varepsilon, s + \Delta s})] \|^2_{L^2}] \nonumber\\
    & \lesssim & \mathbb{E} [\| K_j \ast [: (J_s (\xi_{\varepsilon}) - J_{s +
    \Delta s} (\xi_{\varepsilon})) \circ J_s (\xi_{\varepsilon}) :]
    \|^2_{L^2}] + \nonumber\\
    &  & +\mathbb{E} [\| K_j \ast [: J_{s + \Delta s} (\xi_{\varepsilon})
    \circ (J_s (\xi_{\varepsilon}) - J_{s + \Delta s} (\xi_{\varepsilon})) :]
    \|^2_{L^2}] \nonumber\\
    & \lesssim & | \Delta s |^{\alpha} \frac{1}{(1 + s)^{2 - 2 \alpha}} .
    \nonumber
  \end{eqnarray}
  This concludes the proof for the case $p = 2$. The case of $p > 2$ can be
  obtained by the previous bounds and hypercontractivity applied to the second
  degree Gaussian polynomial $(J_s (\xi_{\varepsilon}) \circ J_s
  (\xi_{\varepsilon}) - \gamma_{\varepsilon, s})$.
\end{proof*}

\begin{remark}
  \label{remark:stochasticestimates2}The result of Lemma
  \ref{lemma:stochasticestimates1} can be easily extended to the case where
  the resonant product $J_s (\xi_{\varepsilon}) \circ J_{\varepsilon}
  (\xi_{\varepsilon})$ is replaced by the standard product $J_s
  (\xi_{\varepsilon}) J_s (\xi_{\varepsilon})$, namely we have the following
  result: for every $\delta > 0, \ell < 1$, there is $c > 0$ such that for any
  $\delta > 0$, $p \geqslant 2$ and there is we have
  \[ \sup_{\varepsilon \in (0, 1)} \mathbb{E} [\| (1 + s)^{\ell} \| J_s
     (\xi_{\varepsilon}) J_s (\xi_{\varepsilon}) - \gamma_{\varepsilon,
     s}^{(2)} \|_{B^{- \delta}_{p, p}} \|_{B^{c \delta}_{p, p}
     (\mathbb{R}_+)}^p] < + \infty \]
\end{remark}

\begin{lemma}
  \label{lemma:inequalitydoubleexpectation1}For any $\delta > 0$ there is
  $\kappa < \frac{1}{2}$ small enough such that we have
  \[ \sup_{\varepsilon \in (0, 1)} \mathbb{E} \left[ \left\| \left( {J_s} 
     (\xi_{\varepsilon} W_s) J_s \xi_{\varepsilon} {- J_s}  \xi_{\varepsilon}
     J_s \xi_{\varepsilon} W_s \right) \right\|_{H^{- \delta}
     (\mathbb{T}^2)}^2 \right] \lesssim (1 + s)^{- 3 + 2 \kappa} . \]
\end{lemma}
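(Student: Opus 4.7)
My approach is to recognise the expression as a commutator between the smoothing operator $J_s$ and multiplication by $W_s$, and to exploit the frequency localisation of $J_s$ at scale $|k|^2\sim s$ together with the short-distance increment behaviour of the Gaussian field $W_s$. First, I factor
\[
J_s(\xi_\varepsilon W_s)\, J_s\xi_\varepsilon - (J_s\xi_\varepsilon)(J_s\xi_\varepsilon)\, W_s = J_s\xi_\varepsilon\cdot R_s,\qquad R_s := J_s(\xi_\varepsilon W_s) - (J_s\xi_\varepsilon)\, W_s .
\]
Writing $J_s$ through its kernel $K_s$ (the inverse Fourier transform of $m_s(k)=\sigma_s(|k|^2)/\sqrt{|k|^2+m^2}$, supported at $|k|^2\sim s$ and spatially concentrated at scale $s^{-1/2}$), the pointwise representation
\[
R_s(x) = \int K_s(x-y)\,\xi_\varepsilon(y)\,[W_s(y)-W_s(x)]\,dy
\]
already displays the desired commutator gain: only the increments of $W_s$ on the support of $K_s(x-\cdot)$ contribute.

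Since the two white noises $\xi$ and $X$ live on independent probability spaces (cf.\ Notation~\ref{notation:omegaprime}), I perform a chaos decomposition of $J_s\xi_\varepsilon\cdot R_s$ in $\xi_\varepsilon$. Applying the Wick identity $\xi_\varepsilon(z)\xi_\varepsilon(y) = :\xi_\varepsilon(z)\xi_\varepsilon(y): + (\rho_\varepsilon\ast\rho_\varepsilon)(z-y)$ yields the orthogonal splitting $J_s\xi_\varepsilon\cdot R_s = A_s + B_s$, where
\[
A_s(x) = \int K_s^2(x-y)[W_s(y)-W_s(x)]\,dy = (K_s^2\ast W_s)(x) - \|K_s\|_{L^2}^2\, W_s(x)
\]
is the contraction term (depending only on $W_s$), and $B_s$ is the second-chaos remainder, orthogonal to $A_s$.

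For $A_s$, in Fourier $\widehat{A_s}(k) = \bigl[\widehat{K_s^2}(k) - \widehat{K_s^2}(0)\bigr]\hat W_s(k)$; since $K_s^2$ is radial with $\int K_s^2|y|^2\,dy\lesssim s^{-2}$, Taylor expansion gives $|\widehat{K_s^2}(k) - \widehat{K_s^2}(0)|\lesssim |k|^2/s^{2}$ on the spectral support $|k|\lesssim\sqrt s$ of $W_s$, and combined with $\mathbb{E}|\hat W_s(k)|^2\lesssim (1+|k|^2)^{-1}$ the $H^{-\delta}$ sum yields the required decay. For $B_s$, conditioning on $W_s$ and applying Wick's theorem (for the 2nd $\xi$-chaos) reduces the second moment to
\[
\mathbb{E}\bigl[B_s(x)^2\bigm| W_s\bigr] = \|K_s\|_{L^2}^2\int K_s^2(x-y)(W_s(y)-W_s(x))^2\,dy + A_s(x)^2 ,
\]
and taking full expectation with the short-distance increment bound
\[
\mathbb{E}|W_s(y)-W_s(x)|^2 \lesssim \min\bigl(s|x-y|^2,\ \log(2+|x-y|\sqrt s)\bigr),
\]
derived from a direct Fourier computation using the covariance $\rho_s(|k|^2)/(|k|^2+m^2)$ of $W_s$, together with $\|K_s\|_{L^2}^2\lesssim s^{-1}$ and a Bernstein-type gain from the spectral localisation of the product at $|k|\lesssim\sqrt s$, controls $B_s$ in $H^{-\delta}$.

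The main obstacle is tracking three simultaneous sources of decay: the $L^2$-size $s^{-1}$ of $J_s\xi_\varepsilon$, the commutator gain from $|W_s(y)-W_s(x)|^2$ on the support of $K_s$, and the Bernstein gain in $H^{-\delta}$ from the spectral support $|k|\lesssim\sqrt s$. The small $s^{2\kappa}$ loss in the claimed exponent comes precisely from the logarithmic correction at the crossover scale $|x-y|\sim s^{-1/2}$, converted polynomially for arbitrarily small $\kappa$, in complete analogy with the mechanism in Lemma~\ref{lemma:stochasticestimates1}.
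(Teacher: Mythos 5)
Your decomposition is, structurally, the same as the paper's: writing the expression as $J_s\xi_\varepsilon\cdot R_s$ and splitting according to the Wiener chaos in $\xi_\varepsilon$, your contraction term $A_s$ is exactly the paper's $\Iota\Iota+\Iota\Iota\Iota$ (indeed $\widehat{K_s^2}(n)-\widehat{K_s^2}(0)=\sum_{n_1}(J_s(n_1-n)-J_s(n_1))J_s(n_1)$ is precisely the inner sum appearing in $\Iota\Iota$), and your second-chaos term $B_s$ is the paper's $\Iota$; the only genuine difference is that you work with physical-space kernels and increments of $W_s$ where the paper works with finite differences of the symbol $J_s(\cdot)$. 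Your conditional-variance identity for $B_s$ is correct.

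The gap is in the power counting, which is where the entire content of the lemma lies and which you assert rather than verify. First, your frequency calibration is inconsistent with the one the paper actually uses: in the proofs of Lemma \ref{lemma:stochasticestimates1} and of this lemma, $J_s$ is supported on an annulus $|k|\sim s$ (not $|k|^2\sim s$), so for instance $\int K_s^2(y)|y|^2\,dy=\sum_k|\nabla_kJ_s(k)|^2\lesssim s^2\cdot s^{-5}=s^{-3}$ rather than $s^{-2}$, and the increment bound for $W_s$ has its crossover at $|x-y|\sim s^{-1}$. Second, and more seriously, neither of your two terms reaches $(1+s)^{-3+2\kappa}$ with the ingredients you list. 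For $A_s$: combining $|\widehat{K_s^2}(k)-\widehat{K_s^2}(0)|\lesssim|k|^2\int K_s^2|y|^2\,dy$ with $\mathbb{E}|\hat W_s(k)|^2\lesssim\mathbbm{1}_{|k|\lesssim\Lambda_s}\langle k\rangle^{-2}$ gives, in either convention, $\sum_{|k|\le\Lambda_s}\langle k\rangle^{-2-2\delta}|k|^4\bigl(\int K_s^2|y|^2\bigr)^2\sim(1+s)^{-2-c\delta}$, which for small $\delta$ is a full power of $s$ short of the target; "the required decay" does not follow. For $B_s$: your conditional-variance formula yields the $L^2$ bound $\mathbb{E}\|B_s\|_{L^2}^2\lesssim s^{-2}\log s$, and a "Bernstein-type gain" from the $H^{-\delta}$ weight on a function whose spectrum lives at $|k|\lesssim\Lambda_s$ can only produce an extra factor $\Lambda_s^{-O(\delta)}$ — never the additional full power of $s$ needed to reach $s^{-3+2\kappa}$ — and you identify no other mechanism. (The paper's own estimate of $\Iota\Iota$ is itself shaky at this point: the sum $\sum_{|n|\le s}\langle n\rangle^{-2\delta}|n|^{-1}$ is treated there as $O(1)$ but is of order $s^{1-2\delta}$ on $\mathbb{Z}^2$; so the exponent $-3+2\kappa$ is delicate. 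But your write-up does not supply the missing computation either, and the bounds you do state demonstrably stop at $(1+s)^{-2-c\delta}$.)
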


\begin{proof}
  We do the explicit computation in the case $\varepsilon = 0$. The case
  $\varepsilon < 1$ and the uniformity of the inequality with respect to $0 <
  \varepsilon < 1$ can be obtained with a similar method. We compute
  \begin{eqnarray}
    &  & \mathbb{E} [\| (J_s (\xi W_s) - (J_s \xi) W_s) J_s \xi \|^2_{H^{-
    \delta}}] \nonumber\\
    & = & \sum_n \langle n \rangle^{- 2 \delta} \mathbb{E} \left( \sum_{n_1}
    \sum_{m_1} (J_s (n_1 - n) - J_s (n_1 - n - m_1)) \xi (m_1 - n_1 - n) W_s
    (m_1) J_s (n_1) \xi (n_1) \right)^2 \nonumber\\
    & = & \sum_n \langle n \rangle^{- 2 \delta} \mathbb{E} \left. \left[
    \sum_{n_1, n_2} \sum_{m_1, m_2} (J_s (n_1 - n) - J_s (n_1 - n - m_1)) (J_s
    (n_2 - n) - J_s (n_2 - n - m_2)) \times \right. \right. \nonumber\\
    &  & \left. \left. \phantom{\sum_m} \hat{\xi} (n_1 - m_1 - n) \hat{\xi}
    (n_2 - m_2 - n) \hat{W}_s (m_1) \hat{W}_s (m_2) J_s (n_1) \hat{\xi} (n_1)
    J_s (n_2) \hat{\xi} (n_2) \right] \right. \nonumber\\
    & = & \sum_n \langle n \rangle^{- 2 \delta} \sum_{n_1, n_2} \sum_{m_1,
    m_2} \left. \left( (J_s (n_1 - n) - J_s (n_1 - n - m_1)) (J_s (n_2 - n) -
    J_s (n_2 - n - m_2)) \times \phantom{\int} \right. \right. \nonumber\\
    &  & \left. \left. \phantom{\int} J_s (n_1) J_s (n_2) \mathbb{E} \left[
    \hat{\xi} (n_1) \hat{\xi} (n_2) \hat{\xi} \left( n_1 - m_1 - n \right)
    \hat{\xi} (n_2 - m_2 - n) \right] \mathbb{E} [\hat{W}_s (m_1) \hat{W}_s
    (m_2)] \right) \right. \nonumber\\
    & = & \sum_n \langle n \rangle^{- 2 \delta} \sum_{n_1} \sum_{m_1
    \leqslant s} (J_s (n_1 - n) - J_s (n_1 - m_1 - n))^2 \frac{1}{| m + m_1
    |^2} J^2_s (n_1) \nonumber\\
    &  & + \sum_{n \leqslant s} \langle n \rangle^{- 2 \delta} \sum_{n_1,
    n_2} \frac{1}{| m + n |^2} (J_s (n_1 - n) - J_s (n_1)) (J_s (n_2 - n) -
    J_s (n_2)) J_s (n_1) J_s (n_2) \nonumber\\
    &  & + \sum_{n \leqslant s} \langle n \rangle^{- 2 \delta} \sum_{n_1}
    \frac{1}{| m + n |^2} (J_s (n_1 - n) - J_s (n_1)) (J_s (n_2 - n) - J_s
    (n_2)) J_s (n_1) J_s (n_2) \nonumber\\
    & = & \Iota + \Iota \Iota + \Iota \Iota \Iota 
    \label{eq:inequalitydoubleexpectation1}
  \end{eqnarray}
  Estimate on $\Iota$:
  
  Observe that $J_s$ is supported in an annulus of radius $s$ and we are
  restricting to $m_1 \leqslant s$. This means we can rewrite
  \begin{eqnarray*}
    &  & \mathbbm{1}_{\{ | m_1 | \leqslant s \}} (J_s (n_1 - n) - J_s (n_1 -
    m_1 - n))^2\\
    & = & \mathbbm{1}_{\{ | m_1 | \leqslant 2 | n - n_1 | \}} (J_s (n_1 - n)
    - J_s (n_1 - m_1 - n))^2 + \left( {\mathbbm{1}_{\{ | n - n_1 | \leqslant |
    m_1 | / 2 \}}}  \right) J_s (n_1 - n) - J_s (n_1 - m_1 - n)\\
    & \leqslant & \langle s \rangle^{- 1 - \kappa} \langle n_1 - n \rangle^{-
    2 + 2 \kappa} \langle m_1 \rangle^{- \kappa} .
  \end{eqnarray*}
  \begin{eqnarray*}
    &  & \mathbbm{1}_{\{ | m_1 | \leqslant s \}} (J_s (n_1 - n) - J_s (n_1 -
    m_1 - n))^2\\
    & \leqslant & \mathbbm{1}_{\{ | m_1 | \leqslant 2 | n - n_1 | \}} (J_s
    (n_1 - n) - J_s (n_1 - m_1 - n))^2 + \left( {\mathbbm{1}_{\{ | n - n_1 |
    \leqslant | m_1 | / 2 \}}}  \right) J^2_s (n_1 - n - m_1)\\
    & \leqslant & \langle s \rangle^{- 1 - \kappa} \langle n_1 - n \rangle^{-
    2 + 2 \kappa} \langle m_1 \rangle^{- \kappa}
  \end{eqnarray*}
  for some $\kappa > 0$ small enough. Then plugging the previous inequality in
  \eqref{eq:inequalitydoubleexpectation1} we get
  \begin{eqnarray*}
    &  & \mathbb{E} [\| (J_s (\xi W_s) - (J_s \xi) W_s) J_s \xi \|^2_{H^{-
    \delta}}]\\
    & \leqslant &  \sum_n \langle n \rangle^{- 2 \delta} \sum_{n_1} \sum_{m_1
    \leqslant s} \langle s \rangle^{- 3 - 2 \kappa} \langle n_1 - n \rangle^{-
    2 + 2 \kappa} \langle m_1 \rangle^{- 2 - \kappa} \langle n_1 \rangle^{-
    2}\\
    & \lesssim & \langle s \rangle^{- 3 + 2 \kappa}
  \end{eqnarray*}
  Estimate on $\Iota \Iota$: One can easily check that
  \begin{eqnarray*}
    \nabla_k J_s (k) & \lesssim & \mathbbm{1}_{k \backsim s} (\langle s
    \rangle^{- 1 / 2} \langle k \rangle^{- 2} + \langle s \rangle^{- 3 / 2}
    \langle k \rangle^{- 1})\\
    & \lesssim & \langle t \rangle^{- 5 / 2}
  \end{eqnarray*}
  So
  \[ | (J_s (n_1 - n) - J_s (n_1)) | \lesssim \langle t \rangle^{- 5 / 2} | n
     | . \]
  and we have $J_s (k) \lesssim t^{- 3 / 2}$.
  
  Recall also that $| J_s (n_1) | \lesssim \mathbbm{1}_{| n_1 | \lesssim s}
  \langle s \rangle^{- 1 / 2} \langle n_1 \rangle$ so $\sum_{n_1} J_s (n_1)
  \lesssim \langle s \rangle^{1 / 2}$
  
  Plugging this into the sum we get
  \begin{eqnarray*}
    &  & \sum_{n \leqslant s} \langle n \rangle^{- 2 s} \sum_{n_1} \frac{1}{|
    m + n |^2} (J_s (n_1 - n) - J_s (n_1))\\
    &  & \times (J_s (n_2 - n) - J_s (n_2)) J_s (n_1) J_s (n_2)\\
    & \lesssim & \sum_{n \leqslant s} \langle n \rangle^{- 2 s} \frac{1}{| m
    + n |^2} \langle s \rangle^{- 8 / 2} | n | \sum_{n_1, n_2} | J_s (n_1) J_s
    (n_2) |\\
    & \lesssim & \sum_{n \leqslant s} \langle n \rangle^{- 2 s} \frac{1}{| m
    + n |^2} \langle s \rangle^{- 4} \langle s \rangle | n |\\
    & \lesssim & \sum_{n \leqslant s} \langle n \rangle^{- 2 s} \frac{1}{| m
    + n |} \langle s \rangle^{- 3}\\
    & \lesssim & \langle s \rangle^{- 3}
  \end{eqnarray*}
  So
  \begin{eqnarray*}
    &  & \int^{\infty}_0 \mathbb{E} [\| (J_s (\xi_{\varepsilon} W_s) - (J_s
    \xi_{\varepsilon}) W_s) J_s \xi_{\varepsilon} \|_{H^{- s}}] \mathd s\\
    & \leqslant & \int^{\infty}_0 \mathbb{E} [\| (J_s (\xi_{\varepsilon} W_s)
    - (J_s \xi) W_s) J_s \xi_{\varepsilon} \|^2_{H^{- s}}]^{1 / 2} \mathd s\\
    & \lesssim & \int \langle s \rangle^{- 3 / 2} \mathd s.
  \end{eqnarray*}
  Finally the estimate on $\Iota \Iota \Iota$ is a simpler version of the
  estimate on $\Iota \Iota$.
\end{proof}

\begin{lemma}
  \label{lemma:stochasticestimates3}For any $0 < \kappa < \delta$ and for any
  $p \geqslant 2$ we have
  \[ \sup_{\varepsilon \in (0, 1)} \mathbb{E} \left[ \left\| \left( {J_s} 
     (\xi_{\varepsilon} W_s) J_s \xi_{\varepsilon} -
     \dot{\gamma}_{\varepsilon, s}^{(2)} W_s \right) \right\|_{H^{- \delta}}^p
     \right] \lesssim (1 + s)^{(- 1 - \kappa) p} . \]
\end{lemma}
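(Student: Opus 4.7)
The plan is to split off a commutator so that the term reduces to quantities already controlled. Namely, add and subtract $W_s\,(J_s\xi_\varepsilon)^2$ to write
\[
J_s(\xi_\varepsilon W_s)\,J_s\xi_\varepsilon - \dot\gamma^{(2)}_{\varepsilon,s} W_s = T_1 + T_2,
\]
where
\[
T_1 := \bigl(J_s(\xi_\varepsilon W_s) - (J_s\xi_\varepsilon)\,W_s\bigr)\,J_s\xi_\varepsilon,
\qquad
T_2 := W_s\bigl((J_s\xi_\varepsilon)^2 - \dot\gamma^{(2)}_{\varepsilon,s}\bigr).
\]
The $T_1$ term is precisely the quantity treated in Lemma \ref{lemma:inequalitydoubleexpectation1}, which gives $\mathbb{E}\|T_1\|_{H^{-\delta}}^2 \lesssim (1+s)^{-3+2\kappa}$; since $T_1$ is a polynomial of degree at most three in the Gaussians $(\xi,X)$, hypercontractivity lifts this to any $p\geq 2$ with a bound far stronger than the one required.

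For $T_2$ the mechanism is different. The factor $W_s$ is built from $X$ on $\Omega'$ while $A_s := (J_s\xi_\varepsilon)^2 - \dot\gamma^{(2)}_{\varepsilon,s}$ is built from $\xi$ on $\Omega$, so $W_s \perp A_s$. Expanding in Fourier and using that translation invariance forces $\mathbb{E}[\hat W_s(k)\overline{\hat W_s(k')}]=\delta_{kk'}\mathbb{E}|\hat W_s(k)|^2$ and similarly for $A_s$, I would compute
\[
\mathbb{E}\|T_2\|_{H^{-\delta}}^2 = \sum_{k,l\in\mathbb{Z}^2}\langle k+l\rangle^{-2\delta}\,\mathbb{E}|\hat W_s(k)|^2\,\mathbb{E}|\hat A_s(l)|^2.
\]
The relevant inputs are the frequency-localized bounds $\mathbb{E}|\hat W_s(k)|^2\lesssim \mathbbm{1}_{|k|\lesssim s}(1+|k|^2)^{-1}$, and (via Wick's theorem and the fact that $\sigma_s(|k|^2)^2/(m^2+|k|^2)$ is supported in $|k|\sim s$) the estimate $\mathbb{E}|\hat A_s(l)|^2\lesssim \mathbbm{1}_{|l|\lesssim s}\, s^{-4}$. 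Combining these with the elementary $\sum_{|l|\lesssim s}\langle k+l\rangle^{-2\delta}\lesssim s^{2-2\delta}$ (for $\delta<1$) gives
\[
\mathbb{E}\|T_2\|_{H^{-\delta}}^2 \;\lesssim\; \frac{\log(2+s)}{s^{4}}\cdot s^{2-2\delta} \;\lesssim\; (1+s)^{-2-2\kappa}
\]
for any $\kappa<\delta$, the logarithm being absorbed by a small power of $s$. Uniformity in $\varepsilon\in(0,1)$ follows because introducing $\rho_\varepsilon$ only inserts an extra factor $|\hat\rho_\varepsilon(k)|^2\leq 1$ in the symbols.

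To pass from $p=2$ to $p\geq 2$, I would work at the level of Littlewood--Paley blocks: since $T_2$ has Fourier support in $\{|n|\lesssim s\}$, each $\Delta_j T_2(x)$ is a polynomial of degree at most three in Gaussian variables, so Nelson's hypercontractivity pointwise in $x$ gives $\mathbb{E}|\Delta_j T_2(x)|^p \lesssim_p (\mathbb{E}|\Delta_j T_2(x)|^2)^{p/2}$. Using stationarity to make $\mathbb{E}|\Delta_j T_2(x)|^p$ independent of $x$ and Minkowski's integral inequality (valid for $p\geq 2$) to exchange $L^p(\bar\Omega)$ and $L^2(\mathbb{T}^2)$ yields $\mathbb{E}\|\Delta_j T_2\|_{L^2}^p \lesssim_p (\mathbb{E}\|\Delta_j T_2\|_{L^2}^2)^{p/2}$. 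A final Minkowski inequality in $\ell^2_j$ gives $\mathbb{E}\|T_2\|_{H^{-\delta}}^p \lesssim_p (\mathbb{E}\|T_2\|_{H^{-\delta}}^2)^{p/2}$, concluding the proof for $T_2$.

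The main obstacle is the $p=2$ bookkeeping for $T_2$: one must not discard the factor $\langle k+l\rangle^{-2\delta}$ in the Fourier sum, because throwing it away only yields the insufficient bound $\log(s)/s^2$, whereas exploiting the shift by $k$ in $\sum_l\langle k+l\rangle^{-2\delta}$ recovers the extra $s^{-2\delta}$ decay that, combined with $\kappa<\delta$, absorbs the logarithm. The hypercontractivity step for $p>2$ is then essentially automatic, as in the proof of Lemma \ref{lemma:stochasticestimates1}.
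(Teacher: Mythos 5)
Your proof is correct and follows essentially the same route as the paper: the identical decomposition into the commutator term $T_1$ (controlled exactly by Lemma \ref{lemma:inequalitydoubleexpectation1}) plus the product of $W_s$ with the Wick square $(J_s\xi_\varepsilon)^2-\dot\gamma^{(2)}_{\varepsilon,s}$, handled via independence of the two factors, a Fourier second-moment computation, and hypercontractivity for $p>2$. The only cosmetic difference is that the paper estimates $\mathbb{E}^{\omega'}\|fW_s\|^2_{H^{-\delta}}$ conditionally on $f=(J_s\xi_\varepsilon)^2-\dot\gamma^{(2)}_{\varepsilon,s}$ and then invokes Remark \ref{remark:stochasticestimates2}, whereas you take the full expectation at once using the frequency-localized bound on $\mathbb{E}|\hat{A}_s(l)|^2$; both yield the same $(1+s)^{-2-2\kappa}$ after correctly keeping the weight $\langle k+l\rangle^{-2\delta}$, as you note.
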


\begin{proof}
  We have that
  \begin{eqnarray}
    &  & \left\| \left( {J_s}  (\xi_{\varepsilon} W_s) J_s \xi_{\varepsilon}
    - \dot{\gamma}_{\varepsilon, s}^{(2)} W_s \right) \right\|_{H^{- \delta}}
    \nonumber\\
    & \leqslant & \left\| \left( {J_s}  (\xi_{\varepsilon} W_s) J_s
    \xi_{\varepsilon} {- J_s}  \xi_{\varepsilon} J_s \xi_{\varepsilon} W_s
    \right) \right\|_{H^{- \delta}} + \left\| \left( {J_s} 
    (\xi_{\varepsilon}) J_s \xi_{\varepsilon} - \dot{\gamma}_{\varepsilon,
    s}^{(2)} \right) W_s \right\|_{H^{- \delta}} . \nonumber
  \end{eqnarray}
  The first term has been estimated in Lemma
  \ref{lemma:inequalitydoubleexpectation1}. To estimate the second part denote
  \[ f = \left( {J_s}  (\xi_{\varepsilon}) J_s \xi_{\varepsilon} -
     \dot{\gamma}_{\varepsilon, s}^{(2)} \right) . \]
  Note that $f$ is independent of $\mathcal{F}'$. We will show that for any
  $\kappa > 0$
  \[ \mathbb{E}_{w'} [\| f W_s \|^2_{H^{- \delta}}] \lesssim s^{\kappa / 2 +
     \delta} \| f \|^2_{H^{- \delta}} . \]
  Indeed
  \begin{eqnarray*}
    &  & \mathbb{E}_{w'} [\| f W_s \|^2_{H^{- \delta}}]\\
    & = & \sum_{n \in \mathbb{Z}^2} \langle n \rangle^{- 2 \delta} \mathbb{E}
    \left( \sum_{k \in \mathbb{Z}^2} \hat{f} (n - k) \hat{W}_s (k) \right)^2\\
    & \lesssim & \sum_{n \in \mathbb{Z}^2} \langle n \rangle^{- 2 \delta}
    \sum_{k \in \mathbb{Z}^2, | k | \lesssim s} \hat{f} (n - k)^2
    \frac{1}{\langle k \rangle^2}\\
    & \lesssim & s^{\kappa} \sum_{n \in \mathbb{Z}^2} \sum_{k \in
    \mathbb{Z}^2} \langle n \rangle^{- 2 \delta} \frac{1}{\langle k \rangle^{2
    + \kappa}} \hat{f} (n - k)^2\\
    & \lesssim & \sum_{n \in \mathbb{Z}^2} \sum_{k \in \mathbb{Z}^2}
    \frac{1}{\langle k \rangle^{2 + \kappa - 2 \delta}} \frac{1}{\langle n - k
    \rangle^{2 \delta}} \hat{f} (n - k)^2
  \end{eqnarray*}
  Now by Young's convolution inequality
  \[ \left\| \sum_{k \in \mathbb{Z}^2} \sum_{k \in \mathbb{Z}^2}
     \frac{1}{\langle k \rangle^{2 + \kappa - 2 \delta}} \frac{1}{\langle n -
     k \rangle^{2 \delta}} \hat{f} (n - k)^2 \right\|_{l_n^1} \lesssim \left\|
     \frac{1}{\langle \cdot \rangle^{2 \delta}} \hat{f} (\cdot)^2
     \right\|_{l^1} \left\| \frac{1}{\langle \cdot \rangle^{2 + \kappa - 2
     \delta}} \right\|_{l^1} \]
  from which we can conclude.
  
  In the case $p = 2$, the result then follows from Lemma
  \ref{lemma:stochasticestimates1} (see also Remark
  \ref{remark:stochasticestimates2}) and Lemma
  \ref{lemma:inequalitydoubleexpectation1}. The general case can be proved
  using the fact that $\left( {J_s}  (\xi_{\varepsilon}) J_s \xi_{\varepsilon}
  - \dot{\gamma}_{\varepsilon, s}^{(2)} \right) W_s$ is a third degree
  polynomial and then applying hypercontractivity.
\end{proof}

\begin{lemma}
  \label{lemma:stochasticestimates4}We have that for any $\delta > 0$ and $p
  \geqslant 1$ there is $0 < \kappa \ll 1$ for which
  \[ \sup_{\varepsilon \in (0, 1)} \mathbb{E} [\| \xi_{\varepsilon} W_s
     \|_{B^{- 1 - \delta}_{p, p} (\mathbb{T}^2)}^p] \lesssim (\log (1 + s))^p,
  \]
  \[ \sup_{\varepsilon \in (0, 1)} \mathbb{E} \left[ \int_{- 1}^1 \frac{\|
     \xi_{\varepsilon} W_s - \xi_{\varepsilon} W_{s + \Delta s} \|_{B^{- 1 -
     \delta}_{p, p} (\mathbb{T}^2)}^p}{| \Delta s |^{1 + \alpha p}} \mathd
     \Delta s \right] \lesssim (1 + s)^{2 \kappa p} . \]
\end{lemma}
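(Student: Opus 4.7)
The plan is to reduce both inequalities to computations on the $L^{2}(\bar{\Omega})$--variance and to upgrade to arbitrary $p\ge 2$ via Gaussian hypercontractivity, exploiting the fact that $\xi_\varepsilon(\omega)\,W_s(\omega')$ lies in the second inhomogeneous Wiener chaos on $\bar\Omega=\Omega\times\Omega'$, since $\xi_\varepsilon$ and $W_s$ are built from independent Gaussian noises. The case $1\le p<2$ will then follow from Jensen's inequality.

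For the first bound I would begin with a Fourier expansion. Independence of $\xi_\varepsilon$ and $W_s$ together with the stationarity of their Fourier modes gives, for every $k\in\mathbb Z^2$,
\[
\mathbb E\bigl[|\widehat{\xi_\varepsilon W_s}(k)|^2\bigr]=\sum_{\ell\in\mathbb Z^2}|\hat\rho_\varepsilon(\ell)|^2\,\frac{\rho_s(k-\ell)}{m^2+|k-\ell|^2}\lesssim \sum_{|m|\lesssim s+1}\frac{1}{1+|m|^2}\lesssim\log(1+s),
\]
uniformly in $\varepsilon$, using $|\hat\rho_\varepsilon|\le 1$ and the fact that $\rho_s$ is supported in a ball of radius $\lesssim s$. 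Multiplying by $|\varphi_j(k)|^{2}$ and summing yields the Littlewood--Paley block estimate $\mathbb E[\|\Delta_j(\xi_\varepsilon W_s)\|_{L^2}^2]\lesssim 2^{2j}\log(1+s)$; hypercontractivity for the second chaos combined with stationarity upgrades this to
\[
\mathbb E\bigl[\|\Delta_j(\xi_\varepsilon W_s)\|_{L^p}^p\bigr]\lesssim_{p}\bigl(2^{2j}\log(1+s)\bigr)^{p/2},
\]
so summing against $2^{-(1+\delta)jp}$ leaves a convergent geometric series $\sum_j 2^{-\delta jp}$ and produces $(\log(1+s))^{p/2}\le(\log(1+s))^p$, after a trivial check for small $s$.

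For the time-increment inequality I would apply the same strategy to $\xi_\varepsilon(W_s-W_{s+\Delta s})$. The main new input will be a pointwise interpolation
\[
\bigl|\rho_s(k)-\rho_{s+\Delta s}(k)\bigr|\lesssim\bigl(|\Delta s|/(1+s)\bigr)^{\beta},\qquad \beta\in[0,1],
\]
valid on the annular support $|k|\sim s+1$ where the difference is non-zero, obtained by interpolating the smoothness bound $|\rho_s-\rho_{s+\Delta s}|\lesssim|\Delta s|/(1+s)$ against the trivial $|\rho_s-\rho_{s+\Delta s}|\lesssim 1$. Because the remaining $\ell$--sum is then confined to the annulus $|k-\ell|\sim s+1$ and weighted by $(1+|k-\ell|^2)^{-1}$, it contributes only an $O(1)$ factor, so the covariance becomes $\mathbb E[|\widehat{\xi_\varepsilon(W_s-W_{s+\Delta s})}(k)|^2]\lesssim(|\Delta s|/(1+s))^{\beta}$. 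Hypercontractivity and the Besov summation as above give $\mathbb E[\|\xi_\varepsilon(W_s-W_{s+\Delta s})\|_{B^{-1-\delta}_{p,p}}^p]\lesssim(|\Delta s|/(1+s))^{\beta p/2}$, and the Slobodeckij-type integral in $\Delta s$ converges as soon as $\beta/2>\alpha$. Choosing $\beta$ close to $1$ covers any $\alpha<1/2$ and in fact produces a bound $\lesssim(1+s)^{-\beta p/2}$, comfortably inside the stated $(1+s)^{2\kappa p}$ growth.

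The delicate point will be the interpolation identifying the joint $(s,\Delta s)$--decay of $|\rho_s-\rho_{s+\Delta s}|$ and, crucially, noticing that its annular support collapses the Fourier sum from $\log(1+s)$ to $O(1)$; otherwise one would still obtain at worst a logarithmic growth in $s$, which is acceptable but suboptimal. The rest is routine Littlewood--Paley bookkeeping combined with Gaussian hypercontractivity, closely mirroring the computations carried out in Lemmas \ref{lemma:stochasticestimates1}--\ref{lemma:stochasticestimates3}.
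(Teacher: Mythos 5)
Your argument is correct and follows essentially the same route as the paper's: independence of $\xi_{\varepsilon}$ and $W_s$ places the product in the second Wiener chaos, the $L^2$ block estimate reduces to $\sum_k \rho_s(k)/(|k|^2+m^2)\lesssim\log(1+s)$, hypercontractivity upgrades to general $p$, and the increment bound uses the same interpolation between the $O(|\Delta s|/(1+s))$ smoothness bound and the trivial bound that the paper borrows from Lemma \ref{lemma:stochasticestimates1}. Your observation that the annular Fourier support of $\rho_s-\rho_{s+\Delta s}$ collapses the logarithm in the increment term is a small refinement the paper does not spell out, but it is not needed for the stated $(1+s)^{2\kappa p}$ bound.
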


\begin{proof}
  The proof is similar to the one of Lemma \ref{lemma:stochasticestimates1},
  we report here only the main steps. First we note that, since $W_s$ is
  independent of $\xi_{\varepsilon}$, we have $W_s \xi_{\varepsilon} = : W_s
  \xi_{\varepsilon} :$ this means that
  \begin{eqnarray}
    \mathbb{E} [\| K_j \ast : W_s \xi_{\varepsilon} : \|_{L^2}^2] & \lesssim &
    \int_0^s \int_{\mathbb{T}^2} \int_{\mathbb{T}^2} (J_{\tau}^{\ast 2} (0))
    K_j (x) K_j (x) \mathd x \mathd \tau \nonumber\\
    & \lesssim & \left( \int_0^s \| J_{\tau}^{\ast 2} \|_{L^{\infty}} \mathd
    \tau \right) \left( \int_{\mathbb{T}^2} | K_j |^2 \mathd x \right)
    \lesssim \left( \int_0^s \| J_{\tau}^{\ast 2} \|_{L^{\infty}} \mathd \tau
    \right) 2^{2 j} \nonumber
  \end{eqnarray}
  We also have
  \[ \left( \int_0^s \| J_{\tau}^{\ast 2} \|_{L^{\infty}} \mathd \tau \right)
     = \int_0^t \sum_{k \in \mathbb{Z}^2} \frac{| \sigma_{\tau} (k) |^2}{(| k
     |^2 + 1)} \lesssim \sum_{k \in \mathbb{Z}^2} \frac{\rho_t (k)}{(| k |^2 +
     1)} \lesssim \log (1 + s) . \]
  The second bound and the generic case $p > 2$ can be obtained as in Lemma
  \ref{lemma:stochasticestimates1}.
\end{proof}

\subsection{Analytical estimates}\label{subsection:analyticalestimates}

We want now prove some estimates on the terms $\Gamma_i$, $i = 1, \ldots, 7$,
appearing in the expansion \eqref{eq:functional-renom} of $F$. The main aim is
to prove some upper bounds depending on the sums involving either purely
stochastic terms or the positive terms$\int_0^{\infty} \| u_s \|_{L^2}^2
\mathd s$ or $\int_0^{\infty} \frac{\| Z_s \|_{L^2}^2}{(1 + s)^{1 + \kappa}}
\mathd s$ since this kind of term can be compensated by the positive parts of
$F$ (namely $\mathfrak{G} + \frac{1}{2} \int^{\infty}_0 \| l_t (u) \|^2_{L^2}
\mathd s$). \ \

\begin{lemma}
  \label{lemma:inequalityZu}For every $0 < \delta$ we have
  \[ \sup_{t \in [0, \infty]} \| Z_t (u) \|^2_{H^{1 - \delta}} \leqslant
     \int^{\infty}_0 \| u_s \|^2_{H^{- \delta}} \mathd s \]
\end{lemma}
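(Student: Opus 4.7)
The plan is to pass to Fourier variables and apply Cauchy–Schwarz in the time integral, so that the two factors separate into a purely deterministic multiplier estimate and the integrated norm of $u$.

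First I would note that, by definition of $J_s$, the $k$-th Fourier coefficient of $Z_t(u)$ reads
\[
\widehat{Z_t(u)}(k) \;=\; \int_0^t \frac{\sigma_s(|k|^2)}{\sqrt{m^2+|k|^2}}\,\widehat{u_s}(k)\,\mathd s.
\]
Applying Cauchy–Schwarz in $s$ and using that $\int_0^t \sigma_s^2(|k|^2)\,\mathd s=\rho_t(|k|^2)\le 1$, one obtains
\[
|\widehat{Z_t(u)}(k)|^2 \;\le\; \frac{\rho_t(|k|^2)}{m^2+|k|^2}\int_0^t |\widehat{u_s}(k)|^2\,\mathd s \;\le\; \frac{1}{m^2+|k|^2}\int_0^t |\widehat{u_s}(k)|^2\,\mathd s.
\]

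Next I would multiply by the weight $(1+|k|^2)^{1-\delta}$, sum over $k\in\mathbb{Z}^2$, and apply Fubini:
\[
\|Z_t(u)\|_{H^{1-\delta}}^2 \;\le\; \int_0^t \sum_{k\in\mathbb{Z}^2} \frac{(1+|k|^2)^{1-\delta}}{m^2+|k|^2}\,|\widehat{u_s}(k)|^2\,\mathd s.
\]
Since $(1+|k|^2)/(m^2+|k|^2)\le 1$ (for the relevant choice of mass $m\ge 1$), the bracketed multiplier is dominated by $(1+|k|^2)^{-\delta}$, which is exactly the Fourier weight defining the $H^{-\delta}$ norm. Hence
\[
\|Z_t(u)\|_{H^{1-\delta}}^2 \;\le\; \int_0^t \|u_s\|_{H^{-\delta}}^2\,\mathd s \;\le\; \int_0^\infty \|u_s\|_{H^{-\delta}}^2\,\mathd s,
\]
and taking the supremum over $t\in[0,\infty]$ yields the claim. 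There is no real obstacle here: the only point to check is that the frequency weights collapse cleanly, which is the content of the elementary inequality $(1+|k|^2)^{1-\delta}/(m^2+|k|^2)\le(1+|k|^2)^{-\delta}$; the rest is Parseval and Cauchy–Schwarz.
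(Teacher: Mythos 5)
Your proof is correct and follows essentially the same route as the paper's: pass to Fourier coefficients, apply Cauchy--Schwarz in the time variable using $\int_0^t\sigma_s^2(|k|^2)\,\mathd s=\rho_t(|k|^2)\leqslant 1$, and let the frequency weights collapse to the $H^{-\delta}$ norm. The only (harmless) cosmetic difference is that the paper works with the equivalent weight $(m^2+|k|^2)^{\sigma}$ throughout, whereas you use $(1+|k|^2)^{\sigma}$ and absorb the ratio via $m\geqslant 1$.
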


\begin{proof}
  We have that
  \begin{eqnarray}
    \| Z_t (u) \|^2_{H^{1 - \delta}} & = & \sum_{k \in \mathbb{Z}^2} (m^2 + |
    k |^2)^{1 - \delta} | \mathcal{F} (Z_t) (k) |^2 \nonumber\\
    & = & \sum_{k \in \mathbb{Z}^2} (m^2 + | k |^2)^{1 - \delta} \left|
    \int_0^t \left( \frac{\mathd}{\mathd s} \rho_s (k) \right)^{1 / 2}
    \frac{\mathcal{F} (u_s) (k)}{\sqrt{(m^2 + | k |^2)}} \mathd s \right|^2
    \nonumber\\
    & = & \sum_{k \in \mathbb{Z}^2} (m^2 + | k |^2)^{- \delta} \left(
    \int_0^t \frac{\mathd}{\mathd s} \rho_s (k) \mathd s \right) \left(
    \int_0^t | \mathcal{F} (u_s) (k) |^2 \mathd s \right) \nonumber\\
    & = & \int_0^t \sum_{k \in \mathbb{Z}^2} | \rho_t (k) | (m^2 + | k
    |^2)^{- \delta} | \mathcal{F} (u_s) (k) |^2 \mathd s \nonumber\\
    & \leqslant & \int_0^t \sum_{k \in \mathbb{Z}^2} (m^2 + | k |^2)^{-
    \delta} | \mathcal{F} (u_s) (k) |^2 \mathd s = \int_0^t \| u_s \|_{H^{-
    \delta}}^2 \mathd s \leqslant \int_0^{\infty} \| u_s \|_{H^{- \delta}}^2
    \mathd s. \nonumber
  \end{eqnarray}
  
\end{proof}

We collect the bounds of the $\Gamma_i$ appearing in Proposition
\ref{proposition:reformulationF}.

\begin{lemma}
  For any $\kappa \in (0, 1)$ and for any $0 < \delta \ll 1$ there are $\alpha
  > 0$ and $0 < \theta < 1$, $0 < \lambda < \tau < \delta \ll 1$, $\eta > 0$,
  and $0 < \ell \ll 1$ such that we get
  \begin{eqnarray}
    | \Gamma_1 | & \lesssim & \kappa \int_0^{\infty} \| u_s \|_{H^{-
    \delta}}^2 \mathd s + \frac{1}{\kappa^{\alpha}} \| \xi_{\varepsilon}
    \|_{\mathcal{C}^{- 1 - \delta}}^{\frac{2}{\delta}} \int_0^{+ \infty}
    \frac{\| Z_t \|_{L^2}^2}{(1 + t)^{3 / \delta - 8}} \mathd t \nonumber\\
    | \Gamma_2 | & \lesssim & \kappa \int_0^{+ \infty} \| u_t \|_{H^{-
    \delta}}^2 \mathd t + \frac{1}{\kappa^{\alpha}} \left( \int^{\infty}_0
    \left\| \left( {J_s}  \xi_{\varepsilon} W_s \circ J_s \xi_{\varepsilon} -
    \dot{\gamma}_{\varepsilon, s}^{(2)} W_s \right) \right\|_{H^{- \delta}}
    \mathd s \right)^2 \nonumber\\
    | \Gamma_3 | & \lesssim & \kappa \int_0^{+ \infty} \| u_t \|_{H^{-
    \delta}}^2 \mathd t + \frac{1}{\kappa^{\alpha}} \int^{\infty}_0
    \frac{(\sup (1 + s)^{1 + \tau - \lambda} \| ((J_s \xi_{\varepsilon} \circ
    J_s \xi_{\varepsilon}) - \dot{\gamma}_{\varepsilon, s}^{(2)})
    \|_{\mathcal{C}^{- \delta}})^{\frac{1}{1 - \theta}}}{(1 + s)^{1 + \tau -
    \lambda}} \| Z_s \|_{L^2}^2 \mathd s \nonumber\\
    | \Gamma_4 | & \lesssim & \kappa \int_0^{+ \infty} \| u_t \|_{H^{-
    \delta}}^2 \mathd t + \frac{1}{\kappa^{\alpha}} \int_0^{+ \infty} \frac{\|
    \xi_{\varepsilon} \|_{\mathcal{C}^{- 1 - \delta}}^2 \| Z_s \|_{L^2}^2}{(1
    + t)^{1 + \eta}} \mathd t \nonumber
\end{eqnarray}
as well as
    \begin{eqnarray}
    | \Gamma_5 | & \lesssim & \kappa \int_0^t \| u \|_{H^{- \delta}}^2 \mathd
    t + \frac{1}{\kappa^{\alpha}} \| \xi \|_{\mathcal{C}^{- 1 - \delta}}^2
    (\sup_{s \in \mathbb{R}_+} (1 + s)^{\ell} \| \xi W_s \|_{\mathcal{C}^{- 1
    - \delta}}^2) \nonumber\\
    | \Gamma_6 | & \lesssim & \kappa \int^{\infty}_0 \| u_t \|_{H^{-
    \delta}}^2 \mathd t + \frac{1}{\kappa^{\alpha}} \left( \int^{\infty}_0
    \frac{| \gamma_{\varepsilon, t}^{(2)} |^2}{(1 + t)^{3 / 2 - 2 \delta}} \|
    W_t \|_{\mathcal{C}^{- \delta}}^2 \mathd t + \int^{\infty}_0 \frac{|
    \gamma_{\varepsilon, t}^{(2)} |^{\frac{2}{1 - \delta}}}{(1 + t)^{3 / 2}}
    \| Z_t \|_{L^2}^2 \mathd t \right) \nonumber\\
    | \Gamma_7 | & \lesssim & \kappa \int_0^{+ \infty} \| u_t \|_{H^{-
    \delta}}^2 \mathd t + \frac{1}{\kappa^{\alpha}} \left( \int_0^{+ \infty} |
    \dot{\gamma}^{(1)}_t (\omega) | \| W_t \|_{\mathcal{C}^{- \delta}} \mathd
    t \right)^2 \nonumber\\
    &  & + \frac{1}{\kappa^{\alpha}} \int_0^{+ \infty} | \gamma_t^{(1)} |^2
    \frac{\| W_t \|_{\mathcal{C}^{- \delta}}^2}{(1 + t)^{3 / 2 - 2 \delta}}
    \mathd t + \frac{1}{\kappa^{\alpha}} \int^{\infty}_0 \frac{|
    \gamma_t^{(1)} |^{\frac{2}{1 - \delta}}}{(1 + t)^{3 / 2}} \| Z_t
    \|_{L^2}^2 \mathd t. \nonumber
  \end{eqnarray}
  where the implied constants in the $\lesssim$ do not depend on $\kappa > 0$,
  where $\Gamma_1, \ldots, \Gamma_7$ are defined as in Proposition
  \ref{proposition:reformulationF}.
\end{lemma}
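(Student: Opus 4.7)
My strategy for all seven terms is the same. Each $\Gamma_i$ has the form $\int_0^\infty \langle A_s, u_s\rangle_{L^2}\,ds$ or a related bilinear quantity involving $W_s$, $Z_s$, $\xi_\varepsilon$ and the deterministic counter terms. The first step is Cauchy--Schwarz in $H^{-\delta}\times H^\delta$ to pair against $u_s$ in $H^{-\delta}$, followed by Young's inequality with weight $\kappa$ to peel off $\kappa\int \|u_s\|_{H^{-\delta}}^2\,ds$. What remains is a space-time norm of stochastic and deterministic objects. Two auxiliary devices are used repeatedly: (a) Lemma~\ref{lemma:inequalityZu}, which converts $\|Z_t\|_{H^{1-\delta}}^2$ into $\int\|u_s\|_{H^{-\delta}}^2\,ds$, enabling further absorption into the $\kappa\int\|u\|^2$ term; and (b) the interpolation $\|Z_t\|_{H^\beta}\leq \|Z_t\|_{H^{1-\delta}}^\theta\|Z_t\|_{L^2}^{1-\theta}$ with $\beta=\theta(1-\delta)$, followed by a second weighted Young's inequality that decouples the $H^{1-\delta}$ factor (absorbed via (a)) from the $L^2$ factor (kept). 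Choosing $\theta$ near $0$ is precisely what produces the large powers of $\|\xi_\varepsilon\|_{\mathcal{C}^{-1-\delta}}$ and $\kappa^{-1}$, while simultaneously buying stronger time decay on $Z_s$.

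For the ``easy'' terms $\Gamma_1$, $\Gamma_6$, $\Gamma_7$, no renormalisation is needed. In $\Gamma_1$ I apply the paraproduct bound $\|\xi_\varepsilon\preccurlyeq Z_t\|_{H^{\beta-1-\delta}}\lesssim \|\xi_\varepsilon\|_{\mathcal{C}^{-1-\delta}}\|Z_t\|_{H^\beta}$; the operator $J_t$ acts as a Fourier multiplier localized at $|k|\sim t$ with a $\langle t\rangle^{-1}$-gain, so $\|J_t(\xi_\varepsilon\preccurlyeq Z_t)\|_{H^\delta}$ admits a quantitative polynomial-in-$t$ estimate that combines with (b) to give the weighted $\|Z_t\|_{L^2}^2$ integral in the statement. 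The terms $\Gamma_6$ and $\Gamma_7$ have scalar coefficients $\gamma^{(1)}_t$ and $\gamma^{(2)}_{\varepsilon,t}$, so the paraproduct step is trivial and only the duality/interpolation machinery remains; additionally an Ito--type bound on the martingale $\int\dot\gamma^{(1)}_t W_t Z_t\,dt$ is recast via Cauchy--Schwarz in time.

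For $\Gamma_2$ and $\Gamma_3$, the direct inputs are the stochastic estimates of Lemma~\ref{lemma:stochasticestimates3} and Lemma~\ref{lemma:stochasticestimates1}, which give decay $(1+s)^{-1-\kappa}$ for the renormalised resonant products in $H^{-\delta}$. For $\Gamma_2$, after pairing with $u_s$ through $J_s$, Young's inequality delivers $\kappa\int\|u\|^2$ plus $\kappa^{-\alpha}$ times the square of the time integral of the stochastic norm. For $\Gamma_3$, the pairing is with $Z_s^2$ rather than $u_s$; I factor out $\sup_s (1+s)^{1+\tau-\lambda}\|(J_s\xi_\varepsilon\circ J_s\xi_\varepsilon)-\dot\gamma^{(2)}_{\varepsilon,s}\|_{\mathcal{C}^{-\delta}}$ and use the interpolation (b) to dispose of the $\|Z_s\|_{H^\delta}$ factor, which accounts for the $1/(1-\theta)$ exponent in the statement.

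The main difficulty lies in the two commutator-style terms $\Gamma_4$ and $\Gamma_5$. For $\Gamma_4$ the cancellation between $(J_s(\xi_\varepsilon\succ Z_s))^2$ and $(J_s\xi_\varepsilon\circ J_s\xi_\varepsilon)Z_s^2$ is realized via the paraproduct commutator of Proposition~\ref{prop:commu}, which provides the gain in regularity making the difference belong to $H^{-\delta}$ with norm controlled by $\|\xi_\varepsilon\|_{\mathcal{C}^{-1-\delta}}^2\|Z_s\|_{H^\beta}^2$ for some $\beta>0$; interpolation as above then produces the form claimed. For $\Gamma_5$ I insert and subtract $(J_s\xi_\varepsilon)W_s\cdot J_s(\xi_\varepsilon\succ Z_s)$ and invoke Lemma~\ref{lemma:inequalitydoubleexpectation1} to trade $J_s(\xi_\varepsilon W_s)$ for $(J_s\xi_\varepsilon)W_s$ modulo a better-regularity remainder, whose stochastic norm is controlled by $\sup_s(1+s)^\ell\|\xi_\varepsilon W_s\|_{\mathcal{C}^{-1-\delta}}$; the complementary piece is a resonant product that is genuinely of higher regularity, so again duality and Young's close the estimate. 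The remaining work is purely bookkeeping: the parameters $\theta,\beta,\tau,\lambda,\eta,\ell$ must be simultaneously tuned so that every time weight is integrable and every exponent on the stochastic factors is finite, which can always be arranged by taking $\delta$ sufficiently small.
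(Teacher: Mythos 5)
Your overall scheme --- duality pairing against $u_s$ in $H^{-\delta}$, weighted Young's inequality to peel off $\kappa\int_0^\infty\|u_s\|_{H^{-\delta}}^2\,\mathd s$, the interpolation $\|Z_s\|_{H^{\beta}}\lesssim\|Z_s\|_{H^{1-\delta}}^{\theta}\|Z_s\|_{L^2}^{1-\theta}$ combined with Lemma \ref{lemma:inequalityZu} to absorb the $H^{1-\delta}$ factor, and the stochastic inputs of Lemmas \ref{lemma:stochasticestimates1} and \ref{lemma:stochasticestimates3} for $\Gamma_2,\Gamma_3$ --- is exactly the paper's argument for $\Gamma_1,\Gamma_2,\Gamma_3,\Gamma_6,\Gamma_7$, and that part of your proposal is sound.

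The gap is in $\Gamma_4$ and $\Gamma_5$, which you correctly single out as the hard terms but for which the tools you cite do not do the job. For $\Gamma_4$ you invoke Proposition \ref{prop:commu}, the standard trilinear commutator $(f\prec g)\circ h-f(g\circ h)$. That lemma contains no information about the Fourier multipliers $J_s$; the cancellation in $\Gamma_4$ is between $\int(J_s(\xi_{\varepsilon}\succ Z_s))^2$ and $\int(J_s\xi_{\varepsilon}\circ J_s\xi_{\varepsilon})Z_s^2$, and realizing it requires commuting $Z_s$ past $J_s$ with a quantitative gain in $s$. This is precisely the content of the separate estimate \eqref{eq:Nikolay1} of Proposition \ref{proposition:commutatorJs1} (proved in the appendix, following Proposition 11 of \cite{BG2022}), which the paper uses and which your argument never supplies. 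For $\Gamma_5$ the situation is worse: the claimed bound is pathwise --- its right-hand side is the random variable $\sup_{s}(1+s)^{\ell}\|\xi W_s\|_{\mathcal{C}^{-1-\delta}}^2$ --- whereas Lemma \ref{lemma:inequalitydoubleexpectation1} is an estimate in expectation (and for a different quantity, the one feeding Lemma \ref{lemma:stochasticestimates3} and hence $\Gamma_2$); the expectations only enter later, in Lemma \ref{lemma:sup} and Theorem \ref{theorem:Fbounds}, to show the random constants are a.s.\ finite. An $\mathbb{E}[\cdot]$-bound cannot produce the deterministic inequality needed here; the paper instead uses the pathwise commutator \eqref{eq:Nikolay2} of Proposition \ref{proposition:commutatorJs1}, which bounds $\int J_s(f_1\prec f_2)\,J_s(f_3)-(J_sf_2\circ J_sf_3)f_1$ directly in terms of $\|f_3\|_{\mathcal{C}^{-1-\delta}}$ with decay $(1+s)^{-2+\eta}$, applied with $f_3=\xi_{\varepsilon}W_s$. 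Until you prove, or correctly cite, these two $J_s$-commutator estimates, the bounds on $\Gamma_4$ and $\Gamma_5$ are not established.
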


\begin{proof}
  The proof is essentially an application of the results in Section
  \ref{section:stochasticestimates}, Lemma \ref{lemma:inequalityZu}, Besov
  embeddings, products properties and Young's inequality, see Appendix
  \ref{app:Besov}. We report here only the main passages of the computations.
  
  $\Gamma_1$ can be bounded as follows
  \begin{eqnarray}
    | \Gamma_1 | & \lesssim & \int^{\infty}_0 (1 + t)^{- 3 / 2 + 4 \delta} \|
    \xi_{\varepsilon} \preccurlyeq Z_t \|_{H^{- 3 \delta}} \| u_t \|_{H^{-
    \delta}} \mathd t \nonumber\\
    & \lesssim & \int^{\infty}_0 (1 + t)^{- 3 / 2 + 4 \delta} \|
    \xi_{\varepsilon} \|_{\mathcal{C}^{- 1 - \delta}} \| Z_t \|_{H^{1 -
    \delta}}^{1 - \delta} \| Z \|^{\delta}_{L^2} \| u_t \|_{H^{- \delta}}
    \mathd t \nonumber\\
    & \lesssim & \kappa \int_0^{\infty} \| u_s \|_{H^{- \delta}}^2 \mathd s +
    \frac{1}{\kappa^{\alpha}} \| \xi_{\varepsilon} \|_{\mathcal{C}^{- 1 -
    \delta}}^{\frac{2}{\delta}} \int_0^{+ \infty} (1 + t)^{- 3 / \delta + 8}
    \| Z_t \|_{L^2}^2 \mathd t. \nonumber
  \end{eqnarray}
  For $\Gamma_2$ we have
  \begin{eqnarray}
    | \Gamma_2 | & \lesssim & \sup_{s \in \mathbb{R}_+} \| Z_s \|_{H^{\delta}}
    \int^{\infty}_0 \left\| \left( {J_s}  \xi_{\varepsilon} W_s \circ J_s
    \xi_{\varepsilon} - \dot{\gamma}_{\varepsilon, s}^{(2)} W_s \right)
    \right\|_{H^{- \delta}} \mathd s \nonumber\\
    & \lesssim & \kappa \int_0^{+ \infty} \| u_t \|_{H^{- \delta}}^2 \mathd t
    + \frac{1}{\kappa^{\alpha}} \left( \int^{\infty}_0 \left\| \left( {J_s} 
    \xi_{\varepsilon} W_s \circ J_s \xi_{\varepsilon} -
    \dot{\gamma}_{\varepsilon, s}^{(2)} W_s \right) \right\|_{H^{- \delta}}
    \mathd s \right)^2 . \nonumber
  \end{eqnarray}
  We bound $\Gamma_3$ as follows
  \begin{eqnarray}
    | \Gamma_3 | & \lesssim & \int^{\infty}_0 \| ((J_s \xi_{\varepsilon} \circ
    J_s \xi_{\varepsilon}) - \dot{\gamma}_{\varepsilon, s}^{(2)})
    \|_{\mathcal{C}^{- \delta}} \| Z^2_s \|_{B^{\delta}_{1, 1}} \mathd s
    \nonumber\\
    & \lesssim & \int^{\infty}_0 \frac{1}{(1 + s)^{1 + \tau - \lambda}} (\sup
    (1 + s)^{1 + \tau - \lambda} \| ((J_s \xi_{\varepsilon} \circ J_s
    \xi_{\varepsilon}) - \dot{\gamma}_{\varepsilon, s}^{(2)})
    \|_{\mathcal{C}^{- \delta}}) \times \nonumber\\
    &  & \times \| Z _s \|_{H^{1 - \delta}}^{2 \theta} \| Z_s \|_{L^2}^{2 (1
    - \theta)} \mathd s \lesssim \kappa \int_0^{+ \infty} \| u_t \|_{H^{-
    \delta}}^2 \mathd t + \nonumber\\
    &  & + \frac{1}{\kappa^{\alpha}} \int^{\infty}_0 \frac{(\sup (1 + s)^{1 +
    \tau - \lambda} \| ((J_s \xi_{\varepsilon} \circ J_s \xi_{\varepsilon}) -
    \dot{\gamma}_{\varepsilon, s}^{(2)}) \|_{\mathcal{C}^{-
    \delta}})^{\frac{1}{1 - \theta}}}{(1 + s)^{1 + \tau - \lambda}} \| Z_s
    \|_{L^2}^2 \mathd s. \nonumber
  \end{eqnarray}
  By Proposition \ref{proposition:commutatorJs1} equation \eqref{eq:Nikolay1}
  we have for $\Gamma_4$ the bound
  \begin{eqnarray}
    | \Gamma_4 | & \lesssim & \int_0^{+ \infty} (1 + t)^{- 1 + \delta} \|
    \xi_{\varepsilon} \|_{\mathcal{C}^{- 1 - \delta}} \| Z_t \|_{H^{1 / 2 -
    \delta / 2}}^2 \mathd t \nonumber\\
    & \lesssim & \int_0^{+ \infty} (\sup_{s \in \mathbb{R}_+} \| Z_s \|_{H^{1
    - \delta}}) (1 + t)^{- 1 - \eta} \| \xi_{\varepsilon} \|_{\mathcal{C}^{- 1
    - \delta}} \| Z_t \|_{L^2} \mathd t \nonumber\\
    & \lesssim & \kappa \int_0^{+ \infty} \| u_t \|_{H^{- \delta}}^2 \mathd t
    + \frac{1}{\kappa} \int_0^{+ \infty} (1 + t)^{- 1 - \eta} \|
    \xi_{\varepsilon} \|_{\mathcal{C}^{- 1 - \delta}}^2 \| Z_t \|_{L^2}^2
    \mathd t. \nonumber
  \end{eqnarray}
  Using \eqref{eq:Nikolay2} from Proposition \ref{proposition:commutatorJs1}
  $\Gamma_5$ can be estimated as
   \begin{eqnarray*}
       | \Gamma_5 | & \lesssim & \int_0^{+ \infty} (1 + s)^{- \frac{3}{2}} \|
       Z \|_{H^{1 - \delta}} \| \xi \|_{\mathcal{C}^{- 1 - \delta}} \| \xi W_s
       \|_{\mathcal{C}^{- 1 - \delta}} \mathd s \nonumber\\
       & \lesssim & \kappa \int_0^t \| u \|_{H^{- \delta}}^2 \mathd t +
       \frac{1}{\kappa} \| \xi_{\varepsilon} \|_{\mathcal{C}^{- 1 - \delta}}^2
       (\sup_{s \in \mathbb{R}_+} (1 + s)^{- 2 \ell} \| \xi_{\varepsilon} W_s
       \|_{\mathcal{C}^{- 1 - \delta}}^2) . \nonumber
     \end{eqnarray*} 
  For $\Gamma_6$ we bound
  \begin{eqnarray}
    | \Gamma_6 | & \lesssim & \int^{\infty}_0 \frac{\gamma_{\varepsilon,
    t}^{(2)}}{(1 + t)^{3 / 2 - 2 \delta}} \| W_t \|_{\mathcal{C}^{- \delta}}
    \| u_t \|_{H^{- \delta}} + \int^{\infty}_0 \frac{\gamma_{\varepsilon,
    t}^{(2)}}{(1 + t)^{3 / 2}} \| Z_t \|_{H^{\delta}} \| u_t \|_{H^{- \delta}}
    \mathd t \nonumber\\
    & \lesssim & \kappa \int^{\infty}_0 \| u_t \|_{H^{- \delta}}^2 \mathd t +
    \frac{1}{\kappa^{\alpha}} \left( \int^{\infty}_0 \frac{|
    \gamma_{\varepsilon, t}^{(2)} |^2}{(1 + t)^{3 / 2 - 2 \delta}} \| W_t
    \|_{\mathcal{C}^{- \delta}}^2 \mathd t + \int^{\infty}_0 \frac{|
    \gamma_{\varepsilon, t}^{(2)} |^{\frac{2}{1 - \delta}}}{(1 + t)^{3 / 2}}
    \| Z_t \|_{L^2}^2 \mathd t \right) . \nonumber
  \end{eqnarray}
  Lastly, for $\Gamma_7$ we estimate
  \begin{eqnarray}
    | \Gamma_7 | & \lesssim & \int_0^{+ \infty} | \dot{\gamma}^{(1)}_t
    (\omega) | \| W_t \|_{\mathcal{C}^{- \delta}} \| Z_t \|_{H^{\delta}}
    \mathd t + \int^{\infty}_0 | \gamma_t^{(1)} | \frac{\| W_t
    \|_{\mathcal{C}^{- \delta}}}{(1 + t)^{3 / 2 - 2 \delta}} \| u_t \|_{H^{-
    \delta}} \mathd t + \nonumber\\
    &  & + \int^{\infty}_0 \frac{| \gamma_t^{(1)} |}{(1 + t)^{3 / 2}} \| Z_t
    \|_{H^{\delta}} \| u_t \|_{H^{- \delta}} \mathd t \nonumber\\
    & \lesssim & \kappa \int_0^{+ \infty} \| u_t \|_{H^{- \delta}}^2 \mathd t
    + \frac{1}{\kappa^{\alpha}} \left( \int_0^{+ \infty} |
    \dot{\gamma}^{(1)}_t (\omega) | \| W_t \|_{\mathcal{C}^{- \delta}} \mathd
    t \right)^2 \nonumber\\
    &  & + \frac{1}{\kappa^{\alpha}} \int_0^{+ \infty} | \gamma_t^{(1)} |^2
    \frac{\| W_t \|_{\mathcal{C}^{- \delta}}^2}{(1 + t)^{3 / 2 - 2 \delta}}
    \mathd t + \frac{1}{\kappa^{\alpha}} \int^{\infty}_0 \frac{|
    \gamma_t^{(1)} |^{\frac{2}{1 - \delta}}}{(1 + t)^{3 / 2}} \| Z_t
    \|_{L^2}^2 \mathd t. \nonumber
  \end{eqnarray}
  
\end{proof}

\subsection{Bounds on $F^{\varepsilon}$}

We start with a preliminary result that gives us the existence of a family of
reference drifts depending on $\xi (\omega)$ which will be needed later on.

\begin{lemma}
  \label{lemma:drift}There exists a family of adapted processes
  $u^{\varepsilon}$ such that
  \[ \sup_{\varepsilon > 0} \mathbb{E} \left[ \int^{\infty}_0 \|
     l^{\varepsilon}_s (u^{\varepsilon}) \|^2_{L^2} \mathd t \right] < \infty
     \quad \text{and} \quad \sup_{\varepsilon > 0} F^{\varepsilon}
     (u^{\varepsilon}, \omega) < \infty, \]
  almost surely with respect to $\omega$.
\end{lemma}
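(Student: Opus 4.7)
The plan is to produce an explicit family $u^{\varepsilon}$ by cancelling the leading singular contribution inside $l_s$. Specifically, I would set $u^{\varepsilon}$ to be the (pathwise) solution of the implicit equation
\[
u^{\varepsilon}_s \;=\; J_s(\xi_{\varepsilon} W_s)\;+\;J_s\bigl(\xi_{\varepsilon}\succ Z_s^{\varepsilon}(u^{\varepsilon})\bigr),\qquad s\ge 0,
\]
so that the forcing from the two ``dangerous'' stochastic contributions in $l$ is absorbed directly into $u$. Because the map $u\mapsto Z_\cdot(u)=\int_0^\cdot J_\tau u_\tau\,\mathrm d\tau$ is causal and smoothing of order one on $J_\tau$-scales, and because the paraproduct $\xi_{\varepsilon}\succ\cdot:H^{\sigma}\to H^{\sigma-1-\delta}$ is bounded uniformly in $\varepsilon$, this equation is well-posed by a standard Picard iteration on $[0,T]$, extended globally once the uniform a priori estimate below is in place. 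With this choice $l_s^{\varepsilon}(u^{\varepsilon})\equiv 0$, so $\sup_\varepsilon\mathbb{E}\int_0^\infty\|l_s^{\varepsilon}\|_{L^2}^2\,\mathrm ds=0$ trivially.

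Next I would plug $u=u^{\varepsilon}$ into the decomposition of Proposition \ref{proposition:reformulationF}. Since the $\frac{1}{2}\int\|l_s\|_{L^2}^2\mathrm ds$ contribution vanishes, only $\mathbb{E}^{\omega'}[\sum_i\Gamma_i+\mathfrak{G}]$ and the $u$-independent remainder $\frac{1}{2}\mathbb{E}^{\omega'}\int\|J_s\xi_{\varepsilon}W_s\|_{L^2}^2\mathrm ds$ need to be bounded. For the latter, independence of $\xi_\varepsilon$ and $W_s$ and the spectral cut-off carried by $\sigma_s$ give pointwise-in-$s$ decay of the type already exploited in the proof of Lemma \ref{lemma:inequalitydoubleexpectation1}, yielding finiteness uniformly in $\varepsilon$ and almost surely in $\omega$. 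For the $\Gamma_i$ and $\mathfrak{G}$ I would invoke the bounds proved in Section \ref{subsection:analyticalestimates}: each $|\Gamma_i|$ is estimated by $\kappa\int_0^\infty\|u_s\|_{H^{-\delta}}^2\mathrm ds$ plus a finite stochastic quantity (controlled uniformly in $\varepsilon$ by Lemmas \ref{lemma:stochasticestimates1}, \ref{lemma:inequalitydoubleexpectation1}, \ref{lemma:stochasticestimates3}, \ref{lemma:stochasticestimates4}) plus weighted integrals $\int(1+t)^{-1-\eta}\|Z_t\|_{L^2}^2\mathrm dt$; the latter are absorbed into $\mathfrak{G}=\int\dot\gamma^{(1)}_t(\omega)Z_t^2\,\mathrm dt$ by choosing the $C^1$ interpolant $\gamma^{(1)}_t(\omega)$ so that $\dot\gamma^{(1)}_t(\omega)(1+t)^{1+\eta}$ stays bounded away from $0$ (compatible with the prescribed limit $\gamma^{(1)}(\omega)$ and with the hypothesis $\mathbb{E}|\gamma^{(1)}|^p<\infty$).

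The closure step is a self-consistent estimate for $\int\|u_s^{\varepsilon}\|_{H^{-\delta}}^2\mathrm ds$. Using the defining equation,
\[
\int_0^\infty\|u_s^{\varepsilon}\|_{H^{-\delta}}^2\mathrm ds\;\lesssim\;\int_0^\infty\|J_s\xi_{\varepsilon}W_s\|_{L^2}^2\mathrm ds\;+\;\int_0^\infty\|J_s(\xi_{\varepsilon}\succ Z_s)\|_{L^2}^2\mathrm ds,
\]
and the second term is controlled by $\|\xi_{\varepsilon}\|_{\mathcal{C}^{-1-\delta}}^2\sup_s\|Z_s\|_{H^{1-\delta}}^2$, which by Lemma \ref{lemma:inequalityZu} is bounded by $\|\xi_{\varepsilon}\|_{\mathcal{C}^{-1-\delta}}^2\int_0^\infty\|u_s^{\varepsilon}\|_{H^{-\delta}}^2\mathrm ds$. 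A Gronwall / absorption argument, possibly combined with an $s$-dependent weight inherited from the support of $\sigma_s$, closes the loop; here the a.s. finiteness of $\|\xi\|_{\mathcal{C}^{-1-\delta}}$ is essential and yields both the global existence of $u^{\varepsilon}$ and the uniform bound $\sup_\varepsilon F^{\varepsilon}(u^{\varepsilon},\omega)<\infty$ for a.e.\ $\omega$.

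I expect the main obstacle to be this last closure step: the self-referential definition of $u^{\varepsilon}$ makes it non-trivial to obtain the a priori bound on $\int\|u^{\varepsilon}\|_{H^{-\delta}}^2$ uniformly in $\varepsilon$, and care is needed to track the exact time weights appearing on both sides so that the absorption succeeds. A secondary technical difficulty is to verify that the weights prescribed for $\gamma^{(1)}_t(\omega)$ are compatible with the tameness requirements used in Theorem \ref{theorem:functionalF}, but this is a design choice rather than a deep issue once the structural estimates of Section \ref{subsection:analyticalestimates} are available.
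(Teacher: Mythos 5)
Your ansatz is essentially the one the paper uses: choose the drift to absorb the two singular contributions $J_s(\xi_\varepsilon W_s)$ and $J_s(\xi_\varepsilon\succ Z_s)$ so that $l^\varepsilon$ becomes harmless, then feed the resulting $u^\varepsilon$ into the decomposition of Proposition \ref{proposition:reformulationF} and the estimates of Section \ref{subsection:analyticalestimates}. The one place where you deviate is exactly the place you flag as the main obstacle, and as written your closure does not go through. You impose the cancellation on all of $[0,\infty)$, so $l\equiv 0$, and then try to close the a priori bound via
\[
\int_0^\infty\|u^\varepsilon_s\|_{H^{-\delta}}^2\,\mathd s\;\lesssim\;\int_0^\infty\|J_s\xi_\varepsilon W_s\|^2\,\mathd s\;+\;\|\xi_\varepsilon\|_{\mathcal{C}^{-1-\delta}}^2\int_0^\infty\|u^\varepsilon_s\|_{H^{-\delta}}^2\,\mathd s,
\]
absorbing the last term into the left-hand side. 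That absorption requires the coefficient $\|\xi_\varepsilon\|_{\mathcal{C}^{-1-\delta}}^2$ (times the implicit constant) to be less than $1$, which is false for a typical realisation of the noise; there is no smallness available here. The paper's device is precisely to restore smallness by a time cutoff: it sets $u^\varepsilon_s=-\mathbbm{1}_{s>T}J_s(W_s\xi_\varepsilon)-\mathbbm{1}_{s>T}J_s(\xi_\varepsilon\succ Z_s(u))$, so that the fixed-point map carries a prefactor $T^{-\delta}\sup_s s^{-2-\delta}$ coming from the decay of $J_s$ at large times, and is a contraction on a ball of $L^2(\mathbb{P}',L^2(\mathbb{R}_+,H^{-\delta}))$ once $T$ is large (independently of $\varepsilon$). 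The price is that $l^\varepsilon$ is no longer zero but is supported on $[0,T]$, where it is bounded by a constant $C(T)$ uniformly in $\varepsilon$ via Lemma \ref{lemma:stochasticestimates4} — which is all the lemma requires.

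Your version can in principle be salvaged without the cutoff, but not by naive absorption: you must exploit the Volterra structure, namely that $\|Z_s\|_{H^{1-\delta}}^2\le\int_0^s\|u_\tau\|_{H^{-\delta}}^2\,\mathd\tau$ (Lemma \ref{lemma:inequalityZu}) together with the integrable weight $(1+s)^{-2-\delta}$ produced by the two factors of $J_s$, so that $g(s)=\int_0^s\|u_\tau\|^2\,\mathd\tau$ satisfies $g'(s)\lesssim\|J_s\xi_\varepsilon W_s\|^2+(1+s)^{-2-\delta}\|\xi_\varepsilon\|_{\mathcal{C}^{-1-\delta}}^2\,g(s)$ and Gronwall yields a bound of the form $\exp(C\|\xi_\varepsilon\|_{\mathcal{C}^{-1-\delta}}^2)$, finite a.s. in $\omega$ and uniform in $\varepsilon$. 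If you go this route you should say so explicitly; the sentence ``a Gronwall / absorption argument ... closes the loop'' conflates two different arguments, only one of which works. Everything else in your proposal (the treatment of the $\Gamma_i$, the choice of $\gamma^{(1)}_t$, and the reduction of $F^\varepsilon(u^\varepsilon,\omega)$ to the uniform bound on $\int\|u^\varepsilon\|_{H^{-\delta}}^2$) matches the paper.
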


\begin{proof}
  Take $u^{\varepsilon}$ to be a solution to the equation
  \begin{equation}
    u_s^{\varepsilon} = - \mathbbm{1}_{s > T} J_s W_s \xi_{\varepsilon} -
    \mathbbm{1}_{s > T} J_s (\xi_{\varepsilon} \succ Z_s (u))
    \label{eq:test-drift}
  \end{equation}
  for some fixed $T > 0$ to be chosen later, independently of $\varepsilon$.
  Assume for the moment that this solution exists and satisfies
  \begin{equation}
    \sup_{\varepsilon > 0} \mathbb{E} \left[ \int^{\infty}_0 \|
    u_t^{\varepsilon} \|^2_{H^{- \delta}} \mathd t \right] < \infty
    \label{bound:test-drift}
  \end{equation}
  for some small $\delta > 0$. \ Then
  \[ l_s^{\varepsilon} (u) = - \mathbbm{1}_{s \leqslant T} J_s W_s
     \xi_{\varepsilon} - \mathbbm{1}_{s \leqslant T} J_s (\xi_{\varepsilon}
     \succ Z_s (u)) \]
  so
  \begin{eqnarray*}
    \mathbb{E} \left[ \int^{\infty}_0 \| l^{\varepsilon}_s (u^{\varepsilon})
    \|^2_{L^2} \mathd t \right] & \lesssim & \mathbb{E} \int^T_0 \| J_s W_s
    \xi_{\varepsilon} \|^2_{L^2} \mathd s +\mathbb{E} \int^T_0 \| J_s
    (\xi_{\varepsilon} \succ Z_s (u)) \|^2_{L^2} \mathd s\\
    & \lesssim & C (T) + C (T) \| Z_s (u) \|^2_{H^{1 - \delta}}\\
    & \lesssim & C (T) \left( 1 + \sup_{\varepsilon} \mathbb{E} \left[
    \int^{\infty}_0 \| u_t^{\varepsilon} \|^2_{H^{- \delta}} \mathd t \right]
    \right)
  \end{eqnarray*}
  From Section \ref{subsection:analyticalestimates} we have that
  \[ | F^{\varepsilon} (u^{\varepsilon}, \omega) | \lesssim 1 +\mathbb{E}
     \left[ \int^{\infty}_0 \| l^{\varepsilon}_s (u^{\varepsilon}) \|^2_{L^2}
     \mathd t \right] . \]
  This proves the assertion.
  
  \
  
  Now let us establish that \eqref{eq:test-drift} has a solution. Consider
  the map
  \[ \Phi (u) = - \mathbbm{1}_{s > T} J_s W_s \xi_{\varepsilon} -
     \mathbbm{1}_{s > T} J_s (\xi_{\varepsilon} \succ Z_s (u)) . \]
  We show that $\Phi$ is a contraction for $T$ large enough. Indeed
  \begin{eqnarray*}
    \mathbb{E} \| \Phi (u) \|^2_{H^{- \delta}} & \lesssim & - \mathbbm{1}_{s >
    T} \mathbb{E} [\| J_s W_s \xi_{\varepsilon} \|^2_{H^{- \delta}}]
    +\mathbb{E} \| J_s (\xi_{\varepsilon} \succ Z_s (u)) \|^2_{H^{- \delta}}\\
    & \lesssim & T^{- \delta} \sup_s \frac{1}{s^{2 + \delta}} \mathbb{E} [\|
    W_s \xi_{\varepsilon} \|^2_{H^{- 1 - \delta / 2}}] + T^{- \delta}
    \frac{1}{s^{2 + \delta}} \| \xi \|^2_{\mathcal{C}^{- 1 - \delta / 2}}
    \mathbb{E} \left[ \int^s_0 \| u_t \|^2_{H^{- \delta}} \mathd t \right]
  \end{eqnarray*}
  Using Lemma \ref{lemma:stochasticestimates4} it is not hard to see that
  $\Phi (u)$ is a contraction in a large enough ball $B (0, K) \subset L^2
  (\mathbb{P}', L^2 (\mathbb{R}_+, H^{- \delta}))$ and thus also satisfies
  \eqref{bound:test-drift}.
\end{proof}

\

With this in hand, we prove a uniform in $\varepsilon$ coercivity result for
$F^{\varepsilon} (\cdot, \omega)$ up to a renormalization constant (which in
this particular case is simply $F^{\varepsilon}
(u^{\varepsilon}, \omega)$ and corresponds to the diverging normalization
constant $\log (\mathcal{Z}^{\varepsilon} (\omega))$ in Theorem
\ref{theorem:functionalF}).

\begin{theorem}
  \label{theorem:Fbounds}There is a sequence $(\varepsilon_n)_{n \in
  \mathbb{N}} \subset (0, 1)$, $\varepsilon_n \rightarrow 0$, such that there
  exists a (non-negative) random variable $C : \Omega \rightarrow
  \mathbb{R}_+$ which is almost surely finite and such that for any $\omega
  \in \Omega$ we have
  \begin{equation}
   F^{\varepsilon_n} (u, \omega) - F^{\varepsilon_n}
    (u^{\varepsilon_n}, \omega) \geqslant \frac{1}{4} \mathbb{E} \left[
    \int^{\infty}_0 \| l^{\varepsilon_n}_s (u) \|^2_{L^2} \mathd s \right] - C
    (\omega) \geqslant - C (\omega) \label{eq:inqeualityFe},
  \end{equation}
 where $u^{\varepsilon_n} $is as in Lemma
  \ref{lemma:drift}.
\end{theorem}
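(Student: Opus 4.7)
The plan is to combine the representation of $F^{\varepsilon}$ from Proposition \ref{proposition:reformulationF} with the analytical bounds of Subsection \ref{subsection:analyticalestimates}. The key observation is that in the decomposition
\[
F^{\varepsilon}(u, \omega) = \mathbb{E}^{\omega'}\!\left[\sum_{i=1}^{7}\Gamma_i + \mathfrak{G} + \tfrac12 \int_0^\infty \|l_s(u)\|^2_{L^2}\,\mathd s\right] + \tfrac12\,\mathbb{E}^{\omega'}\!\int_0^\infty \|J_s\xi_{\varepsilon} W_s\|^2_{L^2}\,\mathd s,
\]
the only $\varepsilon$-divergent piece is the last one, and it is independent of $u$. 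Subtracting $F^{\varepsilon}(u^{\varepsilon},\omega)$ therefore cancels this divergence exactly; by Lemma \ref{lemma:drift} the subtracted value is bounded uniformly in $\varepsilon$, so it only contributes a finite amount to the random constant $C(\omega)$.

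Next, the seven inequalities from Subsection \ref{subsection:analyticalestimates} give, after summing,
\[
\left|\mathbb{E}^{\omega'}\sum_{i=1}^{7}\Gamma_i\right| \;\leq\; 7\kappa\,\mathbb{E}^{\omega'}\!\int_0^\infty \|u_s\|^2_{H^{-\delta}}\,\mathd s + \kappa^{-\alpha} R_1^{\varepsilon}(\omega) + \kappa^{-\alpha}\,\mathbb{E}^{\omega'}\!\int_0^\infty \frac{q_s^{\varepsilon}(\omega)\,\|Z_s(u)\|_{L^2}^2}{(1+s)^{1+\eta}}\,\mathd s,
\]
where $R_1^{\varepsilon}(\omega)$ lumps together the purely stochastic bounds (the various $\|\xi_{\varepsilon}\|_{\mathcal{C}^{-1-\delta}}$-powers, time-weighted $L^p$ norms of the renormalized quadratic objects of Lemmas \ref{lemma:stochasticestimates1}--\ref{lemma:stochasticestimates4}, and moments of $\gamma^{(1)}(\omega)$) and $q_s^{\varepsilon}$ is an $s,\varepsilon$-uniformly bounded stochastic weight. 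To trade $\int\|u_s\|^2_{H^{-\delta}}$ for $\int\|l_s\|^2_{L^2}$ I would use the identity $u_s = J_s\xi_{\varepsilon}W_s + J_s(\xi_{\varepsilon}\succ Z_s) - l_s$, the $L^2\to H^{-\delta}$ boundedness of $J_s(\xi_{\varepsilon}\succ \cdot)$ and Lemma \ref{lemma:inequalityZu}, which reduce it to $\int \|l_s\|_{L^2}^2\,\mathd s + R_2^{\varepsilon}(\omega) + \text{(weighted $\|Z_s\|_{L^2}^2$ terms)}$.

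The weighted $\|Z_s\|^2_{L^2}$-integrals will then be absorbed into the positive term $\mathfrak{G} = \mathbb{E}^{\omega'}\!\int_0^\infty\!\int \dot\gamma^{(1)}_t(\omega) Z_t^2\,\mathd x\,\mathd t$ by choosing $\gamma^{(1)}_t(\omega)$ to be an increasing interpolation from $0$ to $\gamma^{(1)}(\omega)$ whose derivative dominates $(1+t)^{-1-\eta/2}$ on a sufficiently long initial interval (and likewise choosing $\gamma^{(2)}_{\varepsilon,t}$ to interpolate at the logarithmic rate of Remark \ref{remark:gamma2}). Taking $\kappa$ small enough and combining these bounds yields
\[
F^{\varepsilon}(u,\omega) \;\geq\; \tfrac14\,\mathbb{E}^{\omega'}\!\int_0^\infty \|l_s(u)\|^2_{L^2}\,\mathd s + \tfrac12\,\mathbb{E}^{\omega'}\!\int_0^\infty \|J_s\xi_{\varepsilon}W_s\|^2_{L^2}\,\mathd s - R^{\varepsilon}(\omega),
\]
with $R^{\varepsilon}(\omega)$ a $u$-independent stochastic remainder.

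Finally, to upgrade the uniform-in-$\varepsilon$ bounds in expectation furnished by Lemmas \ref{lemma:stochasticestimates1}--\ref{lemma:stochasticestimates4} to an almost sure bound for $R^{\varepsilon_n}(\omega)$, I would pick large $p$, apply Markov's inequality and Borel--Cantelli along the geometric subsequence $\varepsilon_n = 2^{-n}$ to conclude that $\sup_n R^{\varepsilon_n}(\omega) < \infty$ almost surely, which defines $C(\omega)$. Subtracting $F^{\varepsilon_n}(u^{\varepsilon_n},\omega)$ then cancels the $\|J_s\xi_{\varepsilon_n}W_s\|^2$ term and changes $C(\omega)$ by a finite amount thanks to Lemma \ref{lemma:drift}, giving \eqref{eq:inqeualityFe}. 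The main obstacle is the bookkeeping step: tracking every decay exponent in the seven $\Gamma_i$-estimates so that a single choice of $\gamma^{(1)}_t,\gamma^{(2)}_{\varepsilon,t}$ absorbs all the $\|Z_s\|^2_{L^2}$-weights into $\mathfrak{G}$ simultaneously while still converging to the prescribed renormalization constants; the passage to a subsequence is unavoidable because the stochastic bounds are pointwise-in-$\varepsilon$ only in $L^p(\mathbb{P})$.
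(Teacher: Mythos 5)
Your overall strategy coincides with the paper's: rewrite $F^{\varepsilon}$ via Proposition \ref{proposition:reformulationF}, bound the $\Gamma_i$ by $\kappa\int\|u\|^2_{H^{-\delta}}$ plus stochastic remainders plus weighted $\|Z_s\|^2_{L^2}$-integrals, convert $\int\|u\|^2_{H^{-\delta}}$ into $\int\|l_s\|^2_{L^2}$ through the ansatz (this is exactly \eqref{eq:inequalityul}), absorb the $\|Z_s\|^2_{L^2}$-terms into $\mathfrak{G}$ by a suitable choice of $\gamma^{(1)}_t$, and pass to a subsequence to make the stochastic remainders almost surely finite. Two steps, however, are glossed over in a way that matters.

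First, the absorption into $\mathfrak{G}$ is not achieved by an ``increasing interpolation from $0$ to $\gamma^{(1)}(\omega)$ whose derivative dominates $(1+t)^{-1-\eta/2}$''. Among the weights to be dominated are $|\gamma^{(1)}_t|^{2/(1-\delta)}(1+t)^{-3/2}$ and the analogous $|\gamma^{(2)}_{\varepsilon,t}|^{2/(1-\delta)}$-terms, i.e.\ the constraint on $\dot{\gamma}^{(1)}_t$ involves a superlinear power of $\gamma^{(1)}_t$ itself, while the total increase $\int_0^\infty\dot{\gamma}^{(1)}_t\,\mathd t$ is capped by the prescribed terminal value $\gamma^{(1)}(\omega)$. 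For large $\gamma^{(1)}(\omega)$ a linear interpolation starting at $0$ cannot satisfy $\dot{\gamma}^{(1)}_t\geqslant C|\gamma^{(1)}_t|^{2/(1-\delta)}(1+t)^{-1-\eta'}+H(\omega)(1+t)^{-1-\eta'}$, because integrating the right-hand side already exceeds $\gamma^{(1)}(\omega)$. This is precisely why the paper isolates Lemma \ref{lemma:differentialinequality}, which constructs a bounded $C^1$ solution of the nonlinear differential inequality with the prescribed limit at infinity; without that device (or an equivalent one) the bookkeeping step you flag as ``the main obstacle'' does not close.

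Second, your Markov--Borel--Cantelli extraction along $\varepsilon_n=2^{-n}$ requires a quantitative rate $\mathbb{E}[|X_{\varepsilon}-X_0|^p]\lesssim\varepsilon^{cp}$ for the stochastic remainders; uniform-in-$\varepsilon$ moment bounds alone only give $\sup_n X_{\varepsilon_n}/n^{2/p}<\infty$ a.s., not $\sup_n X_{\varepsilon_n}<\infty$. The stochastic lemmas of Section \ref{section:stochasticestimates} are stated as uniform bounds, not rates, so this step needs extra input. The paper instead uses the softer route of Lemmas \ref{lemma:Lpconvergence} and \ref{lemma:sup}: almost sure convergence of the integrands plus uniform integrability gives $L^p$-convergence of the conditional expectations, from which one extracts an (inexplicit) almost surely convergent subsequence. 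Either route works, but yours must first establish the $\varepsilon$-rates.
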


We begin by proving the following useful lemmas.

\begin{lemma}
  \label{lemma:Lpconvergence}Let $C_{\cdot, \varepsilon} : \mathbb{R}_+ \times
  \Omega \rightarrow \mathbb{R}$ be a random process depending on $\varepsilon
  \in \mathcal{I} \subset [0, 1)$ (where $\mathcal{I}$ has zero as an
  accumulation point) such that
  \begin{enumerate}
    \item For any $p \geqslant 1$ and $s$ we have
    \[ (\sup_{\varepsilon \in \mathcal{I}} \mathbb{E} [| C_{s, \varepsilon}
       |^p])^{1 / p} \leqslant f_p (s), \]
    for some integrable function $f_p : \mathbb{R}_+ \rightarrow
    \mathbb{R}_+$;
    
    \item $C_{s, \varepsilon}$ is continuous as $\varepsilon \rightarrow 0$
    almost surely, i.e. $C_{s, \varepsilon} \rightarrow C_{s, 0}$ almost
    surely.
  \end{enumerate}
  Then there exists a sequence $\varepsilon_n \subset \mathcal{I}$ such that,
  for any $\sigma$-algebra $\mathcal{G}$, the random variable $\sup_{n \in
  \mathbb{N}} \mathbb{E} \left[ \int_0^{\infty} C_{s, \varepsilon_n} \mathd s|
  \mathcal{G} \right]$ is bounded almost surely.
\end{lemma}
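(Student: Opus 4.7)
The plan is to extract a subsequence $(\varepsilon_n)$ such that the sample supremum $\sup_n |A_{\varepsilon_n}|$ is integrable, where $A_\varepsilon := \int_0^\infty C_{s,\varepsilon}\,\mathrm{d}s$; once this is done, the conclusion follows almost immediately, because for every $\sigma$-algebra $\mathcal{G}$ and every $n$ one has $\mathbb{E}[A_{\varepsilon_n}\mid\mathcal{G}] \leq \mathbb{E}[\,\sup_k |A_{\varepsilon_k}|\mid\mathcal{G}]$ almost surely, and the right-hand side is a version of the conditional expectation of an $L^1$ random variable, hence a finite random variable independent of $n$ after throwing away a single null set.

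First I would apply Minkowski's integral inequality together with hypothesis $1$ to obtain the uniform $L^p$ bound $\sup_{\varepsilon\in\mathcal{I}} \|A_\varepsilon\|_{L^p(\mathbb{P})} \leq \int_0^\infty f_p(s)\,\mathrm{d}s < \infty$ for every $p\ge 1$. At each fixed $s$, hypothesis $1$ gives uniform boundedness of the family $\{C_{s,\varepsilon}\}_\varepsilon$ in $L^2(\mathbb{P})$, which yields uniform integrability; combined with the a.s.\ convergence in hypothesis $2$, Vitali's theorem then gives pointwise $L^1$ convergence $\mathbb{E}|C_{s,\varepsilon} - C_{s,0}| \to 0$ as $\varepsilon\to 0$. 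Fatou's lemma applied along a converging subsequence guarantees in addition that $\mathbb{E}|C_{s,0}| \leq f_1(s)$.

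Second I would lift this pointwise $L^1$ convergence to $L^1$ convergence of the integrated quantities: the domination $\mathbb{E}|C_{s,\varepsilon}-C_{s,0}| \leq 2 f_1(s)$ together with the integrability of $f_1$ permits dominated convergence in the $s$-variable, giving $\|A_\varepsilon - A_0\|_{L^1(\mathbb{P})} \leq \int_0^\infty \mathbb{E}|C_{s,\varepsilon}-C_{s,0}|\,\mathrm{d}s \to 0$. I would then pick $\varepsilon_n \downarrow 0$ so fast that $\|A_{\varepsilon_n} - A_0\|_{L^1(\mathbb{P})} \leq 2^{-n}$. Monotone convergence (or Fubini--Tonelli) then gives $\mathbb{E}[\sup_n |A_{\varepsilon_n} - A_0|] \leq \sum_n 2^{-n} < \infty$, whence $\sup_n |A_{\varepsilon_n}| \leq |A_0| + \sup_n |A_{\varepsilon_n}-A_0|$ belongs to $L^1(\mathbb{P})$. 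Applying the reduction in the opening paragraph closes the argument.

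The only nontrivial point is the very first one: to convert a.s.\ pointwise-in-$s$ convergence of $C_{s,\varepsilon}$ into $L^1(\mathbb{P})$ convergence at each fixed $s$ one needs uniform integrability, and this is precisely where the full strength of hypothesis $1$ (the existence of an $L^p$ bound for some $p>1$) enters; a bare $L^1$ bound on $C_{s,\varepsilon}$ would not be enough to pass from a.s.\ to $L^1$ convergence, and hence not enough to get $L^1$ convergence of $A_\varepsilon$ in the subsequent step.
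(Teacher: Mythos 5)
Your proposal is correct, and its analytic core — Minkowski's integral inequality, uniform integrability from the higher moments in hypothesis~1 combined with the almost sure convergence in hypothesis~2 to get $L^1$ convergence at each fixed $s$, then dominated convergence in $s$ — is the same as the paper's. The one genuine difference is the final step. The paper extracts a subsequence along which the conditional expectations $\mathbb{E}[\int_0^\infty C_{s,\varepsilon}\,\mathd s\mid\mathcal{G}]$ converge almost surely (using $L^p\Rightarrow$ a.s.\ along a subsequence), which a priori ties the choice of $\varepsilon_n$ to the particular $\sigma$-algebra $\mathcal{G}$. You instead choose $\varepsilon_n$ so that $\|A_{\varepsilon_n}-A_0\|_{L^1}\leqslant 2^{-n}$, deduce $\sup_n|A_{\varepsilon_n}|\in L^1(\mathbb{P})$ via the summable series, and then dominate $\mathbb{E}[A_{\varepsilon_n}\mid\mathcal{G}]$ by the single random variable $\mathbb{E}[\sup_k|A_{\varepsilon_k}|\mid\mathcal{G}]$. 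This buys you a subsequence that is manifestly independent of $\mathcal{G}$, which is exactly the quantifier order the lemma asserts ("there exists a sequence\dots such that for any $\sigma$-algebra"), so your route is, if anything, slightly cleaner on that point.
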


\begin{proof}
  First we prove that $\mathbb{E} \left[ \int_0^{\infty} C_{s, \varepsilon}
  \mathd s| \mathcal{G} \right]$ converges to $\mathbb{E} \left[
  \int_0^{\infty} C_{s, 0} \mathd s| \mathcal{G} \right]$ in $L^p$. By
  Minkowski, we have that
  \[ \mathbb{E} \left[ \left| \mathbb{E} \left[ \int_0^{\infty} C_{s, 0}
     \mathd s - \int_0^{\infty} C_{s, \varepsilon_n} \mathd s | \mathcal{G}
     \right] \right|^p \right] \leqslant \int_0^{+ \infty} (\mathbb{E} [|
     C_{s, \varepsilon} - C_{s, 0} |^p]^{1 / p}) \mathd s. \]
  On the other hand, for some $\kappa > 0$, we have $\sup_{\varepsilon \in
  \mathcal{I}} \mathbb{E} [| C_{s, \varepsilon} |^{p + \kappa}] < + \infty$,
  and thus the family $\{ | C_{s, \varepsilon} - C_{s, 0} | \}_{\varepsilon
  \in \mathcal{I}}$ of random variable is uniformly integrable. This means
  that, since $| C_{s, \varepsilon} - C_{s, 0} |$ converges to $0$ almost
  surely, then $| C_{s, \varepsilon} - C_{s, 0} |$ converges to $0$ in $L^p$.
  This implies that for every $s \in \mathbb{R}_+$ $\mathbb{E} [| C_{s,
  \varepsilon} - C_{s, 0} |^p]^{1 / p}$ converges to $0$ and thus, since
  $\mathbb{E} [| C_{s, \varepsilon} - C_{s, 0} |^p]^{1 / p} \leqslant f_p$, by
  Lebesgue dominated convergence theorem that $\int_0^{+ \infty} (\mathbb{E}
  [| C_{s, \varepsilon} - C_{s, 0} |^p]^{1 / p}) \mathd s \rightarrow 0$ as
  $\varepsilon \rightarrow 0$.
  
  Since $\mathbb{E} \left[ \int_0^{\infty} C_{s, \varepsilon} \mathd s|
  \mathcal{G} \right]$ goes to $\mathbb{E} \left[ \int_0^{\infty} C_{s, 0}
  \mathd s| \mathcal{G} \right]$ in $L^p$, there is a subsequence $\{
  \varepsilon_n \}_{n \in \mathbb{N}} \subset \mathcal{I}$ with $\varepsilon_n
  \rightarrow 0$ such that $\mathbb{E} \left[ \int_0^{\infty} C_{s,
  \varepsilon} \mathd s| \mathcal{G} \right]$ goes to $\mathbb{E} \left[
  \int_0^{\infty} C_{s, 0} \mathd s| \mathcal{G} \right]$ almost surely. This
  concludes the proof.
\end{proof}

\begin{remark}
  The expectation $\mathbb{E}^{\omega'}$ introduced in Notation
  \ref{notation:omegaprime}, can be understood as a conditional expectation
  with respect to the $\sigma$-algebra generated by random field $\omega
  \mapsto \xi (\omega)$. In this way, we can exploit Lemma
  \ref{lemma:Lpconvergence} for random fields of the form
  $\mathbb{E}^{\omega'} [\cdot]$.
\end{remark}

A consequence of the previous result is the following lemma.

\begin{lemma}
  \label{lemma:sup}For every $0 < \kappa \ll 1$,we have that there is a
  sequence $\varepsilon_n \in (0, 1)$ such that $\varepsilon_n \rightarrow 0$,
  as $n \rightarrow \infty$ for which
  \[ \sup_{n \in \mathbb{N}} \mathbb{E}^{\omega'} \left[ \int^{\infty}_0
     \left\| \left( {J_s}  \xi_{\varepsilon_n} W_s \circ J_s
     \xi_{\varepsilon_n} - \dot{\gamma}_{\varepsilon_n, s}^{(2)} W_s \right)
     \right\|_{H^{- \delta}} \mathd s \right] \leqslant C_1 (\omega) \]
  \[ \sup_{n \in \mathbb{N}} (\sup_{s \in \mathbb{R}_+} ((1 + s)^{1 + \kappa}
     \| J_s \xi_{\varepsilon_n} (\omega) \circ J_s \xi_{\varepsilon_n}
     (\omega) - \dot{\gamma}^{(2)}_{\varepsilon_n, s} \|_{\mathcal{C}^{-
     \delta}})) \leqslant C_2 (\omega) \]
  \[ \sup_{n \in \mathbb{N}} \| \xi_{\varepsilon_n} (\omega)
     \|_{\mathcal{C}^{- 1 - \delta}}^2 \leqslant C_3 (\omega), \quad \sup_{n
     \in \mathbb{N}} (\mathbb{E}^{\omega'} [\sup_{s \in \mathbb{R}_+} (1 +
     s)^{- 2 \ell} \| \xi_{\varepsilon_n} W_s \|_{\mathcal{C}^{- 1 -
     \delta}}^2]) \leqslant C_4 (\omega) \]
  for some (positive) random variables $C_1, C_2, C_3, C_4$ which are almost
  surely finite.
\end{lemma}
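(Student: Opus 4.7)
The plan is to verify, for each of the four bounds separately, two conditions: (i) a uniform in $\varepsilon$ bound on the $L^p(\Omega)$-moments of the relevant random variable, and (ii) almost sure convergence of this random variable as $\varepsilon \to 0$. Any nonnegative real sequence that converges almost surely is almost surely bounded; hence, extracting a subsequence $\varepsilon_n \to 0$ realizing the a.s. convergence automatically produces an a.s. finite dominator for that subsequence. A standard diagonal argument then merges the four subsequences (one per bound) into a single $\varepsilon_n$ along which all four bounds hold simultaneously.

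Bound (3) is the simplest: $\|\xi_\varepsilon\|_{\mathcal{C}^{-1-\delta}}$ is a first-chaos quantity with uniform $L^p$ bounds via hypercontractivity and Besov embedding, and it converges a.s. to $\|\xi\|_{\mathcal{C}^{-1-\delta}}$. For bound (1) we apply Lemma \ref{lemma:Lpconvergence} directly with $C_{s,\varepsilon} = \|J_s\xi_\varepsilon W_s \circ J_s\xi_\varepsilon - \dot{\gamma}^{(2)}_{\varepsilon,s} W_s\|_{H^{-\delta}}$: Lemma \ref{lemma:stochasticestimates3} provides the integrable $L^p$-bound $f_p(s) \lesssim (1+s)^{-1-\kappa}$, the a.s. convergence follows from convergence of the underlying stochastic objects, and the choice $\mathcal{G} = \sigma(\xi)$ identifies the conditional expectation with $\mathbb{E}^{\omega'}[\cdot]$, producing $C_1(\omega)$.

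Bounds (2) and (4) involve suprema in $s$, so here we must combine the pointwise-in-$s$ moment bounds \eqref{eq:stochasticbound1} and the Hölder-in-time increment bounds \eqref{eq:stochasticbound2} from Lemma \ref{lemma:stochasticestimates1} (respectively Lemma \ref{lemma:stochasticestimates4} for bound (4)). Taken together these give uniform control of a fractional Sobolev norm $W^{\alpha/2, p}_s$ of the relevant $B^{-\delta}_{p,p}$-valued process, with a polynomial weight in $s$. Sobolev embedding in time $W^{\alpha/2,p}(I) \hookrightarrow C^0(I)$ (valid for $\alpha p > 2$) applied on dyadic intervals in $s$, together with the Besov embedding $B^{-\delta}_{p,p}(\mathbb{T}^2) \hookrightarrow \mathcal{C}^{-\delta-2/p}$ in space, converts these two ingredients into uniform finiteness of the $L^p(\Omega)$-moments of the weighted suprema appearing in (2) and (4) (after relabeling $\delta$ and $\kappa$ slightly). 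A.s. convergence in $\varepsilon$ of the relevant stochastic quantities, together with subsequence extraction, then yields the a.s. finite dominators $C_2(\omega)$ and $C_4(\omega)$, the latter being conditioned on $\sigma(\xi)$ via $\mathbb{E}^{\omega'}[\cdot]$ as in bound (1).

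The main obstacle is the Kolmogorov-type passage from bounds on increments to bounds on suprema in (2) and (4): the exponents $\alpha, \kappa, \delta, \ell$ coming from Section \ref{section:stochasticestimates} must be balanced carefully, so that after paying a small exponent to the Sobolev embedding in time and to the Besov embedding in space, the resulting weighted supremum is still integrable against a sufficiently high Gaussian moment $p$. Once the exponents are chosen consistently, the combination of uniform moment bounds and a.s. convergence, plus the diagonal extraction, closes the argument.
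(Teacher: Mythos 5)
Your proposal is correct and follows essentially the same route as the paper: the paper's proof is a one-line invocation of Lemma \ref{lemma:Lpconvergence} (with $\mathcal{G}=\sigma(\xi)$), the stochastic estimates of Section \ref{section:stochasticestimates}, and the embedding $B^{\kappa}_{p,p}(\mathbb{R}_+,B^{s}_{q,q})\hookrightarrow C^0(\mathbb{R}_+,\mathcal{C}^{s-2/q})$ for $\kappa>1/p$, which is exactly your fractional-Sobolev-in-time plus Besov-in-space argument for the suprema in (2) and (4). The only addition you make is the explicit diagonal extraction to merge the subsequences, which the paper leaves implicit.
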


\begin{proof}
  The proof of this lemma is a straightforward application of Lemma
  \ref{lemma:Lpconvergence} combined with the stochastic estimates from Lemma
  \ref{lemma:stochasticestimates1}, Remark \ref{remark:stochasticestimates2},
  Lemma \ref{lemma:stochasticestimates3} and Lemma
  \ref{lemma:stochasticestimates4} and the immersion properties of Besov
  spaces (namely the immersion of $B^{\kappa}_{p, p} (\mathbb{R}_+, B^s_{q,
  q})$ into $C^0 \left( \mathbb{R}_+, \mathcal{C}^{s - \frac{2}{q}} \right)$
  when $\kappa > \frac{1}{p}$).
\end{proof}

\begin{lemma}
  \label{lemma:differentialinequality}For every $\eta, C, P, K > 0$ and
  $\lambda > \frac{1}{2}$ there is a $C^1$ function $f : \mathbb{R}_+
  \rightarrow \mathbb{R}_+$ such that
  \begin{equation}
    \lim_{t \rightarrow + \infty} f (t) = P, \quad \dot{f} (t) \geqslant
    \frac{K (f (t))^{2 \lambda}}{(1 + t)^{1 + \eta}} + \frac{C}{(1 + t)^{1 +
    \eta}} \label{eq:differentialinequality}
  \end{equation}
\end{lemma}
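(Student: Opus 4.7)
The natural strategy is to propose an explicit ansatz and verify the inequality algebraically. I would take
\[ f(t) = P - A(1+t)^{-\eta} \]
for a constant $A > 0$ to be chosen. This choice automatically gives $f \in C^1(\mathbb{R}_+)$ with $\lim_{t \to \infty} f(t) = P$, and $\dot f(t) = A\eta (1+t)^{-1-\eta}$, which has precisely the same decay rate $(1+t)^{-1-\eta}$ as the right-hand side of \eqref{eq:differentialinequality}. Substituting this into \eqref{eq:differentialinequality} and multiplying through by $(1+t)^{1+\eta}$, the differential inequality reduces to the pointwise algebraic condition
\[ A\eta \;\geq\; K \bigl(P - A(1+t)^{-\eta}\bigr)^{2\lambda} + C \qquad \text{for all } t \geq 0. \]

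Since the function $t \mapsto P - A(1+t)^{-\eta}$ is monotone increasing in $t$ (assuming $A \leq P$) and stays below $P$, the worst case of the right-hand side is the limit $t \to \infty$, which gives $K P^{2\lambda} + C$. It therefore suffices to pick $A$ with $A\eta \geq K P^{2\lambda} + C$, and the concrete choice $A = (KP^{2\lambda}+C+1)/\eta$ does the job with slack to spare. The remaining constraint is $f \geq 0$ on $\mathbb{R}_+$, which is equivalent to $f(0) = P - A \geq 0$, i.e.\ $P\eta \geq KP^{2\lambda}+C+1$. In the application the limit $P$ plays the role of an auxiliary parameter (the asymptotic value of the regularizers $\gamma^{(1)}_t(\omega)$ or $\gamma^{(2)}_{\varepsilon,t}$), so it is at our disposal and can be chosen large enough to enforce this compatibility; alternatively one splits the argument into $[0, T_0]$ and $[T_0, \infty)$, using the above ansatz (suitably shifted) on the unbounded piece where asymptotic control is needed, and a direct $C^1$-interpolation on the compact piece where the right-hand side is uniformly bounded.

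The only real subtlety, and the point worth flagging, is the tension between the two requirements: $f \leq P$ forces $\int_0^\infty \dot f\, dt \leq P$, while the inequality forces $\int_0^\infty \dot f\,dt \geq C/\eta$ (from the additive part alone) plus $K \int_0^\infty f^{2\lambda}(1+t)^{-1-\eta}\, dt$. Since $2\lambda > 1$ is essential to make the nonlinear contribution integrable against the weight $(1+t)^{-1-\eta}$, the hypothesis $\lambda > 1/2$ enters exactly here, preventing the feedback $f \mapsto f^{2\lambda}$ from blowing up. The simple ansatz above is tailored to match both decay rates simultaneously, so the verification is a one-line algebraic check once the constants are picked.
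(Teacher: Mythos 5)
Your ansatz $f(t)=P-A(1+t)^{-\eta}$ is close in spirit to the paper's own argument: the paper instead solves the \emph{linearized} terminal-value problem $\dot f=\frac{KL^{2\lambda-1}f+C}{(1+t)^{1+\eta}}$, $f(\infty)=P$, and uses that $f^{2\lambda}\leqslant L^{2\lambda-1}f$ once $0\leqslant f\leqslant L=\sup f$. Note that this linearization is the only place where $\lambda>\tfrac12$ enters (one needs $2\lambda-1\geqslant 0$ to bound $f^{2\lambda-1}$ by a constant); your stated explanation — that $2\lambda>1$ is needed to make $f^{2\lambda}(1+t)^{-1-\eta}$ integrable — is not correct, since $f$ is bounded and the weight is integrable for every $\lambda>0$. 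In your version of the argument the hypothesis on $\lambda$ in fact plays no role at all, which should have been a warning sign.

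The genuine gap is in your resolution of the positivity constraint. You correctly reduce the problem to finding $A$ with $A\eta\geqslant KP^{2\lambda}+C$ and $A\leqslant P$, i.e.\ to $P\eta\geqslant KP^{2\lambda}+C$, and then propose to ``choose $P$ large enough.'' But precisely because $2\lambda>1$, the term $KP^{2\lambda}$ eventually dominates $P\eta$, so the condition \emph{fails} for all large $P$; it also fails for all small $P$ since the right-hand side is bounded below by $C>0$; and for unfavourable constants (e.g.\ $K$ large relative to $\eta$) it fails for every $P>0$. The splitting into $[0,T_0]$ and $[T_0,\infty)$ cannot rescue this: any $C^1$ function $f:\mathbb{R}_+\to\mathbb{R}_+$ satisfying \eqref{eq:differentialinequality} is increasing with limit $P$, hence $P\geqslant f(\infty)-f(0)=\int_0^\infty\dot f\,\mathd t\geqslant C/\eta$, so no construction whatsoever can deliver the conclusion for arbitrary $P>0$. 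You deserve credit for spotting this tension — the paper's own one-line proof silently ignores it and never checks that its backward solution stays nonnegative — but your proposed fix does not close the gap. What is actually needed is either a compatibility condition among $(\eta,C,K,P,\lambda)$ carried into the application (where $P=\gamma^{(1)}(\omega)$ is constrained from below by $K(\omega)$, so ``choose $P$ in a bounded window'' is not free), or a relaxation of the requirement $f\geqslant 0$, under which your ansatz with $A$ large does work for every $P$.
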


\begin{proof}
  Suppose that $\sup_{t \in \mathbb{R}_+} f (t) \leqslant L$, and $f$ solves
  the equation
  \begin{equation}
    \dot{f} (t) = \frac{K L^{2 \lambda - 1} f (t)}{(1 + t)^{1 + \eta}} +
    \frac{C}{(1 + t)^{1 + \eta}}, \quad \lim_{t \rightarrow + \infty} f (t) =
    P, \label{eq:differentialinequality2}
  \end{equation}
  then $f$ satisfies the differential inequality
  \eqref{eq:differentialinequality}. 
\end{proof}

\begin{proof*}{Proof of Theorem \ref{theorem:Fbounds}}
  If we apply the analytical estimates of Section
  \ref{subsection:analyticalestimates} to the functional $F^{\varepsilon}$ (in
  the form given by Proposition \ref{proposition:reformulationF}) we get
  \begin{eqnarray}
    F^{\varepsilon} (u) - F^{\varepsilon} (0) & \geqslant & \frac{1}{2}
    \int_0^{\infty} \| l (u) \|^2_{L^2} \mathd t - 8 K_1 \kappa
    \int_0^{\infty} \| u \|_{H^{- \delta}}^2 \mathd t + \nonumber\\
    &  & - K_2 (\kappa) \left( \mathbb{E}^{\omega'} \left[ \left(
    \int^{\infty}_0 \left\| \left( {J_t}  \xi_{\varepsilon} W_t \circ J_t
    \xi_{\varepsilon} - \dot{\gamma}_{\varepsilon, t}^{(2)} W_t \right)
    \right\|_{H^{- \delta}} \mathd t \right)^2 \right] + \| \xi_{\varepsilon}
    \|_{\mathcal{C}^{- 1 - \delta}}^2 \right) \nonumber\\
    &  & - K_2 (\kappa) \| \xi \|_{\mathcal{C}^{- 1 - \delta}}^2
    \mathbb{E}^{\omega'} [(\sup_{s \in \mathbb{R}_+} (1 + s)^{\ell} \| \xi W_s
    \|_{\mathcal{C}^{- 1 - \delta}}^2)] \nonumber\\
    &  & - K_2 (\kappa) \mathbb{E}^{\omega'} \left[ \int^{\infty}_0 \frac{|
    \gamma_{\varepsilon, t}^{(2)} |^2}{(1 + t)^{3 / 2 - 2 \delta}} \| W_t
    \|_{\mathcal{C}^{- \delta}}^2 \mathd t + \left( \int_0^{+ \infty} |
    \dot{\gamma}^{(1)}_t (\omega) | \| W_t \|_{\mathcal{C}^{- \delta}} \mathd
    t \right)^2 \right] \nonumber\\
    &  & - K_2 (\kappa) \mathbb{E}^{\omega'} \left[ \int_0^{+ \infty} |
    \gamma_t^{(1)} |^2 \frac{\| W_t \|_{\mathcal{C}^{- \delta}}^2}{(1 + t)^{3
    / 2 - 2 \delta}} \mathd t \right] - K_2 (\kappa) \int^{\infty}_0 \frac{|
    \gamma_{\varepsilon, t}^{(2)} |^{\frac{2}{1 - \delta}}}{(1 + t)^{3 / 2}}
    \| Z_t \|_{L^2}^2 \mathd t \nonumber\\
    &  & - K_2 (\kappa) \int^{\infty}_0 \frac{\left( \underset{s}{\sup} (1 +
    s)^{1 + \tau - \lambda} \| ((J_s \xi_{\varepsilon} \circ J_s
    \xi_{\varepsilon}) - \dot{\gamma}_{\varepsilon, s}^{(2)})
    \|_{\mathcal{C}^{- \delta}} \right)^{\frac{1}{1 - \theta}}}{(1 + t)^{1 +
    \tau - \lambda}} \| Z_t \|_{L^2}^2 \mathd t \nonumber\\
    &  & - K_2 (\kappa) \int_0^{+ \infty} \frac{\| \xi_{\varepsilon}
    \|_{\mathcal{C}^{- 1 - \delta}}^2}{(1 + t)^{1 + \eta}} \| Z_t \|_{L^2}^2
    \mathd t - K_2 (\kappa) \int_0^{+ \infty} \frac{| \gamma_t^{(1)}
    |^{\frac{2}{1 - \delta}}}{(1 + t)^{3 / 2}} \| Z_t \|_{L^2}^2 \mathd t
    \nonumber\\
    &  & + K_3 (\kappa) \int_0^{+ \infty} \dot{\gamma}^{(1)}_t (\omega) \|
    Z_t \|_{L^2}^2 \mathd t \nonumber
  \end{eqnarray}
  Where $K_2, K_3 : \mathbb{R}_+ \backslash \{ 0 \} \rightarrow \mathbb{R}_+$
  are continuous (decreasing) functions of $\kappa$ and $K_1 > 0$ is a
  suitable constant. First we note that
  \begin{eqnarray}
    \int_0^{\infty} \| u \|_{H^{- \delta}}^2 \mathd t & \lesssim &
    \int_0^{\infty} \left\| {J_t}  \xi_{\varepsilon} W_t \right\|_{H^{-
    \delta}}^2 \mathd t + \int_0^{\infty} \| J_t (\xi_{\varepsilon} \succ Z_t
    (u)) \|_{H^{- \delta}}^2 \mathd t + \int_0^{\infty} \| l (u_t) \|_{L^2}
    \mathd t \nonumber\\
    & \lesssim & \left( \sup_{s \in \mathbb{R}_+} (1 + s)^{\frac{\delta}{4}}
    \| \xi_{\varepsilon} W_s \|_{H^{- 1 - \frac{\delta}{2}}}^2 \right)
    \int_0^{\infty} \frac{1}{(1 + t)^{1 + \frac{\delta}{4}}} \mathd t +
    \nonumber\\
    &  & + \int_0^{\infty} \frac{1}{(1 + t)^{1 + \delta}} \|
    \xi_{\varepsilon} \|_{\mathcal{C}^{- 1 - \frac{\delta}{2}}} \| Z \|_{H^{1
    - \delta}} \mathd t + \int_0^{\infty} \| l (u_t) \|_{L^2} \mathd t
    \nonumber\\
    & \lesssim & \kappa' \int_0^{\infty} \| u \|_{H^{- \delta}}^2 \mathd t +
    \left( \sup_{s \in \mathbb{R}_+} (1 + s)^{\frac{\delta}{4}} \|
    \xi_{\varepsilon} W_s \|_{H^{- 1 - \frac{\delta}{2}}}^2 \right) + \|
    \xi_{\varepsilon} \|_{\mathcal{C}^{1 - \frac{\delta}{2}}}^2 +
    \int_0^{\infty} \| l (u_t) \|_{L^2} \mathd t. \nonumber
  \end{eqnarray}
  In other words there is $K_4 > 0$ for which
  \begin{equation}
    \int_0^{\infty} \| u \|_{H^{- \delta}}^2 \mathd t \lesssim K_4
    \int_0^{\infty} \| l (u_t) \|_{L^2} \mathd t + K_4 \left( \left( \sup_{s
    \in \mathbb{R}_+} (1 + s)^{\frac{\delta}{4}} \| \xi_{\varepsilon} W_s
    \|_{H^{- 1 - \frac{\delta}{2}}}^2 \right) + \| \xi_{\varepsilon}
    \|_{\mathcal{C}^{1 - \frac{\delta}{2}}}^2 \right) .
    \label{eq:inequalityul}
  \end{equation}
  This means that if we choose $\kappa < \frac{1}{16 K_1 K_4}$, then
  \[ \frac{1}{2} \int_0^{\infty} \| l (u) \|^2_{L^2} \mathd t - 8 K_1 \kappa
     \int_0^{\infty} \| u \|_{H^{- \delta}}^2 \mathd t \geqslant \frac{1}{4}
     \int_0^{\infty} \| l (u) \|^2_{L^2} \mathd t + \]
  \[ - \frac{1}{4} \left( \left( \sup_{s \in \mathbb{R}_+} (1 +
     s)^{\frac{\delta}{4}} \| \xi_{\varepsilon} W_s \|_{H^{- 1 -
     \frac{\delta}{2}}}^2 \right) + \| \xi_{\varepsilon} \|_{\mathcal{C}^{1 -
     \frac{\delta}{2}}}^2 \right) . \]
  Now we fix $\kappa > 0$ such that $\kappa < \frac{1}{16 K_1 K_4}$. In this
  way $K_2 (\kappa), K_3 (\kappa)$ are (fixed) numbers (hereafter we drop the
  dependence on $\kappa$ of $K_2 (\kappa), K_3 (\kappa)$). We focus on the
  parts depending on $\| Z \|_{L^2}^2$. If we take $\gamma^{(1)}_t$ such that
  \begin{equation}
    \dot{\gamma}^{(1)}_t (\omega) \geqslant (K_2 K_3 + 1) \frac{|
    \gamma_t^{(1)} (\omega) |^{\frac{2}{1 - \delta}}}{(1 + t)^{1 + \eta'}} +
    \frac{\left( {K_3}  H (\omega) + 1 \right)}{(1 + t)^{1 + \eta'}}, \quad
    \gamma^{(1)}_{\infty} (\omega) = \gamma^{(1)} (\omega)
    \label{eq:inequalitygamma}
  \end{equation}
  where and
  \begin{eqnarray}
    H (\omega) & = & \sup_{n \in \mathbb{N}, s \in \mathbb{R}_+} \left( (1 +
    s)^{1 + \tau - \lambda} \| ((J_s \xi_{\varepsilon_n} \circ J_s
    \xi_{\varepsilon_n}) - \dot{\gamma}_{\varepsilon_n, s}^{(2)})
    \|_{\mathcal{C}^{- \delta}}^{\frac{1}{1 - \theta}} + \vphantom{+ \sup_{t
    \in \mathbb{R}_+} ((\log (2 + t))^{- 1} | \gamma_{\varepsilon_n, t}^{(2)}
    |)^{\frac{2}{1 - \delta}} + \| \xi_{\varepsilon_n} \|_{\mathcal{C}^{- 1 -
    \delta}}^2} \right. \nonumber\\
    &  & \left. + \frac{(\log (2 + s))^{\frac{2}{1 - \delta}}}{(1 + s)^{1 / 2
    - \eta'}} ((\log (2 + s))^{- 1} | \gamma_{\varepsilon_n, s}^{(2)}
    |)^{\frac{2}{1 - \delta}} + \| \xi_{\varepsilon_n} \|_{\mathcal{C}^{- 1 -
    \delta}}^2 \right) \label{eq:Homega} 
  \end{eqnarray}
  which by Lemma \ref{lemma:sup} and Remark \ref{remark:gamma2} is almost
  surely finite and $\eta' < \min \left( \eta, \frac{1}{2}, \delta, - \tau +
  \lambda \right)$. The existence of some solution to the differential
  inequality \eqref{eq:inequalitygamma} is given by Lemma
  \ref{lemma:differentialinequality}. Furthermore, again by Lemma
  \ref{lemma:differentialinequality}, we can choose $\gamma_t^{(1)}$ which is
  bounded (for every fixed $\omega$, not uniformly with respect to $\omega$)
  and such that $\dot{\gamma}^{(1)}_t (\omega) \lesssim \frac{1}{(1 + t)^{1 +
  \eta'}}$. Fix now a solution $\gamma^{(1)}_t$ to the differential inequality
  \eqref{eq:inequalitygamma} satisfying the previous two conditions.
  
  For such a function $\gamma_t^{(1)}$, the sum of the terms involving $\| Z_t
  \|_{L^2}^2$ is strictly positive.
  
  If we now took
  \begin{eqnarray}
    C (\omega) & = & K_2 \sup_{n \in \mathbb{N}} \left. \left(
    \mathbb{E}^{\omega'} \left[ \left( \int^{\infty}_0 \left\| \left( {J_t} 
    \xi_{\varepsilon_n} W_t \circ J_t \xi_{\varepsilon_n} -
    \dot{\gamma}_{\varepsilon_n, t}^{(2)} W_t \right) \right\|_{H^{- \delta}}
    \mathd t \right)^2 \right] + \| \xi_{\varepsilon_n} \|_{\mathcal{C}^{- 1 -
    \delta}}^2 + \right. \right. \nonumber\\
    &  & + \| \xi_{\varepsilon_n} \|_{\mathcal{C}^{- 1 - \delta}}^2
    \mathbb{E}^{\omega'} [(\sup_{s \in \mathbb{R}_+} (1 + s)^{\ell} \|
    \xi_{\varepsilon_n} W_s \|_{\mathcal{C}^{- 1 - \delta}}^2)]
    +\mathbb{E}^{\omega'} \left[ \int_0^{+ \infty} | \gamma_t^{(1)} |^2
    \frac{\| W_t \|_{\mathcal{C}^{- \delta}}^2}{(1 + t)^{3 / 2 - 2 \delta}}
    \mathd t \right] \nonumber\\
    &  & \left. \left. +\mathbb{E}^{\omega'} \left[ \int^{\infty}_0 \frac{|
    \gamma_{\varepsilon_n, t}^{(2)} |^2}{(1 + t)^{3 / 2 - 2 \delta}} \| W_t
    \|_{\mathcal{C}^{- \delta}}^2 \mathd t + \left( \int_0^{+ \infty} |
    \dot{\gamma}^{(1)}_t (\omega) | \| W_t \|_{\mathcal{C}^{- \delta}} \mathd
    t \right)^2 \right] \right) \right. + \nonumber\\
    &  & + \sup_{n \in \mathbb{N}} \frac{1}{4} \left( \left( \sup_{s \in
    \mathbb{R}_+} (1 + s)^{\frac{\delta}{4}} \| \xi_{\varepsilon_n} W_s
    \|_{H^{- 1 - \frac{\delta}{2}}}^2 \right) + \| \xi_{\varepsilon_n}
    \|_{\mathcal{C}^{- 1 - \frac{\delta}{2}}}^2 \right) \nonumber
  \end{eqnarray}
  which is almost surely finite by Lemma \ref{lemma:sup} and thus we are
  finished.
\end{proof*}

\begin{remark}
  \label{rem:variationalAH}It is interesting to note that it is possible to
  use the proof of Theorem \ref{theorem:Fbounds}, in particular the expression
  of the constant \eqref{eq:Homega}, to deduce the existence of a limit
  operator $\mathbb{H}^{\omega, 0} = \lim_{\varepsilon_k \rightarrow 0} (-
  \Delta + \xi_{\varepsilon_k} - \gamma^{(2)}_{\varepsilon_k, \infty})$ (as
  the covariance operator of obtained as the limit of a Gaussian measure
  convergent subsequence $\mu^{\varepsilon_k}$ defined in equation
  \eqref{eq:nuepsilon}). Furthermore, using Lemma
  \ref{lemma:differentialinequality}, we get also that $\mathbb{H}^{\omega, 0}
  \geqslant - (\lim_{\varepsilon_k \rightarrow 0} \gamma^{(1)}_{\varepsilon_k,
  \infty} (\omega)) \mathbb{I}_{L^2}$. This means that the proof of Theorem
  \ref{theorem:Fbounds} in the current section can be seen as providing an
  autonomous proof of the main part of Theorem \ref{thm:anderson-K} using the
  variational techniques of {\cite{BG2020}}.
\end{remark}

\subsection{Construction of the coupling}

In this last subsection we will prove the following theorem.

\begin{theorem}
  \label{theorem:coupling}For any $\chi, \delta > 0$ $\chi > \delta$ and for
  almost every $\omega \in \Omega$, there is a probability measure
  $\overline{\nu_0}$ on $\mathcal{C}^{- \chi} \times H^{1 - \chi}$ such that
  \begin{enumerate}
    \item $P_{\mathcal{C}^{- \chi}, \ast} (\bar{\nu}_0) = \tmop{Law} (\mu^{-
    \Delta + 1})$ (where $P_{\mathcal{C}^{- \chi}} : \mathcal{C}^{- \chi}
    \times H^{1 - \chi} \rightarrow \mathcal{C}^{- \chi}$ is the natural
    projection);
    
    \item $\int \| Z \|_{H^{1 - \chi}}^2 \bar{\nu}_0 (\mathd \varphi, \mathd
    Z) < + \infty$ (where $(\varphi, Z) \in \mathcal{C}^{- \chi} \times H^{1 -
    \chi}$)
    
    \item $\tmop{Law}_{\nu_0} (\varphi + Z) \assign (P_{\mathcal{C}^{- \chi}}
    + P_{H^{1 - \chi}})_{\ast} (\bar{\nu}_0) = \mu^{\mathbb{H}^{\omega} + K
    (\omega)}$ (where $\lim_{\varepsilon_n \rightarrow 0} \mu^{\varepsilon_n}
    (\omega) = \mu^{\mathbb{H}^{\omega} + K (\omega)}$ weakly and where
    $\mu^{\varepsilon_n}$ are the Gaussian measure introduced in equation
    \eqref{eq:nuepsilon}).
  \end{enumerate}
\end{theorem}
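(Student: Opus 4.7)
The plan is to construct, for each $n$, an explicit coupling between $\mu^{-\Delta+1}$ and the regularized Gaussian measure $\mu^{\varepsilon_n}$ using the Bou\'e--Dupuis variational representation established in Theorem \ref{theorem:functionalF}, and then to extract a weak limit along the subsequence $\{\varepsilon_n\}$ produced in Theorem \ref{theorem:Fbounds}. The density of $\mu^{\varepsilon_n}$ with respect to $\mu^{-\Delta+1}$ is quadratic (Wick) in $\varphi$, so the associated stochastic control problem is strictly convex and admits a minimizer $u^{*,\varepsilon_n}$, which via the standard Girsanov/optimal-control interpretation of the minimizer satisfies
\[
\mathrm{Law}\bigl(W_\infty + Z_\infty(u^{*,\varepsilon_n})\bigr) = \mu^{\varepsilon_n}.
\]
Accordingly I would set
\[
\bar\nu^{\varepsilon_n} := \mathrm{Law}\bigl(W_\infty,\, Z_\infty(u^{*,\varepsilon_n})\bigr) \in \mathcal{P}\bigl(\mathcal{C}^{-\chi} \times H^{1-\chi}\bigr),
\]
so that Properties 1 and 3 of the theorem hold for $\bar\nu^{\varepsilon_n}$ at finite $\varepsilon_n$ by construction, with $\mu^{\varepsilon_n}$ in place of the limit measure.

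Because $u^{*,\varepsilon_n}$ is a minimizer, $F^{\varepsilon_n}(u^{*,\varepsilon_n},\omega) \le F^{\varepsilon_n}(u^{\varepsilon_n},\omega)$, so inequality \eqref{eq:inqeualityFe} of Theorem \ref{theorem:Fbounds} yields the uniform (in $n$) bound $\mathbb{E}^{\omega'}[\int_0^\infty \|l^{\varepsilon_n}_s(u^{*,\varepsilon_n})\|_{L^2}^2\,\mathd s] \le 4C(\omega)$. Combining with inequality \eqref{eq:inequalityul}, whose right-hand stochastic terms are almost surely finite uniformly in $n$ by Lemma \ref{lemma:sup}, and with Lemma \ref{lemma:inequalityZu}, this upgrades to
\[
\sup_n \mathbb{E}^{\omega'}\bigl[\|Z_\infty(u^{*,\varepsilon_n})\|_{H^{1-\delta}}^2\bigr] \le C'(\omega)
\]
for small $\delta > 0$. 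For $\chi > \delta$ the embedding $H^{1-\delta} \hookrightarrow H^{1-\chi}$ is compact, so Markov's inequality produces tightness of the second marginals on $H^{1-\chi}$; the first marginals equal the fixed measure $\mu^{-\Delta+1}$. Hence $\{\bar\nu^{\varepsilon_n}\}$ is tight on $\mathcal{C}^{-\chi}\times H^{1-\chi}$ and I extract a weakly convergent subsequence with limit $\bar\nu_0$. Property 1 for $\bar\nu_0$ is stable under weak convergence of marginals; Property 2 follows from the above uniform $H^{1-\delta}$-moment estimate together with lower semicontinuity of $\|\cdot\|_{H^{1-\chi}}^2$ via Fatou. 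For Property 3 note that the addition map $(\varphi,Z)\mapsto \varphi+Z$ is continuous from $\mathcal{C}^{-\chi}\times H^{1-\chi}$ to $\mathcal{C}^{-\chi}$ (since $H^{1-\chi}\hookrightarrow \mathcal{C}^{-\chi}$), so its pushforward commutes with weak limits; combined with the weak convergence $\mu^{\varepsilon_n} \rightharpoonup \mu^{\mathbb{H}^\omega + K(\omega)}$ supplied by Lemma \ref{lem:abscont}(a) and Proposition \ref{prop:normconv}, this identifies the third marginal.

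The principal obstacle I foresee is rigorously establishing the Girsanov-type identity $\mathrm{Law}(W_\infty+Z_\infty(u^{*,\varepsilon_n})) = \mu^{\varepsilon_n}$: although for a quadratic Wick tilt of a Gaussian this is essentially an infinite-dimensional linear-quadratic control problem whose feedback optimizer is explicit, one must verify admissibility of the feedback and integrability of the Girsanov exponential on the Cameron--Martin space of $\mu^{-\Delta+1}$. A safer alternative would be to bypass the exact optimizer by constructing near-minimizing drifts $u^{\varepsilon_n}$ that \emph{by construction} achieve the Radon--Nikod\'ym tilt of $\mu^{\varepsilon_n}$ (e.g.\ using the explicit Cameron--Martin shift between the two Gaussian measures encoded by $(\mathbb{H}^{\omega,K}_{\varepsilon_n})^{-1/2}(-\Delta+1)^{1/2}$), and invoking Theorem \ref{theorem:Fbounds} only to control $F^{\varepsilon_n}(u^{\varepsilon_n})$ and hence the $H^{1-\delta}$-norm of $Z_\infty(u^{\varepsilon_n})$ uniformly; the limit argument above then proceeds unchanged.
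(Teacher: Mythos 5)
Your proposal follows essentially the same route as the paper: minimize the Bou\'e--Dupuis functional $F^{\varepsilon_n}$, use Theorem \ref{theorem:Fbounds} together with the reference drifts of Lemma \ref{lemma:drift} to get the uniform bound on $\mathbb{E}^{\omega'}[\int_0^\infty\|l^{\varepsilon_n}_s\|_{L^2}^2\,\mathd s]$, upgrade via \eqref{eq:inequalityul} and Lemma \ref{lemma:inequalityZu} to a uniform $H^{1-\delta}$ moment on $Z_\infty$, and conclude by tightness, Fatou, and continuity of the addition map combined with norm resolvent convergence. The one obstacle you flag --- existence of the optimizer and the identity $\tmop{Law}(W_\infty+Z_\infty(u^*))=\mu^{\varepsilon_n}$ --- is exactly what the paper handles by relaxing the control problem to the space $\overline{\mathcal{X}}$ of laws of $(\mathbb{W},u)$ and invoking Lemma \ref{existence-min} (i.e.\ Lemma 7 and Theorem 11 of {\cite{Barashkovwholespace}}), so your argument is correct once that step is supplied in this form.
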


\begin{remark}
  Theorem \ref{theorem:coupling} can be also reformulated in this way: for
  almost every $\omega \in \Omega$, there is a coupling $\widetilde{\nu_0}$ on
  $\mathcal{C}^{- \chi} (\mathbb{T}^2) \times \mathcal{C}^{- \chi}
  (\mathbb{T}^2)$ between the Gaussian measures $\mu^{- \Delta + 1}$ and
  $\mu^{\mathbb{H}^{\omega} + K (\omega)}$ such that if we write $(X, Y) \sim
  \widetilde{\nu_0}$, and thus $X \sim \mu^{- \Delta + 1}$ and $Y \sim
  \mu^{\mathbb{H}^{\omega} + K (\omega)}$, we have $X - Y \in H^{1 - \chi}$
  $\tilde{\nu}_0$-almost surely (see {\cite{DeVecchiGubinelliTurra}} for a
  formulation of this property using a Wasserstein-type distance between
  measures). In other words, there exists a $H^{1 - \chi} $regular coupling
  between the standard free field and the Anderson free field.
\end{remark}

In order to prove Theorem \ref{theorem:coupling} we employ bounds on the
functional $F^{\varepsilon}$ proved in Theorem \ref{theorem:Fbounds} to get
tightness of the measures $\mu^{\varepsilon}$. To achieve this aim, we need to
extend the functionals $F^{\varepsilon}$ to functionals depending on the laws
of $(\mathbb{W}, u)$ (where $\mathbb{W}$ is the Gaussian process defined in
Notation \ref{notation:W}) so that we may obtain some compactness properties
of the functionals which will allow us to apply the direct method of the
calculus of variations.

\begin{notation}
  \label{notation:mathcalX}Let us consider the space of Radon measures
  \[ \mathcal{X} \subset \mathcal{P} (C^0 (\mathbb{R}_+, C^{- \chi}
     (\mathbb{T}^2)) \times L^2 (\mathbb{R}_+ \times \mathbb{T}^2))
     \backassign \mathcal{P} (\mathfrak{S} \times L^2 (\mathbb{R}_+ \times
     \mathbb{T}^2)), \]
  defined as follows: We say that the measure $\sigma \in \mathcal{X}$ if,
  writing $\mathfrak{S} \times L^2 (\mathbb{R}_+ \times \mathbb{T}^2) \ni
  (\mathbb{W}, u) \sim \sigma$ for the random variable with law $\sigma$, we
  have that $\mathbb{W}$ is a Gaussian process with covariance as defined in
  Notation \ref{notation:W} and $u$ can be written (almost surely) as a
  progressively measurable process of $\mathbb{W}$ and finally $\| u \|_{L^2
  (\mathbb{R}_+ \times \mathbb{T}^2)} \in L^2 (\sigma)$.
\end{notation}

\begin{remark}
  Usually the space $\mathfrak{S}$, on which the process $\mathbb{W}$ takes
  values, is the space on enhanced noise (i.e. containing also the processes
  ${J_s}  (\xi_{\varepsilon} W_s) J_s \xi_{\varepsilon} {- J_s} 
  \xi_{\varepsilon} J_s \xi_{\varepsilon} W_s$ etc. considered in Section
  \ref{section:stochasticestimates}). Here we define $\mathfrak{S}$ to be only
  $C^0 (\mathbb{R}_+, C^{- \chi} (\mathbb{T}^2))$ (i.e. the space where $W_s$
  takes values) because we never consider directly the limit $\varepsilon
  \rightarrow 0$ but we ask merely for estimates of stochastic terms uniformly
  in $\varepsilon$. For this reason, when $\varepsilon > 0$, since the
  enhanced noise is a continuous function of $W_s$, we need only the space
  $\mathfrak{S} = C^0 (\mathbb{R}_+, C^{- \chi} (\mathbb{T}^2))$.
  
  In any case, since all the stochastic terms, considered in Section
  \ref{section:stochasticestimates}, by Lemma \ref{lemma:Lpconvergence} and
  Lemma \ref{lemma:sup}, converge (almost surely with respect to $\omega \in
  \Omega$), as $\varepsilon \rightarrow 0$, to some well defined adapted
  processes, our argument can be extended to the space of enhanced noise.
\end{remark}

Hereafter we write $L_w^2 (\mathbb{R}_+ \times \mathbb{T}^2)$ for the space
$L^2 (\mathbb{R}_+ \times \mathbb{T}^2)$ equipped with the weak topology.

\begin{definition}
  Let $\mathcal{X}$ be defined as in Notation \ref{notation:mathcalX}.
  Consider the space
  \[ \overline{\mathcal{X}} = \left\{ \sigma : \quad \exists \sigma_n \in
     \mathcal{X} : \sigma_n \rightarrow \sigma \text{ weakly on $\mathfrak{S}
     \times L_w^2 (\mathbb{R}_+ \times \mathbb{T}^2)$, and } \sup_n
     \mathbb{E}_{\sigma_n}^{\omega'} [\| u \|^2_{L^2 (\mathbb{R}_+ \times
     \mathbb{T}^2)}] < \infty \right\} \]
  We say that $\sigma_n \rightarrow \sigma$ in \={$\mathcal{X}$} if $\sigma_n
  \rightarrow \sigma$ weakly and $\sup_n \mathbb{E}_{\sigma_n}^{\omega'} [\| u
  \|^2_{L^2 (\mathbb{R}_+ \times \mathbb{T}^2)}] < \infty$.
\end{definition}

If $\sigma \in \overline{\mathcal{X}}$ we define
\[ \bar{F}^{\varepsilon} (\omega, \sigma) =\mathbb{E}_{\sigma}^{\omega'}
   \left[ \sum_{i = 1}^7 \Gamma_i (\omega) + \mathfrak{G} + \frac{1}{2}
   \int^{\infty}_0 \| l^{\varepsilon}_s (u) \|^2_{L^2} \mathd s \right] +
   \frac{1}{2} \mathbb{E}_{\sigma}^{\omega'} \left[ \int^{\infty}_0 \left\|
   {J_s}  \xi_{\varepsilon} (\omega) W_s \right\|^2_{L^2} \mathd s \right] .
\]
We have the following statement that says that $\overline{F}^{\varepsilon}$ has
the same minimum as $F^{\varepsilon} .$

\begin{lemma}
  \label{lemma:inf1}For almost every $\omega \in \Omega$ and $\varepsilon_n >
  0$ (where $\varepsilon_n$ is in the sequence defined in Theorem
  \ref{theorem:Fbounds}) \ we have
  \[ \inf_{u \in \mathbb{H}_a} F^{\varepsilon_n} (\omega, u) = \inf_{\sigma
     \in \mathcal{X}} \bar{F}^{\varepsilon_n} (\omega, \sigma) = \inf_{\sigma
     \in \overline{\mathcal{X}}} \bar{F}^{\varepsilon_n} (\omega, \sigma) \]
\end{lemma}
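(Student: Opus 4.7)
The three-way equality breaks into two distinct statements, which I treat in turn.

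\emph{Step 1 ($\inf_{u \in \mathbb{H}_a} F^{\varepsilon_n} = \inf_{\sigma \in \mathcal{X}} \bar F^{\varepsilon_n}$).} This is a routine correspondence between adapted drifts and their joint laws with $\mathbb{W}$. Given $u \in \mathbb{H}_a$, the joint law $\sigma_u := \mathrm{Law}^{\mathbb{P}'}(\mathbb{W},u)$ belongs to $\mathcal{X}$ (adaptedness is built into $\mathbb{H}_a$), and directly from the definition of $\bar F^{\varepsilon_n}$ as the $\mathbb{E}^{\omega'}$-expectation of the same integrand defining $F^{\varepsilon_n}$ one has $F^{\varepsilon_n}(\omega, u) = \bar F^{\varepsilon_n}(\omega, \sigma_u)$, yielding $\inf_{\mathcal{X}} \leq \inf_{\mathbb{H}_a}$. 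Conversely, for $\sigma \in \mathcal{X}$, the progressive measurability of $u$ with respect to $\mathbb{W}$ provides a measurable factorization $u = \Phi(\mathbb{W})$ with $\Phi$ progressively measurable on $\mathfrak{S}$ (standard disintegration of $\sigma$ along the filtration $\mathcal{F}^{\mathbb{W}}_s$); applying $\Phi$ to the concrete Gaussian process $\mathbb{W}$ on $(\Omega', \mathcal{F}'_t, \mathbb{P}')$ from Notation \ref{notation:W} produces $\hat u := \Phi(\mathbb{W}) \in \mathbb{H}_a$ with $F^{\varepsilon_n}(\omega, \hat u) = \bar F^{\varepsilon_n}(\omega, \sigma)$.

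\emph{Step 2 ($\inf_{\mathcal{X}} \bar F^{\varepsilon_n} = \inf_{\overline{\mathcal{X}}} \bar F^{\varepsilon_n}$).} The inequality $\inf_{\overline{\mathcal{X}}} \leq \inf_{\mathcal{X}}$ is immediate from $\mathcal{X} \subset \overline{\mathcal{X}}$. For the reverse, I would exhibit, for every $\sigma \in \overline{\mathcal{X}}$, a companion $\tilde\sigma \in \mathcal{X}$ with $\bar F^{\varepsilon_n}(\tilde\sigma) \leq \bar F^{\varepsilon_n}(\sigma)$. The natural candidate is the progressive conditional projection: set $\tilde u_s := \mathbb{E}_\sigma[u_s \mid \mathcal{F}^{\mathbb{W}}_s]$ and $\tilde\sigma := \mathrm{Law}_\sigma(\mathbb{W}, \tilde u)$. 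Then $\tilde\sigma \in \mathcal{X}$ since the $\mathbb{W}$-marginal is preserved and $\tilde u$ is $\mathcal{F}^{\mathbb{W}}_\cdot$-adapted (hence, via measurable selection, a progressively measurable functional of $\mathbb{W}$). The monotonicity is then verified term-by-term using the convex structure in $u$: pure-$\mathbb{W}$ pieces are unchanged; terms linear in $u$ with $\mathcal{F}^{\mathbb{W}}_\cdot$-adapted integrands are preserved by the tower property; and the strictly convex pieces --- the $L^2$ cost $\frac12\int \|u_s\|_{L^2}^2\mathd s$ arising from $\|l_s^{\varepsilon_n}(u)\|^2$, and the contribution $\int \dot\gamma_t^{(1)} Z_t(u)^2\mathd x\mathd t$ from $\mathfrak{G}$ --- decrease under conditioning by Jensen's inequality.

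\emph{Main obstacle.} The principal technical difficulty is handling the convex pieces that couple $u$ nonlocally in time through $Z_s(u) = \int_0^s J_r u_r\mathd r$. Under the projection one has $Z_s(\tilde u) = \int_0^s J_r \mathbb{E}_\sigma[u_r\mid \mathcal{F}^{\mathbb{W}}_r]\mathd r$, which is \emph{not} equal to $\mathbb{E}_\sigma[Z_s(u) \mid \mathcal{F}^{\mathbb{W}}_s] = \int_0^s J_r \mathbb{E}_\sigma[u_r\mid \mathcal{F}^{\mathbb{W}}_s]\mathd r$, since the two differ for each $r < s$ by the filtering--smoothing discrepancy. To bypass this I would rewrite $\bar F^{\varepsilon_n}$ as a time integral $\mathbb{E}_\sigma[\int_0^\infty \psi_{t,\varepsilon_n}(u|_{[0,t]}, \mathbb{W}|_{[0,t]})\mathd t] + (\text{pure-}\mathbb{W}\text{ terms})$ with each $\psi_{t,\varepsilon_n}$ $\mathcal{F}^{\mathbb{W}}_t$-adapted and convex in $u|_{[0,t]}$, apply conditional Jensen slicewise against $\mathcal{F}^{\mathbb{W}}_t$, and use a martingale/iteration argument to identify the limiting projection at each slice with the filtered process $\tilde u$. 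The requisite integrability along the way is furnished by the coercive bound of Theorem \ref{theorem:Fbounds} (which controls $\mathbb{E}_\sigma[\int \|l_s^{\varepsilon_n}(u)\|_{L^2}^2\mathd s]$ uniformly), so the main effort is the measure-theoretic bookkeeping in the slicewise Jensen argument rather than establishing integrability.
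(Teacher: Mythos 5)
Your Step 1 is fine and is exactly why the paper calls the first equality ``obvious from the definition''. The problem is Step 2. First, your classification of the terms of $\bar F^{\varepsilon_n}$ into ``pure-$\mathbb{W}$'', ``linear in $u$ with adapted integrands'' and ``convex'' does not cover the functional: $\Gamma_1 = 2\int_0^\infty\int J_t(\xi_{\varepsilon}\preccurlyeq Z_t)\,u_t$, as well as the $\int\gamma\, Z_tJ_tu_t$ contributions in $\Gamma_6,\Gamma_7$, are \emph{quadratic} in $u$ (since $Z_t$ is itself linear in $u$) and of indefinite sign, so neither the tower property nor Jensen applies to them. Second, even for the genuinely convex pieces the projection $\tilde u_s=\mathbb{E}_\sigma[u_s\mid\mathcal{F}^{\mathbb{W}}_s]$ does not help, for the reason you yourself flag: Jensen compares $\|\mathbb{E}[Z_s(u)\mid\mathcal{F}^{\mathbb{W}}_s]\|^2$ with $\mathbb{E}[\|Z_s(u)\|^2\mid\mathcal{F}^{\mathbb{W}}_s]$, whereas the projected drift produces $Z_s(\tilde u)=\int_0^sJ_r\,\mathbb{E}[u_r\mid\mathcal{F}^{\mathbb{W}}_r]\,\mathd r$, and the cross terms $\mathbb{E}\langle J_r\mathbb{E}[u_r\mid\mathcal{F}^{\mathbb{W}}_r],J_{r'}\mathbb{E}[u_{r'}\mid\mathcal{F}^{\mathbb{W}}_{r'}]\rangle$ admit no one-sided comparison with their smoothed counterparts. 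The proposed repair (``slicewise Jensen plus a martingale/iteration argument'') is a hope, not a proof; the same filtering-versus-smoothing obstruction reappears inside every slice because $l_s(u)$ and the $\Gamma_i$ depend on the whole past of $u$ through $Z_s$. So the reverse inequality $\inf_{\mathcal{X}}\le\inf_{\overline{\mathcal{X}}}$ is not established.

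The route the paper intends (it defers to Lemma 8 of the cited variational-method paper) avoids projections altogether. An element $\sigma\in\overline{\mathcal{X}}$ is a \emph{weak} control: on some space one has $(\mathbb{W},u)\sim\sigma$ with $u$ adapted to the joint filtration and $\mathbb{W}$ still a Gaussian martingale for that filtration (a property preserved under the weak limits with uniformly bounded $\mathbb{E}\|u\|_{L^2}^2$ that define $\overline{\mathcal{X}}$). The Bou\'e--Dupuis/\"Ust\"unel representation used in Theorem \ref{theorem:functionalF} gives the lower bound $\mathbb{E}_\sigma\bigl[G^{\varepsilon_n,f}(W_\infty+Z_\infty)+\tfrac12\int_0^\infty\|u_s\|^2_{L^2}\,\mathd s\bigr]\ge-\log\int e^{-G^{\varepsilon_n,f}}\,\mathd\mu^{-\Delta+1}$ for every such weak control, because the Girsanov/relative-entropy direction of that formula does not require $u$ to be a functional of $\mathbb{W}$. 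Hence $\bar F^{\varepsilon_n}(\omega,\sigma)\ge\inf_{u\in\mathbb{H}_a}F^{\varepsilon_n}(\omega,u)$ for every $\sigma\in\overline{\mathcal{X}}$ directly, which is the missing inequality. If you want a self-contained write-up, the work to do is to verify that the martingale property of $\mathbb{W}$ and the identity of Proposition \ref{proposition:reformulationF} survive the passage to $\overline{\mathcal{X}}$ — not to project drifts.
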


\begin{proof}
  The first equality is obvious from the definition of $\mathcal{X}$,
  $\bar{F}^{\varepsilon_n}$. The second inequality can be proved in the same
  way of Lemma 8 of {\cite{Barashkovwholespace}}.
\end{proof}

We introduce here a class of functional which is important in what follows.

\begin{definition}
  \label{definition:admissible}We say that the functional $G : \mathcal{X}
  \rightarrow \mathbb{R}$ is admissible, if $G$ is lower semicontinuous and
  \[ \mathbb{E}_{\sigma} \left[ \int_0^{+ \infty} \| u_t \|^2_{L^2} \mathd t
     \right] \lesssim 1 + G (\sigma) . \]
\end{definition}

\begin{lemma}
  For every fixed $\varepsilon_n > 0$, the functional
  $\bar{F}^{\varepsilon_n}$ is an admissible functional.
\end{lemma}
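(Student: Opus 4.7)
The plan is to verify the two clauses of Definition \ref{definition:admissible} separately, exploiting the fact that $\varepsilon_n>0$ is \emph{fixed}, which makes $\xi_{\varepsilon_n}$ smooth and many of the estimates in Subsection \ref{subsection:analyticalestimates} usable with $L^2$ (rather than $H^{-\delta}$) norms at the cost of $\varepsilon_n$-dependent constants.

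\textbf{Coercivity.} Starting from the identity
\[
u_s \;=\; -J_s\bigl(\xi_{\varepsilon_n}W_s\bigr)\;-\;J_s\bigl(\xi_{\varepsilon_n}\succ Z_s(u)\bigr)\;+\;l_s^{\varepsilon_n}(u),
\]
I would bound $\|u_s\|_{L^2}$ by the triangle inequality. Since $\xi_{\varepsilon_n}\in C^\infty$, the operators $f\mapsto J_s(\xi_{\varepsilon_n}f)$ and $f\mapsto J_s(\xi_{\varepsilon_n}\succ f)$ are bounded $L^2\to L^2$ with norm controlled by a power of $\|\xi_{\varepsilon_n}\|_{C^N}$ times the decaying symbol of $J_s$; combined with Lemma \ref{lemma:inequalityZu} and a square integrability bound for $\|J_sW_s\|_{L^2}^2$ analogous to Lemma \ref{lemma:stochasticestimates4} this gives
\[
\mathbb{E}_\sigma\!\left[\int_0^\infty\|u_s\|_{L^2}^2\,\mathrm{d}s\right]\;\le\;C_1(\omega,\varepsilon_n)\,\mathbb{E}_\sigma\!\left[\int_0^\infty\|l_s^{\varepsilon_n}(u)\|_{L^2}^2\,\mathrm{d}s\right]\;+\;C_2(\omega,\varepsilon_n).
\]
On the other hand, inspecting the proof of Theorem \ref{theorem:Fbounds} and applying exactly the same choice of $\gamma_t^{(1)}$ (solving the differential inequality \eqref{eq:inequalitygamma}) yields
\[
\bar F^{\varepsilon_n}(\omega,\sigma)\;-\;\bar F^{\varepsilon_n}(\omega,u^{\varepsilon_n})\;\ge\;\tfrac{1}{4}\,\mathbb{E}_\sigma\!\left[\int_0^\infty\|l_s^{\varepsilon_n}(u)\|_{L^2}^2\,\mathrm{d}s\right]\;-\;C(\omega,\varepsilon_n),
\]
so chaining the two inequalities delivers the desired coercivity $\mathbb{E}_\sigma[\int\|u\|_{L^2}^2]\lesssim 1+\bar F^{\varepsilon_n}(\omega,\sigma)$.

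\textbf{Lower semicontinuity.} Let $\sigma_k\to\sigma$ in $\overline{\mathcal X}$, so that $\sigma_k\to\sigma$ weakly on $\mathfrak S\times L_w^2(\mathbb{R}_+\times\mathbb{T}^2)$ and $\sup_k\mathbb{E}_{\sigma_k}[\|u\|_{L^2}^2]<\infty$. The main quadratic term $\tfrac12\mathbb{E}_\sigma\bigl[\int\|l_s^{\varepsilon_n}(u)\|_{L^2}^2\,\mathrm{d}s\bigr]$ is convex in the pair $(u,Z(u))$ with $W$ fixed, and continuous in the strong $L^2$ topology, hence weakly lower semicontinuous (a Skorokhod/Mazur argument as in \cite{BG2020} and \cite{Barashkovwholespace}). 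The term $\mathfrak G=\int\dot\gamma_t^{(1)}(\omega)Z_t^2\,\mathrm{d}x\,\mathrm{d}t$ is convex in $u$ with positive weight, so is also l.s.c. For the $\Gamma_i$, I would split them into (i) terms that are linear in $u_s$ (or linear in $Z_s(u)$) paired against explicit functionals of $W$, and (ii) terms depending only on the $W$-marginal. For type (ii), continuity follows from weak convergence of the $\mathfrak S$-marginals together with uniform integrability supplied by Section \ref{section:stochasticestimates}. For type (i), the integrand is a continuous linear functional of $u$ whose test element — built from $W$ and the \emph{smooth} object $\xi_{\varepsilon_n}$ — lies in $L^2_{loc}$ with explicit $s$-decay, so joint continuity under $\sigma_k\to\sigma$ holds.

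\textbf{Main obstacle.} The delicate step is the passage to the limit in the cross terms pairing $u$ (only weakly convergent in $L^2$) against distributional processes constructed from $W$. The argument hinges on the fact that for \emph{fixed} $\varepsilon_n>0$ the smoothness of $\xi_{\varepsilon_n}$ makes every such test process a continuous function of $W$ with strong $L^2$ integrability in $s$; this is exactly what fails in the limit $\varepsilon_n\to 0$, which is why the uniform-in-$\varepsilon$ stochastic estimates of Section \ref{section:stochasticestimates} are needed separately and cannot be used here as a black box.
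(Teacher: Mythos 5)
Your proof is correct, but for the coercivity clause it takes a genuinely different route from the paper. The paper works with the \emph{unexpanded} Bou\'e--Dupuis form of $F^{\varepsilon}$ from Theorem \ref{theorem:functionalF}: since $\varepsilon_n$ is fixed, $\xi_{\varepsilon_n}\in\mathcal{C}^{\delta}$, so the whole potential $\int(\xi_{\varepsilon_n}+\gamma^{(1)}+\gamma^{(2)}_{\varepsilon_n})(2W_\infty Z_\infty+Z_\infty^2)\,\mathd x$ is bounded directly by $(\|\xi_{\varepsilon_n}\|_{\mathcal{C}^{\delta}}+\gamma^{(1)}+\gamma^{(2)}_{\varepsilon_n})\|Z_\infty\|_{H^1}(\|W_\infty\|_{\mathcal{C}^{-\delta}}+\dots)$, and then Lemma \ref{lemma:inequalityZu} plus Young's inequality absorbs this into $\tfrac12\mathbb{E}\int\|u_t\|^2_{L^2}\,\mathd t$ — a short, self-contained argument that never touches the decomposition of Proposition \ref{proposition:reformulationF} or Theorem \ref{theorem:Fbounds}. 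You instead route through the renormalized decomposition: you convert $L^2$-coercivity in $u$ into $L^2$-coercivity in $l^{\varepsilon_n}(u)$ (exploiting, as the paper does elsewhere in \eqref{eq:inequalityul}, that for fixed $\varepsilon_n$ the maps $f\mapsto J_s(\xi_{\varepsilon_n}f)$ and $f\mapsto J_s(\xi_{\varepsilon_n}\succ f)$ have integrable-in-$s$ operator norms), and then invoke the lower bound of Theorem \ref{theorem:Fbounds} together with Lemma \ref{lemma:drift}. This is heavier machinery, but it is sound — the term-by-term estimates behind Theorem \ref{theorem:Fbounds} are pathwise, so they do extend from $F^{\varepsilon_n}$ on $\mathbb{H}_a$ to $\bar F^{\varepsilon_n}$ on $\overline{\mathcal{X}}$, which is the one small point you should state explicitly — and it has the advantage of sidestepping the delicate absorption of the purely quadratic-in-$Z$ part of the potential, which in your version is already handled by the choice of $\gamma^{(1)}_t$ in \eqref{eq:inequalitygamma}. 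For lower semicontinuity both you and the paper ultimately rely on the convexity-plus-Skorokhod argument of \cite{Barashkovwholespace} (Lemma 17 there); your sketch of how the $\Gamma_i$ split into $u$-linear and $W$-only pieces is consistent with that reference.
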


\begin{proof}
  For brevity we drop the $n$ index. We consider the form of $F^{\varepsilon}$
  given in Theorem \ref{theorem:functionalF}. In particular we have
  \[ \left| \int (\xi_{\varepsilon} (x) + \gamma^{(1)} +
     \gamma^{(2)}_{\varepsilon}) Z_{\infty} (x) W_{\infty} (x) \mathd x
     \right| \lesssim (\| \xi_{\varepsilon} \|_{\mathcal{C}^{\delta}} +
     \gamma^{(1)} + \gamma^{(2)}_{\varepsilon}) \| Z_{\infty} \|_{H^1} \|
     W_{\infty} \|_{\mathcal{C}^{- \delta}} . \]
  Thus we get
  \[ F (\mu) \geqslant \frac{1}{2} \mathbb{E}_{\mu} \left[ \int_0^{\infty} \|
     u_t \|^2_{L^2} \mathd t \right] - K (\| \xi_{\varepsilon}
     \|_{\mathcal{C}^{\delta}} + \gamma^{(1)} + \gamma^{(2)}_{\varepsilon})
     (\mathbb{E}_{\mu} [\| Z_{\infty} \|_{H^1} \| W_{\infty}
     \|_{\mathcal{C}^{- \delta}} + \| W_{\infty} \|_{\mathcal{C}^{-
     \delta}}^2]) . \]
  Thus, from Young's inequality, we get
  \[ \left( \frac{1}{2} - \kappa \right) \mathbb{E}_{\mu} \left[
     \int_0^{\infty} \| u_t \|^2_{L^2} \mathd t \right] \lesssim
     F^{\varepsilon} (\mu) + K^2 (\| \xi_{\varepsilon}
     \|_{\mathcal{C}^{\delta}} + \gamma^{(1)} + \gamma^{(2)}_{\varepsilon} +
     1)^2 \mathbb{E}_{\mu} [\| W_{\infty} \|_{\mathcal{C}^{- \delta}}^2] . \]
  Since for any $\varepsilon > 0$ and $\delta$, $\xi_{\varepsilon} \in
  \mathcal{C}^{\delta} (\mathbb{T}^2)$ this finishes the proof. The lower
  semicontinuity of $F^{\varepsilon}$ can be proved as in Lemma 17 of
  {\cite{Barashkovwholespace}}.
\end{proof}

\begin{lemma}
  \label{existence-min}For every $\varepsilon > 0$ there is
  $\sigma^{\varepsilon} \in \overline{\mathcal{X}}$ such that
  $\bar{F}^{\varepsilon} = \inf_{\sigma \in \overline{\mathcal{X}}}
  \bar{F}^{\varepsilon} (\omega, \sigma)$. Furthermore, for each
  $\sigma^{\varepsilon}$ as before we have
  \[ \mu^{\mathbb{H}_{\varepsilon}^{\omega, K}} = \mu^{\varepsilon} =
     \tmop{Law}_{\sigma^{\varepsilon}} (W_{\infty} + Z_{\infty}) . \]
\end{lemma}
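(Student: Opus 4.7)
The plan is to handle existence via the direct method of the calculus of variations, and identification of the law via a Jensen/envelope-style duality argument that probes the minimizer $\sigma^\varepsilon$ with test functions.

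First, for existence, I would take a minimizing sequence $\{\sigma_n\} \subset \overline{\mathcal{X}}$ and invoke admissibility of $\bar{F}^\varepsilon$ (just established in the preceding lemma) to obtain the uniform bound $\mathbb{E}_{\sigma_n}[\int_0^\infty \|u_t\|_{L^2}^2\,\mathd t] \lesssim 1 + \bar{F}^\varepsilon(\sigma_n)$. The $\mathfrak{S}$-marginal of each $\sigma_n$ coincides with the fixed Gaussian law of $\mathbb{W}$, hence is trivially tight on $C^0(\mathbb{R}_+,\mathcal{C}^{-\chi})$, while the uniform $L^2$ bound on $u$ together with Banach--Alaoglu gives weak precompactness of the $u$-marginals in $L^2_w(\mathbb{R}_+\times \mathbb{T}^2)$. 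A diagonal extraction yields a subsequential weak limit $\sigma^\varepsilon$ on $\mathfrak{S}\times L^2_w$; the uniform $L^2$ bound persists in the limit by Fatou, so $\sigma^\varepsilon \in \overline{\mathcal{X}}$ by the very definition of this set. The lower semicontinuity of $\bar{F}^\varepsilon$ built into admissibility then yields $\bar{F}^\varepsilon(\omega,\sigma^\varepsilon) \leq \liminf_n \bar{F}^\varepsilon(\omega,\sigma_n) = \inf$, as desired.

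Second, to identify the law of $W_\infty + Z_\infty$ under $\sigma^\varepsilon$, I would probe with bounded continuous test functions. From the explicit form in Theorem~\ref{theorem:functionalF} one reads off $F^\varepsilon(f,\omega)[u] = F^\varepsilon(0,\omega)[u] + \mathbb{E}^{\omega'}[f(W_\infty+Z_\infty(u))]$, so combining Theorem~\ref{theorem:functionalF} with the extension to $\overline{\mathcal{X}}$ (Lemma~\ref{lemma:inf1} carries over verbatim when the admissible functional is augmented by a bounded continuous term) gives, for every bounded continuous $f:\mathcal{C}^{-\chi}(\mathbb{T}^2)\to\mathbb{R}$,
\[
\inf_{\sigma \in \overline{\mathcal{X}}} \Bigl\{ \bar{F}^\varepsilon(\omega,\sigma) + \mathbb{E}_\sigma[f(W_\infty+Z_\infty)] \Bigr\} = -\log \int e^{-f(\varphi)}\,\mu^\varepsilon(\mathd \varphi) + \log \mathcal{Z}^\varepsilon(\omega).
\]
Specializing to $f \equiv 0$ yields $\bar{F}^\varepsilon(\omega,\sigma^\varepsilon) = \log\mathcal{Z}^\varepsilon(\omega)$ for any minimizer. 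Evaluating the above infimum at $\sigma^\varepsilon$ and subtracting $\log \mathcal{Z}^\varepsilon(\omega)$ gives $\mathbb{E}_{\sigma^\varepsilon}[f(W_\infty+Z_\infty)] \geq -\log \int e^{-f}\,\mathd\mu^\varepsilon$. Substituting $tf$ in place of $f$, dividing by $t>0$ and Taylor expanding the right-hand side as $t\downarrow 0$ produces $\mathbb{E}_{\sigma^\varepsilon}[f(W_\infty+Z_\infty)] \geq \int f\,\mathd\mu^\varepsilon$; the same argument with $-f$ gives the opposite inequality, hence equality. Since bounded continuous functions separate probability measures on $\mathcal{C}^{-\chi}(\mathbb{T}^2)$, this forces $\tmop{Law}_{\sigma^\varepsilon}(W_\infty+Z_\infty) = \mu^\varepsilon$, while $\mu^\varepsilon = \mu^{\mathbb{H}_\varepsilon^{\omega,K}}$ is an immediate application of Lemma~\ref{lemma:absolutecontinuity} with $g = \xi_\varepsilon(\omega,\cdot) - (\gamma^{(1)}(\omega) - \gamma_\varepsilon^{(2)})$.

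The hard part will be confirming the lower semicontinuity claim underpinning the direct method: each $\Gamma_i$ is a polynomial of degree at most two in $u$ (and in $\mathbb{W}$), and weak $L^2$-convergence $u_n \rightharpoonup u$ does not a priori pass to quadratic quantities. For fixed $\varepsilon>0$ the field $\xi_\varepsilon$ is smooth so all convolutions against it act continuously in the weak topology; the genuine quadratic piece $\tfrac{1}{2}\int\|l_s(u)\|_{L^2}^2\,\mathd s$ is convex in $u$ and therefore weakly lower semicontinuous; and the admissibility bound supplies the uniform integrability needed to take limits in the cross terms and in the terms involving $Z(u)=\int_0^\cdot J_s u_s\,\mathd s$, where the operator $u\mapsto Z(u)$ is continuous from $L^2_w$ into $C^0(\mathbb{R}_+,H^{1-\delta})$ by Lemma~\ref{lemma:inequalityZu}. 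This is exactly the strategy of Lemma~17 in~\cite{Barashkovwholespace} already invoked in Lemma~\ref{lemma:inf1}, so the extension to the present setting should be routine.
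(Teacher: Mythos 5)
Your proof is correct and follows essentially the same route as the paper, whose own proof simply outsources both steps to Lemma 7 and Theorem 11 of \cite{Barashkovwholespace}: existence via the direct method using admissibility (coercivity of the drift norm plus lower semicontinuity on $\overline{\mathcal{X}}$), and identification of the law by perturbing the variational identity of Theorem \ref{theorem:functionalF} with a bounded continuous test function $t f$ and linearizing as $t \downarrow 0$. The one point you rightly flag --- that Lemma \ref{lemma:inf1} and the lower semicontinuity must be rechecked for the functional augmented by $\mathbb{E}_{\sigma}[f(W_{\infty}+Z_{\infty})]$ --- is indeed routine, since $u \mapsto Z_{\infty}(u)$ sends weakly convergent sequences in $L^2$ to strongly convergent ones in $H^{1-\delta'}$ for $\delta' > \delta$, so your argument closes.
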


\begin{proof}
  Since $\bar{F}^{\varepsilon}$ is admissible in the sense of Definition
  \ref{definition:admissible} (see also Definition 6 of
  {\cite{Barashkovwholespace}}) then the existence of a minimizer is
  guaranteed by Lemma 7 of {\cite{Barashkovwholespace}}. The fact that
  $\mu^{\varepsilon} = \tmop{Law}_{\sigma^{\varepsilon}} (W_{\infty} +
  Z_{\infty})$ is proved in Theorem 11 of {\cite{Barashkovwholespace}}.
  Finally the fact that $\mu^{\varepsilon}$ is a Gaussian free field related
  to the (regularized) Anderson Hamiltonian is proved in Lemma
  \ref{lemma:absolutecontinuity}.
\end{proof}

\begin{proof*}{Proof of Theorem \ref{theorem:coupling}}
  Consider $\varepsilon_n \in \mathbb{R}_+$, $\varepsilon_n \rightarrow 0$,
  and $C : \Omega \rightarrow \mathbb{R}_+$ as in Theorem
  \ref{theorem:Fbounds}, then we have
  \begin{equation}
    \sup_{n \in \mathbb{N}} \mathbb{E}_{\sigma^{\varepsilon_n}} \left[
    \int^{\infty}_0 \| l^{\varepsilon_n}_s (u) \|^2_{L^2} \mathd s \right]
    \leqslant 8 C (\omega) \label{eq:uniformboundl}
  \end{equation}
  for any $\mu^{\varepsilon_n}$ minimizer of $\bar{F}^{\varepsilon_n}$.
  Indeed, we know that , by Lemma \ref{lemma:drift} there exists a sequence of
  drifts $\bar{u}^{\varepsilon_n}$ such that
  \[ \sup_{n \in \mathbb{N}} F^{\varepsilon_n} (\omega,
     \bar{u}^{\varepsilon_n}) \backassign C (w) < \infty . \]
  Then, with $\tilde{C}$ being a constant changing from line to line, we have
  \begin{eqnarray*}
    0 & \geqslant & (\inf_{u \in \mathbb{H}_a} (F^{\varepsilon_n} (\omega,
    u))) - F^{\varepsilon_n} (\omega, \bar{u}^{\varepsilon_n})\\
    & \geqslant & \frac{1}{4} \mathbb{E}_{\sigma^{\varepsilon_n}} \left[
    \int^{\infty}_0 \| l^{\varepsilon_n}_s (u) \|^2_{L^2} \mathd s \right] -
    \sup_{n \in \mathbb{N}} F^{\varepsilon_n} (\omega,
    \bar{u}^{\varepsilon_n}) - \tilde{C} (\omega)\\
    & \geqslant & \frac{1}{4} \mathbb{E}_{\sigma^{\varepsilon_n}} \left[
    \int^{\infty}_0 \| l^{\varepsilon_n}_s (u) \|^2_{L^2} \mathd s \right] -
    \tilde{C} (\omega)
  \end{eqnarray*}
  Thus, taking the sup over $n \in \mathbb{N}$, we get inequality
  \eqref{eq:uniformboundl}. Consider
  \[ \bar{\nu}^{\varepsilon_n} \assign \tmop{Law}_{\sigma^{\varepsilon_n}}
     (W_{\infty}, Z_{\infty}) \in \mathcal{P} (\mathcal{C}^{- \delta} \times
     H^{1 - \delta}) . \]
  We want to prove that $\bar{\nu}^{\varepsilon_n}$ is a family of tight
  measures in $\mathcal{P} (\mathcal{C}^{- \delta} \times H^{1 - \delta})$.
  Since $\mathcal{C}^{- \delta'} \times H^{1 - \delta'}$ compactly embeds in
  $\mathcal{C}^{- \delta} \times H^{1 - \delta}$ (whenever $\delta >
  \delta'$), it is enough to prove that
  \[ \sup_{n \in \mathbb{N}} \mathbb{E}_{\bar{\nu}^{\varepsilon_n}} [\|
     W_{\infty} \|^2_{\mathcal{C}^{- \delta'}} + \| Z_{\infty} \|^2_{H^{1 -
     \delta'}}] < + \infty . \]
  Since the law of $W_{\infty}$ is the Gaussian free field, obviously $\sup_{n
  \in \mathbb{N}} \mathbb{E}_{\bar{\nu}^{\varepsilon_n}} [\| W_{\infty}
  \|^2_{\mathcal{C}^{- \delta'}}] < + \infty$. On the other hand, by Lemma
  \ref{lemma:inequalityZu}, inequality \eqref{eq:inequalityul} in the proof of
  Theorem \ref{theorem:Fbounds}, and inequality \eqref{eq:uniformboundl},
  obtain
  \begin{align*}
    \mathbb{E}_{\bar{\nu}^{\varepsilon_n}} [\| Z_{\infty} \|^2_{H^{1 -
    \delta'}}] & \leqslant  \mathbb{E}_{\sigma^{\varepsilon_n}} \left[
    \int_0^{+ \infty} \| u_s \|_{H^{1 - \delta'}}^2 \mathd s \right]
    \nonumber\\
    & \lesssim  \mathbb{E}_{\sigma^{\varepsilon_n}} \left[ \int_0^{\infty}
    \| l (u_s) \|_{L^2}^2 \mathd s \right]
    +\mathbb{E}_{\sigma^{\varepsilon_n}} \left[ \left( \sup_{s \in
    \mathbb{R}_+} (1 + s)^{\frac{\delta'}{4}} \| \xi_{\varepsilon} W_s
    \|_{H^{- 1 - \frac{\delta'}{2}}}^2 \right) \right] + \| \xi_{\varepsilon}
    \|_{\mathcal{C}^{- 1 - \frac{\delta'}{2}}}^2 \nonumber\\
    &\lesssim  8 \tilde{C} (\omega) + \tilde{C} (\omega) . \nonumber
  \end{align*}
  Since the previous bound is uniform in $\varepsilon_n$ the tightness of
  $\bar{\nu}^{\varepsilon_n}$ follows. Considering any weak limit $\bar{\nu}_0
  \in \mathcal{P} (\mathcal{C}^{- \delta} \times H^{1 - \delta})$ of a
  suitable subsequence of $\bar{\nu}^{\varepsilon_n}$, we have that
  $\bar{\nu}_0$ satisfies the point 1. and 2. of Theorem
  \ref{theorem:coupling}.
  
  The point 3. follows from the second part of Lemma \ref{existence-min}
  (namely that $\mu^{\mathbb{H}_{\varepsilon_n}^{\omega, K}} =
  \tmop{Law}_{\bar{\nu}^{{\varepsilon_n} }} (W_{\infty} + Z_{\infty})$) and
  the fact that $- \Delta + m^2 + \xi_{\varepsilon_n} +
  \gamma_{\varepsilon_n}^{(2)}$ converges to $\mathbb{H}^{\omega}$ in the norm
  resolvent sense (see Theorem 2.30 of {\cite{GUZ}}).
\end{proof*}

\begin{remark}
  \label{remark:Lph}Thanks to the existence of the coupling proved in Theorem
  \ref{theorem:coupling}, we can deduce some regularity properties of the
  Gaussian Anderson free field $\varphi^A$. Indeed, consider $\varphi^A =
  \varphi^G + h $, where $\varphi^G$ is the (standard) Gaussian free field and
  $h \in H^{1 - \delta} (\mathbb{T}^2)$ almost surely is a regular coupling
  between $\varphi^A$ and $\varphi^G$. Then, since $\varphi^G$ is supported on
  $\mathcal{C}^{- \delta'} (\mathbb{T}^2)$ and by Besov embedding (see Lemma
  \ref{lem:besovem}), $\mathcal{C}^{- \delta'} \supset H^{1 - \delta}$ for
  $\delta' > \delta > 0$, we have $\varphi^A \in \mathcal{C}^{- \delta'}$
  almost surely. This implies by Fernique's theorem for Gaussian measures that
  $\varphi^A \in L^p (\Omega', \mathcal{C}^{- \delta'} (\mathbb{T}^2))$ and
  thus $h \in L^p (\Omega', \mathcal{C}^{- \delta'} (\mathbb{T}^2))$ for any
  $1 \leqslant p < + \infty$. 
\end{remark}

\subsection{On the renormalization of the powers of
AGFF}\label{sec:AndersonWick}

In this section we talk about the renormalization of powers of the AGFF. First
we suppose that $\varphi^A$ is a Gaussian random distribution with covariance
$(\mathbb{H}^{\omega, K})^{- 1}$. Then by Theorem \ref{theorem:coupling} there
is a Gaussian free field (with mass $K$) $\varphi^G$ and a random field $h$
taking values in $H^{1 - \delta} (\mathbb{T}^2)$ (for any $\delta > 0$), with
$\mathbb{E} [\| h \|^2_{H^{1 - \delta}}] < + \infty$, such that $\varphi^A =
\varphi^G + h$. Let $\rho_{\varepsilon}$ be a mollifier and define
$\varphi^A_{\varepsilon} \assign \rho_{\varepsilon} \ast \varphi^A$,
$\varphi^G \assign \rho_{\varepsilon} \ast \varphi^G$ etc. For $M \in
\mathbb{N}$, let $H_M : \mathbb{R} \rightarrow \mathbb{R}$ be the $M$-th
Hermite polynomial and we define
\begin{equation}
  (\varphi^A_{\varepsilon})^{\circ M} = c_{\varepsilon}^{\frac{M}{2}} H_M
  \left( \frac{\varphi^A_{\varepsilon}}{\sqrt{c_{\varepsilon}}} \right)
\end{equation}
where $c_{\varepsilon} = \left( \sum_{k \in \mathbb{Z}^2} \frac{|
\hat{\rho}_{\varepsilon} (x) |^2}{(| k |^2 + m^2)} \right)^{1 / 2} \sim \log
\left( \frac{1}{\varepsilon} \right)$. By the properties of sums of Hermite
polynomials and the fact that $c_{\varepsilon} = (\mathbb{E} [|
\varphi^G_{\varepsilon} |^2])^{1 / 2},$ we get
\begin{equation}
  (\varphi^A_{\varepsilon})^{\circ M} = \sum_{k = 0}^M \left( \begin{array}{c}
    M\\
    k
  \end{array} \right) : (\varphi^G_{\varepsilon})^k : h_{\varepsilon}^{M - k}
  .
\end{equation}
\begin{remark}
  \label{lemma:convolutionoperator}Consider the operator $A_{\varepsilon} (f)
  \assign \rho_{\varepsilon} \ast f - f$, then there is $c > 0$ such that
  \[ \| A_{\varepsilon} \|_{\mathcal{L} (B^{s + \kappa}_{p, q}, B^s_{p, q})}
     \leqslant c \varepsilon^{\kappa}, \]
  where the constants in the symbol $\lesssim$ are independent of
  $\varepsilon$.
\end{remark}

\begin{lemma}
  \label{lemma:Wickfreefield}We have that for every $\delta > 0$, $k \in
  \mathbb{N}$ and $p \geqslant 1$ there is $c > 0$ such that
  \[ \mathbb{E} [\| : (\varphi^G_{\varepsilon})^k : - : (\varphi^G)^k :
     \|_{\mathcal{C}^{- \delta}}^p] \lesssim \varepsilon^c . \]
\end{lemma}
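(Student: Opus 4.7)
The plan is to first reduce the Hölder--Besov $\mathcal{C}^{-\delta}$ norm to a Besov norm of type $B^{-\delta'}_{p',p'}$ with $0 < \delta' < \delta$ and $p' \geqslant p$ large, using the Besov embedding $B^{-\delta'}_{p',p'} \hookrightarrow \mathcal{C}^{-\delta}$ that holds once $\delta - \delta' > 2/p'$. By Minkowski, it then suffices to bound $\sum_{j \geqslant -1} 2^{-\delta' p' j} \mathbb{E}[\|\Delta_j X_\varepsilon\|_{L^{p'}}^{p'}]$ where $X_\varepsilon := {:}(\varphi^G_\varepsilon)^k{:} - {:}(\varphi^G)^k{:}$. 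Since $X_\varepsilon$ belongs to the $k$-th homogeneous Wiener chaos and each Littlewood--Paley block $\Delta_j X_\varepsilon(x)$ remains in that chaos, Nelson's hypercontractivity reduces the estimate to the second moment: $\mathbb{E}[|\Delta_j X_\varepsilon(x)|^{p'}] \lesssim_{k,p'} \mathbb{E}[|\Delta_j X_\varepsilon(x)|^2]^{p'/2}$.

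Next I would compute the second moment. Writing $\varphi^G = I_1(G)$ where $G = (-\Delta + m^2)^{-1/2}$ acting on the driving white noise kernel, one has $\varphi^G_\varepsilon = I_1(G_\varepsilon)$ with $G_\varepsilon = \rho_\varepsilon \ast G$, and hence ${:}(\varphi^G_\varepsilon)^k{:}(x) = I_k(G_\varepsilon(x,\cdot)^{\otimes k})$. Proposition~\ref{prop:Wickexp} then gives, by translation invariance,
\[ \mathbb{E}[|\Delta_j X_\varepsilon(x)|^2] = k!\, \bigl\|\Delta_j \bigl(G_\varepsilon^{\otimes k} - G^{\otimes k}\bigr)(x,\cdot)\bigr\|_{L^2}^2, \]
which becomes, via the telescoping identity $G_\varepsilon^{\otimes k} - G^{\otimes k} = \sum_{i=1}^k G_\varepsilon^{\otimes(i-1)} \otimes (G_\varepsilon - G) \otimes G^{\otimes (k-i)}$, a finite sum of Fourier integrals whose integrand is a product of $k$ factors of the form $|k_\ell|^{-2}$ (or $|\hat{\rho}_\varepsilon(k_\ell)|^2 |k_\ell|^{-2}$) and exactly one factor $(\hat{\rho}_\varepsilon(k_\ell) - 1)|k_\ell|^{-2}$ from the difference.

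The crucial input is that $\hat{\rho}$ is smooth with $\hat{\rho}(0)=1$, so for any $\beta \in (0,2)$ one has $|\hat{\rho}_\varepsilon(k_\ell) - 1| = |\hat{\rho}(\varepsilon k_\ell) - 1| \lesssim (\varepsilon|k_\ell|)^\beta$, which is precisely the content of Remark~\ref{lemma:convolutionoperator} applied to the convolution operator $A_\varepsilon$. Inserting this bound trades a factor $\varepsilon^\beta$ for a loss of $\beta$ derivatives on one of the Gaussian factors. Together with the standard two-dimensional computation showing that $\mathbb{E}[|\Delta_j {:}(\varphi^G)^k{:}(x)|^2] \lesssim 2^{j \cdot 0_+}$ (logarithmically divergent, which is absorbed into the Wick renormalization), one obtains
\[ \mathbb{E}[|\Delta_j X_\varepsilon(x)|^2] \lesssim \varepsilon^{2\beta}\, 2^{2\beta j}\, 2^{j \cdot 0_+}. \]
Choosing $\beta$ small enough depending on $\delta - \delta'$ and $p'$, this dominates a summable sequence in the $B^{-\delta'}_{p',p'}$ norm with an overall $\varepsilon^{2\beta}$ prefactor, and taking $c := \beta \, p$ finishes the proof after applying Besov embedding.

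The main bookkeeping obstacle is to keep the logarithmically divergent (in $j$) second-moment bounds for the uncorrected Wick powers from eating into the polynomial $\varepsilon$-gain — this is handled by spending only a fraction of the $\beta$ regularity on closing the sum over $j$ and keeping the rest as the $\varepsilon^c$ decay. A secondary nuisance is that the telescoping for a tensor product produces $k$ summands that are not all of the same tensorial form, but the symmetry of the iterated integral and the uniform $L^2$ bound on each summand make this harmless.
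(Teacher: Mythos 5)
Your argument is correct and is essentially the standard proof of quantitative convergence of Wick powers of the 2d GFF — reduction to second moments via hypercontractivity, the covariance/Fourier computation with the mollifier gain $|\hat{\rho}(\varepsilon k)-1|\lesssim(\varepsilon|k|)^{\beta}$ traded against a small derivative loss, then Besov embedding. The paper does not spell out a proof but cites Theorem V.3 of Simon's book (and Lemma 3.12 of Barashkov--De Vecchi), which proceed along the same lines, so your proposal matches the intended argument.
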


\begin{proof}
  See, e.g., Theorem V.3 in {\cite{Simonbookphi}} (see also Lemma 3.12 of
  {\cite{BarshkovDeVecchi}}).
\end{proof}

\begin{lemma}
  \label{lemma:pseudoWickpower}For every $\delta > 0$ and $p \geqslant 1$, we
  have that, for any $M \in \mathbb{N}$, $(\varphi^A_{\varepsilon})^{\circ M}$
  is a Cauchy sequence in $L^p (\Omega', B^{- \delta}_{p, p} (\mathbb{T}^2))$
  with a limit $(\varphi^A)^{\circ M}$. Furthermore we have
  \begin{equation}
    (\varphi^A)^{\circ M} = \sum_{k = 0}^M \left( \begin{array}{c}
      M\\
      k
    \end{array} \right) : (\varphi^G)^k : h^{M - k} .
    \label{eq:sumWickproduct}
  \end{equation}
\end{lemma}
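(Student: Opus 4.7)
The starting point is the Appell/shift identity for Hermite polynomials,
\begin{equation*}
c^{M/2}H_M\!\left(\tfrac{x+y}{\sqrt{c}}\right)=\sum_{k=0}^{M}\binom{M}{k}\, c^{k/2}H_k\!\left(\tfrac{x}{\sqrt{c}}\right)\, y^{M-k},\qquad c>0,\ x,y\in\mathbb{R},
\end{equation*}
a direct consequence of the generating function identity $e^{t(x+y)-t^2/2}=e^{ty}\cdot e^{tx-t^2/2}$. Applying this pointwise with $c=c_\varepsilon$, $x=\varphi^G_\varepsilon$, $y=h_\varepsilon$, and using that $c_\varepsilon$ equals the (spatially constant) variance of $\varphi^G_\varepsilon$, one obtains at the regularized level the identity
\begin{equation*}
(\varphi^A_\varepsilon)^{\circ M} = \sum_{k=0}^{M}\binom{M}{k}\,:(\varphi^G_\varepsilon)^k:\; h_\varepsilon^{M-k}.
\end{equation*}
It therefore suffices to show that each summand converges in $L^p(\Omega', B^{-\delta}_{p,p})$ to $:(\varphi^G)^k:\,h^{M-k}$; both the Cauchy property and \eqref{eq:sumWickproduct} then follow by the triangle inequality.

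For the stochastic factor I invoke Lemma \ref{lemma:Wickfreefield}: it yields $:(\varphi^G_\varepsilon)^k:\to :(\varphi^G)^k:$ in $L^q(\Omega',\mathcal{C}^{-\delta/2})$ with quantitative rate $\varepsilon^{c}$ for every $q\geq 1$, together with uniform-in-$\varepsilon$ bounds supplied by Theorem \ref{theorem:Wickproduct}. For the deterministic factor I plan to use the two-dimensional Sobolev multiplication $H^{s_1}\cdot H^{s_2}\hookrightarrow H^{s_1+s_2-1}$ (valid for $s_1+s_2>1$, $s_i<1$): starting from $h\in H^{1-\delta}$ (Theorem \ref{theorem:coupling}) and shrinking $\delta$ so that $M\delta<1$, an induction gives the deterministic estimate $\|h^{M-k}\|_{H^{1-(M-k)\delta}}\lesssim \|h\|_{H^{1-\delta}}^{M-k}$. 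The telescoping identity
\begin{equation*}
h_\varepsilon^{M-k}-h^{M-k}=\sum_{j=0}^{M-k-1}h_\varepsilon^{\,j}(h_\varepsilon-h)\,h^{M-k-1-j},
\end{equation*}
together with Remark \ref{lemma:convolutionoperator} (which provides $\|h_\varepsilon-h\|_{H^{1-\delta-\kappa}}\lesssim \varepsilon^{\kappa}\|h\|_{H^{1-\delta}}$), then upgrades this to almost-sure convergence $h_\varepsilon^{M-k}\to h^{M-k}$ in $H^{1-(M-k)\delta-\kappa}$ at rate $\varepsilon^{\kappa}$.

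Assembling the product convergence: for $\delta$ small enough the effective regularity $1-(M-k+1)\delta$ remains strictly positive, so Bony's decomposition (with only the resonant block requiring a positive sum of regularities) yields an estimate of the form $\|fg\|_{B^{-\delta}_{p,p}}\lesssim \|f\|_{B^{-\delta/2}_{q_1,q_1}}\|g\|_{B^{s}_{q_2,q_2}}$ with $1/p=1/q_1+1/q_2$ and $s>\delta$; splitting $f_\varepsilon g_\varepsilon - fg=(f_\varepsilon-f)g_\varepsilon+f(g_\varepsilon-g)$ and applying H\"older in $\Omega'$ to separate the stochastic and deterministic $L^{q}(\Omega')$-bounds then closes the argument. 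The central obstacle is the moment bookkeeping for $h^{M-k}$: Theorem \ref{theorem:coupling} only supplies $\mathbb{E}[\|h\|_{H^{1-\delta}}^{2}]<\infty$, whereas a product pairing via H\"older asks for higher $L^{q}(\Omega')$-moments of $\|h\|_{H^{1-\delta}}$. These must be recovered either by sharpening the coercivity bound of Theorem \ref{theorem:Fbounds} so as to control a higher moment of $\int_0^\infty\|l_s\|_{L^2}^2\,ds$ (hence of $\|Z_\infty\|_{H^{1-\delta}}$), by interpolating between the $L^2$-Sobolev control on $h$ and the all-order $L^{q}(\Omega')$-control on $\|h\|_{\mathcal{C}^{-\delta'}}$ from Remark \ref{remark:Lph}, or by a conditional-expectation argument that decouples the pairing.
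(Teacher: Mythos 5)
Your plan follows the paper's proof essentially step for step: the binomial/Appell expansion of $(\varphi^A_\varepsilon)^{\circ M}$, term-by-term convergence using Lemma \ref{lemma:Wickfreefield} for the Wick factors and the mollifier estimate of Remark \ref{lemma:convolutionoperator} for the $h$-factors, and a H\"older pairing in $\Omega'$ to separate the stochastic and deterministic norms. The moment obstacle you flag at the end is resolved in the paper by exactly the second route you name: interpolating $h \in L^2(\Omega', H^{1-\delta''})$ against $h \in L^{p'}(\Omega', \mathcal{C}^{-\delta'})$ for all $p'$ (Remark \ref{remark:Lph}) to place $h$ in $L^{p'/\theta}(\Omega', B^{3\delta}_{p(M-k),p(M-k)})$ with arbitrarily high integrability in $\Omega'$, so your argument closes as written.
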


\

\begin{remark}
  \label{rem:diff-Wick}We should stress that the singular product
  $(\varphi^A)^{\circ M}$ (defined thanks to Lemma
  \ref{lemma:pseudoWickpower}) is different from the Gaussian Wick product
  \begin{equation}
    : (\varphi^A)^M : = \lim_{\varepsilon \rightarrow 0} :
    (\varphi^A_{\varepsilon})^M : \label{eq:WickproductA}
  \end{equation}
  defined as in equation \eqref{eq:Wickproduct} and Theorem
  \ref{theorem:Wickproduct}. Indeed, the renormalization procedure is done
  through a limit of a function of the random field $\varphi^A_{\varepsilon}
  (x)$ and the variable $x$, and not only on $\varphi^A_{\varepsilon} (x)$ as
  for the product $(\varphi^A)^{\circ M}$ (see Section 6 of
  {\cite{bailleul2022analysis}} for a discussion on the product
  \eqref{eq:WickproductA}).
  
  However, it turns out that the difference of the renormalization functions
  should be (at least) an $L^p$ function for any $p \geqslant 1$. Take for
  example the square case, where we get
  \[ \mathbb{E} [(\varphi^A)^2 (\cdot) - (\varphi^G)^2 (\cdot)] = 2\mathbb{E}
     [(\varphi^G h) (\cdot)] +\mathbb{E} [h^2 (\cdot)] . \]
  Now the term $\mathbb{E} [h (x)^2]$ is in $L^p$, by Sobolev embedding, since
  $h \in H^{1 - \delta}$. If we decompose the other term as
  \[ \mathbb{E} [\varphi^G h] =\mathbb{E} [\varphi^G \succ h] +\mathbb{E}
     [\varphi^G \preccurlyeq h], \]
  we obtain that it is in $H^{1 - 2 \delta}$. In fact $\mathbb{E} [\varphi^G
  \preccurlyeq h]$ is in $H^{1 - 2 \delta}$ by the properties of $h$ and of
  the paraproduct $\preccurlyeq$,  see Appendix \ref{app:Besov}. To study
  $\mathbb{E} [\varphi^G \succ h]$, we observe
  \begin{eqnarray*}
    \mathbb{E} [\varphi^G \succ h] & = & \sum_{i \leqslant j - 1} \mathbb{E}
    [\Delta_j \varphi^G \Delta_i h]\\
    & = & \sum_{i \leqslant j - 1} \mathbb{E} [\Delta_j \varphi^G] \mathbb{E}
    [\Delta_i h]\\
    & = & 0
  \end{eqnarray*}
  where we have used the ``scale to scale'' property of Remark
  \ref{rem:scaletoscale}.
  
  This remark is an example of the renormalization of singular products
  through diverging constant (in space) functions of the singular field whose
  law is not invariant with respect to translation (see
  {\cite{bailleul2023global,hairer2023regularity}} for some examples of this
  kind of phenomenon in the case of smooth Riemannian manifolds).
\end{remark}

\begin{proof*}{Proof of Lemma \ref{lemma:pseudoWickpower}}
  We fix $p \geqslant 1$ and $\delta > 0$, we want to prove that
  \[ \lim_{\varepsilon \rightarrow 0} \mathbb{E} [\|
     (\varphi^A_{\varepsilon})^{\circ M} - (\varphi^A)^{\circ M} \|_{B^{-
     \delta}_{p, p}}^p] = 0. \]
  We have that
  \begin{eqnarray}
    \mathbb{E} [\| (\varphi^A_{\varepsilon})^{\circ M} - (\varphi^A)^{\circ M}
    \|_{B^{- \delta}_{p, p}}^p] & \lesssim & \sum_{k = 0}^M \mathbb{E} [\| :
    (\varphi^G_{\varepsilon})^k : h^{M - k}_{\varepsilon} - : (\varphi^G)^k :
    h^{M - k} \|_{B^{- \delta}_{p, p}}^p] \label{eq:sumWikproduct2} . 
  \end{eqnarray}
  Let us focus on each separate term in the previous sum. We have that
  \begin{eqnarray}
    &  & \mathbb{E} [\| : (\varphi^G_{\varepsilon})^k : h^{M -
    k}_{\varepsilon} - : (\varphi^G)^k : h^{M - k} \|_{B^{- \delta}_{p, p}}^p]
    \nonumber\\
    & \lesssim & \mathbb{E} [\| : (\varphi^G_{\varepsilon})^k : - :
    (\varphi^G)^k : \|_{\mathcal{C}^{- \delta}}^p \| h_{\varepsilon} \|_{B^{2
    \delta}_{p (M - k), p (M - k)}}^{p (M - k)}] + \nonumber\\
    &  & + \sum_{\ell = 1}^{M - k - 1} \mathbb{E} [\| : (\varphi^G)^k :
    \|_{\mathcal{C}^{- \delta}}^p \| h_{\varepsilon} - h \|_{B^{2 \delta}_{p
    (M - k), p (M - k)}}^p \| h_{\varepsilon} \|^{p (M - k - \ell)}_{B^{2
    \delta}_{p (M - k), p (M - k)}} \| h \|^{p \ell}_{B^{2 \delta}_{p (M - k),
    p (M - k)}}] . \nonumber
  \end{eqnarray}
  Fix $n \in \mathbb{N}$ and $k$, then there is $\delta', \delta'' > 0$, $p'
  \geqslant 1$ and $0 \leqslant \theta \leqslant 1$ such that
  \[ - \theta \delta' + (1 - \theta) (1 - \delta'') \geqslant 3 \delta, \quad
     \frac{(1 - \theta)}{2} \geqslant \frac{1}{p (M - k)}, \quad
     \frac{p'}{\theta} > p (M - k) . \]
  Since $h \in L^2 (\Omega', H^{1 - \delta''} (\mathbb{T}^2))$ and $h \in
  L^{p'} (\Omega', \mathcal{C}^{- \delta'})$, (see Remark \ref{remark:Lph}) we
  get that
  \[ h \in L^{\frac{p'}{\theta}} (\Omega', B^{3 \delta}_{p (M - k), p (M -
     k)}) . \]
  Putting all the previous observations together, we obtain
  \begin{eqnarray}
    &  & \mathbb{E} [\| : (\varphi^G_{\varepsilon})^k : h^{M -
    k}_{\varepsilon} - : (\varphi^G)^k : h^{M - k} \|_{B^{- \delta}_{2, 2}}^2]
    \nonumber\\
    & \lesssim & (\mathbb{E} [\| : (\varphi^G_{\varepsilon})^k : - :
    (\varphi^G)^k : \|_{\mathcal{C}^{- \delta}}^{2 q'}])^{\frac{1}{q'}}
    (\mathbb{E} [\| h_{\varepsilon} \|_{B^{2 \delta}_{p (M - k), p (M -
    k)}}^{p'}])^{\frac{p (M - k)}{p'}} \nonumber\\
    &  & + \sum_{\ell = 1}^{2 n - k - 1} \| A_{\varepsilon} \|_{\mathcal{L}
    (B^{3 \delta}_{p, q}, B^{2 \delta}_{p, q})} \mathbb{E} [\| : (\varphi^G)^k
    : \|_{\mathcal{C}^{- \delta}}^{2 q'}]^{\frac{1}{q'}} \mathbb{E} [\|
    h_{\varepsilon} \|_{B^{3 \delta}_{p (M - k), p (M - k)}}^{p'}]^{\frac{p (M
    - k - \ell)}{p'}} \times \nonumber\\
    &  & \times \mathbb{E} [\| h \|^{p'}_{B^{3 \delta}_{p (M - k), p (M -
    k)}}]^{\frac{p (\ell + 1)}{p'}} \nonumber
  \end{eqnarray}
  where, as usual, $q' \geqslant 1$ such that $\frac{1}{q'} + \frac{1}{p'} =
  1$, and $A_{\varepsilon} (f) = \rho_{\varepsilon} \ast f - f$. Since by
  Lemma \ref{lemma:Wickfreefield} $\mathbb{E} [\| :
  (\varphi^G_{\varepsilon})^k : - : (\varphi^G)^k : \|_{\mathcal{C}^{-
  \delta}}^{2 q'}] \rightarrow 0$, as $\varepsilon \rightarrow 0$ as
  $\mathbb{E} [\| h_{\varepsilon} \|_{B^{2 \delta}_{p (M - k), 2 (M -
  k)}}^{p'}]$ is uniformly bounded when $0 \leqslant \varepsilon \leqslant 1$
  and since by Remark \ref{lemma:convolutionoperator} $\lim_{\varepsilon
  \rightarrow 0} \| A_{\varepsilon} \|_{\mathcal{L} (B^{3 \delta}_{p, q}, B^{2
  \delta}_{p, q})} = 0$, we get $\mathbb{E} [\| : (\varphi^G_{\varepsilon})^k
  : h^{2 n - k}_{\varepsilon} - : (\varphi^G)^k : h^{2 n - k} \|_{B^{-
  \delta}_{p, p}}^p] \rightarrow 0$ as $\varepsilon \rightarrow 0$. By
  inequality \eqref{eq:sumWikproduct2} and since the previous proof can be
  repeated (with different $\delta', \delta'' > 0$, $p' \geqslant 1$ and $0
  \leqslant \theta \leqslant 1$) for any $n \geqslant k \in \mathbb{N}$, this
  implies the thesis.
\end{proof*}

\begin{remark}
  \label{remark:pseudoWickpower}A consequence of the proof of Lemma
  \ref{lemma:pseudoWickpower} and of the estimates in Lemma
  \ref{lemma:convolutionoperator} and Lemma \ref{lemma:Wickfreefield} is that
  there exists a $c > 0$ for which
  \[ \mathbb{E} [\| (\varphi^A_{\varepsilon})^{\circ M} - (\varphi^A)^{\circ
     M} \|_{B^{- \delta}_{p, p}}^p] \lesssim \varepsilon^{c \wedge \delta} .
  \]
\end{remark}

Let $P (x) = \sum_{k = 0}^M c_k x^k$ be a polynomial then we write
\[ P^{\circ} (\varphi^A) = \sum_{k = 0}^M c_k (\varphi^A)^{\circ k} . \]

\begin{theorem}
  \label{theorem:Lp}Let $P$ be a polynomial of even degree with positive
  leading coefficient, then for any $p \geqslant 0$ we have
  \[ \mathbb{E} \left[ \exp \left( - p \int_{\mathbb{T}^2} P^{\circ}
     (\varphi^A) (x) \mathd x \right) \right] < + \infty . \]
\end{theorem}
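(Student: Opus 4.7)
The plan is to use the coupling of Theorem \ref{theorem:coupling} to rewrite $\int P^\circ(\varphi^A)$ as a controlled perturbation of $c_M \int :\!(\varphi^G)^M\!:$ and then reduce the exponential integrability to the classical Nelson / Glimm--Jaffe bound for the $P(\varphi)_2$ measure under a standard Gaussian free field. First, Theorem \ref{theorem:coupling} gives $\varphi^A = \varphi^G + h$, where $\varphi^G\sim\mu^{-\Delta+1}$ is a standard GFF and $h\in H^{1-\delta}$ almost surely, with $\mathbb{E}_{\bar\nu_0}[\|h\|_{H^{1-\delta}}^2]<\infty$ and with Fernique-type moments in $\mathcal{C}^{-\delta'}$ by Remark \ref{remark:Lph}. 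Combining Lemma \ref{lemma:pseudoWickpower} with a Taylor/binomial expansion of $P$ yields the pointwise identity
\begin{equation*}
P^{\circ}(\varphi^A)\;=\;\sum_{j=0}^{M}\frac{P^{(j)}(h)}{j!}\,:\!(\varphi^G)^{j}\!:\,,
\end{equation*}
so that, separating the extremal terms $j=0$ and $j=M$,
\begin{equation*}
\int_{\mathbb{T}^2}P^{\circ}(\varphi^A)\,dx\;=\;\int P(h)\,dx\;+\;c_M\int :\!(\varphi^G)^{M}\!:\,dx\;+\;\sum_{j=1}^{M-1}\int \tfrac{P^{(j)}(h)}{j!}\,:\!(\varphi^G)^{j}\!:\,dx\,.
\end{equation*}

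Next, I would establish an a.s.\ lower bound on the right-hand side separating the ``good'' Wick term from remainders. Since $P$ has even degree $M$ with positive leading coefficient, $P(x)\ge c'x^M-C_P$ for some $c',C_P>0$, giving $\int P(h)\ge c'\|h\|_{L^M}^M-C$. Each mixed term with $1\le j\le M-1$ is handled via Besov--Sobolev duality together with paraproduct/Moser estimates and the two-dimensional embedding $H^{1-\delta}\hookrightarrow L^q$ for every $q<\infty$:
\begin{equation*}
\Bigl|\int \tfrac{P^{(j)}(h)}{j!}\,:\!(\varphi^G)^{j}\!:\,dx\Bigr|\;\lesssim\;\|:\!(\varphi^G)^{j}\!:\|_{\mathcal{C}^{-\delta}}\bigl(1+\|h\|_{H^{1-\delta}}^{M-j}\bigr)\,.
\end{equation*}
A Young's inequality, tuned so that the exponent $q_j$ arising on $\|:\!(\varphi^G)^j\!:\|_{\mathcal{C}^{-\delta}}^{q_j}$ stays strictly below the $j$-th chaos hypercontractivity threshold $2/j$, produces remainders polynomial in $\varphi^G$-chaos norms plus pieces absorbable into the positive $\int P(h)$ contribution and into the $L^2(\bar\nu_0)$ moment $\mathbb{E}[\|h\|_{H^{1-\delta}}^2]$ inherited from the variational construction.

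The proof concludes by a Cauchy--Schwarz on $\bar\nu_0$ separating $\varphi^G$ and the remainders from the leading Wick power:
\begin{equation*}
\mathbb{E}_{\bar\nu_0}\!\bigl[e^{-p\int P^{\circ}(\varphi^A)}\bigr]\;\le\;C\cdot\mathbb{E}\bigl[e^{2p\,\Psi(\varphi^G,h)}\bigr]^{1/2}\cdot\mathbb{E}\bigl[e^{-2pc_M\int :\!(\varphi^G)^{M}\!:\,dx}\bigr]^{1/2}\,.
\end{equation*}
The second factor is the classical Nelson / Glimm--Jaffe $P(\varphi)_2$ integrability bound (\cite{Simonbookphi,GlimmJaffe}), finite precisely because $M$ is even with $c_M>0$; the first factor is finite by Gaussian hypercontractivity applied to each $\varphi^G$-chaos factor at its sub-critical exponent, combined with the Fernique/coupling moments of $h$.

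The hard part is the precise balancing of exponents in the Young's inequality step so that \emph{all} mixed terms remain both below the chaos threshold $2/j$ and absorbable by the available moments of $h$; an alternative, perhaps cleaner, route is to avoid the Cauchy--Schwarz altogether and work with the approximations $\mu^\varepsilon$ directly, using the Radon--Nikodym derivative of Lemma \ref{lemma:absolutecontinuity} to reduce to a uniform-in-$\varepsilon$ Nelson bound for the combined Gaussian-exponent $-p\int P^{\diamond}_{\varepsilon}(\varphi)-V^\varepsilon(\varphi)$, in which the leading term $-pc_M\int :\!\varphi^M\!:$ dominates the quadratic perturbation $V^\varepsilon$, followed by passing to the weak limit $\varepsilon\to 0$.
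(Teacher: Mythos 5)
Your decomposition $P^{\circ}(\varphi^A)=\sum_{j=0}^{M}\tfrac{P^{(j)}(h)}{j!}:(\varphi^G)^j:$ is correct (it follows from \eqref{eq:sumWickproduct}), and the treatment of the leading term $c_M\int :(\varphi^G)^M:$ via the classical Nelson bound is fine. The gap is in the first Cauchy--Schwarz factor $\mathbb{E}[e^{2p\Psi(\varphi^G,h)}]$. After pairing $h^{M-j}$ against $:(\varphi^G)^j:\in\mathcal{C}^{-\delta}$ you necessarily pay a \emph{positive-regularity} Besov norm of $h$ (as in the proof of Lemma \ref{lemma:pseudoWickpower}, something like $\|h\|_{B^{2\delta}_{q,q}}^{M-j}$, controlled by interpolation through $\|h\|_{H^{1-\delta}}$). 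Your Young's inequality then leaves you needing exponential moments of positive powers of $\|h\|_{H^{1-\delta}}$. The coupling of Theorem \ref{theorem:coupling} provides only $\mathbb{E}\|h\|^2_{H^{1-\chi}}<\infty$, and Remark \ref{remark:Lph} provides only \emph{polynomial} moments, and only in the negative-regularity norm $\mathcal{C}^{-\delta'}$; no exponential moment of $\|h\|_{H^{1-\delta}}$ is available anywhere in the construction ($h=Z_\infty(u)$ is a drift, not a Gaussian, so Fernique does not apply to it in the strong norm). The proposed absorption into $\int P(h)\gtrsim \|h\|_{L^M}^M-C$ also fails, because $L^M$ does not control any positive-regularity Sobolev or Besov norm of $h$, which is what actually appears in your duality estimates. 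So the first factor in your Cauchy--Schwarz cannot be shown finite with the ingredients at hand.

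The paper's proof avoids ever needing exponential moments of $h$: it works with the regularized field, uses the deterministic Hermite lower bound $(\varphi^A_\varepsilon)^{\circ 2n}\geqslant -Kc_\varepsilon^{n}\gtrsim -(\log(1/\varepsilon))^{n}$ (which applies to the full field $\varphi^A_\varepsilon$, with no splitting into $\varphi^G+h$), and combines it with the quantitative convergence rate of Remark \ref{remark:pseudoWickpower} together with the polynomial-in-$p$ growth $(p-1)^{np}\varepsilon^{cp/2}$ of the $L^p$ error; choosing $p=\varepsilon^{-c/(6n)}$ yields a stretched-exponential tail for the event $\{\int(\varphi^A)^{\circ 2n}\leqslant -2K'(\log(1/\varepsilon))^n\}$, which gives the theorem. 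The coupling (and hence $h$) enters only through $L^p$ moments in the convergence-rate estimate, which is exactly what is available. Your closing ``alternative route'' via Lemma \ref{lemma:absolutecontinuity} is also problematic as stated: the perturbation $V^\varepsilon$ contains the coefficient $\xi_\varepsilon-\gamma^{(1)}+\gamma^{(2)}_\varepsilon$ with $\gamma^{(2)}_\varepsilon\to\infty$ and $\xi\in\mathcal{C}^{-1-}$ pairing against $:\varphi^2:\in\mathcal{C}^{0-}$, so a ``uniform-in-$\varepsilon$ Nelson bound'' in which the quartic dominates the quadratic is not a routine domination argument but essentially the whole renormalization problem again.
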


\begin{proof}
  The proof is similar to the proof of exponential integrability of the even
  Wick products of Gaussian free field in 2d (see, e.g., Chapter V of
  {\cite{Simonbookphi}}). For this reason, here we only provide a sketch of
  the proof in the case where $P (x) = x^{2 n}$ for some $n \in \mathbb{N}$.
  
  Fix $n \in \mathbb{N}$, then there is $K \geqslant 0$ such that $H_{2 n}
  (x) \geqslant - K$. This means that
  \[ (\varphi_{\varepsilon}^A)^{\circ 2 n} \geqslant - K c_{\varepsilon}^{2
     n} . \]
  By recalling that $c_{\varepsilon} \gtrsim \sqrt{\log
  \frac{1}{\varepsilon}}$, and thus the previous inequality becomes
  \begin{equation}
    \int_{\mathbb{T}^2} (\varphi_{\varepsilon}^A)^{\circ 2 n} \mathd x
    \geqslant - K' \left( \log \left( \frac{1}{\varepsilon} \right) \right)^n
    \geqslant 1 - 2 K' \left( \log \left( \frac{1}{\varepsilon} \right)
    \right)^n \label{eq:inequalityexponentialbounds1}
  \end{equation}
  for $\varepsilon \ll 1$ and for some $K' \geqslant 0$. By hypercontractivity
  and the proof of Lemma \ref{lemma:pseudoWickpower} (see Remark
  \ref{remark:pseudoWickpower}) there is $c, K'' > 0$ such that, for every $p
  \geqslant 2$,
  \begin{eqnarray}
    \mathbb{E} \left[ \left| \int_{\mathbb{T}^2}
    (\varphi_{\varepsilon}^A)^{\circ 2 n} \mathd x - \int_{\mathbb{T}^2}
    (\varphi^A)^{\circ 2 n} \mathd x \right|^p \right] & \lesssim & (p - 1)^{n
    p} (\mathbb{E} [\| (\varphi_{\varepsilon}^A)^{\circ 2 n} -
    (\varphi^A)^{\circ 2 n} \|^2_{H^{- \delta}}])^{\frac{p}{2}} \nonumber\\
    & \leqslant & (K'')^p (p - 1)^{n p} \varepsilon^{\frac{c p}{2}} . 
    \label{eq:inequalityexponentialbounds2}
  \end{eqnarray}
  Now, if $\int_{\mathbb{T}^2} (\varphi^A)^{\circ 2 n} \mathd x \leqslant - 2
  K' \left( \log \left( \frac{1}{\varepsilon} \right) \right)^n$ then, by
  inequality \eqref{eq:inequalityexponentialbounds1}, $\left|
  \int_{\mathbb{T}^2} (\varphi^A)^{\circ 2 n} \mathd x - \int_{\mathbb{T}^2}
  (\varphi^A_{\varepsilon})^{\circ 2 n} \mathd x \right| \geqslant 1$, and
  thus, by Markov inequality and inequality
  \eqref{eq:inequalityexponentialbounds2} we get
  \begin{eqnarray}
    &  & \mathbb{P} \left( \int_{\mathbb{T}^2} (\varphi^A)^{\circ 2 n} \mathd
    x \leqslant - 2 K' \left( \log \left( \frac{1}{\varepsilon} \right)
    \right)^n \right) \nonumber\\
    & \leqslant & \mathbb{P} \left( \left| \int_{\mathbb{T}^2}
    (\varphi^A)^{\circ 2 n} \mathd x - \int_{\mathbb{T}^2}
    (\varphi^A_{\varepsilon})^{\circ 2 n} \mathd x \right| \geqslant 1 \right)
    \nonumber\\
    & \leqslant & \mathbb{E} \left[ \left| \int_{\mathbb{T}^2}
    (\varphi^A)^{\circ 2 n} \mathd x - \int_{\mathbb{T}^2}
    (\varphi^A_{\varepsilon})^{\circ 2 n} \mathd x \right|^p \right] \lesssim
    (K'')^p (p - 1)^n \varepsilon^{\frac{c p}{2}} . \nonumber
  \end{eqnarray}
  If we choose $p = \left( \frac{1}{\varepsilon} \right)^{\frac{c}{6 n}}$ we
  get, for $\varepsilon \ll 1$, that
  \begin{equation}
    \mathbb{P} \left( \int_{\mathbb{T}^2} (\varphi^A)^{\circ 2 n} \mathd x
    \leqslant - 2 K' \left( \log \left( \frac{1}{\varepsilon} \right)
    \right)^n \right) \leqslant e^{- \left( \frac{1}{\varepsilon}
    \right)^{\frac{c}{6 n}}} \label{eq:inequalityexponentialbounds3}
  \end{equation}
  Since inequality \eqref{eq:inequalityexponentialbounds3} holds for any
  $\varepsilon > 0$ small enough, by standard methods (see Theorem V.7
  {\cite{Simonbookphi}}), inequality \eqref{eq:inequalityexponentialbounds3}
  implies the thesis. \ 
\end{proof}

Thanks to the previous theorem we can define Anderson $\Phi^4_2$ Gibbs measure
(so the $\Phi^4_2$ measure with respect to the AGFF). Indeed we introduce the
following definition.

\begin{definition}[Anderson $\Phi_2^4$]
  Consider $\omega \in \Omega$ and $\mu^{\mathbb{H}^{\omega, K}}$ (defined in
  Section \ref{sec:Wick}), we defined the Anderson $\Phi_2^4$ measure as the
  measure $\nu^{\omega}$ defined as
  \[ \nu^{\omega} = (\mathcal{Z} (\omega))^{- 1} \exp \left( -
     \int_{\mathbb{T}^2} (\varphi^A)^{\circ 4} \mathd x + K (\omega)
     \int_{\mathbb{T}^2} (\varphi^A)^{\circ 2} \right) \mathd
     \mu^{\mathbb{H}^{\omega, K}} \]
  where $\mathcal{Z} (\omega) = \int \exp \left( - \int_{\mathbb{T}^2}
  (\varphi^A)^{\circ 4} \mathd x + K (\omega) \int_{\mathbb{T}^2}
  (\varphi^A)^{\circ 2} \right) \mathd \mu^{\mathbb{H}^{\omega, K}} (\varphi)$
  which is well defined by Theorem \ref{theorem:Lp}.
\end{definition}

\begin{remark}
  \label{rem:abscont}From the definition of the Anderson $\Phi_2^4$ measure
  and Theorem \ref{theorem:Lp} we obtain readily that $\nu^{\omega}$ is
  absolutely continuous with respect to the Gaussian AGFF measure.
\end{remark}

\section{Local-in-time solution}\label{sec:localsol}

In this section we prove local-in-time well-posedness for a renormalised
nonlinear wave equation with white noise potential with initial data supported
in the Gibbs measure
\[ (u_0, u_1) \sim \nu^{\omega} \otimes \mu^{I_{L^2}}, \]
where $\nu^{\omega}$ is the Anderson $\Phi_2^4$ measure
from Section \ref{sec:AndersonWick} and $\mu^{I_{L^2}}$
is the white noise measure, defined in Section \ref{sec:Wick}.

As we have seen in Remark \ref{rem:abscont}, the Anderson $\Phi_2^4$ measure
is mutually absolutely continuous w.r.t. the Anderson Free Field
$\mu^{\mathbb{H}^{\omega, K}}$ and so we can write the initial conditions via
random series as
\begin{eqnarray}
  u_0 (\omega') & = & \underset{n \in \mathbb{N}}{\sum} \frac{\hat{g}_0
  (\omega')}{\lambda_n} f_n \in \mathcal{C}^{- \delta}  \text{a.s.} 
  \label{eqn:randomu0}\\
  u_1 (\omega') & = & \underset{n \in \mathbb{N}}{\sum} \hat{g}_1 (\omega')
  f_n, \in \mathcal{C}^{- 1 - \delta} \text{ a.s.}  \label{eqn:randomu1}
\end{eqnarray}
for any $\delta > 0,$ where $\lambda_n, f_n$ are eigenvalues and -function of
$\mathbb{H}^{\omega}$ as in \eqref{eqn:Heigen} and the $\hat{g}_0, \hat{g}_1$
are i.i.d. standard Gaussians, similar to the definition of $\xi$ in
\eqref{xisum}. Note here that we distinguish the two probability spaces
$\omega' \in \Omega'$ for the random initial data and $\omega \in \Omega$ for
the randomness in the Anderson Hamiltonian. Since the operator
$\mathbb{H}^{\omega, K}$ depends on $\omega \in \Omega$, the eigenvalues and
-functions of course also depend on $\omega$ but we omit that dependence here
for brevity.

We consider the random data SPDE formally given by
\begin{eqnarray}
  (\partial^2_t +\mathbb{H}^{\omega}) u + u^3 & = & 0  \label{eqn:formalNLW}\\
  (u, \partial_t u) |_{t = 0} & = & (u_0 (\omega'), u_1 (\omega')), \nonumber
\end{eqnarray}
where due to the irregularity of the initial data, we actually have to
Wick-ordered the nonlinearity as was done in {\cite{OhLa2020}} in the
classical case without the white noise potential.

In order to illustrate this point, we make the following ansatz for $u$
(named after Da Prato Debussche in the SPDE literature and Bourgain/McKean in
the dispersive PDE literature) and we use the shifted operator
$\mathbb{H}^{\omega, K}$ as in \eqref{def:HK} in order to avoid difficulties
with taking square roots. This means we change the equation
\eqref{eqn:formalNLW} by adding a linear term but we will see that one has to
renormalize the cube by subtracting an infinite linear term in any case; We
set
\[ u \assign \theta + v, \text{ where } \left\{\begin{array}{lll}
     (\partial^2_t +\mathbb{H}^{\omega, K}) \theta & = & 0\\
     (\theta, \partial_t \theta) |_{t = 0} & = & (u_0 (\omega'), u_1
     (\omega'))
   \end{array}\right. \]
and $v$ formally satisfies the equation
\begin{equation}
  \left\{\begin{array}{lll}
    (\partial^2_t +\mathbb{H}^{\omega, K}) v & = & - (\theta + v)^3 = -
    \theta^3 - 3 v \theta^2 - 3 v^2 \theta - v^3\\
    (v, \partial_t v) |_{t = 0} & = & 0
  \end{array}\right. . \label{eqn:vformal}
\end{equation}
Now from the mild formulation (as was derived in {\cite{GUZ}})
\begin{equation}
  \theta (t) = \cos \left( t \sqrt{\mathbb{H}^{\omega, K}} \right) u_0
  (\omega') + \frac{\sin \left( t \sqrt{\mathbb{H}^{\omega, K}}
  \right)}{\sqrt{\mathbb{H}^{\omega, K}}} u_1 (\omega') \label{def:theta}
\end{equation}
and Theorem \ref{thm:AHprop} we see that $\theta$ will have regularity $C
(\mathbb{R}_+ ; H^{- \delta})$, using hypercontractivity
it is even in $C (\mathbb{R}_+ ; \mathcal{C}^{- \delta})$ cf. Section
\ref{sec:Wick} but crucially it has negative regularity. Thus it is not
possible to classically define powers of $\theta$ appearing in
\eqref{eqn:vformal}, however we can replace them by Wick powers as introduced
in Section \ref{sec:AndersonWick} as was done in
{\cite{bourgain1996invariant}}, {\cite{OhLa2020}} etc.

\begin{proposition}[Anderson wave Wick polynomials]
  \label{prop:wavewick}Let $(u_0, u_1) \sim \mu^{\mathbb{H}^{\omega, K}}
  \otimes \mu^{I_{L^2}}$, i.e. as in \eqref{eqn:randomu0} and
  \eqref{eqn:randomu1}, then
  \[ \theta (t) \sim u_0  \sim \mu^{\mathbb{H}^{\omega, K}}  \text{ for all
     times} t \in \mathbb{R}, \]
  where $\theta$ is defined in \eqref{def:theta}. Moreover
  \[ \theta^{\circ k} (t) \sim (u_0)^{\circ k}  \text{ for all } k \in
     \mathbb{N} \text{and } t \in \mathbb{R} \]
  i.e. in law the time dependent Wick ordering is the same as the Wick
  ordering of the initial condition w.r.t. the Anderson GFF from Section
  \ref{sec:AndersonWick}. Also one has the bound
  \[ \| \theta^{\circ k} \|_{L_{\Omega'}^p L_{[0, T]}^p \mathcal{C}^{- k
     \delta}} \leqslant T^{\frac{1}{p}} \| (u_0)^{\circ k} \|_{L_{\Omega'}^p
     \mathcal{C}^{- k \delta}}  \text{for } 2 \leqslant p < \infty, \delta > 0
     \text{and } k \in \mathbb{N} \]
  and the norm on the right hand side is finite by Theorem \ref{theorem:Lp}
  and one also has an exponential tail estimate of the form
  \begin{equation}
    \mathbb{P}^{'} (\| \theta^{\circ k} \|_{L_{[0, T]}^p \mathcal{C}^{- k
    \delta}} > R) \lesssim e^{- C R} \label{eq:exptail}
  \end{equation}
  for some $C > 0$ and all $R > 0.$
  
  \begin{proof}
    This is because the law of $\theta (t)$ is a rotated Gaussian, see
    Proposition 2.3 in {\cite{OhLa2020}}.
    
    In fact for i.i.d. centered standard Gaussian random variables $g^i_0
    (\omega')$ and $g^j_1 (\omega')$ one can write
    \begin{equation}
      u_0 (\omega') = \underset{i \in \mathbb{N}}{\sum} \frac{g_0^i
      (\omega')}{\lambda_i} f_i  \text{ and } u_1 (\omega') = \underset{j \in
      \mathbb{N}}{\sum} \frac{g_1^j (\omega')}{\lambda_j} f_j,
      \label{eqn:gaussinitial}
    \end{equation}
    where $\lambda_n$ and $f_n$ are the eigenvalues and -functions of
    $\mathbb{H}^{\omega, K}$ respectively. Thus one has
    \[ \theta (t) = \cos \left( t \sqrt{\mathbb{H}^{\omega, K}} \right) u_0
       (\omega') + \frac{\sin \left( t \sqrt{\mathbb{H}^{\omega, K}}
       \right)}{\sqrt{\mathbb{H}^{\omega, K}}} u_1 (\omega') =
       \underset{i}{\sum} \frac{1}{\lambda_i} (\cos (t \lambda_i) g_0^i
       (\omega') + \sin (t \lambda_i) g_1^i (\omega')) f_i \]
    and one sees that the term in the brackets is nothing but a rotated
    Gaussian with mean zero and Variance $\cos^2  (t \lambda_i) + \sin^2  (t
    \lambda_i) = 1$. Thus $\theta (t)$ is in law equal to $u_0$ and the $L^p
    $bound follows readily. The exponential tail estimate follows from the
    previous bound and Theorem \ref{theorem:Lp}.
  \end{proof}
\end{proposition}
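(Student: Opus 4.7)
My plan is to exploit the spectral decomposition of $\mathbb{H}^{\omega, K}$ from Theorem \ref{thm:AHprop} so that both the random data and the linear flow $\theta$ become diagonal in the eigenbasis $\{f_n\}$, thereby reducing the invariance-in-law claim to a one-dimensional statement about rotations of independent Gaussians. Writing $u_0 = \sum_n \lambda_n^{-1/2} g_0^n f_n$ (as the Karhunen--Lo\`eve expansion of the Gaussian with covariance $(\mathbb{H}^{\omega, K})^{-1}$) and $u_1 = \sum_n g_1^n f_n$ (since $u_1$ is white noise and the $f_n$ form an $L^2$-orthonormal basis), with $\{g_0^n, g_1^n\}$ i.i.d.\ standard Gaussians, the functional calculus of Theorem \ref{thm:AHprop} yields
\[ \theta(t) = \sum_n \lambda_n^{-1/2} \bigl( \cos(t\sqrt{\lambda_n})\, g_0^n + \sin(t\sqrt{\lambda_n})\, g_1^n \bigr) f_n. \]
At each mode, the coefficient is a rotation of $(g_0^n, g_1^n)$ and is again a standard Gaussian, independent across $n$, so $\theta(t)$ has the same law as $u_0$, for every $t \in \mathbb{R}$.

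Next I would argue that Wick powers are preserved under this equality in law. The random distribution $(\varphi^A)^{\circ k}$ in Section \ref{sec:AndersonWick} is constructed as the almost-sure limit of $c_\varepsilon^{k/2} H_k(\rho_\varepsilon \ast \varphi^A / \sqrt{c_\varepsilon})$; by Lemma \ref{lemma:pseudoWickpower} the limit is independent of the mollifier, hence intrinsic to the law of $\varphi^A$. Applying this at each $t$ to $\varphi^A = \theta(t)$ and using $\theta(t) \sim u_0$ gives $\theta^{\circ k}(t) \sim (u_0)^{\circ k}$. The $L^p$-in-time bound then follows directly from Fubini together with the Besov-embedding remark after Theorem \ref{theorem:Wickproduct}:
\[ \mathbb{E}\|\theta^{\circ k}\|_{L^p_{[0,T]} \mathcal{C}^{-k\delta}}^p \;=\; \int_0^T \mathbb{E}\|\theta^{\circ k}(t)\|_{\mathcal{C}^{-k\delta}}^p\, \mathd t \;=\; T\, \mathbb{E}\|(u_0)^{\circ k}\|_{\mathcal{C}^{-k\delta}}^p, \]
finite by Lemma \ref{lemma:pseudoWickpower}. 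The exponential tail \eqref{eq:exptail} would then come from combining this moment bound with Gaussian hypercontractivity applied to $\theta^{\circ k}$, which lies in a finite inhomogeneous chaos with respect to the underlying Gaussian structure (built from $\theta(t)$ and the coupling shift $h$ from Theorem \ref{theorem:shift}); Markov's inequality optimized in $p$ then produces the claimed decay in $R$.

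The main obstacle I anticipate is the rigorous justification that $\theta^{\circ k}(t)$ is truly a measurable function of the law of $\theta(t)$ alone, because the definition in Section \ref{sec:AndersonWick} implicitly invokes the regular coupling $\varphi^A = \varphi^G + h$. One needs to make sure that when $\theta(t)$ replaces $\varphi^A$, the associated coupling is transported consistently (equivalently, that the almost-sure $B^{-\delta}_{p,p}$ limit defining $(\varphi^A)^{\circ k}$ depends only on the covariance of $\varphi^A$, not on an auxiliary choice of decomposition). This is however exactly the content of the mollifier-independence in Lemma \ref{lemma:pseudoWickpower} together with the representation \eqref{eq:sumWickproduct}, which identifies $(\varphi^A)^{\circ k}$ with an intrinsic object. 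Once this point is settled, the remaining steps are essentially bookkeeping.
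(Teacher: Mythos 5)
Your proposal is correct and follows essentially the same route as the paper: diagonalize in the eigenbasis of $\mathbb{H}^{\omega,K}$, observe that each mode of $\theta(t)$ is a rotation of an independent standard Gaussian pair so that $\theta(t)\sim u_0$, transfer this equality in law to the Wick powers via their intrinsic (mollifier-independent) construction, and conclude the $L^p_{[0,T]}$ bound by Fubini and the tail bound by hypercontractivity. Your expansion $u_0=\sum_n\lambda_n^{-1/2}g_0^n f_n$, $u_1=\sum_n g_1^n f_n$ with frequencies $\sqrt{\lambda_n}$ is in fact the correct normalization for the stated covariances (the paper's displayed series contain minor typos), and your explicit treatment of why $(\cdot)^{\circ k}$ depends only on the law is a point the paper leaves implicit.
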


The corrected equation for $v$ then reads
\begin{equation}
  \left\{\begin{array}{lll}
    (\partial^2_t +\mathbb{H}^{\omega, K}) v & = & - (\theta + v)^{\circ 3} =
    - \theta^{\circ 3} - 3 v \theta^{\circ 2} - 3 v^2 \theta - v^3\\
    (v, \partial_t v) |_{t = 0} & = & 0
  \end{array}\right., \label{eqn:vWick}
\end{equation}
where $\theta^{\circ k} \in L_T^p \mathcal{C}^{- \varepsilon k}$ for any
$\varepsilon > 0$ and $p < \infty$ is the Wick power from Proposition
\ref{prop:wavewick}. This looks quite similar to the renormalized wave
equation from {\cite{OhLa2020}} except that we have replaced the Laplace
operator by the Anderson Hamiltonian and consequently we use a different Wick
ordering.

\

However, due to the norm equivalence $\| u \|_{H^s} \approx \left\|
(\mathbb{H}^{\omega, K})^{\frac{s}{2}} u \right\|_{L^2}$ and the regularizing
property of $\frac{\sin \left( t \sqrt{\mathbb{H}^{\omega, K}}
\right)}{\sqrt{\mathbb{H}^{\omega, K}}}$, see Theorem \ref{thm:AHprop}, we are
able to solve \eqref{eqn:vWick} locally in time without much effort. To do
this we prove a simple lemma which quantifies those two properties of
$\mathbb{H}^{\omega, K}$.

\begin{lemma}[Inhomogeneous estimate]
  \label{lem:inhom}
  
  For $\sigma \in (0, 1)$ and $p \geqslant 1$ we have
  \[ \left\| \int^t_0 \frac{\sin \left( (t - s) \sqrt{\mathbb{H}^{\omega, K}}
     \right)}{\sqrt{\mathbb{H}^{\omega, K}}} f (s) d s \right\|_{{H^{1 -
     \sigma}} } \lesssim t^{\frac{p - 1}{p}} \| f \|_{L_{[0, t]}^p H^{-
     \sigma}} \]
  for all times $t \geqslant 0$, with the obvious modification for $p = \infty
  .$
  
  \begin{proof}
    This is a simple consequence of H{\"o}lder's inequality in time and the
    aforementioned norm equivalence from Theorem \ref{thm:AHprop}. Indeed we
    may bound
    \begin{eqnarray*}
      \left\| \int^t_0 \frac{\sin \left( (t - s) \sqrt{\mathbb{H}^{\omega, K}}
      \right)}{\sqrt{\mathbb{H}^{\omega, K}}} f (s) d s \right\|_{{H^{1 -
      \sigma}} } & \overset{\eqref{eqn:normequ}}{\approx} & \left\|
      \sqrt{\mathbb{H}^{\omega, K}}^{1 - \sigma} \int^t_0 \frac{\sin \left( (t
      - s) \sqrt{\mathbb{H}^{\omega, K}} \right)}{\sqrt{\mathbb{H}^{\omega,
      K}}} f (s) d s \right\|_{{L^2} }\\
      & \lesssim & \int^t_0 \left\| \frac{\sin \left( (t - s)
      \sqrt{\mathbb{H}^{\omega, K}} \right)}{\sqrt{\mathbb{H}^{\omega,
      K}}^{\sigma}} f (s) \right\|_{{L^2} } d s\\
      & \overset{\eqref{eqn:cosL2}}{\lesssim} & \int^t_0 \left\|
      \sqrt{\mathbb{H}^{\omega, K}}^{- \sigma} f (s) \right\|_{{L^2} } d s\\
      & \overset{\eqref{eqn:normequ}}{\approx} & \int^t_0 \| f (s) \|_{{H^{-
      \sigma}} } d s\\
      & \lesssim & t^{\frac{p - 1}{p}} \| f \|_{L_{[0, t]}^p H^{- \sigma}}
    \end{eqnarray*}
    and thus we are done.
  \end{proof}
\end{lemma}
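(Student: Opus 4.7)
The plan is to reduce everything to an $L^2$ estimate via the spectral/functional calculus of $\mathbb{H}^{\omega,K}$ and then close by H\"older in time. The two essential inputs are (i) the norm equivalence from Theorem \ref{thm:AHprop} point 4, $\|u\|_{H^s} \approx \|(\mathbb{H}^{\omega,K})^{s/2} u\|_{L^2}$ for $s \in (-1,1)$, which in particular applies to the indices $1-\sigma$ and $-\sigma$ since $\sigma \in (0,1)$; and (ii) the fact that $\cos(t\sqrt{\mathbb{H}^{\omega,K}})$ and $\sin(t\sqrt{\mathbb{H}^{\omega,K}})$ are bounded on $L^2$ uniformly in $t$, equation \eqref{eqn:cosL2}.

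Concretely, I would first replace the $H^{1-\sigma}$ norm of the Duhamel integral by the $L^2$ norm of $(\mathbb{H}^{\omega,K})^{(1-\sigma)/2}$ applied to it. Since every operator in sight is a Borel function of the single self-adjoint operator $\mathbb{H}^{\omega,K}$, the functional calculus allows $(\mathbb{H}^{\omega,K})^{(1-\sigma)/2}$ to be commuted past the sine and the $(\mathbb{H}^{\omega,K})^{-1/2}$ inside the integral, combining them into a single multiplier $\sin((t-s)\sqrt{\mathbb{H}^{\omega,K}}) \cdot (\mathbb{H}^{\omega,K})^{-\sigma/2}$. Using Minkowski's integral inequality to exchange the $L^2_x$ norm with the time integral, and the $L^2$-boundedness of the sine operator, this is bounded by $\int_0^t \|(\mathbb{H}^{\omega,K})^{-\sigma/2} f(s)\|_{L^2}\, ds$.

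Applying the norm equivalence once more in the opposite direction identifies the integrand with $\|f(s)\|_{H^{-\sigma}}$ up to a constant. A single application of H\"older's inequality in time with exponents $p'$ and $p$ then yields $\int_0^t \|f(s)\|_{H^{-\sigma}} ds \leq t^{(p-1)/p} \|f\|_{L^p_{[0,t]} H^{-\sigma}}$, which is the stated bound; the $p=\infty$ case is obtained identically, with $t^{(p-1)/p}$ replaced by $t$. There is no substantive obstacle here: the argument hinges entirely on inputs already established (functional calculus of $\mathbb{H}^{\omega,K}$ and the $H^s$-norm equivalence for $|s|<1$), and the constraint $\sigma \in (0,1)$ is used precisely to keep both fractional exponents within the range where the equivalence \eqref{eqn:normequ} is available.
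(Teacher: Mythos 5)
Your proposal is correct and follows essentially the same route as the paper's proof: reduce to $L^2$ via the norm equivalence \eqref{eqn:normequ} (valid since both $1-\sigma$ and $-\sigma$ lie in $(-1,1)$), commute powers of $\mathbb{H}^{\omega,K}$ through the functional calculus, use Minkowski and the $L^2$-boundedness of the sine operator \eqref{eqn:cosL2}, then undo the equivalence and apply H\"older in time. No differences worth noting.
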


This allows us to prove local well-posedness for the SPDE \eqref{eqn:vWick}
via fixed point.

\begin{theorem}[Local well-posedness]
  \label{thm:localwp}Let $0 < \delta \ll 1$ and $p \gg 1.$With $\theta$
  defined as above, there exists a time
  \[ T \sim \left( \| \theta^{\circ 3} \|^{\frac{1}{3}}_{L_{[0, 1]}^p H^{-
     \delta}} + \| \theta^{\circ 2} \|^{\frac{1}{2}}_{L_{[0, 1] \mathcal{C}^{-
     \delta}}^p} + \| \theta \|_{L_{[0, 1]}^{\infty} \mathcal{C}^{- \delta}} +
     1 \right)^{- 2 \frac{p}{p - 1}} \]
  which is almost surely in $(0, 1) $so that there exists a unique solution
  \[ v \in C ([0, T] ; H^{1 - \delta}) \cap C^1 ([0, T] ; H^{- \delta}) \]
  to the equation
  \begin{equation}
    v (t) = \int^t_0 \frac{\sin \left( (t - s) \sqrt{\mathbb{H}^{\omega, K}}
    \right)}{\sqrt{\mathbb{H}^{\omega, K}}} (\theta^{\circ 3} (s) + 3 v (s)
    \theta^{\circ 2} (s) + 3 v^2 (s) \theta (s) + v^3 (s)) d s,
  \end{equation}
  which is the mild formulation of \eqref{eqn:vWick}.
\end{theorem}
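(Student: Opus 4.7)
The plan is a Banach fixed point argument for the mild equation on $X_T := C([0, T]; H^{1-\delta})$. After fixing $0 < \delta \ll 1$ and $p \gg 1$, I would define the Duhamel map
\[
\Phi(v)(t) := -\int_0^t \frac{\sin\!\left((t-s)\sqrt{\mathbb{H}^{\omega,K}}\right)}{\sqrt{\mathbb{H}^{\omega,K}}}\bigl(\theta^{\circ 3}(s) + 3\,v(s)\,\theta^{\circ 2}(s) + 3\,v^2(s)\,\theta(s) + v^3(s)\bigr)\,ds
\]
and seek its fixed point on a ball $B_R \subset X_T$ centred at $0$. The central analytic ingredient is Lemma \ref{lem:inhom} with $\sigma = \delta$, which reduces matters to an $L^p_{[0,t]}H^{-\delta}$ bound on the nonlinearity; once the four pieces are estimated, choosing $R \sim M$ with $M := \|\theta^{\circ 3}\|^{1/3}_{L^p_{[0,1]}H^{-\delta}} + \|\theta^{\circ 2}\|^{1/2}_{L^p_{[0,1]}\mathcal{C}^{-\delta}} + \|\theta\|_{L^\infty_{[0,1]}\mathcal{C}^{-\delta}} + 1$ and balancing $T^{(p-1)/p} M^2 \simeq 1$ produces exactly the time in the statement.

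The stochastic term $\theta^{\circ 3}$ is controlled directly by its norm in the definition of $M$; the linear-in-$v$ term uses the standard paraproduct bound $\|v\,\theta^{\circ 2}\|_{H^{-\delta}} \lesssim \|v\|_{H^{1-\delta}}\|\theta^{\circ 2}\|_{\mathcal{C}^{-\delta}}$, which closes since $1 - 2\delta > 0$ makes all three paraproduct contributions (para and resonant) well defined via Lemma \ref{lem:para}. The hard step is the quadratic term $v^2\theta$: $H^{1-\delta}(\mathbb{T}^2)$ is just short of being an algebra and $v$ is not a priori bounded, so one cannot pair $v^2$ against $\theta \in \mathcal{C}^{-\delta}$ by an $L^\infty$ argument. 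My plan is to absorb one factor of $v$ via the two-dimensional multiplication $H^{1-\delta}\cdot H^{1-\delta} \hookrightarrow H^{1-2\delta-\kappa}$ (admissible for arbitrarily small $\kappa>0$ since $2(1-\delta) > d/2$), yielding $\|v^2\|_{H^{1-2\delta-\kappa}} \lesssim \|v\|_{H^{1-\delta}}^2$, and then pair with $\theta$ by paraproducts to obtain $\|v^2\theta\|_{H^{-\delta}} \lesssim \|v\|_{H^{1-\delta}}^2\|\theta\|_{\mathcal{C}^{-\delta}}$ for $\delta,\kappa$ small. The purely cubic term is handled by iterating this multiplication, giving $\|v^3\|_{H^{-\delta}} \lesssim \|v\|_{H^{1-\delta}}^3$ at the price of one extra $T^{1/p}$ from H\"older in time.

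Combining everything produces
\[
\|\Phi(v)\|_{X_T} \lesssim T^{(p-1)/p}\bigl(M^3 + M^2 R + M R^2 + T^{1/p} R^3\bigr),
\]
with an analogous difference estimate in which the factors of $v$ appear once as $v_1-v_2$ and the rest as $\|v_j\|_{X_T} \le R$. Setting $R \sim M$ and picking $T$ as in the statement turns $\Phi$ into a strict contraction on $B_R$, giving existence and uniqueness of $v \in X_T$. The $C^1([0,T]; H^{-\delta})$ regularity then follows by differentiating the Duhamel formula in $t$: the derivative replaces $\sin\!\left(\cdot\sqrt{\mathbb{H}^{\omega,K}}\right)/\sqrt{\mathbb{H}^{\omega,K}}$ by $\cos\!\left(\cdot\sqrt{\mathbb{H}^{\omega,K}}\right)$, which is bounded on $L^2$ by Theorem \ref{thm:AHprop}, and the same nonlinearity bounds applied one derivative lower close the estimate. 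Finally, almost-sure finiteness of $T$ in $(0,1)$ is inherited from the exponential tail bound \eqref{eq:exptail} on the Wick norms from Proposition \ref{prop:wavewick}.
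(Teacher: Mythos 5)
Your proposal is correct and follows essentially the same route as the paper: a Banach fixed point argument for the Duhamel map in a ball of $L^\infty_{[0,T]}H^{1-\delta}$, with Lemma \ref{lem:inhom} reducing each term to a product estimate in $H^{-\delta}$, and the same balancing $T^{(p-1)/p}M^2\simeq 1$ fixing the existence time. The only cosmetic deviations (using $H^{1-\delta}\cdot H^{1-\delta}\hookrightarrow H^{1-2\delta-\kappa}$ for $v^2\theta$ and $v^3$ where the paper uses $H^{1/2}$ and $L^6$ embeddings, and obtaining $C^1$ regularity by differentiating Duhamel rather than citing Stone's theorem) are interchangeable with the paper's choices.
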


\begin{proof}
  We make a contraction argument for $v$ in a ball in $L_T^{\infty} H^{1 -
  \delta}$ and then show a posteriori that one in fact has continuity in time
  as well.
  
  As usual, we define the map
  \[ \Psi (v) \assign \int^t_0 \frac{\sin \left( (t - s)
     \sqrt{\mathbb{H}^{\omega, K}} \right)}{\sqrt{\mathbb{H}^{\omega, K}}}
     (\theta^{\circ 3} (s) + 3 v (s) \theta^{\circ 2} (s) + 3 v^2 (s) \theta
     (s) + v^3 (s)) d s \]
  and we want to show that it is a contraction on a suitable ball. We make the
  following estimations which are valid for any time $t > 0$ using heavily
  Lemma \ref{lem:inhom}
  \begin{eqnarray*}
    \left\| \int^t_0 \frac{\sin \left( (t - s) \sqrt{\mathbb{H}^{\omega, K}}
    \right)}{\sqrt{\mathbb{H}^{\omega, K}}} \theta^{\circ 3} (s) d s
    \right\|_{H^{1 - \delta}} & \lesssim & t^{\frac{p - 1}{p}} \|
    \theta^{\circ 3} \|_{L_{[0, t]}^p H^{- \delta}}\\
    \left\| \int^t_0 \frac{\sin \left( (t - s) \sqrt{\mathbb{H}^{\omega, K}}
    \right)}{\sqrt{\mathbb{H}^{\omega, K}}} (3 v (s) \theta^{\circ 2} (s)) d s
    \right\|_{H^{1 - \delta}} & \lesssim & t^{\frac{p - 1}{p}} \| v \|_{L_{[0,
    t]}^{\infty} H^{2 \delta}} \| \theta^{\circ 2} (s) \|_{L_{[0, t]}^p
    \mathcal{C}^{- \delta}}\\
    \left\| \int^t_0 \frac{\sin \left( (t - s) \sqrt{\mathbb{H}^{\omega, K}}
    \right)}{\sqrt{\mathbb{H}^{\omega, K}}} (3 v^2 (s) \theta (s)) d s
    \right\|_{H^{1 - \delta}} & \lesssim & t \| \theta \|_{L_{[0, t]}^{\infty}
    \mathcal{C}^{- \delta}} \| v^2 \|_{L^{\infty} H^{2 \delta}}\\
    & \lesssim & t \| \theta \|_{L_{[0, t]}^{\infty} \mathcal{C}^{- \delta}}
    \| v \|^2_{L^{\infty} H^{\frac{1}{2}}}\\
    \left\| \int^t_0 \frac{\sin \left( (t - s) \sqrt{\mathbb{H}^{\omega, K}}
    \right)}{\sqrt{\mathbb{H}^{\omega, K}}} v^3 (s) d s \right\|_{H^{1 -
    \delta}} & \lesssim & t \| v \|^3_{L_{[0, t]}^{\infty} L^6}\\
    & \lesssim & t \| v \|^3_{L_{[0, t]}^{\infty} H^{\frac{2}{3}}} .
  \end{eqnarray*}
  This allows us to bound (taking $t < 1$)
  \begin{eqnarray}
    &  & \| \Psi (v) \|_{L_{[0, t]}^{\infty} H^{1 - \delta}} \nonumber\\
    & \lesssim & t^{\frac{p - 1}{p}} \left( \| \theta^{\circ 3} \|_{L_{[0,
    t]}^p H^{- \delta}} + \| v \|_{L_{[0, t]}^{\infty} H^{2 \delta}} \|
    \theta^{\circ 2} \|_{L_{[0, t]}^p \mathcal{C}^{- \delta}} + \| \theta
    \|_{L_{[0, t]}^{\infty} \mathcal{C}^{- \delta}} \| v \|^2_{L_{[0,
    t]}^{\infty} H^{\frac{1}{2}}} + \| v \|^3_{L_{[0, t]}^{\infty}
    H^{\frac{2}{3}}} \right) \nonumber
  \end{eqnarray}
  and similarly
  \begin{eqnarray}
    &  & \| \Psi (v) - \Psi (w) \|_{L_{[0, t]}^{\infty} H^{1 - \delta}}
    \nonumber\\
    & \lesssim & \left( \| \theta^{\circ 2} \|_{L_{[0, t]}^p \mathcal{C}^{-
    \delta}} + \| \theta \|_{L_{[0, t]}^p \mathcal{C}^{- \delta}} (\| v
    \|_{L_{[0, t]}^{\infty} H^{2 \delta}} + \| w \|_{L_{[0, t]}^{\infty} H^{2
    \delta}}) + \| v \|^2_{L_{[0, t]}^{\infty} H^{\frac{2}{3}}} + \| w
    \|^2_{L_{[0, t]}^{\infty} H^{\frac{2}{3}}} \right) \times \nonumber\\
    &  & t^{\frac{p - 1}{p}} \| v - w \|_{L_{[0, t]}^{\infty} H^{2 \delta}}
    \nonumber
  \end{eqnarray}
  Now we simply take $\| v \|_{L_{[0, T]}^{\infty} H^{1 - \delta}}, \| w
  \|_{L_{[0, T]}^{\infty} H^{1 - \delta}} \leqslant R$ with $R > 0$ and $0 < T
  \leqslant 1$ chosen s.t.
  \begin{eqnarray*}
    T^{\frac{p - 1}{p}} \left( \| \theta^{\circ 3} \|_{L_{[0, T]}^p H^{-
    \delta}} + R \| \theta^{\circ 2} \|_{L_{[0, T] \mathcal{C}^{- \delta}}^p}
    + \| \theta \|_{L_{[0, T]}^{\infty} \mathcal{C}^{- \delta}} R^2 + R^3
    \right) & \leqslant & R\\
    \text{and} &  & \\
    T^{\frac{p - 1}{p}} \left( \| \theta^{\circ 2} \|_{L_{[0, T]
    \mathcal{C}^{- \delta}}^p} + \| \theta \|_{L_{[0, T] \mathcal{C}^{-
    \delta}}^p} 2 R + 2 R^2 \right) & \leqslant & \frac{1}{2}
  \end{eqnarray*}
  this can be achieved e.g. by setting
  \[ R = \| \theta^{\circ 3} \|^{\frac{1}{3}}_{L_{[0, 1]}^p H^{- \delta}} + \|
     \theta^{\circ 2} \|^{\frac{1}{2}}_{L_{[0, 1] \mathcal{C}^{- \delta}}^p} +
     \| \theta \|_{L_{[0, 1]}^{\infty} \mathcal{C}^{- \delta}} + 1 \quad
     \text{and } T = \left( \frac{1}{10 R^2} \right)^{\frac{p}{p - 1}} . \]
  Thus by Banach's fixed point theorem we have that there exists a unique
  solution $v$ to \eqref{eqn:vWick}.
  
  Continuity in time follows as per usual from Stone's theorem, see e.g. the
  proof of Theorem 3.19 in {\cite{GUZ}}.
\end{proof}

We summarize this result as follows: The flow of the equation
\begin{equation}
  \left\{\begin{array}{lll}
    (\partial^2_t +\mathbb{H}^{\omega, K}) u & = & - u^{\circ 3}\\
    (u, \partial_t u) |_{t = 0} & = & (u_0 (\omega'), u_1 (\omega'))
  \end{array}\right. \label{eqn:waveWick}
\end{equation}
which we denote by $\Phi^{\omega} (t) ((u_0 (\omega'), u_1 (\omega'))) \assign
u (t)$ is measurable as a map
\begin{equation}
  \Phi^{\omega} : B (R) \rightarrow \theta + C ([0, T (R)] ; H^{1 - \delta})
  \cap C^1 ([0, T (R)] ; H^{- \delta}),
\end{equation}
where as before $\theta$ is as in \eqref{def:theta}, the linear propagator
applied to the initial data which satisfy
\begin{align*}
  &(u_0 (\omega'), u_1 (\omega')) \in B (R), \text{ where}\\
  &B (R) \assign  \left\{ (u_0, u_1) \in \tmop{supp} \left(
  \mu^{\mathbb{H}^{w, K}} \otimes \mu^{\mathbb{I}_{L^2}} \right. : \|
  \theta^{\circ 3} \|^{\frac{1}{3}}_{L_{[0, 1]}^p H^{- \delta}} + \|
  \theta^{\circ 2} \|^{\frac{1}{2}}_{L_{[0, 1] \mathcal{C}^{- \delta}}^p} + \|
  \theta \|_{L_{[0, 1]}^{\infty} \mathcal{C}^{- \delta}} \leqslant R - 1
  \right\}
\end{align*}
for $R \gg 1$ and $0 < T < 1$ satisfying $T = \left( \frac{1}{10 R^2}
\right)^{\frac{p}{p - 1}}$ where $p \gg 2$ is taken to be large.

Proposition \ref{prop:wavewick} implies that the measure of $B (R)$ is large,
i.e.
\begin{equation}
  \mathbb{P}' (B (R)^c) \lesssim e^{- C R} .
\end{equation}
This means one has
\[ \| \Phi^{\omega} (u_0 (\omega'), u_1 (\omega')) - \theta \|_{L_{[0, T
   (R)]}^{\infty} H^{1 - \delta}} \leqslant R \text{ and }  \| \Phi^{\omega}
   (u_0 (\omega'), u_1 (\omega')) \|_{L_{[0, T (R)]}^{\infty} H^{- \delta}}
   \leqslant 2 R \]
for such initial data.

\begin{remark}
  \label{rem:focussing}As we have a Wick cube in the equation which is a cube
  renormalized by a logarithmically diverging constant times the function, see
  Section \ref{sec:AndersonWick}, one could consider also the
  \tmtextit{focusing} version of the equation (meaning to change the sign of
  the nonlinearity) with Gaussian initial data and tune the parameters in such
  a way that the divergence cancels with the logarithmically diverging
  constant, see \eqref{def:ceps}, in the definition of the renormalized
  product of the Anderson Hamiltonian.
  
  As the short-time well-posedness does not depend on the sign and is
  continuous in the noise and the initial data, one would obtain local-in-time
  dynamics for this focusing equation where the infinities cancel. Clearly
  the globalization and invariance arguments will fail, however. 
\end{remark}

\subsection{Local-in-time convergence of approximations }

Furthermore we need an analogous short-time well-posedness result for
different approximations to the SPDE \eqref{eqn:vWick}. We consider three
different approximations and one approximation that combines two of those.

One natural approximation is to smoothen the noise $\xi$ i.e. replacing the
operator $\mathbb{H}^{\omega, K}$ by the operator
$\mathbb{H}_{\delta}^{\omega, K}$from \eqref{def:HKeps} and to replace the
Wick powers by regularized Wick powers as in Section \ref{sec:AndersonWick}.
This would amount to solving the SPDE
\begin{eqnarray}
  (\partial^2_t +\mathbb{H}_{\delta}^{\omega, K}) u_{(\delta)} +
  u_{(\delta)}^{\circ_{\delta} 3} & = & 0  \label{eqn:deltaNLW}\\
  (u_{(\delta)}, \partial_t u_{(\delta)}) |_{t = 0} & = & (u^{(\delta)}_0
  (\omega'), u_1 (\omega')), \nonumber
\end{eqnarray}
where one would define the Wick ordering $(\cdummy)^{\circ_{\delta}}$
analogously to Section \ref{sec:AndersonWick} by replacing
$\mathbb{H}^{\omega, K}$ by $\mathbb{H}_{\delta}^{\omega, K}$ and
$u^{(\delta)}_0$is in the support of the Gaussian measure with covariance
$(\mathbb{H}_{\delta}^{\omega, K})^{- 1}$. While this seems like a very
natural approximation, it is actually not very useful if we want to
approximate the dynamics \eqref{eqn:waveWick} since one has not only a
different operator in the equation but also, crucially, a different reference
Gaussian measure $\mu^{\mathbb{H}_{\delta}^{\omega, K}}$ for the support of
the initial condition which is mutually singular w.r.t. the reference Gaussian
for \eqref{eqn:waveWick} $\mu^{\mathbb{H}^{\omega, K}},$ namely the Anderson
Free Field as was remarked in Lemma \ref{lem:abscont}. This then of course
also leads to a different Wick ordered nonlinearity for which one would have
to prove strong enough convergence.

\

Another downside of the approximation \eqref{eqn:deltaNLW} is that it does
not behave well under finite dimensional projection which is crucial if one
wants to prove invariance. This leads us to the next natural choice, namely is
the finite dimensional Galerkin approximation, where we project the equation
onto the span of eigenfunctions $f_n$ of $\mathbb{H}^{\omega, K}$, see
\eqref{eqn:Heigen}. To this end we define the projection
\begin{eqnarray*}
  P_{\leqslant N} & :& L^2 (\mathbb{T}^2) \rightarrow L^2 (\mathbb{T}^2)\\
  P_{\leqslant N} g & \assign &\underset{n \leqslant N}{\sum} (g, f_n) f_n
\end{eqnarray*}
for $N \in \mathbb{N}$ and $u_N$ as the solution to
\begin{eqnarray}
  (\partial^2_t +\mathbb{H}^{\omega, K}) u_N + P_{\leqslant N} (u_N^{\circ 3})
  & = & 0  \label{eqn:finiteNLW}\\
  (u_N, \partial_t u_N) |_{t = 0} & = & (P_{\leqslant N} u_0 (\omega'),
  P_{\leqslant N} u_1 (\omega')) \nonumber
\end{eqnarray}
and we denote by $\Phi^{\omega, N}$ its flow. This will be useful in Section
\ref{sec:global} since this is just a finite dimensional Hamiltonian system
whose Gibbs measure is invariant and approximates $\nu^{\omega}$. The finite
dimensional projections are also compatible with the choice of initial
conditions (see \eqref{eqn:gaussinitial}) and the Wick product, see Section
\ref{sec:AndersonWick}.

\

The third approximation we consider is a regularization of the nonlinearity
in which we replace the cubic nonlinearity by a regularized cube in a way that
we still get an invariant dynamics. The modified equation then reads \
\begin{eqnarray}
  (\partial^2_t +\mathbb{H}^{\omega, K}) u_{\varepsilon} + \rho_{\varepsilon}
  \ast ((u_{\varepsilon} \ast \rho_{\varepsilon})^{\circ 3}) & = & 0 
  \label{eqn:epsNLW}\\
  (u_{\varepsilon}, \partial_t u_{\varepsilon}) |_{t = 0} & = & (u_0
  (\omega'), u_1 (\omega')), \nonumber
\end{eqnarray}
where $\rho_{\varepsilon}$ is the convolution with a standard symmetric
mollifier. Again we define $\Phi_{\varepsilon}^{\omega}$ as its flow.

This approximation has the upside that it has the (for now formally) invariant
measure $\nu^{\varepsilon}$ which is mutually absolutely continuous w.r.t. the
Anderson Free Field $\mu^{\mathbb{H}^{\omega, K}}$ with smooth density
\[ \frac{d \nu^{\omega}_{\varepsilon}}{d \mu^{\mathbb{H}^{\omega, K}}} (v) =
   \int_{\mathbb{T}^2} (v \ast \rho_{\varepsilon})^{\circ 4} d x. \]
By construction of the Wick ordering in Section \ref{sec:AndersonWick}, we
have that this density converges strongly in $L^p $ to
\[ \frac{d \nu^{\omega}}{d \mu^{\mathbb{H}^{\omega, K}}} (v) =
   \int_{\mathbb{T}^2} v^{\circ 4} d x \]
implying convergence in total variation of the measures.

Due to this property, one may use the the same initial data when approximating
the equation \eqref{eqn:waveWick} by \eqref{eqn:epsNLW}.

Finally we define an approximation that combines the last two, i.e. we define
the Galerkin approximation of \eqref{eqn:epsNLW} so
\begin{eqnarray}
  (\partial^2_t +\mathbb{H}^{\omega, K}) u_{N, \varepsilon} + P_{\leqslant N}
  (\rho_{\varepsilon} \ast ((u_{N, \varepsilon} \ast
  \rho_{\varepsilon})^{\circ 3})) & = & 0  \label{eqn:NepsNLW}\\
  (u_{N, \varepsilon}, \partial_t u_{N, \varepsilon}) |_{t = 0} & = &
  (P_{\leqslant N} u_0 (\omega'), P_{\leqslant N} u_1 (\omega')), \nonumber
\end{eqnarray}
writing $\Phi_{\varepsilon}^{\omega, N}$ for its flow.

By the Hamiltonian structure, as in {\cite{OhLa2020}}, the equations
\eqref{eqn:finiteNLW} and \eqref{eqn:NepsNLW} are globally well-posed and
their flows leave the following finite dimensional Gibbs measures
\begin{eqnarray*}
  d \nu_N^{\omega} \assign & \exp \left( - \frac{1}{4} \int_{\mathbb{T}^2}
  (P_{\leqslant N} \phi)^{\circ 4} \mathd x \right) \mathd
  \mu^{\mathbb{H}^{\omega, K}} (\phi) \otimes \mathd \mu^{\mathbb{I}_{L^2}}
  (\partial_t \phi)\\
  \text{and} & \\
  d \nu_{N, \varepsilon}^{\omega} \assign & \exp \left( - \frac{1}{4}
  \int_{\mathbb{T}^2} (\rho_{\varepsilon} \ast P_{\leqslant N} \phi)^{\circ 4}
  \mathd x \right) \mathd \mu^{\mathbb{H}^{\omega, K}} \otimes \mathd
  \mu^{\mathbb{I}_{L^2}} (\partial_t \phi)
\end{eqnarray*}
invariant respectively.

For \eqref{eqn:epsNLW} one has the same local well-posedness as for the
limiting equation \eqref{eqn:waveWick}. Later we will globalize the solutions
to these equations and prove that their flows are invariant.

We summarize these results in the following result.

\begin{proposition}[Well-posedness of approximate equations]
  \label{prop:lwpapprox}
  
  Let $(u_0, u_1) \in \tmop{supp} \left( \mu^{\mathbb{H}^{\omega, K}} \otimes
  \mu^{\mathbb{I}_{L^2}} \right)$, then the flows of the equations
  \eqref{eqn:finiteNLW} and \eqref{eqn:NepsNLW} called $\Phi^{\omega, N}$ and
  $\Phi_{\varepsilon}^{\omega, N}$respectively, exist for all times for
  initial data $(P_{\leqslant N} u_0, P_{\leqslant N} u_1)$ and we have the
  following convergence
  \[ \| \Phi_{\varepsilon}^{\omega, N} (P_{\leqslant N} u_0, P_{\leqslant N}
     u_1) - \Phi^{\omega, N} (P_{\leqslant N} u_0, P_{\leqslant N} u_1)
     \|_{L_{[0, T]}^{\infty} L^2 (\mathbb{T}^2)} \rightarrow 0 \text{ as }
     \varepsilon \rightarrow 0 \text{ for all } T > 0. \]
  Now let $R \gg 1,$ $p \gg 2$ and $T = \left( \frac{1}{10 R^2}
  \right)^{\frac{p}{p - 1}}$ as well as $\delta > 0$ small. If the initial
  data satisfy additionally
  \[ \| u^{\circ 3}_0 \|^{\frac{1}{3}}_{\mathcal{C}^{- \delta}} + \| u^{\circ
     2}_0 \|^{\frac{1}{2}}_{\mathcal{C}^{- \delta}} + \| u_0
     \|_{\mathcal{C}^{- \delta}} + \| u_1 \|_{H^{^{- 1 - \delta}}} + 1
     \leqslant R \]
  then the flow $\Phi^{\omega}_{\varepsilon}$ of the equation
  \eqref{eqn:epsNLW} exists up to the time $T$ and we have
  \begin{eqnarray}
    \| \Phi^{\omega, \varepsilon} (u_0, u_1) \|_{L_{[0, T]}^{\infty} H^{-
    \delta}} & \leqslant & 2 R \\
    \underset{\varepsilon \rightarrow 0}{\lim} \| \Phi^{\omega}_{\varepsilon}
    (u_0, u_1) - \Phi^{\omega} (u_0, u_1) \|_{L_{[0, T]}^{\infty} H^{-
    \delta}} & = & 0 \\
    \underset{N \rightarrow \infty}{\lim} \| \Phi^{\omega}_{\varepsilon} (u_0,
    u_1) - \Phi^{\omega, N}_{\varepsilon} (P_{\leqslant N} u_0, P_{\leqslant
    N} u_1) \|_{L_{[0, T]}^{\infty} H^{- \delta}} & = & 0, 
  \end{eqnarray}
  where we recall that $\Phi^{\omega} $is the flow of the full equation
  \eqref{eqn:waveWick} which was shown to exist up to time $T$ in Theorem
  \ref{thm:localwp} for such initial data.
\end{proposition}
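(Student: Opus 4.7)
The plan is to establish all four conclusions by propagating the Da Prato--Debussche ansatz $u = \theta + v$ used in Theorem \ref{thm:localwp} through each of the approximations, reducing everything to linear stability estimates for $v$.

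For global well-posedness of the Galerkin flows $\Phi^{\omega,N}$ and $\Phi^{\omega,N}_\varepsilon$, I would observe that these are finite-dimensional Hamiltonian ODEs on $P_{\leq N}L^2 \times P_{\leq N}L^2$, with conserved energies of the form
\[
E_N(u, \dot u) = \tfrac12 (u, \mathbb{H}^{\omega,K} u) + \tfrac12 \|\dot u\|_{L^2}^2 + \tfrac14 \int_{\mathbb{T}^2}(P_{\leq N} u)^{\circ 4} dx
\]
(and its $\rho_\varepsilon$-smoothed analogue). Since the AGFF Wick quartic, restricted to the finite-dimensional subspace $P_{\leq N} L^2$, differs from the classical quartic by a quadratic term with constant coefficient (plus an additive constant depending only on $\omega$ and $N$), the energies are coercive in the $L^2\times L^2$ norm, giving global existence by standard ODE theory. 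Convergence $\Phi^{\omega,N}_\varepsilon \to \Phi^{\omega,N}$ as $\varepsilon \to 0$ for each fixed $N, T$ then follows from a Gr\"onwall argument in $L^\infty_T L^2$, using that mollification converges strongly on the finite-dimensional subspace.

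For the statements concerning $\Phi^\omega_\varepsilon$, I would write $u_\varepsilon = \theta + v_\varepsilon$, set $\Theta_\varepsilon := \rho_\varepsilon * \theta$ and $V_\varepsilon := \rho_\varepsilon * v_\varepsilon$, and expand
\[
(\rho_\varepsilon * u_\varepsilon)^{\circ 3} = \Theta_\varepsilon^{\circ 3} + 3 \Theta_\varepsilon^{\circ 2} V_\varepsilon + 3 \Theta_\varepsilon V_\varepsilon^2 + V_\varepsilon^3.
\]
By Lemma \ref{lemma:pseudoWickpower}, Remark \ref{remark:pseudoWickpower}, and the time-wise law identity from Proposition \ref{prop:wavewick}, the stochastic inputs $\Theta_\varepsilon^{\circ k}$ converge to $\theta^{\circ k}$ in $L^p_{[0,1]} \mathcal{C}^{-k\delta}$ almost surely and are bounded uniformly in $\varepsilon$ on a set of probability controlled by \eqref{eq:exptail}. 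Since $\rho_\varepsilon*$ is a uniformly bounded operator on every Sobolev space, the contraction argument of Theorem \ref{thm:localwp} runs verbatim with an $\varepsilon$-independent contraction radius $R$ and time $T = (10R^2)^{-p/(p-1)}$, delivering both the local existence of $\Phi^\omega_\varepsilon$ up to time $T$ and the bound $\|\Phi^\omega_\varepsilon(u_0,u_1)\|_{L^\infty_T H^{-\delta}} \leq 2R$. Convergence $\Phi^\omega_\varepsilon \to \Phi^\omega$ then reduces to Gr\"onwall applied to the linear equation satisfied by $v - v_\varepsilon$, whose forcing decays in $L^p_T H^{-\delta}$ by the aforementioned convergence of Wick powers together with standard mollifier estimates.

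For the final convergence $\Phi^{\omega,N}_\varepsilon \to \Phi^\omega_\varepsilon$ as $N \to \infty$ (at fixed $\varepsilon$), I would use the same ansatz with the $P_{\leq N}$-projected free evolution $\theta_N$, and write a linear inhomogeneous equation for $v_\varepsilon - v_{N,\varepsilon}$. Its forcing contains $(I - P_{\leq N})$ acting on quantities with positive Sobolev regularity, plus differences $\theta^{\circ k} - \theta_N^{\circ k}$; both are controlled by the quantitative bound $\|(I - P_{\leq N}) f\|_{H^s} \lesssim N^{s-s'} \|f\|_{H^{s'}}$ from Remark \ref{remark:N-delta}, and vanish as $N \to \infty$. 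The main obstacle in the whole argument is maintaining uniform-in-$\varepsilon$ control of the stochastic inputs $\Theta_\varepsilon^{\circ k}$ over the same random time interval on which the limit equation was solved, but this is precisely what the quantitative convergence of Wick powers in Remark \ref{remark:pseudoWickpower} provides; every other step is a routine rerun of Theorem \ref{thm:localwp}.
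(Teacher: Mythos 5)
Your proposal is correct in substance and follows the same overall strategy the paper uses (the paper does not give a self-contained proof of this proposition; it relies on rerunning the contraction of Theorem \ref{thm:localwp} and on the Gr\"onwall-type stability lemmas carried out in Section \ref{sec:global}). The main difference is in the $N\to\infty$ comparison at fixed $\varepsilon$: you propagate the Da Prato--Debussche ansatz and compare the remainders $v_\varepsilon$ and $v_{N,\varepsilon}$, whereas the paper (Lemma \ref{eq:convegence-1}) works directly with $u^{\varepsilon}$ and $u^{N,\varepsilon}$ in $H^{-\delta}$, exploiting that at fixed $\varepsilon>0$ the mollified nonlinearity $f^{\varepsilon}$ is locally Lipschitz on $H^{-\delta}$, and quantifies the error as $\lesssim_{\varepsilon} R^2 N^{-\delta'}$ via Remark \ref{remark:N-delta}; your route works too but is heavier than necessary since no renormalization is being removed in that step. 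Two small points to tighten: first, the renormalization function $a_N^{\varepsilon}(x)=3\mathbb{E}[(\rho_\varepsilon*P_{\leqslant N}W)^2(x)]$ is \emph{not} a constant (only its $N\to\infty$ limit is translation invariant), so the projected Wick quartic differs from the classical quartic by a quadratic term with a bounded, $x$-dependent coefficient; coercivity of the finite-dimensional energy still holds, but the statement as you wrote it is inaccurate, and the convergence $a_N^{\varepsilon}\to a^{\varepsilon}$ in $C_b^2$ is an ingredient you need for the $N\to\infty$ limit. Second, the hypothesis of the proposition controls only the limiting Wick powers $u_0^{\circ k}$, while your uniform contraction needs control of the mollified inputs $(\rho_\varepsilon*\theta)^{\circ k}$ on the same set; Remark \ref{remark:pseudoWickpower} gives convergence in $L^p(\Omega')$, which yields almost-sure uniform control only after restricting to a further set of large measure (the paper handles this by building both bounds into the hypotheses of its Section \ref{sec:global} lemmas), so you should make that restriction explicit rather than treat it as automatic.
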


\section{Globalization and Invariance}\label{sec:global}

By the computations on the covariance, we have that the pair $\left(
\mu^{\mathbb{H}^{\omega, K}}, \mu^{\mathbb{I}_{L^2}} \right)$ is invariant
under the flow of the linear equation
\begin{equation}
  (\partial^2_t +\mathbb{H}^{\omega, K}) u = 0 \qquad \begin{array}{rl}
    (u, \partial_t u) |_{t = 0} & = (u_0, u_1) .
  \end{array}
\end{equation}
Now, let $\rho_{\varepsilon}$ be a standard mollifier. Let $P_{\leqslant N}$
be the projection on to the first $N$ eigenfunctions of $\mathbb{H}^{\omega,
K}$. We will denote by $u^{N, \varepsilon}$ the solution to the equation
\begin{equation}
  \begin{array}{ll}
    (\partial^2_t +\mathbb{H}^{\omega, K}) u^{N, \varepsilon}  &= -
    P_{\leqslant N} (\rho_{\varepsilon} \ast (\rho_{\varepsilon} \ast
    P_{\leqslant N} u^{N, \varepsilon})^{\circ 3})\\
    (u, \partial_t u) |_{t = 0}  &= \varphi = (\varphi_0, \varphi_1) .
  \end{array} \qquad \begin{array}{l}
    
  \end{array} \label{eq:approximate-wave-globa}
\end{equation}
Note that here the Wick ordering is taken with respect to the covariance of
$\rho_{\varepsilon} \ast W$ where $W$ is an AGFF. We will consider $u^{N,
\varepsilon}$ as a function of $\varphi$ and when we want to stress this
dependence, we will write $\Phi_t^{N, \varepsilon} \varphi = u^{N,
\varepsilon} (t)$, where $\Phi^{N, \varepsilon}$ denotes the flow (we again
drop the dependence on $\omega$ of this and related objects for brevity). We
will also consider the solution $u^{\varepsilon}$ equation
\begin{equation}
  (\partial^2_t +\mathbb{H}^{\omega, K}) u^{\varepsilon} = -
  (\rho_{\varepsilon} \ast (\rho_{\varepsilon} \ast u^{\varepsilon})^{\circ
  3}) \quad \begin{array}{rl}
    (u, \partial_t u) |_{t = 0} & = \varphi = (\varphi_0, \varphi_1),
  \end{array} \label{eq:approximate-wave-globa-2}
\end{equation}
and the Wick ordering is the same as above. We write $\Phi_t^{\varepsilon}
\varphi = u^{\varepsilon} (t)$. Finally we also consider
\begin{equation}
  (\partial^2_t +\mathbb{H}^{\omega, K}) u = - u^{\circ 3} \quad
  \begin{array}{rl}
    (u, \partial_t u) |_{t = 0} & = \varphi = (\varphi_0, \varphi_1) .
  \end{array} \label{eq:wave-global}
\end{equation}
where the Wick ordering is now taken with respect to the Anderson free field,
see Section \ref{sec:AndersonWick}. We will denote $\Phi_t \varphi = u (t)$.
Our goal is to show that \eqref{eq:wave-global} has $\nu$-almost surely global
solutions and $\nu$ is invariant under the flow $\Phi_t$. To do this we
implement the well-known Bourgain argument. \

\subsection{Approximation by finite dimensional system}

In this section we fix an $\varepsilon > 0$ and will prove that a smoothened
version of our system can be approximated by a finite-dimensional one. Note
that \eqref{eq:approximate-wave-globa} splits into a finite dimensional
Hamiltonian dynamical system and a linear equation. By Liouville's theorem in
finite dimensions we know that
\[ \nu^{N, \varepsilon} = \left( \exp \left( - \frac{1}{4}
   \int_{\mathbb{T}^2} (\rho_{\varepsilon} \ast P_{\leqslant N} \phi)^{\circ
   4} \mathd x \right) \mathd \mu^{\mathbb{H}^{\omega, K}}, \mathd
   \mu^{\mathbb{I}_{L^2}} \right) \]
is invariant under the flow of \eqref{eq:approximate-wave-globa}. Observe that
if we keep $\varepsilon$ fixed $(\rho_{\varepsilon} \ast P_{\leqslant N}
\phi)^{\circ 4}$ is bounded below uniformly in $N$ since the renormalization
function $\mathbb{E} (\rho_{\varepsilon} \ast \phi) (x)^2$ is bounded
uniformly in $N$ and $x$ (but not $\varepsilon$). Furthermore,
$(\rho_{\varepsilon} \ast P_{\leqslant N} \phi)^{\circ 4} \rightarrow
(\rho_{\varepsilon} \ast \phi)^{\circ 4}$ in $L^1 (\mathbb{T}^2)$ $\mu$-almost
surely. Combined with the lower bound, this gives us by dominated convergence
\begin{eqnarray*}
  \nu^{N, \varepsilon} & \rightarrow & \nu^{\varepsilon} : = \left( \exp
  \left( - \frac{1}{4} \int_{\mathbb{T}^2} (\rho_{\varepsilon} \ast
  \phi)^{\circ 4} \mathd x \right) \mathd \mu^{\mathbb{H}^{\omega, K}}, \mathd
  \mu^{\mathbb{I}_{L^2}} \right)
\end{eqnarray*}
in total variation. Our first goal is to show that $\nu^{\varepsilon}$ is
invariant under $\Phi^{\varepsilon}$. To this end we first have to establish
that $u^{\varepsilon}$ exists $\nu^{\varepsilon}$-almost surely for all times.
We begin with the following local well-posedness statement:

\begin{lemma}
  \label{lemma-2approx-local}For every $\delta > 0$ there exists an $L \in
  \mathbb{N}$ and $c > 0$ such that for $t \leqslant c (1 + \| \varphi
  \|_{H^{- \delta}})^{- L}$ such that
  \[ \| \Phi^{N, \varepsilon}_t \varphi \|_{H^{- \delta}} \leqslant 2 \|
     \varphi \|_{H^{- \delta}} . \]
  Note that $c, L$ can depend on $\varepsilon$ and $\delta$ but not on $N$.
\end{lemma}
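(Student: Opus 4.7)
The plan is to run a standard bootstrap argument on the Duhamel formulation of \eqref{eq:approximate-wave-globa} in $L^{\infty}_{[0,t]} H^{-\delta}$, with every estimate being uniform in $N$. First I would rewrite the equation in mild form
\[
u^{N,\varepsilon}(t) = \cos\bigl(t\sqrt{\mathbb{H}^{\omega,K}}\bigr)\varphi_0 + \frac{\sin\bigl(t\sqrt{\mathbb{H}^{\omega,K}}\bigr)}{\sqrt{\mathbb{H}^{\omega,K}}}\varphi_1 - \int_0^t \frac{\sin\bigl((t-s)\sqrt{\mathbb{H}^{\omega,K}}\bigr)}{\sqrt{\mathbb{H}^{\omega,K}}}\, F^{N,\varepsilon}(u^{N,\varepsilon}(s))\, ds,
\]
with $F^{N,\varepsilon}(u) \assign P_{\leq N}\bigl(\rho_\varepsilon * (\rho_\varepsilon * P_{\leq N} u)^{\circ 3}\bigr)$. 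The linear part is controlled in $H^{-\delta}$ by a constant times $\|\varphi\|_{H^{-\delta}}$ using the norm equivalence \eqref{eqn:normequ} together with the uniform operator bounds on $\cos(t\sqrt{\mathbb{H}^{\omega,K}})$ and $\sin(t\sqrt{\mathbb{H}^{\omega,K}})/\sqrt{\mathbb{H}^{\omega,K}}$ coming from the functional calculus of Theorem \ref{thm:AHprop}.

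The main analytic step is the nonlinear estimate
\[
\|F^{N,\varepsilon}(u)\|_{H^{-\delta}} \leq C(\omega,\varepsilon)\,(1+\|u\|_{H^{-\delta}})^3
\]
with $C(\omega,\varepsilon)$ independent of $N$. I would decompose this into three ingredients: (i) by Remark \ref{remark:N-delta} applied with $s=s'=-\delta$, the projection $P_{\leq N}$ onto the eigenspaces of $\mathbb{H}^{\omega,K}$ is bounded on $H^{-\delta}$ uniformly in $N$; (ii) for fixed $\varepsilon>0$, the rapid decay of $\hat\rho_\varepsilon(k) = \hat\rho(\varepsilon k)$ gives $\|\rho_\varepsilon * g\|_{L^\infty} \leq C(\varepsilon)\|g\|_{H^{-\delta}}$; (iii) the Wick cube has the form $(\rho_\varepsilon * P_{\leq N} u)^3 - 3\,c_\varepsilon^{\omega}(x)\,\rho_\varepsilon * P_{\leq N} u$, where the pointwise renormalization constant $c_\varepsilon^{\omega}(x)$ is uniformly bounded on $\mathbb{T}^2$ for fixed $(\varepsilon,\omega)$ and, crucially, does not depend on $N$. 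Chaining these bounds, using Young's inequality for the outer convolution with $\rho_\varepsilon$ and the boundedness of $P_{\leq N}$ on $L^2 \hookrightarrow H^{-\delta}$, yields the claim.

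Combining the nonlinear estimate with the inhomogeneous bound of Lemma \ref{lem:inhom} (with $\sigma=\delta$ and $p=\infty$) and the embedding $H^{1-\delta}\hookrightarrow H^{-\delta}$, and writing $M(t) \assign \sup_{s \leq t}\|u^{N,\varepsilon}(s)\|_{H^{-\delta}}$, I obtain
\[
M(t) \leq C_0\,\|\varphi\|_{H^{-\delta}} + C(\omega,\varepsilon)\,t\,(1+M(t))^3.
\]
A continuity argument (using that $M$ is continuous in $t$ by Proposition \ref{prop:lwpapprox}) then bootstraps this to $M(t) \leq 2\|\varphi\|_{H^{-\delta}}$ as long as $C(\omega,\varepsilon)\,t\,(1+2\|\varphi\|_{H^{-\delta}})^3$ is small compared to $\|\varphi\|_{H^{-\delta}}$, which amounts to taking $t \leq c(\omega,\varepsilon)(1+\|\varphi\|_{H^{-\delta}})^{-L}$ with, say, $L=3$.

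The main obstacle is ensuring that the constant $C(\omega,\varepsilon)$ in the cubic bound truly carries no $N$-dependence: this is secured by Remark \ref{remark:N-delta} applied at equal Sobolev indices, together with the fact that the Wick counter-term depends only on the covariance of $\rho_\varepsilon * W$ with $W\sim\mu^{\mathbb{H}^{\omega,K}}$ and is independent of the finite-dimensional truncation.
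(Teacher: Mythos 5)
Your approach is the same one the paper has in mind (its proof of this lemma is just a pointer to the local well-posedness argument of Proposition \ref{prop:lwpapprox}): a Duhamel bootstrap in $L^\infty_{[0,t]}H^{-\delta}$ in which all constants are uniform in $N$ because, for fixed $\varepsilon$, the outer and inner convolutions with $\rho_\varepsilon$ make the nonlinearity a smooth, subcritical map on $H^{-\delta}$, while $P_{\leq N}$ is uniformly bounded on $H^{-\delta}$ by Remark \ref{remark:N-delta}. Two small remarks on the ingredients: the counter-term in the truncated equation is really $a_N^{\varepsilon}(x)=3\,\mathbb{E}[(\rho_\varepsilon\ast P_{\leq N}W)^2(x)]$ in the paper's own usage (see the proof of Lemma \ref{eq:convegence-1}), so it does depend on $N$; but since $a_N^\varepsilon\to a^\varepsilon$ in $C^2_b$ it is bounded uniformly in $N$, which is all you need. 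Also, to land exactly on the constant $2$ in the conclusion you should run the estimate in the $\mathbb{H}^{\omega,K}$-adapted norm $\|(\mathbb{H}^{\omega,K})^{-\delta/2}\cdot\|_{L^2}$, where the linear propagator has operator norm $\leq 1$; in the plain $H^{-\delta}$ norm the equivalence constant of \eqref{eqn:normequ} enters and you only get $C_0\|\varphi\|_{H^{-\delta}}$, which is harmless for the application in Lemma \ref{approx-global-well} but does not literally give the stated factor $2$.

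There is one genuine slip in your closing step. From $M(t)\leq C_0\|\varphi\|_{H^{-\delta}}+C(\omega,\varepsilon)\,t\,(1+M(t))^3$ you conclude that $M(t)\leq 2\|\varphi\|_{H^{-\delta}}$ once $C t(1+2\|\varphi\|_{H^{-\delta}})^3$ is ``small compared to $\|\varphi\|_{H^{-\delta}}$'', and you assert this amounts to $t\leq c(1+\|\varphi\|_{H^{-\delta}})^{-L}$. That deduction fails for small data: the condition $Ct(1+2\|\varphi\|)^3\leq \tfrac12\|\varphi\|$ forces $t\lesssim \|\varphi\|$, which is \emph{not} implied by $t\leq c(1+\|\varphi\|)^{-L}$ when $\|\varphi\|\to 0$. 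The inhomogeneous bound $\|F^{N,\varepsilon}(u)\|_{H^{-\delta}}\leq C(1+\|u\|_{H^{-\delta}})^3$ is too lossy because it does not vanish at $u=0$, whereas $F^{N,\varepsilon}(0)=0$. Replace it by the homogeneous bound
\begin{equation*}
\|F^{N,\varepsilon}(u)\|_{H^{-\delta}}\;\leq\; C(\omega,\varepsilon)\,\|u\|_{H^{-\delta}}\,(1+\|u\|_{H^{-\delta}})^2,
\end{equation*}
which follows from the same three ingredients (the cubic term is cubic in $\|u\|$ and the counter-term contributes a term linear in $\|u\|$). Then the closing condition becomes $Ct(1+2\|\varphi\|_{H^{-\delta}})^2\leq \tfrac12$, i.e.\ $t\leq c(1+\|\varphi\|_{H^{-\delta}})^{-2}$, which is of the claimed form. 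With that correction the proof is complete.
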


\begin{proof}
  This is similar to Proposition \ref{prop:lwpapprox}. 
\end{proof}

\begin{lemma}
  \label{approx-global-well}With the notation of Lemma
  \ref{lemma-2approx-local} we have
  \[ \sup_{N \in \mathbb{N}} \nu^{N, \varepsilon} (\sup_{0 \leqslant t
     \leqslant T} \| \Phi^{N, \varepsilon}_t \varphi \|_{H^{- \delta}} > 2 R)
     \rightarrow 0 \quad \text{as } R \rightarrow \infty . \]
\end{lemma}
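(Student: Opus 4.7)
The plan is to run the classical Bourgain iteration argument, combining three ingredients that are already in hand: the short-time bound of Lemma \ref{lemma-2approx-local}, the invariance of $\nu^{N,\varepsilon}$ under $\Phi^{N,\varepsilon}_t$ (a consequence of Liouville's theorem applied to the finite-dimensional Hamiltonian dynamics, as recalled immediately before Lemma \ref{lemma-2approx-local}), and a uniform-in-$N$ Gaussian-type tail bound on $\|\varphi\|_{H^{-\delta}}$ under $\nu^{N,\varepsilon}$. The polynomial blow-up of the number of iteration steps must be beaten by the Gaussian decay of the tail.

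First I would fix $R\gg 1$, take $(c,L)$ as in Lemma \ref{lemma-2approx-local}, set $\tau:=c(1+2R)^{-L}$, and partition $[0,T]$ into $n:=\lceil T/\tau\rceil\lesssim_T R^{L}$ subintervals $I_j=[j\tau,(j+1)\tau]$. By the semigroup property $\Phi^{N,\varepsilon}_{j\tau+s}=\Phi^{N,\varepsilon}_{s}\circ\Phi^{N,\varepsilon}_{j\tau}$ and Lemma \ref{lemma-2approx-local}, on $\{\|\Phi^{N,\varepsilon}_{j\tau}\varphi\|_{H^{-\delta}}\le R\}$ one has $\sup_{t\in I_j}\|\Phi^{N,\varepsilon}_t\varphi\|_{H^{-\delta}}\le 2R$; contrapositively,
\[
\Bigl\{\sup_{0\le t\le T}\|\Phi^{N,\varepsilon}_t\varphi\|_{H^{-\delta}}>2R\Bigr\}\subseteq \bigcup_{j=0}^{n-1}\bigl\{\|\Phi^{N,\varepsilon}_{j\tau}\varphi\|_{H^{-\delta}}>R\bigr\}.
\]
A union bound and invariance of $\nu^{N,\varepsilon}$ then give
\[
\nu^{N,\varepsilon}\!\Bigl(\sup_{0\le t\le T}\|\Phi^{N,\varepsilon}_t\varphi\|_{H^{-\delta}}>2R\Bigr)\le n\cdot\nu^{N,\varepsilon}(\|\varphi\|_{H^{-\delta}}>R).
\]

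For the tail bound, at fixed $\varepsilon>0$ the pointwise variance $\mathbb{E}[(\rho_\varepsilon*W)(x)^{2}]$ of the mollified AGFF $W$ is bounded uniformly in $x\in\mathbb{T}^{2}$, so the Wick fourth power obeys $(\rho_\varepsilon*P_{\le N}\phi)^{\circ 4}(x)\ge -C_\varepsilon$ with $C_\varepsilon$ independent of $N$ and $\phi$ (using the elementary bound $H_{4}(y,c)\ge -6c^{2}$ for the renormalized Hermite polynomial). This, together with the total-variation convergence $\nu^{N,\varepsilon}\to\nu^{\varepsilon}$ already noted, bounds the normalization $\mathcal{Z}^{N,\varepsilon}$ away from $0$ and $\infty$ uniformly in $N$, and therefore yields a uniform-in-$N$ $L^{\infty}$ bound on the density $d\nu^{N,\varepsilon}/d(\mu^{\mathbb{H}^{\omega,K}}\otimes\mu^{\mathbb{I}_{L^{2}}})$. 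Since $\mu^{\mathbb{H}^{\omega,K}}\otimes\mu^{\mathbb{I}_{L^{2}}}$ is a Gaussian measure on the separable Banach space $H^{-\delta}\times H^{-1-\delta}$ (cf.\ Theorem \ref{thm:AHprop} and the norm equivalence \eqref{eqn:normequ}), Fernique's theorem gives $\sup_{N}\nu^{N,\varepsilon}(\|\varphi\|_{H^{-\delta}}>R)\lesssim_\varepsilon e^{-cR^{2}}$. Combining with the $O(R^{L})$ factor produces a bound of order $R^{L}e^{-cR^{2}}\to 0$ as $R\to\infty$, uniformly in $N$, which is the claim.

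The main point to watch is keeping all constants uniform in $N$: the exponents $(c,L)$ from Lemma \ref{lemma-2approx-local} are already stated to be $N$-independent, and the density/normalization bound is $N$-uniform precisely because $\varepsilon>0$ acts as an effective ultraviolet cutoff, so that no extra divergence from $N\to\infty$ enters the lower bound on the Wick fourth power. This is the step that would fail if one tried to pass to $\varepsilon\to 0$ directly at this stage, which is why $\varepsilon$ is kept fixed throughout the Bourgain iteration.
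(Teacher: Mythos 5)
Your proof is correct and follows essentially the same route as the paper: partition $[0,T]$ into $O(R^{L})$ steps of length $\tau\sim c(1+R)^{-L}$ using Lemma \ref{lemma-2approx-local}, apply a union bound and the invariance of $\nu^{N,\varepsilon}$, and beat the polynomial factor with a uniform-in-$N$ tail estimate on $\|\varphi\|_{H^{-\delta}}$. The only (harmless) difference is at the last step, where you invoke Fernique to get a Gaussian tail $e^{-cR^{2}}$, whereas the paper settles for the weaker but sufficient bound $R^{-p}$ with $p>L$ coming from finiteness of all moments together with the same uniform-in-$N$ density bound you establish.
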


\begin{proof}
  By Lemma \ref{lemma-2approx-local} we have for $\tau = c (1 + R)^{- L}$,
  using invariance in the 4th line of the following computation
  \begin{eqnarray*}
    &  & \nu^{N, \varepsilon} (\sup_{0 \leqslant t \leqslant T} \| \Phi^{N,
    \varepsilon}_t \varphi \|_{H^{- \delta}} > 2 R)\\
    & \leqslant & \nu^{N, \varepsilon} (\sup_{n \leqslant T / \tau} \|
    \Phi^{N, \varepsilon}_{n \tau} \varphi \|_{H^{- \delta}} > R)\\
    & \leqslant & \sum_{n \leqslant T / \tau} \nu^{N, \varepsilon} (\|
    \Phi^{N, \varepsilon}_{n \tau} \varphi \nobracket \| \nobracket_{H^{-
    \delta}} > R)\\
    & = & (T / \tau) \nu^{N, \varepsilon} (\| \varphi \|_{H^{- \delta}} >
    R)\\
    & \leqslant & C (T) (1 + R)^L \nu^{N, \varepsilon} (\| \varphi \|_{H^{-
    \delta}} > R)\\
    & \leqslant & C (T) (1 + R)^L R^{- p}
  \end{eqnarray*}
  for any $p < \infty$. Choosing $p > L$ and taking the supremum over $N$ we
  can conclude. 
\end{proof}

\begin{lemma}
  \label{eq:convegence-1}Let $u^{\varepsilon}, u_N^{\varepsilon}$ be solutions
  to equations \eqref{eq:approximate-wave-globa-2} and
  \eqref{eq:approximate-wave-globa} respectively and suppose that \
  \[ \sup_{0 \leqslant t \leqslant T} (\| u^{\varepsilon} (t) \|_{H^{-
     \delta}} + \| u^{N, \varepsilon} (t) \|_{H^{- \delta}}) \leqslant R. \]
  Then, for $0 < \delta' < \delta$
  \[ \sup_{0 \leqslant t \leqslant T} \| (u^{N, \varepsilon} -
     u^{\varepsilon}) (t) \|_{H^{- \delta}} \lesssim_{\varepsilon, \delta,
     \delta'} R^2 N^{- \delta'} . \]
\end{lemma}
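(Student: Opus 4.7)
The plan is to write the difference $w \assign u^{N,\varepsilon} - u^{\varepsilon}$, which by subtracting the two equations solves $(\partial_t^2 + \mathbb{H}^{\omega,K}) w = F$ with zero Cauchy data, where
\[
F = (I - P_{\leqslant N})\bigl[\rho_{\varepsilon} \ast (\rho_{\varepsilon} \ast u^{\varepsilon})^{\circ 3}\bigr] + P_{\leqslant N}\bigl[\rho_{\varepsilon} \ast (\rho_{\varepsilon} \ast u^{\varepsilon})^{\circ 3} - \rho_{\varepsilon} \ast (\rho_{\varepsilon} \ast P_{\leqslant N} u^{N,\varepsilon})^{\circ 3}\bigr].
\]
Applying Duhamel's formula and Lemma \ref{lem:inhom} gives $\|w(t)\|_{H^{1-\delta}} \lesssim \int_0^t \|F(s)\|_{H^{-\delta}}\,\mathd s$, and the embedding $H^{1-\delta} \hookrightarrow H^{-\delta}$ turns this into a Gronwall-closable estimate provided I can bound $\|F(s)\|_{H^{-\delta}}$ by $\|w(s)\|_{H^{-\delta}}$ plus a projection remainder of size $N^{-\delta'}$.

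For the first summand of $F$, the spectral Bernstein bound of Remark \ref{remark:N-delta} yields $\|(I - P_{\leqslant N}) g\|_{H^{-\delta}} \lesssim N^{-\delta'} \|g\|_{H^{-\delta+\delta'}}$. Since $\rho_{\varepsilon}$ is smoothing, expanding the Wick cube as $v^3 - 3 c_\varepsilon v$ and using $\|\rho_{\varepsilon} \ast u^{\varepsilon}\|_{L^\infty} \lesssim_{\varepsilon} \|u^{\varepsilon}\|_{H^{-\delta}} \lesssim R$ bounds $\|g\|_{H^{-\delta+\delta'}}$ by a polynomial in $R$ with an $\varepsilon$-dependent constant, yielding an $N^{-\delta'}$ contribution. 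For the second summand, I would use the algebraic identity
\[
a^{\circ 3} - b^{\circ 3} = (a - b)\bigl(a^2 + ab + b^2 - 3 c_\varepsilon\bigr),
\]
which follows directly from $(\cdot)^{\circ 3} = (\cdot)^3 - 3 c_\varepsilon (\cdot)$, applied with $a = \rho_{\varepsilon} \ast u^{\varepsilon}$ and $b = \rho_{\varepsilon} \ast P_{\leqslant N} u^{N,\varepsilon}$. Splitting $a - b = \rho_{\varepsilon} \ast (I - P_{\leqslant N}) u^{\varepsilon} + \rho_{\varepsilon} \ast P_{\leqslant N} w$ decomposes this piece into a second projection tail (estimated as above) and a term linear in $w$ whose quadratic factor is bounded in $L^\infty$ by $C_\varepsilon R^2$ via the mollifier smoothing, contributing $\lesssim_{\varepsilon} R^2 \|w(s)\|_{H^{-\delta}}$.

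Combining these pieces produces an inequality of the form
\[
\|w(t)\|_{H^{-\delta}} \lesssim_{\varepsilon,\delta,\delta'} R^2 \int_0^t \|w(s)\|_{H^{-\delta}} \, \mathd s + T\, P(R)\, N^{-\delta'},
\]
to which Gronwall's inequality applies on $[0,T]$, delivering the claim. The main technical point to keep track of is the $\varepsilon$-dependent smoothing constants, in particular verifying the estimate $\|\rho_{\varepsilon} \ast f\|_{L^\infty} \lesssim_{\varepsilon} \|f\|_{H^{-\delta}}$ on the cubic factor, and ensuring that the renormalization constant $c_\varepsilon$ (identical for $a$ and $b$ since both Wick cubes use the same reference covariance) cancels exactly in the algebraic identity, so that no divergent contribution reappears from the cube difference.
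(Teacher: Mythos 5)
Your overall architecture is the paper's: subtract the two Duhamel formulas, control the projection tail with the spectral Bernstein bound of Remark \ref{remark:N-delta}, use the mollifier to get $L^\infty$ control of the cubic factors (so the difference of nonlinearities is locally Lipschitz in $H^{-\delta}$ with constant $\lesssim_{\varepsilon} R^2$), and close with Gronwall. That part is fine and matches the paper's proof step for step.

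There is, however, one concrete gap: your assertion that the renormalization constant is ``identical for $a$ and $b$ since both Wick cubes use the same reference covariance,'' so that it cancels exactly in the identity $a^{\circ 3}-b^{\circ 3}=(a-b)(a^2+ab+b^2-3c_\varepsilon)$. In the setting of \eqref{eq:approximate-wave-globa} the Wick cube of the Galerkin-truncated field is renormalized by $a_N^{\varepsilon}(x)=3\,\mathbb{E}[(\rho_{\varepsilon}\ast P_{\leqslant N}W)^2(x)]$, which is \emph{not} equal to the constant $a^{\varepsilon}=3\,\mathbb{E}[(\rho_{\varepsilon}\ast W)^2(0)]$ used for the untruncated cube, and is not even a constant function of $x$, because $P_{\leqslant N}$ projects onto eigenfunctions of $\mathbb{H}^{\omega,K}$ and therefore breaks translation invariance. (This $N$-dependent renormalization is forced by the invariance argument in Section \ref{sec:global}: the finite-dimensional Gibbs density must be a function of the first $N$ modes only.) Consequently your algebraic identity produces an additional term of the form $3(a^{\varepsilon}-a_N^{\varepsilon})\,\rho_{\varepsilon}\ast(\rho_{\varepsilon}\ast P_{\leqslant N}u^{N,\varepsilon})$, which does not cancel and must itself be shown to be $O(N^{-\delta'})$ in $H^{-\delta}$. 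The paper does this by establishing $a_N^{\varepsilon}\rightarrow a^{\varepsilon}$ in $C_b^2(\mathbb{T}^2)$ (with a rate, again coming from the spectral decay of Remark \ref{remark:N-delta} applied to the truncated covariance) and pairing $\|a_N^{\varepsilon}-a^{\varepsilon}\|_{L^{\infty}}$ with the $L^{\infty}$ bound on the mollified field. Your proof needs this extra estimate; everything else goes through as you describe.
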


\begin{proof}
  Denote
  \[ f^{\varepsilon, N} (\varphi) = P_{\leqslant N} (\rho_{\varepsilon} \ast
     (\rho_{\varepsilon} \ast P_{\leqslant N} \varphi)^{\circ 3}) =
     P_{\leqslant N} (\rho_{\varepsilon} \ast ((\rho_{\varepsilon} \ast
     P_{\leqslant N} \varphi)^{\circ 3} - a_N^{\varepsilon}
     (\rho_{\varepsilon} \ast P_{\leqslant N} \varphi))) \]
  \[ f^{\varepsilon} (\varphi) = (\rho_{\varepsilon} \ast (\rho_{\varepsilon}
     \ast \varphi)^{\circ 3}) = (\rho_{\varepsilon} \ast ((\rho_{\varepsilon}
     \ast \varphi)^3 - a_N^{\varepsilon} (\rho_{\varepsilon} \ast \varphi)))
  \]
  where
  \[ a_N^{\varepsilon} (x) = 3\mathbb{E} [(\rho_{\varepsilon} \ast
     P_{\leqslant N} W)^2 (x)] \qquad a^{\varepsilon} (x) = 3\mathbb{E}
     [(\rho_{\varepsilon} \ast W)^2 (x)] = a^{\varepsilon} (0) \]
  Note that $a^{\varepsilon}$ is translation invariant (hence a constant
  function) while $a_N^{\varepsilon}$ is not. Furthermore $a_N^{\varepsilon}
  \rightarrow a^{\varepsilon}$ in $C_b^2 (\mathbb{T}^2)$ as $N \rightarrow
  \infty$. By Remark \ref{remark:N-delta} and the smoothing properties of
  $\rho_{\varepsilon} \ast$, we can estimate
  \begin{eqnarray*}
    &  & \| f^{\varepsilon, N} (\varphi) - f^{\varepsilon} (\varphi) \|_{H^{-
    \delta}}\\
    & \leqslant & \| P_{> N} (\rho_{\varepsilon} \ast (\rho_{\varepsilon}
    \ast P_{\leqslant N} \varphi)^{\circ 3}) \|_{H^{- \delta}} + \|
    (\rho_{\varepsilon} \ast (\rho_{\varepsilon} \ast P_{\leqslant N}
    \varphi)^{\circ 3}) - (\rho_{\varepsilon} \ast (\rho_{\varepsilon} \ast
    \varphi)^{\circ 3}) \|_{H^{- \delta}}\\
    & \leqslant & N^{- 1} \| (\rho_{\varepsilon} \ast (\rho_{\varepsilon}
    \ast P_{\leqslant N} \varphi)^{\circ 3}) \|_{H^{1 - \delta}} + \|
    (\rho_{\varepsilon} \ast (\rho_{\varepsilon} \ast P_{\leqslant N}
    \varphi)^3) - (\rho_{\varepsilon} \ast (\rho_{\varepsilon} \ast
    \varphi)^3) \|_{H^{- \delta}}\\
    &  & + a^{\varepsilon} \| (\rho_{\varepsilon} \ast (\rho_{\varepsilon}
    \ast P_{> N} \varphi)) \|_{H^{- \delta}} + \| (a^{\varepsilon}_N -
    a^{\varepsilon}) (\rho_{\varepsilon} \ast (\rho_{\varepsilon} \ast
    P_{\leqslant N} \varphi)) \|_{H^{- \delta}}\\
    & \lesssim_{\varepsilon} & N^{- 1} \| : (\rho_{\varepsilon} \ast
    (\rho_{\varepsilon} \ast P_{\leqslant N} \varphi)^3) : \|_{H^{1 - \delta}}
    + \| \rho_{\varepsilon} \ast P_{> N} \varphi \|^3_{L^{\infty}} + 
    a^{\varepsilon} \| P_{> N} \varphi \|_{H^{- 2 \delta}} \\
    & &+ \| (a^{\varepsilon}_N - a^{\varepsilon}) \|_{L^{\infty}} \|
    \rho_{\varepsilon} \ast P_{\leqslant N} \varphi \|_{L^{\infty}}\\
    & \lesssim_{\varepsilon} & N^{- \delta'} (1 + \| \varphi \|^3_{H^{-
    \delta}}) .
  \end{eqnarray*}
  It is not hard to see that $f^{\varepsilon}$ is locally Lipschitz continuous
  on $H^{- \delta}$.
  
  From the mild formulation, we get
  
  \[ \begin{array}{ll}
       & \| (u^{\varepsilon}_N - u^{\varepsilon}) (t) \|_{H^{- \delta}}\\
       = & \left\| \int^t_0 \frac{\sin \left( (t - s) 
       \sqrt{\mathbb{H}^{\omega, K}} \right)}{\sqrt{\mathbb{H}^{\omega, K}}}
       f^{\varepsilon, N} (u^{N, \varepsilon} (s)) - f^{\varepsilon}
       (u^{\varepsilon} (s)) \mathd s \right\|_{H^{- \delta}}\\
       = & \left\| \int^t_0 \frac{\sin \left( (t - s) 
       \sqrt{\mathbb{H}^{\omega, K}} \right)}{\sqrt{\mathbb{H}^{\omega, K}}}
       \left( \left( f^{\varepsilon, N} (u^{\varepsilon}_N (s)) -
       f^{\varepsilon} \left( u_N^{\varepsilon} (s) \right) \right) +
       (f^{\varepsilon} (u^{N, \varepsilon} (s)) - f^{\varepsilon}
       (u^{\varepsilon} (s))) \right. \mathd s \right\|_{H^{- \delta}}\\
       \lesssim_{\varepsilon, T} & N^{- \delta'} \sup_{s \leqslant T} (1 + \|
       u^{N, \varepsilon} (s) \|^3_{H^{- \delta}}) + \int^T_0 \|
       f^{\varepsilon} (u^{N, \varepsilon} (s)) - f^{\varepsilon}
       (u^{\varepsilon} (s)) \|_{H^{- \delta}} \mathd s\\
       \lesssim_{\varepsilon, T, R} & N^{- \delta'} + \int^T_0 \| (\nobracket
       u^{N, \varepsilon} (s)) - \nobracket u^{\varepsilon} (s)) \nobracket
       \|_{H^{- \delta}} \mathd s.
     \end{array} \]
  
  So from Gronwall's Lemma we can conclude. 
\end{proof}

Note that if $\sup_{0 \leqslant t \leqslant T} (\| u^{N, \varepsilon} (t)
\|_{H^{- \delta}}) \leqslant R$ then from \eqref{eq:approximate-wave-globa} we
have
\[ \| u^{N, \varepsilon} \|_{W_t^{\delta, \infty} H_x^{- 2 \delta}} \leqslant
   \| f^{\varepsilon, N} (u^{N, \varepsilon}) \|_{L_t^{\infty} H_x^{- \delta}}
   + \| S (t) \varphi \|_{W_t^{\delta, \infty} H_x^{- 2 \delta}}
   \lesssim_{\varepsilon} 1 + \| u^{N, \varepsilon} \|^3_{L_t^{\infty} H_x^{-
   \delta}} \leqslant 1 + R^3, \]
where $S (t) \varphi$ is a short-hand notation for the linear part of the
equation. In particular $u^{N, \varepsilon}$ possesses a convergent
subsequence in $L_t^{\infty} H_x^{- \delta}$ by the compact embedding
$W_t^{\delta, \infty} H_x^{- 2 \delta} \hookrightarrow L_t^{\infty} H_x^{- 4
\delta}$. Any subsequential limit can be seen to satisfy
\eqref{eq:approximate-wave-globa-2} in the same way as in the proof of Lemma
\ref{eq:convegence-1}. In particular if we denote by
\[ \Sigma_R^{\varepsilon} = \{ \varphi : \sup_{0 \leqslant t \leqslant T} \|
   \Phi^{\varepsilon}_t \varphi \|_{H^{- \delta}} \leqslant R \} \]
and
\[ \Sigma_R^{\varepsilon, N} = \{ \varphi : \sup_{0 \leqslant t \leqslant T}
   \| \Phi^{\varepsilon, N}_t \varphi \|_{H^{- \delta}} \leqslant R \}, \]
we have
\[ \Sigma_R^{\varepsilon} \supset \limsup_{N \rightarrow \infty}
   \Sigma_R^{\varepsilon, N} = \bigcap_{N = 1}^{\infty} \bigcup^{\infty}_{N_1
   = N} \Sigma_R^{\varepsilon, N} . \]
This implies by Fatou's lemma that
\begin{equation}
  \label{eq:global-wellposed} {\nu^{\varepsilon}}  (\Sigma_R^{\varepsilon})
  \geqslant \limsup_{N \rightarrow \infty} \nu^{\varepsilon}
  (\Sigma_R^{\varepsilon, N}) = \limsup_{N \rightarrow \infty}
  \nu^{\varepsilon, N} (\Sigma_R^{\varepsilon, N}) \geqslant 1 - C (T) (1 +
  R)^L R^{- p}
\end{equation}
where the equality holds since $\nu^{\varepsilon, N} \rightarrow
\nu^{\varepsilon}$ in total variation. In particular, $\Phi^{\varepsilon}_t$
is well defined for all $t$, $\nu^{\varepsilon}$-almost surely. Note that
since $\Phi^{\varepsilon}_t \varphi$ is well defined
$\nu^{\varepsilon}$-almost surely we can define the pushforward measure
$(\Phi^{\varepsilon}_t)^{\ast} \nu^{\varepsilon}$ by
\[ \int f (\varphi) \mathd (\Phi^{\varepsilon}_t)^{\ast} \nu^{\varepsilon} =
   \int f (\Phi^{\varepsilon}_t \varphi) \mathd \nu^{\varepsilon} \]
for any continuous and bounded function $f : H^{- \delta} \rightarrow
\mathbb{R}$.

We now establish invariance.

\begin{proposition}
  \label{prop:invariance-approx}For any $0 < t < \infty$ and $\varepsilon >
  0$one has
  \[ (\Phi^{\varepsilon}_t)^{\ast} \nu^{\varepsilon} = \nu^{\varepsilon} . \]
\end{proposition}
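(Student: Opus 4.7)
The plan is to implement the classical Bourgain globalisation-and-invariance argument, combining the finite-dimensional invariance of $\nu^{N,\varepsilon}$ under $\Phi^{N,\varepsilon}_t$ (which follows from Liouville's theorem since \eqref{eq:approximate-wave-globa} splits into a finite-dimensional Hamiltonian system plus a decoupled linear piece) with the convergences already established: $\nu^{N,\varepsilon}\to\nu^{\varepsilon}$ in total variation, and the pointwise approximation of trajectories from Lemma \ref{eq:convegence-1}. Since both measures are probability measures supported in a separable metric space containing the trajectories, it is enough to test against an arbitrary bounded continuous $f:H^{-\delta}\to\mathbb{R}$.

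Fixing such an $f$ and $t\in(0,\infty)$, I would decompose
$$
\int f(\Phi^{\varepsilon}_t\varphi)\,d\nu^{\varepsilon}-\int f\,d\nu^{\varepsilon} = I_1^N + I_2^N + I_3^N,
$$
with $I_1^N\assign\int(f\circ\Phi^{\varepsilon}_t - f\circ\Phi^{N,\varepsilon}_t)\,d\nu^{\varepsilon}$, $I_2^N$ collecting the two terms in which $\nu^{\varepsilon}$ is replaced by $\nu^{N,\varepsilon}$, and $I_3^N\assign\int(f\circ\Phi^{N,\varepsilon}_t - f)\,d\nu^{N,\varepsilon}$. Finite-dimensional invariance gives $I_3^N=0$, while $|I_2^N|\le 2\|f\|_{\infty}\|\nu^{\varepsilon}-\nu^{N,\varepsilon}\|_{TV}\to 0$ as $N\to\infty$.

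The core of the argument is showing $I_1^N\to 0$. I would split the integral over $\Sigma^{\varepsilon}_R\cap\Sigma^{\varepsilon,N}_R$ and its complement for some large $R$. On the intersection, applying Lemma \ref{eq:convegence-1} with $2R$ in place of $R$ yields $\|\Phi^{\varepsilon}_t\varphi-\Phi^{N,\varepsilon}_t\varphi\|_{H^{-\delta}}\lesssim_{\varepsilon}R^2N^{-\delta'}\to 0$, so continuity of $f$ combined with dominated convergence (using $|f(\Phi^{\varepsilon}_t\varphi)-f(\Phi^{N,\varepsilon}_t\varphi)|\le 2\|f\|_{\infty}$) makes this piece vanish as $N\to\infty$ with $R$ held fixed. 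On the complement the contribution is bounded by $2\|f\|_{\infty}\bigl[\nu^{\varepsilon}((\Sigma^{\varepsilon}_R)^c)+\nu^{\varepsilon}((\Sigma^{\varepsilon,N}_R)^c)\bigr]$; the first summand is controlled by \eqref{eq:global-wellposed}, and for the second I would use
$$
\nu^{\varepsilon}((\Sigma^{\varepsilon,N}_R)^c)\le \nu^{N,\varepsilon}((\Sigma^{\varepsilon,N}_R)^c)+\|\nu^{\varepsilon}-\nu^{N,\varepsilon}\|_{TV},
$$
whose first term is made uniformly small in $N$ by Lemma \ref{approx-global-well} upon taking $R$ large. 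The conclusion is then standard: choose $R$ so that both exceptional-set contributions are below a prescribed threshold uniformly in $N$, then send $N\to\infty$.

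The only delicate point is precisely the uniform-in-$N$ control of $\nu^{\varepsilon}((\Sigma^{\varepsilon,N}_R)^c)$ under the \emph{limit} measure rather than under $\nu^{N,\varepsilon}$: it is the combination of the invariance-based uniform tail bound of Lemma \ref{approx-global-well} with the total-variation convergence of the Gibbs measures that transfers the estimate from $\nu^{N,\varepsilon}$ to $\nu^{\varepsilon}$. Everything else is routine given the local well-posedness of Lemma \ref{lemma-2approx-local} and the stability estimate of Lemma \ref{eq:convegence-1}.
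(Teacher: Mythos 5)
Your argument is essentially identical to the paper's proof: finite-dimensional invariance of $\nu^{N,\varepsilon}$ via Liouville, total-variation convergence $\nu^{N,\varepsilon}\to\nu^{\varepsilon}$, the trajectory comparison of Lemma \ref{eq:convegence-1} on the good sets, and the uniform tail bound of Lemma \ref{approx-global-well} transferred to $\nu^{\varepsilon}$ through the total-variation estimate, with the limits taken in the same order ($N\to\infty$ first, then $R\to\infty$). The only cosmetic difference is that the paper tests against bounded \emph{Lipschitz} rather than merely continuous $f$, which turns your dominated-convergence step on the good set into the explicit bound $L R^2 N^{-\delta'}$ and sidesteps the minor point that the $N$-dependent indicator sets make a literal pointwise-convergence argument slightly delicate.
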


\begin{proof}
  We will show that for any Lipschitz-continuous and bounded function $f :
  H^{- \delta} \rightarrow \mathbb{R}$ with Lipschitz constant $L$, we have
  \[ \int f (\Phi^{\varepsilon}_t (\varphi)) \mathd \nu^{\varepsilon} = \int
     f (\varphi) \mathd \nu^{\varepsilon} . \]
  We already know that
  \[ \int f (\Phi^{\varepsilon, N}_t (\varphi)) \mathd \nu^{\varepsilon, N} =
     \int f (\varphi) \mathd \nu^{\varepsilon, N}, \]
  and since $\nu^{\varepsilon, N} \rightarrow \nu^{\varepsilon}$ in total
  variation
  \[ \lim_{N \rightarrow \infty} \int f (\varphi) \mathd \nu^{\varepsilon, N}
     = \int f (\varphi) \mathd \nu^{\varepsilon} . \]
  So it remains to show that
  \[ \lim_{N \rightarrow \infty} \int f (\Phi^{\varepsilon, N}_t (\varphi))
     \mathd \nu^{\varepsilon, N} = \int f (\Phi^{\varepsilon}_t (\varphi))
     \mathd \nu^{\varepsilon} \]
  Now we bound
  \begin{eqnarray*}
    &  & \left| \int f (\Phi^{\varepsilon, N}_t (\varphi)) \mathd
    \nu^{\varepsilon, N} - \int f (\Phi^{\varepsilon}_t (\varphi)) \mathd
    \nu^{\varepsilon} \right|\\
    & \leqslant & \left| \int f (\Phi^{\varepsilon, N}_t (\varphi)) \mathd
    \nu^{\varepsilon, N} - \int f (\Phi^{\varepsilon, N}_t (\varphi)) \mathd
    \nu^{\varepsilon} \right|\\
      & &+ \left| \int f (\Phi^{\varepsilon, N}_t (\varphi)) \mathd
    \nu^{\varepsilon} - \int f (\Phi^{\varepsilon}_t (\varphi)) \mathd
    \nu^{\varepsilon} \right|
  \end{eqnarray*}
  The first term goes to $0$ since $\nu^{\varepsilon, N}$ converges in total
  variation. For the second term we have
  \begin{eqnarray*}
    &  & \left| \int f (\Phi^{\varepsilon, N}_t (\varphi)) - f
    (\Phi^{\varepsilon}_t (\varphi)) \mathd \nu^{\varepsilon} \right|\\
    & \leqslant & \| f \|_{L^{\infty}} (\nu^{\varepsilon} (\sup_{t \leqslant
    T} \| \Phi^{\varepsilon, N}_t (\varphi) \|_{H^{- \delta}} \geqslant R) +
    \nu^{\varepsilon} (\sup_{t \leqslant T} \| \Phi^{\varepsilon}_t (\varphi)
    \|_{H^{- \delta}} \geqslant R))\\
    &  & + \left| \int \mathbbm{1}_{\{ \sup_t (\| \Phi^{\varepsilon, N}_t
    (\varphi) \|_{H^{- \delta}} + \| \Phi^{\varepsilon}_t (\varphi) \|_{H^{-
    \delta}}) \leqslant 2 R \}} (f (\Phi^{\varepsilon, N}_t (\varphi)) - f
    (\Phi^{\varepsilon}_t (\varphi))) \mathd \nu^{\varepsilon} \right|
  \end{eqnarray*}
  Now applying Lemma \ref{eq:convegence-1} we get by Lipschitz continuity of
  $f$
  \begin{eqnarray*}
    &  & \left| \int \mathbbm{1}_{\{ \sup_t (\| \Phi^{\varepsilon, N}_t
    (\varphi) \|_{H^{- \delta}} + \| \Phi^{\varepsilon}_t (\varphi) \|_{H^{-
    \delta}}) \leqslant 2 R \}} (f (\Phi^{\varepsilon, N}_t (\varphi)) - f
    (\Phi^{\varepsilon}_t (\varphi))) \mathd \nu^{\varepsilon} \right|\\
    & \leqslant & L R^2 N^{- \delta'}
  \end{eqnarray*}
  Finally, by Lemma \ref{approx-global-well} and \eqref{eq:global-wellposed}
  we get
  \begin{eqnarray*}
    \sup_{N \in \mathbb{N}} \nu^{\varepsilon} (\sup_t \| \Phi^{\varepsilon,
    N}_t (\varphi) \|_{H_x^{- \delta}} \geqslant R) + \nu^{\varepsilon}
    (\sup_t \| \Phi^{\varepsilon}_t (\varphi) \|_{H_x^{- \delta}} \geqslant R)
    & \rightarrow & 0
  \end{eqnarray*}
  as $R \rightarrow \infty$. From this we can conclude by taking $N
  \rightarrow \infty$ and then $R \rightarrow \infty$.
\end{proof}

\subsection{Removal of the smoothing}

We now want to show that \eqref{eq:wave-global} has global solutions
$\nu$-almost surely and that $\nu$ is invariant under $\Phi_t$. Firstly we
define $\theta (t) = U (t) \varphi_1 + S (t) \varphi_2$ to be the ``linear
part'' of the solution, we also set $\theta^{\varepsilon} = \rho_{\varepsilon}
\ast \theta .$ We then have
\begin{eqnarray*}
  \mathbb{E}_{\nu^{\varepsilon}} \left[ \int^T_0 \| \theta^{\varepsilon}
  (t)^{\circ i} \|^p_{B_{p, p}^{- \delta}} \mathd t \right] & = & \int_0^T
  \mathbb{E}_{\nu^{\varepsilon}} [\| \theta^{\varepsilon} (0)^{\circ i}
  \|^p_{B_{p, p}^{- \delta}}] \mathd t\\
  & \lesssim & T\mathbb{E}_{\mu^{\mathbb{H}^{\omega, K}}} [\|
  \theta^{\varepsilon} (0)^{\circ i} \|^{2 p}_{B_{p, p}^{- \delta}}]^{1 / 2}\\
  & \leqslant & C T
\end{eqnarray*}
where, in the second line we used invariance of the AGFF with respect to the
free field and that $\nu^{\varepsilon}$ is absolutely continuous with respect
to $\mu^{\mathbb{H}^{\omega, K}}$ and the density is in $L^2 (\mu)$ uniformly
in $\varepsilon$. So in particular $(\theta^{\varepsilon})^{\circ i}$ are in
$L^p \mathcal{C}^{- \delta}$ almost surely with respect to both
$\nu^{\varepsilon}, \nu$ uniformly in $\varepsilon$ and the same holds for
$\theta^{\circ i}$. From now on we write $v^{\varepsilon} = u^{\varepsilon} -
\theta^{\varepsilon} .$

\begin{lemma}
  \label{bound-convergence}There exists $\delta > 0$ such that, assuming
  \[ \sup_{t \leqslant T} (\| v^{\varepsilon} (t) \|_{H^{1 - \delta}})
     \leqslant R \]
  and
  \[ \sup_{i \leqslant 3} \sup_{t \leqslant T} (\| \theta^{\circ i} \|_{H^{-
     \delta}} + \| (\theta^{\varepsilon})^{\circ i} \|_{H^{- \delta}})
     \leqslant R, \]
  we have
  \[ \left| v^{\varepsilon} (t) - \int^t_0 S (t - s) (u^{\varepsilon}
     (s))^{\circ 3} \mathd s \right| \lesssim R^3 \mathcal{R}^{\varepsilon},
  \]
  where $\mathcal{R}^{\varepsilon}$ is a random variable such that $\|
  \mathcal{R}^{\varepsilon} \|_{L^p (\mu)} \rightarrow 0$ as $\varepsilon
  \rightarrow 0$.
\end{lemma}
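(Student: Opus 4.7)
The starting point is the mild formulation of the regularized equation \eqref{eq:approximate-wave-globa-2}, which gives
\begin{equation*}
v^\varepsilon(t) = (\theta - \theta^\varepsilon)(t) - \int_0^t S(t-s)\,\rho_\varepsilon * \big(\rho_\varepsilon * u^\varepsilon(s)\big)^{\circ 3}\,ds, \qquad S(t) := \frac{\sin\!\big(t\sqrt{\mathbb{H}^{\omega,K}}\big)}{\sqrt{\mathbb{H}^{\omega,K}}}.
\end{equation*}
Up to the sign of $S$ in the statement, the claim reduces to bounding in $H^{1-\delta}$: (i) the linear error $\theta - \rho_\varepsilon*\theta$, and (ii) the nonlinear error $\int_0^t S(t-s)\bigl[(u^\varepsilon)^{\circ 3} - \rho_\varepsilon*(\rho_\varepsilon*u^\varepsilon)^{\circ 3}\bigr](s)\,ds$. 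For (i), Remark~\ref{lemma:convolutionoperator} yields $\|\theta-\rho_\varepsilon*\theta\|_{H^{-\delta}} \lesssim \varepsilon^\kappa\|\theta\|_{H^{-\delta+\kappa}}$, which is absorbed into $\mathcal R^\varepsilon$ and tends to zero in $L^p$ by Proposition~\ref{prop:wavewick}. For (ii), Lemma~\ref{lem:inhom} reduces the task to controlling the bracketed integrand in $L^1_tH^{-\delta}_x$ (or $L^p_tH^{-\delta}_x$ with $p$ large).

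To estimate the integrand, set $w^\varepsilon := u^\varepsilon - \theta = v^\varepsilon - (\theta^\varepsilon - \theta)$, which by hypothesis (modulo the term already absorbed in the linear error) is bounded by $R$ in $C_TH^{1-\delta}$. The binomial identity underlying Lemma~\ref{lemma:pseudoWickpower} gives
\begin{align*}
(u^\varepsilon)^{\circ 3} &= \sum_{k=0}^{3}\tbinom{3}{k}\theta^{\circ k}(w^\varepsilon)^{3-k}, \\
(\rho_\varepsilon * u^\varepsilon)^{\circ 3} &= \sum_{k=0}^{3}\tbinom{3}{k}(\theta^\varepsilon)^{\circ k}(\rho_\varepsilon * w^\varepsilon)^{3-k},
\end{align*}
since the covariance of $\theta^\varepsilon$ is exactly the renormalization constant used to Wick-order $\rho_\varepsilon*u^\varepsilon$. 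For each index $k$ we telescope: first replace $\theta^{\circ k}$ by $(\theta^\varepsilon)^{\circ k}$, using the quantitative convergence of Remark~\ref{remark:pseudoWickpower} (with rate $\varepsilon^c$); then replace $w^\varepsilon$ by $\rho_\varepsilon*w^\varepsilon$, using Remark~\ref{lemma:convolutionoperator} and $\|w^\varepsilon\|_{H^{1-\delta}}\lesssim R$; finally, apply the outer $\rho_\varepsilon*$ and bound its commutator with the product in the same manner. Each product is estimated via the Bony/Besov multiplication bounds of Appendix~\ref{app:Besov}: e.g.\ the mixed term is controlled by $\|\theta^{\circ 2}\|_{\mathcal C^{-\delta}}\|w^\varepsilon\|_{H^{2\delta}}\leq R\cdot R$ (this requires $\delta$ small enough), the term with one power of $\theta$ by $\|\theta\|_{\mathcal C^{-\delta}}\|(w^\varepsilon)^2\|_{H^{2\delta}}\lesssim R\cdot R^2$, and the purely smooth term by $\|w^\varepsilon\|_{H^{1-\delta}}^3\leq R^3$. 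The collected stochastic factors $\|\theta-\theta^\varepsilon\|_{H^{-\delta}}$ and $\|(\theta^\varepsilon)^{\circ k}-\theta^{\circ k}\|_{H^{-\delta}}$ for $k=1,2,3$ are assembled into $\mathcal R^\varepsilon$, whose $L^p(\mu)$-convergence to zero follows from Lemmas~\ref{lemma:Wickfreefield} and~\ref{lemma:pseudoWickpower}.

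The main obstacle is the \emph{resonant} mixed term $3\theta^{\circ 2}\cdot w^\varepsilon$ versus its doubly regularized counterpart $3\rho_\varepsilon*\big((\theta^\varepsilon)^{\circ 2}\cdot\rho_\varepsilon*w^\varepsilon\big)$: the convergence $(\theta^\varepsilon)^{\circ 2}\to\theta^{\circ 2}$ only holds in a negative Besov norm, while $w^\varepsilon$ sits at the borderline regularity $1-\delta$, so the product estimate is saturated and we genuinely need $\delta$ small enough that $(1-\delta)+(-\delta)>0$; keeping track of the two independent $R$-factors here is what forces the $R^3$ growth in the final estimate. The other three binomial indices are either purely stochastic (giving $\mathcal R^\varepsilon$ without any factor of $R$) or purely smooth (controlled by Sobolev embedding), and do not pose further difficulty. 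Combining all pieces and applying Lemma~\ref{lem:inhom} once more yields the desired bound $\lesssim R^3\,\mathcal R^\varepsilon$.
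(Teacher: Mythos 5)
Your proposal is correct and follows essentially the same route as the paper: mild formulation, binomial expansion of the Wick cubes into $\theta^{\circ k}$ (resp.\ $(\theta^{\varepsilon})^{\circ k}$) against powers of the smooth remainder, telescoping via the quantitative convergence of the mollified Wick powers (Remark~\ref{remark:pseudoWickpower}) and the mollifier bound of Remark~\ref{lemma:convolutionoperator}, Besov product estimates, and assembly of the stochastic factors into $\mathcal{R}^{\varepsilon}$. The one wrinkle is your claim that $w^{\varepsilon}=u^{\varepsilon}-\theta$ is bounded by $R$ in $H^{1-\delta}$ ``modulo the linear error'': since $\theta-\theta^{\varepsilon}$ lives only at negative regularity, that absorption does not literally work for the purely deterministic terms such as $(w^{\varepsilon})^{3}$; the paper sidesteps this by taking $u^{\varepsilon}=\theta+v^{\varepsilon}$ in the proof (so your $w^{\varepsilon}$ coincides with $v^{\varepsilon}$), and with that convention your argument goes through unchanged.
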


\begin{proof}
  Recall that
  \begin{eqnarray*}
    0 & = & v^{\varepsilon} (t) - \int^t_0 S (t - s) \rho_{\varepsilon} \ast
    (\rho_{\varepsilon} \ast u^{\varepsilon} (s))^3 \mathd s
  \end{eqnarray*}
  and
  \begin{eqnarray*}
    &  & v^{\varepsilon} (t) - \int^t_0 S (t - s) (u^{\varepsilon}
    (s))^{\circ 3} \mathd s\\
    & = & v^{\varepsilon} (t) - \int^t_0 S (t - s) \rho_{\varepsilon} \ast
    (\rho_{\varepsilon} \ast u^{\varepsilon} (s))^{\circ 3} \mathd s\\
    &  & + \int^t_0 S (t - s) \rho_{\varepsilon} \ast (\rho_{\varepsilon}
    \ast u^{\varepsilon} (s))^{\circ 3} \mathd s - \int^t_0 S (t - s)
    (u^{\varepsilon} (s))^{\circ 3} \mathd s
  \end{eqnarray*}
  so we have to estimate the last line. Recalling that $u^{\varepsilon} =
  \theta + v^{\varepsilon}$ that is equal to
  \begin{eqnarray*}
    &  & \int^t_0 S (t - s) \rho_{\varepsilon} \ast (\rho_{\varepsilon} \ast
    u^{\varepsilon} (s))^{\circ 3} \mathd s - \int^t_0 S (t - s)
    (u^{\varepsilon} (s))^{\circ 3} \mathd s\\
    & = & \sum_{i = 0}^3 \int^t_0 S (t - s) (((\rho_{\varepsilon} \ast \theta
    (s))^{\circ i} (\rho_{\varepsilon} \ast u^{\varepsilon} (s))^{i - j}) -
    (\theta (s))^{\circ i} (u^{\varepsilon})^{i - j}) \mathd s\\
    &  & + \int^t_0 S (t - s) ((\rho_{\varepsilon} \ast u^{\varepsilon}
    (s))^{\circ 3} - \rho_{\varepsilon} \ast (\rho_{\varepsilon} \ast
    u^{\varepsilon} (s))^3) \mathd s\\
    & \leqslant & \sum_{i = 0}^3 \int^t_0 \| \nobracket (((\rho_{\varepsilon}
    \ast \theta (s))^{\circ i} (\rho_{\varepsilon} \ast u^{\varepsilon}
    (s))^{i - j}) - (\theta (s))^{\circ i} (u^{\varepsilon})^{i - j}) \|_{H^{-
    \delta}} \nobracket \mathd s\\
    &  & + \int^t_0 \| (\rho_{\varepsilon} \ast u^{\varepsilon} (s))^3 -
    \rho_{\varepsilon} \ast : (\rho_{\varepsilon} \ast u^{\varepsilon} (s))^3
    : \|_{H^{- \delta}} \mathd s\\
    & \backassign & \Iota + \Iota \Iota
  \end{eqnarray*}
  Now to estimate $\Iota$ we have for $p$ sufficiently large and $1 / p + 1 /
  p' = 1$:
  \begin{align*}
    &   | ((\theta^{\varepsilon} (s))^{\circ i} (\rho_{\varepsilon} \ast
    u^{\varepsilon} (s))^{i - j}) - (\theta (s))^{\circ i}
    (u^{\varepsilon})^{i - j} |\\
     \leqslant & \| (\theta^{\varepsilon})^{\circ i} - (\theta)^{\circ i}
    \|_{B_{p, p}^{- 2 \delta}} (\| u^{\varepsilon} (s)^{i - j} \|_{B^{2
    \delta}_{p', p'}} + \| (\rho_{\varepsilon} \ast u^{\varepsilon} (s))^{i -
    j} \|_{B^{2 \delta}_{p', p'}})\\
      & + (\| (\theta^{\varepsilon})^{\circ i} \|_{B_{p, p}^{- 2 \delta}} +
    \| (\theta)^{\circ i} \|_{B_{p, p}^{- 2 \delta}}) \| u^{\varepsilon}
    (s)^{i - j} - (\rho_{\varepsilon} \ast u^{\varepsilon} (s))^{i - j}
    \|_{B^{2 \delta}_{p', p'}}\\
     \leqslant & \| (\theta^{\varepsilon})^{\circ i} - (\theta)^{\circ i}
    \|_{B_{p, p}^{- 2 \delta}} (\| u^{\varepsilon} (s) \|^{i - j}_{H^{1 -
    \delta}} + \| (\rho_{\varepsilon} \ast u^{\varepsilon} (s)) \|^{i -
    j}_{H^{1 - \delta}})\\
      & + (\| (\theta^{\varepsilon})^{\circ i} \|_{B_{p, p}^{- 2 \delta}} +
    \| (\theta)^{\circ i} \|_{B_{p, p}^{- 2 \delta}}) \\
    &\times \| u^{\varepsilon} (s) - (\rho_{\varepsilon} \ast u^{\varepsilon}
    (s)) \|_{H^{1 - \delta}} (\| u^{\varepsilon} (s) \|_{H^{1 - \delta}} + \|
    (\rho_{\varepsilon} \ast u^{\varepsilon} (s)) \|_{H^{1 - \delta}})^{i - j
    - 1}\\
     = & \| (\theta^{\varepsilon})^{\circ i} - (\theta)^{\circ i} \|_{B_{p,
    p}^{- 2 \delta}} R^3 + R^3 (\| (\theta^{\varepsilon})^{\circ i} \|_{B_{p,
    p}^{- 2 \delta}} + \| (\theta)^{\circ i} \|_{B_{p, p}^{- 2 \delta}}) \|
    u^{\varepsilon} (s) - (\rho_{\varepsilon} \ast u^{\varepsilon} (s))
    \|_{H^{1 - \delta}}
  \end{align*}
  Now
  \[ \| u^{\varepsilon} (s) - (\rho_{\varepsilon} \ast u^{\varepsilon} (s))
     \|_{H^{1 - \delta}} \lesssim \varepsilon^{\delta / 2} \| u^{\varepsilon}
     (s) \|_{H^{1 - \delta / 2}} \lesssim \varepsilon^{\delta / 2} R \]
  so
  \begin{eqnarray*}
    &  & \| (\theta^{\varepsilon})^{\circ i} (s) - (\theta)^{\circ i} (s)
    \|_{B_{p, p}^{- 2 \delta}} R^3\\
    &  & + R^3 (\| (\theta^{\varepsilon})^{\circ i} (s) \|_{B_{p, p}^{- 2
    \delta}} + \| \theta^{\circ i} (s) \|_{B_{p, p}^{- 2 \delta}}) \|
    u^{\varepsilon} (s) - (\rho_{\varepsilon} \ast u^{\varepsilon} (s))
    \|_{H^{1 - \delta}}\\
    & \leqslant & \| (\theta^{\varepsilon})^{\circ i} (s) - \theta^{\circ i}
    (s) \|_{B_{p, p}^{- 2 \delta}} R^3 + \varepsilon^{\delta / 2} R^4 (\|
    (\theta^{\varepsilon})^{\circ i} (s) \|_{B_{p, p}^{- 2 \delta}} + \|
    (\theta)^{\circ i} (s) \|_{B_{p, p}^{- 2 \delta}})
  \end{eqnarray*}
  so integrating in time, we have that
  \[ \Iota \lesssim R^3 \int^t_0 \| (\theta^{\varepsilon})^{\circ i} (s) -
     (\theta)^{\circ i} (s) \|_{B_{p, p}^{- 2 \delta}} \mathd s + R^4
     \varepsilon^{\delta / 2} \int^t_0 (\| (\theta^{\varepsilon})^{\circ i}
     (s) \|_{B_{p, p}^{- 2 \delta}} + \| \theta^{\circ i} (s) \|_{B_{p, p}^{-
     2 \delta}}) \mathd s \]
  and we recall that
  \[ \mathbb{E}_{\mu} \int^T_0 \| (\theta^{\varepsilon})^{\circ i} (s) -
     (\theta)^{\circ i} (s) \|^p_{B_{p, p}^{- 2 \delta}} \mathd s \rightarrow
     0. \]
  Now finally to estimate $\Iota \Iota$ we have
  \[ \int^t_0 \| (\rho_{\varepsilon} \ast u^{\varepsilon} (s))^{\circ 3} -
     \rho_{\varepsilon} \ast (\rho_{\varepsilon} \ast u^{\varepsilon}
     (s))^{\circ 3} \|_{H^{- \delta}} \mathd s \lesssim \varepsilon^{\delta /
     2} \int^t_0 \| (\rho_{\varepsilon} \ast u^{\varepsilon} (s))^{\circ 3}
     \|_{H^{- \delta / 2}} \mathd s \]
  and
  \begin{eqnarray*}
    \| (\rho_{\varepsilon} \ast u^{\varepsilon} (s))^{\circ 3} \|_{H^{- \delta
    / 2}} & \leqslant & \sum_{i = 0}^3 \| (\theta^{\varepsilon})^{\circ i}
    (u^{\varepsilon})^{3 - i} \|_{H^{- \delta / 2}}\\
    & \leqslant & \sum_{i = 0}^3 \| (\theta^{\varepsilon})^{\circ i}
    \|_{\mathcal{C}^{- \delta / 2}} \| (u^{\varepsilon})^{3 - i}
    \|_{H^{\delta}}\\
    & \leqslant & \sum_{i = 0}^3 \| (\theta^{\varepsilon})^{\circ i}
    \|_{\mathcal{C}^{- \delta / 2}} \| u^{\varepsilon} \|_{W^{\delta, 6}}^{3 -
    i}\\
    & \leqslant & \sum_{i = 0}^3 \| (\theta^{\varepsilon})^{\circ i}
    \|_{\mathcal{C}^{- \delta / 2}} \| u^{\varepsilon} \|_{H^{1 - \delta}}^{3
    - i}\\
    & \leqslant & (1 + R)^3 \sum_{i = 0}^3 \| (\theta^{\varepsilon})^{\circ
    i} \|_{\mathcal{C}^{- \delta / 2}}
  \end{eqnarray*}
  and again integrating in time we can conclude. 
\end{proof}

Let $v$ be the solution \eqref{eqn:vWick}. Similarly to the above we have the
following statement:

\begin{lemma}
  There exists a $\delta > 0$ such that, assuming that
  \[ \sup_{t \leqslant T} (\| v^{\varepsilon} (t) \|_{H^{1 - \delta}})
     \leqslant R, \]
  and
  \[ \sup_{i \leqslant 3}  \int^T_0 (\| \theta^{\circ i} \|^p_{H^{- \delta}}
     + \| (\theta^{\varepsilon})^{\circ i} \|^p_{H^{- \delta}}) \mathd t
     \leqslant R, \]
  we get \
  \[ \sup_{t \leqslant T} (\| v^{\varepsilon} (t) - v (t) \|_{H^{1 -
     \delta}}) \lesssim R \bar{\mathcal{R}}^{\varepsilon}, \]
  where $\bar{\mathcal{R}}^{\varepsilon}$ is given by \
  \[ \bar{\mathcal{R}}^{\varepsilon} \assign \sum_{0 \leqslant i \leqslant 3}
     \int^T_0 \| (\theta^{\varepsilon})^{\circ i} (s) - \theta^{\circ i} (s)
     \|^p_{B_{p, p}^{- 2 \delta}} \mathd s. \]
  In particular$\| \bar{\mathcal{R}}^{\varepsilon} \|_{L^p (\mu)} \rightarrow
  0$ as $\varepsilon \rightarrow 0$.
\end{lemma}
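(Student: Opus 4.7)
The plan is to combine the previous lemma (our Lemma on $v^\varepsilon - \int_0^t S(t-s) u^\varepsilon(s)^{\circ 3}\,ds$) with a Gronwall-type argument applied to $v^\varepsilon - v$ in the mild formulation. First, by the preceding lemma one has
\[
v^\varepsilon(t) = \int_0^t S(t-s)\,(u^\varepsilon(s))^{\circ 3}\,ds + E^\varepsilon(t),
\qquad \sup_{t\le T}\|E^\varepsilon(t)\|_{H^{1-\delta}} \lesssim R^3 \mathcal{R}^\varepsilon,
\]
and after inspecting the proof of that lemma, $\mathcal{R}^\varepsilon$ is bounded by $\bar{\mathcal{R}}^\varepsilon + \varepsilon^{\delta/2}(\text{stochastic factors})$, both of which are dominated by $\bar{\mathcal{R}}^\varepsilon$ after adjusting $\delta$ and absorbing powers of $R$.

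Next, I expand the Wick cubes around the respective Gaussians: writing $u^\varepsilon = \theta^\varepsilon + v^\varepsilon$ and $u = \theta + v$, I get
\[
(u^\varepsilon)^{\circ 3} = (\theta^\varepsilon)^{\circ 3} + 3v^\varepsilon (\theta^\varepsilon)^{\circ 2} + 3(v^\varepsilon)^2 \theta^\varepsilon + (v^\varepsilon)^3,
\]
and analogously for $u^{\circ 3}$. Subtracting and grouping,
\[
(u^\varepsilon)^{\circ 3} - u^{\circ 3} = \sum_{i=0}^{2} 3^{\mathbf 1_{i<3}}\bigl[(\theta^\varepsilon)^{\circ i} - \theta^{\circ i}\bigr](v^\varepsilon)^{3-i} + \sum_{i=0}^{2} 3^{\mathbf 1_{i<3}} \theta^{\circ i}\bigl[(v^\varepsilon)^{3-i} - v^{3-i}\bigr] + \bigl[(\theta^\varepsilon)^{\circ 3} - \theta^{\circ 3}\bigr].
\]
Using Lemma \ref{lem:inhom} to pass from $H^{-\delta}$ estimates on the source to $H^{1-\delta}$ estimates on the Duhamel integral, I bound each of the first two groups by Besov product estimates: terms containing $(\theta^\varepsilon)^{\circ i} - \theta^{\circ i}$ give a factor of $\|(\theta^\varepsilon)^{\circ i} - \theta^{\circ i}\|_{B^{-2\delta}_{p,p}}$ in $L^p_t$ times $R^{3-i}$ (by the a priori $H^{1-\delta}$ bound on $v^\varepsilon$), which sum to $\lesssim R^3 \bar{\mathcal{R}}^{\varepsilon \, 1/p}$; while terms containing $(v^\varepsilon)^{3-i} - v^{3-i}$ factor as $\theta^{\circ i}$ times a polynomial in $v^\varepsilon, v$ of degree $\le 2$ times $(v^\varepsilon - v)$, giving, after multiplication by $\|\theta^{\circ i}\|_{\mathcal{C}^{-\delta}}$ and use of Sobolev products, a bound of the form $R^2 \|v^\varepsilon(s) - v(s)\|_{H^{1-\delta}}$ pointwise in $s$ (up to the stochastic factor $R$).

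Assembling everything, the difference satisfies a linear integral inequality
\[
\|v^\varepsilon(t) - v(t)\|_{H^{1-\delta}} \lesssim R^3 \bar{\mathcal{R}}^\varepsilon + R^3 \int_0^t \|v^\varepsilon(s) - v(s)\|_{H^{1-\delta}}\,ds,
\]
and Gronwall's inequality yields $\sup_{t\le T}\|v^\varepsilon(t) - v(t)\|_{H^{1-\delta}} \lesssim R \bar{\mathcal{R}}^\varepsilon$ after adjusting the constants (absorbing $R^3 e^{R^3 T}$ by possibly shrinking $T$ or redefining the small parameter $\delta$; alternatively restating the bound with $R$ replaced by an appropriate power). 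The $L^p(\mu)$ convergence $\bar{\mathcal{R}}^\varepsilon \to 0$ is then exactly the content of Theorem \ref{theorem:Wickproduct} together with the fact that $\rho_\varepsilon \ast \theta \to \theta$, applied term by term in the sum defining $\bar{\mathcal{R}}^\varepsilon$.

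The main obstacle is the careful handling of the paraproduct/resonant decompositions in the term $(v^\varepsilon - v) \theta^{\circ 2}$ and $v^\varepsilon[(\theta^\varepsilon)^{\circ 2} - \theta^{\circ 2}]$: although $\theta^{\circ 2}$ is only in $\mathcal{C}^{-\delta}$, the factors $v^\varepsilon, v, v^\varepsilon - v$ lie in $H^{1-\delta}$, so by Besov embedding and the product rule one controls these in $H^{-\delta}$. The precise choice of $\delta$ (and of $p$ large enough for the $L^p \mathcal{C}^{-\delta}$ bounds on Wick powers from Proposition \ref{prop:wavewick}) needs to be compatible across all such estimates, but apart from this bookkeeping the argument is linear in $v^\varepsilon - v$ and hence closes by Gronwall.
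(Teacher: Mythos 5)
Your argument follows the paper's proof essentially verbatim: both telescope the difference of the Wick cubes across the replacements $\theta^{\varepsilon}\to\theta$ and $v^{\varepsilon}\to v$ inside the mild formulation, bound each term with the same product estimates already used for Lemma \ref{bound-convergence}, and close with Gronwall. The loose end you flag --- that Gronwall produces a factor of the form $\exp\bigl(C R^{3}\int_0^T\|(\theta^{\varepsilon})^{\circ 2}(s)\|_{\mathcal{C}^{-\delta}}\,\mathd s\bigr)$ rather than the stated linear factor $R$ --- is present in the paper's own proof as well (its final display reads $\lesssim \varepsilon^{\delta}(\varepsilon^{\delta}+\bar{\mathcal{R}}^{\varepsilon})\exp(CR^{4})$), so this is an imprecision in the statement's $R$-dependence rather than a gap specific to your write-up.
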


\begin{proof}
  Using the definition of $v^{\varepsilon}, v$ we have
  \begin{eqnarray*}
    &  & v (t) - v^{\varepsilon} (t)\\
    & = & \int^t_0 S (t - s) \rho_{\varepsilon} \ast \left( \theta^{\circ 3}
    (s) - (\theta^{\varepsilon})^{\circ 3} (s) + 3 \left( \theta^{\circ 2} (s)
    - (\theta^{\varepsilon})^{\circ 2} (s) \right) v (s) + 3
    (\theta^{\varepsilon})^{\circ 2} (v (s) - v^{\varepsilon} (s)) \right)
    \mathd s\\
    &  & + 3 \int^t_0 S (t - s) \rho_{\varepsilon} \ast ((\theta (s) -
    \theta^{\varepsilon} (s)) v^2 (s) + (\theta^{\varepsilon} (s)) (v^2 (s) -
    (v^{\varepsilon})^2 (s))) \mathd s\\
    &  & + \int^t_0 S (t - s) \rho_{\varepsilon} \ast (v^3 (s) -
    (v^{\varepsilon})^3 (s)) \mathd s
  \end{eqnarray*}
  Now with the same estimates as in the proof of Lemma
  \ref{bound-convergence} we obtain
  \begin{eqnarray*}
    \| v (t) - v^{\varepsilon} (t) \|_{H^{1 - 2 \delta}} & \lesssim & R^3
    \int^t_0 \| (\theta^{\varepsilon})^{\circ 2} (s) \|_{\mathcal{C}^{-
    \delta}} \| v (s) - v^{\varepsilon} (s) \|_{H^{1 - 2 \delta}} \mathd s + C
    R \varepsilon^{\delta}\\
    &  & + \sum_{0 \leqslant i \leqslant 3} \int^T_0 \|
    (\theta^{\varepsilon})^{\circ 2} (s) - \theta^{\circ 2} (s) \|^p_{B_{p,
    p}^{- 2 \delta}} \mathd s
  \end{eqnarray*}
  \[ \  \]
  and Gronwall's lemma gives
  \begin{eqnarray*}
    &  & \| v (t) - v^{\varepsilon} (t) \|_{H^{1 - 2 \delta}}\\
    & \lesssim & C R (\varepsilon^{\delta} + \bar{\mathcal{R}}^{\varepsilon})
    \exp \left( R^3 \int^T_0 \| \theta^{\circ 2}_{\varepsilon} (s)
    \|_{\mathcal{C}^{- \delta}} \mathd s \right)\\
    & \lesssim & \varepsilon^{\delta} (\varepsilon^{\delta} +
    \bar{\mathcal{R}}^{\varepsilon}) \exp (C R^4)
  \end{eqnarray*}
  from which we can conclude. 
\end{proof}

\begin{lemma}
  \label{bound-remainder}Assume that
  \[ \sup_{t \leqslant T} \| (\rho_{\varepsilon} \ast u^{\varepsilon})^{\circ
     3} \|_{H^{- \delta}} \leqslant R \]
  Then for $\alpha \in (0, 1)$ and $\delta > 0$
  \[ \| v^{\varepsilon} \|_{\mathcal{C}_t^{\alpha} H^{1 - \delta - \alpha}}
     \lesssim R. \]
  The analogous statement also holds for $u, v$. 
\end{lemma}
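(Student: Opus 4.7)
Since $v^{\varepsilon}=u^{\varepsilon}-\theta$ has vanishing initial data and satisfies $(\partial_t^2+\mathbb{H}^{\omega,K})v^{\varepsilon}=-\rho_{\varepsilon}\ast(\rho_{\varepsilon}\ast u^{\varepsilon})^{\circ 3}$, Duhamel's formula gives
\[
v^{\varepsilon}(t)=-\int_0^t\frac{\sin\!\left((t-s)\sqrt{\mathbb{H}^{\omega,K}}\right)}{\sqrt{\mathbb{H}^{\omega,K}}}\,\rho_{\varepsilon}\ast(\rho_{\varepsilon}\ast u^{\varepsilon}(s))^{\circ 3}\,ds,
\]
and differentiating in $t$,
\[
\partial_t v^{\varepsilon}(t)=-\int_0^t\cos\!\left((t-s)\sqrt{\mathbb{H}^{\omega,K}}\right)\,\rho_{\varepsilon}\ast(\rho_{\varepsilon}\ast u^{\varepsilon}(s))^{\circ 3}\,ds.
\]
Convolution with $\rho_{\varepsilon}$ is a contraction on every $H^s$, so the hypothesis yields $\sup_{s\le T}\|\rho_{\varepsilon}\ast(\rho_{\varepsilon}\ast u^{\varepsilon}(s))^{\circ 3}\|_{H^{-\delta}}\lesssim R$ uniformly in $\varepsilon$.

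The plan is then to extract two bounds and interpolate. First, I would apply Lemma \ref{lem:inhom} with $\sigma=\delta$ and $p=\infty$ to obtain
\[
\sup_{t\le T}\|v^{\varepsilon}(t)\|_{H^{1-\delta}}\lesssim_T R.
\]
Second, for $\partial_t v^{\varepsilon}$ I would use the $L^2$-boundedness of $\cos\!\left(t\sqrt{\mathbb{H}^{\omega,K}}\right)$ from Theorem \ref{thm:AHprop} combined with the norm equivalence \eqref{eqn:normequ}, writing
\[
\|\cos\!\left((t-s)\sqrt{\mathbb{H}^{\omega,K}}\right)f\|_{H^{-\delta}}\approx\|(\mathbb{H}^{\omega,K})^{-\delta/2}\cos\!\left((t-s)\sqrt{\mathbb{H}^{\omega,K}}\right)f\|_{L^2}\lesssim\|f\|_{H^{-\delta}},
\]
which after integration in $s$ yields
\[
\sup_{t\le T}\|\partial_t v^{\varepsilon}(t)\|_{H^{-\delta}}\lesssim_T R.
\]

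To finish, I would invoke the standard complex interpolation identity $[H^{1-\delta},H^{-\delta}]_{\alpha}=H^{1-\delta-\alpha}$ together with the elementary fact that for any curve $f\in L^{\infty}_t X$ with $\partial_t f\in L^{\infty}_t Y$ and $X\hookrightarrow Y$ one has
\[
\|f(t)-f(t')\|_{[X,Y]_{\alpha}}\le\|f(t)-f(t')\|_X^{1-\alpha}\|f(t)-f(t')\|_Y^{\alpha}\lesssim\|f\|_{L^{\infty}_t X}^{1-\alpha}\|\partial_t f\|_{L^{\infty}_t Y}^{\alpha}|t-t'|^{\alpha}.
\]
Applied with $X=H^{1-\delta}$ and $Y=H^{-\delta}$, this gives
\[
\|v^{\varepsilon}\|_{\mathcal{C}^{\alpha}_t H^{1-\delta-\alpha}}\lesssim\|v^{\varepsilon}\|_{L^{\infty}_t H^{1-\delta}}^{1-\alpha}\|\partial_t v^{\varepsilon}\|_{L^{\infty}_t H^{-\delta}}^{\alpha}\lesssim R,
\]
as desired. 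The statement for $u,v$ follows line-by-line upon replacing the mollified nonlinearity by $u^{\circ 3}$. There is no serious obstacle here: everything reduces to the functional calculus on $\mathbb{H}^{\omega,K}$ recorded in Theorem \ref{thm:AHprop} and to the standard time-Hölder inequality for curves with a time derivative at lower regularity; the only mild care required is to keep the bounds uniform in $\varepsilon$, which is automatic because $\rho_{\varepsilon}\ast$ is a contraction on each Sobolev space and the operators $\cos$, $\sin/\sqrt{\,\cdot\,}$ do not involve $\varepsilon$ at all.
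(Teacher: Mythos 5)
Your proof is correct and takes essentially the same route as the paper: Duhamel's formula plus the smoothing and time-regularity properties of the propagator $\sin\bigl(t\sqrt{\mathbb{H}^{\omega,K}}\bigr)/\sqrt{\mathbb{H}^{\omega,K}}$, where the paper simply cites ``properties of the wave propagator'' and you supply the standard derivation via the $L^{\infty}_t H^{1-\delta}$ bound on $v^{\varepsilon}$, the $L^{\infty}_t H^{-\delta}$ bound on $\partial_t v^{\varepsilon}$, and interpolation in the Sobolev scale. The $\varepsilon$-uniformity and the use of the norm equivalence \eqref{eqn:normequ} are handled correctly.
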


\begin{proof}
  By definition of $v^{\varepsilon}$ we have
  \[ v^{\varepsilon} (t) = \int^t_0 S (t - s) \rho_{\varepsilon} \ast
     ((\rho_{\varepsilon} \ast u^{\varepsilon})^{\circ 3}) \mathd s \]
  so by the properties of the Wave propagator
  \[ \| v^{\varepsilon} \|_{\mathcal{C}^{\alpha}_t ([0, T], H^{1 - \alpha -
     \delta})} \lesssim_T \| (u^{\varepsilon})^{\circ 3} \|_{L^2 ([0, T], H^{-
     \delta})} . \]
  which gives the statement. The proof for $v$ is analogous. 
\end{proof}

\begin{proposition}
  \label{prop:bound-wick-u}For $K > 0$ we have the bound
  \[ \nu^{\varepsilon} (\| (\rho_{\varepsilon} \ast u^{\varepsilon})^{\circ 3}
     \|_{L^p ([0, T], H^{- \delta})} \leqslant R) \geqslant 1 - \frac{C
     T}{R^p} . \]
\end{proposition}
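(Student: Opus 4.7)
The plan is to use the invariance of $\nu^{\varepsilon}$ under the flow $\Phi^{\varepsilon}_t$ (established in Proposition \ref{prop:invariance-approx}) to reduce the time-averaged quantity to a single expectation under the invariant measure, then exploit absolute continuity w.r.t.\ $\mu^{\mathbb{H}^{\omega,K}}$ and uniform control of the Wick cube.

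First, by Markov's inequality one has
\[
\nu^{\varepsilon}\!\left(\|(\rho_{\varepsilon}\ast u^{\varepsilon})^{\circ 3}\|_{L^p([0,T],H^{-\delta})}>R\right)
\;\leq\; \frac{1}{R^p}\,\mathbb{E}_{\nu^{\varepsilon}}\!\left[\|(\rho_{\varepsilon}\ast u^{\varepsilon})^{\circ 3}\|^p_{L^p([0,T],H^{-\delta})}\right].
\]
Applying Fubini, the right-hand side equals $R^{-p}\int_0^T \mathbb{E}_{\nu^{\varepsilon}}\bigl[\|(\rho_{\varepsilon}\ast u^{\varepsilon}(t))^{\circ 3}\|^p_{H^{-\delta}}\bigr]\,dt$. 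Since $u^{\varepsilon}(t)=\Phi^{\varepsilon}_t\varphi$ and $\nu^{\varepsilon}$ is invariant under $\Phi^{\varepsilon}_t$ by Proposition \ref{prop:invariance-approx}, for each fixed $t\in[0,T]$ the integrand is independent of $t$ and equals $\mathbb{E}_{\nu^{\varepsilon}}\bigl[\|(\rho_{\varepsilon}\ast \varphi)^{\circ 3}\|^p_{H^{-\delta}}\bigr]$. Thus it suffices to bound this latter expectation by a constant uniform in $\varepsilon$.

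For the uniform bound, I would use that $\nu^{\varepsilon}$ is absolutely continuous w.r.t.\ $\mu^{\mathbb{H}^{\omega,K}}$ with density whose $L^2(\mu^{\mathbb{H}^{\omega,K}})$ norm is uniformly bounded in $\varepsilon$ (this is noted in the discussion preceding Lemma \ref{lemma-2approx-local}, since $(\rho_\varepsilon\ast\phi)^{\circ 4}$ has a uniform lower bound via the Hermite polynomial $H_4$). By Cauchy--Schwarz,
\[
\mathbb{E}_{\nu^{\varepsilon}}\!\bigl[\|(\rho_{\varepsilon}\ast \varphi)^{\circ 3}\|^p_{H^{-\delta}}\bigr]
\;\lesssim\; \mathbb{E}_{\mu^{\mathbb{H}^{\omega,K}}}\!\bigl[\|(\rho_{\varepsilon}\ast \varphi)^{\circ 3}\|^{2p}_{H^{-\delta}}\bigr]^{1/2}.
\]
The right-hand side is uniformly bounded in $\varepsilon$: this follows from Lemma \ref{lemma:pseudoWickpower} and Remark \ref{remark:pseudoWickpower}, which give convergence in $L^{2p}(\Omega', B^{-\delta}_{2p,2p})$ of $(\rho_{\varepsilon}\ast\varphi^A)^{\circ 3}$ to the limiting Wick cube $(\varphi^A)^{\circ 3}$ (with explicit rate $\varepsilon^{c\wedge\delta}$), combined with the Besov embedding $B^{-\delta/2}_{2p,2p}\hookrightarrow H^{-\delta}$ for $p$ large enough.

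Combining these steps yields $\nu^{\varepsilon}(\|(\rho_{\varepsilon}\ast u^{\varepsilon})^{\circ 3}\|_{L^p([0,T],H^{-\delta})}>R)\leq CT/R^p$, which is exactly the claim. The main technical point — though none of the steps is truly hard given what has been built up — is the uniformity in $\varepsilon$ of the Wick cube bound, which relies crucially on the decomposition $\varphi^A=\varphi^G+h$ from the coupling of Theorem \ref{theorem:coupling} together with Lemma \ref{lemma:Wickfreefield} for the Gaussian piece and the $L^p$ integrability of $h$ from Remark \ref{remark:Lph} for the remainder.
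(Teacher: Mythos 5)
Your proof is correct and follows essentially the same route as the paper: Markov's inequality, Fubini, invariance of $\nu^{\varepsilon}$ under $\Phi^{\varepsilon}_t$ to reduce to a single-time expectation, and then a bound uniform in $\varepsilon$ via Cauchy--Schwarz against $\mu^{\mathbb{H}^{\omega,K}}$ (the paper carries out this last step explicitly only in the analogous computation for $\theta^{\varepsilon}$ just before the proposition, but it is the same argument). The extra detail you supply on the uniformity of the Wick-cube moment is a legitimate filling-in of what the paper leaves implicit.
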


\begin{proof}
  By Markov's inequality
  \begin{eqnarray*}
    &  & \nu^{\varepsilon} (\| (\rho_{\varepsilon} \ast
    u^{\varepsilon})^{\circ 3} \|_{L^p ([0, T], H^{- \delta})} \geqslant R)\\
    & \leqslant & \frac{1}{R^p} \mathbb{E}_{\nu^{\varepsilon}} [\|
    (\rho_{\varepsilon} \ast u^{\varepsilon})^{\circ 3} \|^p_{L^p ([0, T],
    H^{- \delta})}]\\
    & \leqslant & \frac{1}{R^p} \int^T_0 \mathbb{E}_{\nu^{\varepsilon}} [\|
    (\rho_{\varepsilon} \ast u^{\varepsilon} (t))^{\circ 3} \|^p_{H^{-
    \delta}}]\\
    & = & \frac{1}{R^p} \int^T_0 \mathbb{E}_{\nu^{\varepsilon}} [\|
    (\rho_{\varepsilon} \ast u^{\varepsilon} (0))^{\circ 3} \|^p_{H^{-
    \delta}}]\\
    & = & \frac{1}{R^p} C T
  \end{eqnarray*}
  where in the last line we used invariance of $\nu^{\varepsilon}$ under the
  flow of $u^{\varepsilon}$ .
\end{proof}

\begin{corollary}
  Recall that \ $v = u - \theta$. If \ $\| v^{\varepsilon} (t) \|_{H^{1 -
  \delta}} \leqslant R$, and for $i \leqslant 3$ $\|
  (\theta^{\varepsilon})^{\circ i} \|_{H^{- \delta}} \leqslant R$, then
  $u^{\varepsilon} \rightarrow u$ in $\mathcal{C}^{\alpha}_t ([0, T], H^{1 -
  \delta - \alpha})$.
\end{corollary}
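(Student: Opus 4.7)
The key observation is that $u^\varepsilon$ and $u$ satisfy their respective equations with the \emph{same} initial data $\varphi$, and the linear propagators in both equations agree. Writing $u^\varepsilon = \theta + v^\varepsilon$ and $u = \theta + v$ with $\theta$ the common linear part, we get the identity $u^\varepsilon - u = v^\varepsilon - v$. Thus the task reduces to proving that $v^\varepsilon \to v$ in $\mathcal{C}^\alpha_t([0,T], H^{1-\delta-\alpha})$.

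First I would invoke the second-to-last lemma, which under the hypotheses (possibly enlarging $R$ by a factor $T^{1/p}$ to absorb the $L^p$-in-time norm into the $L^\infty$-in-time bound) gives
\[
\sup_{t\in[0,T]} \|v^\varepsilon(t)-v(t)\|_{H^{1-2\delta}} \;\lesssim\; R\, \bar{\mathcal{R}}^\varepsilon,
\]
where $\bar{\mathcal{R}}^\varepsilon \to 0$ as $\varepsilon \to 0$. This delivers $L^\infty$-in-time convergence, but in a slightly weaker spatial norm than the target $H^{1-\delta-\alpha}$, and without the Hölder-in-time control.

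Second, I would apply Lemma \ref{bound-remainder} separately to $v^\varepsilon$ and to $v$: the hypotheses $\|v^\varepsilon(t)\|_{H^{1-\delta}} \leq R$ and $\|(\theta^\varepsilon)^{\circ i}\|_{H^{-\delta}} \leq R$ provide control on $\|(\rho_\varepsilon\ast u^\varepsilon)^{\circ 3}\|_{H^{-\delta}}$ (and its unmollified counterpart) via the paraproduct/Wick decomposition along with the embedding $H^{1-\delta} \hookrightarrow W^{\delta,6}$, so we obtain the uniform bound
\[
\|v^\varepsilon\|_{\mathcal{C}^\alpha_t H^{1-\delta-\alpha}} + \|v\|_{\mathcal{C}^\alpha_t H^{1-\delta-\alpha}} \;\lesssim\; R.
\]

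Finally, I would interpolate between the two estimates. For $w = v^\varepsilon - v$, take the trivial bound $\|w\|_{L^\infty H^{1-2\delta}} \leq \eta^\varepsilon \assign R\bar{\mathcal{R}}^\varepsilon \to 0$ and the uniform bound $\|w\|_{\mathcal{C}^\alpha_t H^{1-\delta-\alpha}} \leq C R$. Then, for any $\vartheta\in(0,1)$ and with $s_\vartheta = \vartheta(1-2\delta) + (1-\vartheta)(1-\delta-\alpha) = 1-(1+\vartheta)\delta-(1-\vartheta)\alpha$, spatial interpolation gives
\[
\|w(t)-w(s)\|_{H^{s_\vartheta}} \;\leq\; (2\eta^\varepsilon)^\vartheta (CR)^{1-\vartheta} |t-s|^{\alpha(1-\vartheta)}.
\]
Setting $\alpha' = (1-\vartheta)\alpha$ and $\delta' = (1+\vartheta)\delta$, this shows $w \to 0$ in $\mathcal{C}^{\alpha'}_t H^{1-\delta'-\alpha'}$. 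Since $\vartheta$ can be taken arbitrarily small, this yields convergence in any $\mathcal{C}^{\alpha'}_t H^{1-\delta'-\alpha'}$ with $\alpha'<\alpha$ and $\delta'>\delta$, which is the statement up to an infinitesimal loss in the exponents (this loss can be absorbed by starting with a slightly improved $\delta$ in the preceding lemmas).

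The only real subtlety is the interpolation bookkeeping: since the previous lemma only yields convergence in $H^{1-2\delta}$ rather than $H^{1-\delta}$, we cannot quite match the endpoints of the Hölder--Sobolev scale. This is a cosmetic issue, handled by running the argument with $\delta/2$ in place of $\delta$ in the hypotheses, or by interpreting the conclusion with a slight $\delta$-loss. No conceptually new estimate is needed beyond the two invoked lemmas and the interpolation inequality in $H^s$.
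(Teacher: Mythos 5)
Your argument is correct, and in fact the paper states this corollary without any proof, so there is nothing to compare it against line by line; your proposal supplies exactly the kind of argument the placement of the corollary suggests (it sits immediately after the two lemmas you invoke). Two remarks. First, your reduction $u^{\varepsilon}-u=v^{\varepsilon}-v$ is the right one: although the paper's text at one point writes $v^{\varepsilon}=u^{\varepsilon}-\theta^{\varepsilon}$, all the surrounding proofs (e.g.\ ``recalling that $u^{\varepsilon}=\theta+v^{\varepsilon}$'' in Lemma \ref{bound-convergence}, and the Duhamel formula with zero data in Lemma \ref{bound-remainder}) use $v^{\varepsilon}=u^{\varepsilon}-\theta$, which is what makes the linear parts cancel; had the other convention been in force, the leftover $\rho_{\varepsilon}\ast\theta-\theta$ would not converge in $H^{1-\delta-\alpha}$ and the statement would have to be read differently. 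Second, your interpolation between the $L^{\infty}_tH^{1-2\delta}$ convergence and the uniform $\mathcal{C}^{\alpha}_tH^{1-\delta-\alpha}$ bounds is valid but costs an arbitrarily small loss in both exponents, which you correctly flag and absorb by shrinking $\delta$. This loss can be avoided entirely by a more direct route: write
\[
v^{\varepsilon}(t)-v(t)=\int_0^t S(t-s)\left[\rho_{\varepsilon}\ast(\rho_{\varepsilon}\ast u^{\varepsilon}(s))^{\circ 3}-(u(s))^{\circ 3}\right]\mathd s,
\]
note that the bracket converges to $0$ in $L^p_tH^{-\delta}$ by the estimates of Lemma \ref{bound-convergence} and the unnumbered lemma, and then apply the same wave-propagator smoothing estimate used in the proof of Lemma \ref{bound-remainder} to the difference, which yields convergence directly in $\mathcal{C}^{\alpha}_tH^{1-\alpha-\delta}$ with the stated exponents. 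Either way the conclusion holds; no essential idea is missing from your proposal.
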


\begin{corollary}
  With the same notation as above
  \[ \nu^{\varepsilon} (\| v^{\varepsilon} \|_{\mathcal{C}_t^{\alpha} H^{1 -
     \delta - \alpha}} \leqslant R) \geqslant 1 - \frac{C T}{R^p} . \]
\end{corollary}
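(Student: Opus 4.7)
The plan is very short, since this corollary is essentially a direct combination of the two immediately preceding results.

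First I would observe that Lemma \ref{bound-remainder} as written gives a deterministic estimate of the form
\[
\| v^{\varepsilon} \|_{\mathcal{C}^{\alpha}_t([0,T], H^{1-\alpha-\delta})} \;\lesssim_T\; \| (\rho_{\varepsilon} \ast u^{\varepsilon})^{\circ 3} \|_{L^p([0,T],H^{-\delta})}
\]
(the proof of that lemma only uses boundedness of the inhomogeneous wave propagator $S(t-s)$ acting on $L^p_t H^{-\delta}_x$, which is nothing but Lemma \ref{lem:inhom} applied to the forcing term $\rho_\varepsilon \ast (\rho_\varepsilon \ast u^\varepsilon)^{\circ 3}$; the exponent $p=2$ in that lemma statement is illustrative and the same argument gives any $p \geq 2$, with constants depending only on $T$). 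So up to a multiplicative constant $C=C(T)$, having $\|(\rho_\varepsilon \ast u^\varepsilon)^{\circ 3}\|_{L^p([0,T],H^{-\delta})} \leq R/C$ implies $\| v^{\varepsilon} \|_{\mathcal{C}^{\alpha}_t H^{1-\alpha-\delta}} \leq R$.

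Next I would apply Proposition \ref{prop:bound-wick-u} with $R$ replaced by $R/C$, which gives
\[
\nu^{\varepsilon}\!\left( \| (\rho_{\varepsilon} \ast u^{\varepsilon})^{\circ 3} \|_{L^p([0,T],H^{-\delta})} \leq R/C \right) \;\geq\; 1 - \frac{C^p \, C' \, T}{R^p}.
\]
Absorbing the factor $C^p C'$ into the constant (which is allowed to change from line to line and to depend on $\alpha,\delta,p$), the inclusion of events
\[
\bigl\{ \| (\rho_\varepsilon \ast u^\varepsilon)^{\circ 3} \|_{L^p_t H^{-\delta}} \leq R/C \bigr\} \;\subset\; \bigl\{ \| v^{\varepsilon} \|_{\mathcal{C}^\alpha_t H^{1-\delta-\alpha}} \leq R \bigr\}
\]
that follows from the first step gives the desired lower bound on $\nu^\varepsilon$ of the right-hand event.

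There is essentially no obstacle here; the only mild point to check is that the $L^2_t$ estimate in Lemma \ref{bound-remainder} really does extend to $L^p_t$ for the value of $p$ used in Proposition \ref{prop:bound-wick-u}, but this is immediate from the Minkowski/H\"older style bound in Lemma \ref{lem:inhom}, which is valid for any $p \in [1,\infty]$ with a time factor $T^{(p-1)/p}$ that gets absorbed in $C(T)$. The invariance of $\nu^\varepsilon$ (Proposition \ref{prop:invariance-approx}), already used in Proposition \ref{prop:bound-wick-u}, is what allows us to trade the space-time $L^p$ norm for an integral of a single-time expectation of a fixed Wick power, whose finiteness follows from Theorem \ref{theorem:Lp} together with the mutual absolute continuity of $\nu^\varepsilon$ and $\mu^{\mathbb{H}^{\omega,K}}$ with uniformly $L^2$-bounded density.
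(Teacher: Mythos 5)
Your argument is exactly the one the paper intends: the corollary is stated there as following immediately from Lemma \ref{bound-remainder} (the deterministic bound of $\| v^{\varepsilon} \|_{\mathcal{C}_t^{\alpha} H^{1-\delta-\alpha}}$ by the space--time norm of $(\rho_{\varepsilon}\ast u^{\varepsilon})^{\circ 3}$) combined with the tail estimate of Proposition \ref{prop:bound-wick-u}, via the same event inclusion you write down. Your extra remark reconciling the $L^2_t$ norm used in the proof of Lemma \ref{bound-remainder} with the $L^p_t$ norm controlled in Proposition \ref{prop:bound-wick-u} (via H\"older on the finite interval $[0,T]$) is a correct and welcome clarification of a point the paper glosses over.
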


\begin{proof}
  This follows immediately from Lemmas \ref{bound-remainder} and
  \ref{prop:bound-wick-u}.
\end{proof}

Next we show that having a sequence of uniformly bounded approximate solutions
for the cut-off flow is sufficient to construct the solution for the limiting
equation.

\begin{lemma}
  \label{lemma:subsequence}Assume that
  \[ \sup_{\varepsilon > 0} \sup_{0 \leqslant i \leqslant 3}  \|
     (\theta^{\varepsilon})^{\circ i} \|_{L^p ([0, T], H^{- \delta})}
     \leqslant R \]
  and that $v^{\varepsilon}$ solves the equation
  \[ (\partial^2_t +\mathbb{H}^{\omega, K}) v^{\varepsilon} - \sum_{i = 0}^3
     (\theta^{\varepsilon})^{\circ i} (v^{\varepsilon})^{3 - i} = 0. \qquad
     v^{\varepsilon} (0) = 0 \]
  and satisfies
  \[ \| v^{\varepsilon} \|_{L^{\infty} H^{1 - \delta}} \leqslant L \]
  Then $v^{\varepsilon}$ has a subsequence converging in $L^{\infty} H^{1 - 2
  \delta}$ and the limit $v$ solves
  \[ (\partial^2_t +\mathbb{H}^{\omega, K}) v - \sum_{i = 0}^3 \theta^{\circ
     i} v^{3 - i} = 0 \quad (v, \partial_t v) |_{t = 0} = 0 \]
  and satisfies $\| v \|_{L^{\infty} H^{1 - \delta}} \lesssim R L^3$.
\end{lemma}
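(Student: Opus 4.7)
I would proceed by a compactness-and-passage-to-the-limit argument, standard for such nonlinear wave equations, and then read off the desired bound from the mild formulation of the limit.

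\textbf{Step 1 (H\"older-in-time bound uniform in $\varepsilon$).} Starting from the mild formulation
\[
v^{\varepsilon}(t) = \sum_{i=0}^{3} \binom{3}{i} \int_{0}^{t} \frac{\sin\!\left((t-s)\sqrt{\mathbb{H}^{\omega,K}}\right)}{\sqrt{\mathbb{H}^{\omega,K}}}\,(\theta^{\varepsilon})^{\circ i}(s)\,(v^{\varepsilon}(s))^{3-i}\,\mathd s,
\]
I would apply Lemma~\ref{lem:inhom} together with product estimates in Besov spaces to bound each nonlinear term in $L^{p}([0,T],H^{-\delta})$: for $i\ge 1$ one uses the uniform $L^{p}H^{-\delta}$-bound on $(\theta^{\varepsilon})^{\circ i}$ together with $H^{1-\delta}\hookrightarrow \mathcal{C}^{-\delta'}$ and paraproduct bounds to control $(\theta^{\varepsilon})^{\circ i}(v^{\varepsilon})^{3-i}$; for $i=0$ the cube $(v^{\varepsilon})^{3}$ is controlled in $H^{-\delta}$ by $L^{3}$ using the 2D algebra-type embedding. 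This yields $\|v^{\varepsilon}\|_{\mathcal{C}^{\alpha}_{t}([0,T],H^{1-\delta-\alpha})}\lesssim R L^{3}$ for some small $\alpha>0$, exactly in the spirit of Lemma~\ref{bound-remainder}.

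\textbf{Step 2 (Extraction of a subsequence).} Combining the H\"older bound with the compact embedding $H^{1-\delta-\alpha}\hookrightarrow H^{1-2\delta}$, an Arzel\`a--Ascoli argument in $C([0,T],H^{1-2\delta})$ produces a subsequence $v^{\varepsilon_{n}}\to v$ in $L^{\infty}([0,T],H^{1-2\delta})$. Lower semicontinuity of the $L^{\infty}H^{1-\delta}$-norm preserves $\|v\|_{L^{\infty}H^{1-\delta}}\le L$.

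\textbf{Step 3 (Passing to the limit in the nonlinearity).} To identify $v$ with the claimed equation, I would show $(\theta^{\varepsilon_{n}})^{\circ i}(v^{\varepsilon_{n}})^{3-i}\to \theta^{\circ i}v^{3-i}$ in $L^{p}([0,T],H^{-\delta})$ (possibly after a small loss of regularity). Convergence $(\theta^{\varepsilon})^{\circ i}\to \theta^{\circ i}$ strongly in $L^{p}_{t}H^{-\delta}$ follows by pulling the static estimate of Lemma~\ref{lemma:pseudoWickpower} and Remark~\ref{remark:pseudoWickpower} through the linear Anderson wave flow: by Proposition~\ref{prop:wavewick} the law of $\theta(t)$ equals that of $u_{0}$ for every $t$, so the same $L^{p}$-rates apply pointwise in $t$ and integrate. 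Splitting
\[
(\theta^{\varepsilon_{n}})^{\circ i}(v^{\varepsilon_{n}})^{3-i} - \theta^{\circ i}v^{3-i} = [(\theta^{\varepsilon_{n}})^{\circ i} - \theta^{\circ i}](v^{\varepsilon_{n}})^{3-i} + \theta^{\circ i}[(v^{\varepsilon_{n}})^{3-i} - v^{3-i}],
\]
the first piece is controlled via Besov multiplication by $\|(\theta^{\varepsilon_{n}})^{\circ i}-\theta^{\circ i}\|_{L^{p}H^{-\delta}}L^{3-i}\to 0$, and the second by strong convergence of $v^{\varepsilon_{n}}$ in $H^{1-2\delta}$, which via the uniform $H^{1-\delta}$-bound and interpolation gives strong convergence of $(v^{\varepsilon_{n}})^{3-i}$ in a space of regularity $>\delta$. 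Passing to the limit in the mild formulation identifies $v$ as the claimed solution.

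\textbf{Step 4 (A priori bound on the limit).} Applying Lemma~\ref{lem:inhom} one last time to the mild equation for $v$, I bound the RHS by $\sum_{i=0}^{3}\|\theta^{\circ i}\|_{L^{p}H^{-\delta}}\|v\|_{L^{\infty}H^{1-\delta}}^{3-i}\lesssim \sum_{i} R\,L^{3-i}\lesssim R L^{3}$, which gives the claimed estimate.

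\textbf{Main obstacle.} The heart of the argument is Step 3: because $\theta^{\circ i}\in H^{-\delta}$ with negative regularity, the product $\theta^{\circ i}v^{3-i}$ requires $(v^{\varepsilon_{n}})^{3-i}\to v^{3-i}$ in a space of regularity strictly above $\delta$. This is exactly what strong convergence in $H^{1-2\delta}$ (rather than merely weak convergence in $H^{1-\delta}$) provides, and explains why the compactness must be gained in the intermediate scale $H^{1-2\delta}$ despite the uniform bound being available in $H^{1-\delta}$. A secondary technical point is transferring the static Wick-power convergence of Section~\ref{sec:AndersonWick} into the $L^{p}_{t}$-setting, which is enabled by the Gaussian invariance of $\theta(t)$ under the linear flow.
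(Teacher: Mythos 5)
Your proposal is correct and follows essentially the same route as the paper: a uniform $\mathcal{C}^{\alpha}_t H^{1-\alpha-\delta'}$ bound obtained from the mild formulation (the paper invokes Lemma~\ref{bound-remainder} for this), Arzel\`a--Ascoli together with the compact Sobolev embedding to extract the subsequence, and identification of the limiting equation through convergence of the Wick powers combined with strong convergence of $v^{\varepsilon_n}$. The only cosmetic difference is that the paper delegates your Step~3 wholesale to Lemma~\ref{bound-convergence}, whereas you spell out the product decomposition explicitly.
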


\begin{proof}
  Note that to obtain a converging subsequence we only need to bound
  $v^{\varepsilon}$ in $\mathcal{C}^{\alpha}_t H^{1 - \alpha - \delta'}$ for
  small $\alpha, \delta' > 0.$ To this end we will apply Lemma
  \ref{bound-remainder}. Recall that $u (s) = \theta (s) + v (s)$ so
  \begin{eqnarray*}
    \| v^{\varepsilon} \|_{H^{1 - \delta}} & \leqslant & \sum_{i = 0}^3 \|
    (\theta^{\varepsilon})^{\circ i} (\rho_{\varepsilon} \ast
    v^{\varepsilon})^{3 - i} \|_{H^{- \delta}}\\
    & \leqslant & \sum_{i = 0}^3 \| \theta^{\circ i} \|_{H^{- \delta}} \| v
    \|^{3 - i}_{H^{1 - \delta}}\\
    & \leqslant & 3 R (1 + L)^3
  \end{eqnarray*}
  so applying Lemma \ref{bound-remainder} we get that
  \[ \| v^{\varepsilon} \|_{\mathcal{C}_t^{\alpha} H^{1 - \alpha - \delta'}}
     \lesssim 3 R (1 + L)^3 \]
  and the compactness claim follows. Lastly, that $v$ solves the limiting
  equation follows from Lemma \ref{bound-convergence}. 
\end{proof}

\begin{proposition}
  We have that
  \[ \lim_{R \rightarrow \infty} \nu (\| v \|_{L^{\infty} H^{1 - \delta}}
     \leqslant R) = 1. \]
\end{proposition}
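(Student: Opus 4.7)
The approach is a standard Bourgain-type globalisation: I would transfer the uniform-in-$\varepsilon$ control of $v^{\varepsilon}$ under the $\Phi_t^{\varepsilon}$-invariant measures $\nu^{\varepsilon}$ (given by the preceding corollary) to a control of $v$ under $\nu$ via the total variation convergence $\nu^{\varepsilon} \to \nu$, then identify subsequential limits with the solution of the limiting equation via Lemma \ref{lemma:subsequence}.

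Fix $R \gg 1$ and a sequence $\varepsilon_n \downarrow 0$. I would introduce the measurable set of initial conditions
\begin{equation*}
  B_R^{\varepsilon_n} := \left\{\varphi : \|v^{\varepsilon_n}\|_{\mathcal{C}_t^{\alpha}([0,T];H^{1-\delta-\alpha})} \leq R \;\text{and}\; \sup_{1 \leq i \leq 3}\|(\theta^{\varepsilon_n})^{\circ i}\|_{L_t^p H^{-\delta}} \leq R\right\}.
\end{equation*}
Combining the preceding corollary with Markov's inequality applied to the bounds on $\|(\theta^\varepsilon)^{\circ i}\|_{L^p H^{-\delta}}$ (which follow uniformly in $\varepsilon$ from invariance of $\mu^{\mathbb{H}^{\omega,K}}$ under the linear flow, hypercontractivity, and the uniform $L^2$-bound on the density $d\nu^{\varepsilon}/d\mu^{\mathbb{H}^{\omega,K}}$), one gets $\nu^{\varepsilon_n}(B_R^{\varepsilon_n}) \geq 1 - CT/R^p$ uniformly in $n$.

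Using the total variation convergence $\nu^{\varepsilon_n} \to \nu$ (as derived at the start of Section \ref{sec:global} from $L^1$-convergence of the densities), one obtains $\nu(B_R^{\varepsilon_n}) \geq 1 - CT/R^p - o(1)$ as $n \to \infty$. Applying Fatou's lemma for sets to $B_R := \limsup_n B_R^{\varepsilon_n}$ yields
\begin{equation*}
  \nu(B_R) \;\geq\; \limsup_{n \to \infty} \nu(B_R^{\varepsilon_n}) \;\geq\; 1 - \frac{CT}{R^p}.
\end{equation*}
On $B_R$, by definition there exists a subsequence $\varepsilon_{n_k}$ along which both $\|v^{\varepsilon_{n_k}}\|_{L^{\infty}H^{1-\delta}} \leq R$ (via embedding) and the relevant stochastic norms are bounded by $R$; Lemma \ref{lemma:subsequence} then produces a further subsequence converging in $L^{\infty}_tH^{1-2\delta}_x$ to a limit $\tilde v$ solving the limiting Wick-ordered equation \eqref{eqn:vWick} with $\|\tilde v\|_{L^{\infty}H^{1-\delta}} \lesssim R^4$. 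By local uniqueness from Theorem \ref{thm:localwp}, this $\tilde v$ coincides with the unique local solution $v$, and the resulting a priori bound allows iteration of the local theory up to time $T$. Therefore $\nu(\|v\|_{L^{\infty}([0,T];H^{1-\delta})} \lesssim R^4) \geq 1 - CT/R^p$, and letting $R \to \infty$ yields the claim (after renaming $R' = CR^4$ so that the probability remains of the form $1 - CT/(R')^{p/4} \to 1$).

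The main obstacle is the identification step: verifying that the subsequential limit $\tilde v$ in Lemma \ref{lemma:subsequence} genuinely coincides with the local solution $v$ from Theorem \ref{thm:localwp} in the correct functional class, and that the polynomial-in-norm local time of existence permits Bourgain-style iteration up to an arbitrary fixed $T$. A secondary technical point is ensuring that the limsup construction above, combined with total variation convergence of the densities, really transfers the uniform Markov estimate from $\nu^{\varepsilon}$ to $\nu$; this is routine once one keeps track of the fact that $B_R^{\varepsilon_n}$ is a fixed measurable set in the common state space $H^{-\delta}$.
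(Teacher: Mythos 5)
Your proposal is correct and follows essentially the same route as the paper: uniform Markov-type bounds under the invariant measures $\nu^{\varepsilon}$ (Proposition \ref{prop:bound-wick-u} and the corollary), transfer to $\nu$ via total variation convergence, Fatou's lemma applied to the $\limsup$ of the good sets, and Lemma \ref{lemma:subsequence} to identify subsequential limits with the solution of the limiting equation. The only differences are cosmetic bookkeeping (carrying the stochastic norms explicitly in the good set and renaming $R \mapsto CR^4$), which the paper's proof glosses over but which does not change the argument.
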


\begin{proof}
  Denote by $\Sigma_R^{\varepsilon} = \{ \varphi : \| v^{\varepsilon}
  \|_{L^{\infty} H^{1 - \delta}} \leqslant R \}$ and $\Sigma_R = \{ \varphi :
  \| v \|_{L^{\infty} H^{1 - \delta}} \leqslant R \}$. From Lemma
  \ref{lemma:subsequence} we have that
  \[ \Sigma_R \supset \limsup_{\varepsilon \rightarrow 0}
     \Sigma_R^{\varepsilon} = \bigcap_{\varepsilon > 0} \bigcup_{\varepsilon'
     < \varepsilon} \Sigma_R^{\varepsilon} . \]
  Note that $\sup_{\varepsilon > 0} \sup_{0 \leqslant i \leqslant 3}  \| :
  (\rho_{\varepsilon} \ast w)^i : \|_{L^p ([0, T], H^{- \delta})} < \infty$
  $\nu, \nu^{\varepsilon}$-almost surely. Then by Fatou's Lemma
  \[ \nu (\Sigma_R) \geqslant \nu (\limsup_{\varepsilon \rightarrow 0}
     \Sigma_R^{\varepsilon}) \geqslant \limsup_{\varepsilon \rightarrow 0} \nu
     (\Sigma_R^{\varepsilon}) = \limsup_{\varepsilon \rightarrow 0}
     \nu^{\varepsilon} (\Sigma_R^{\varepsilon}) \]
  where the last equality is true since $\nu^{\varepsilon} \rightarrow \nu$ in
  total variation. Now an application of Proposition \ref{prop:bound-wick-u}
  yields the claim. 
\end{proof}

\begin{proposition}
  The measure $\nu$ is invariant under the flow $\Phi_t$, that is
  \[ \Phi_t^{\ast} \nu = \nu . \]
\end{proposition}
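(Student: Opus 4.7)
The plan is to mimic the Bourgain-style argument of Proposition \ref{prop:invariance-approx}, now removing the mollification $\varepsilon$ rather than the finite-dimensional projection $N$. It suffices to show, for every Lipschitz and bounded $f : H^{- \delta} \to \mathbb{R}$ with Lipschitz constant $L$, the identity
\[
\int f(\Phi_t \varphi) \, \mathrm{d}\nu(\varphi) = \int f(\varphi) \, \mathrm{d}\nu(\varphi).
\]
By Proposition \ref{prop:invariance-approx}, for every $\varepsilon > 0$ we already have $\int f(\Phi^\varepsilon_t \varphi) \, \mathrm{d}\nu^\varepsilon = \int f \, \mathrm{d}\nu^\varepsilon$, and since $\nu^\varepsilon \to \nu$ in total variation the right-hand side converges to $\int f \, \mathrm{d}\nu$. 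So the task reduces to proving $\int f(\Phi^\varepsilon_t \varphi) \, \mathrm{d}\nu^\varepsilon \to \int f(\Phi_t \varphi) \, \mathrm{d}\nu$.

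I would split this difference as
\[
\int f(\Phi^\varepsilon_t) \, \mathrm{d}(\nu^\varepsilon - \nu) \; + \; \int \bigl[f(\Phi^\varepsilon_t) - f(\Phi_t)\bigr] \, \mathrm{d}\nu,
\]
the first integral tending to zero by total variation convergence and $\|f\|_\infty < \infty$. For the second integral, fix a large $R$ and introduce the good event
\[
\mathcal{G}_R^\varepsilon \assign \Bigl\{ \|v^\varepsilon\|_{L^\infty_{[0,T]} H^{1 - \delta}} \leq R, \ \|v\|_{L^\infty_{[0,T]} H^{1 - \delta}} \leq R, \ \sup_{i \leq 3} \bigl(\|\theta^{\circ i}\|_{L^p H^{-\delta}} + \|(\theta^\varepsilon)^{\circ i}\|_{L^p H^{-\delta}}\bigr) \leq R \Bigr\}.
\]
Combining Proposition \ref{prop:bound-wick-u}, the previous proposition controlling $\|v\|_{L^\infty H^{1 - \delta}}$, and total variation convergence, one obtains $\nu(\mathcal{G}_R^\varepsilon) \geq 1 - C(T) R^{-p} - o_\varepsilon(1)$ uniformly in small $\varepsilon$.

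On $\mathcal{G}_R^\varepsilon$, the second version of Lemma \ref{bound-convergence} (for $v^\varepsilon$ versus $v$) yields $\|v^\varepsilon - v\|_{L^\infty H^{1 - \delta}} \lesssim R \, \bar{\mathcal{R}}^\varepsilon$ with $\|\bar{\mathcal{R}}^\varepsilon\|_{L^p(\mu)} \to 0$; along a subsequence $\varepsilon_k$ one then has $\bar{\mathcal{R}}^{\varepsilon_k} \to 0$ $\nu$-a.s. Since $u^\varepsilon = \theta^\varepsilon + v^\varepsilon$ and $\theta^\varepsilon \to \theta$ in $H^{-\delta}$ almost surely, this gives $u^{\varepsilon_k}(t) \to u(t)$ in $H^{-\delta}$ on $\mathcal{G}_R^{\varepsilon_k}$. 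Splitting the second integral over $\mathcal{G}_R^\varepsilon$ and its complement, Lipschitz continuity of $f$ together with dominated convergence handles the integral over $\mathcal{G}_R^\varepsilon$, while the complementary piece is bounded by $2\|f\|_\infty(C R^{-p} + o_\varepsilon(1))$. Sending $\varepsilon = \varepsilon_k \to 0$ and then $R \to \infty$ concludes.

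The main obstacle is precisely the uniform-in-$\varepsilon$ control $\|v^\varepsilon\|_{L^\infty H^{1 - \delta}} \leq R$ on a set of nearly full $\nu$-measure: the natural estimate is for $\nu^\varepsilon$ (using invariance of $\nu^\varepsilon$ under $\Phi^\varepsilon$), so one has to transfer the bound to $\nu$ through the total variation convergence $\nu^\varepsilon \to \nu$, and then pass from the $L^p(\mu)$ control of the error $\bar{\mathcal{R}}^\varepsilon$ to almost-sure control by subsequence extraction. This two-layer approximation (projection $N$, then mollification $\varepsilon$) is where the whole Bourgain scheme used throughout the section pays off.
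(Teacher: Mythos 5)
Your argument is correct and is essentially the paper's: the paper's proof of this proposition is literally the one-line remark that it follows exactly as Proposition \ref{prop:invariance-approx}, with the mollification parameter $\varepsilon$ playing the role that $N$ played there, which is precisely the scheme you carry out (invariance of $\nu^{\varepsilon}$ under $\Phi^{\varepsilon}_t$, total variation convergence $\nu^{\varepsilon}\to\nu$, restriction to good sets of nearly full measure, and the convergence lemma for $v^{\varepsilon}-v$ combined with Lipschitz continuity of the test function). Your explicit handling of the subsequence extraction for $\bar{\mathcal{R}}^{\varepsilon}$ and the transfer of the good-set bounds from $\nu^{\varepsilon}$ to $\nu$ via total variation fills in exactly the details the paper leaves implicit.
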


\begin{proof}
  This follows in exactly the same way as Proposition
  \ref{prop:invariance-approx}. 
\end{proof}

\appendix\section{Besov spaces and related concepts}\label{app:Besov}

We collect some basic definitions and elementary results about Besov spaces,
paraproducts etc., see, e.g., {\cite{allez_continuous_2015,BCD,GIP2015}} for
more details. We work here in the case of Besov spaces defined on the
$d$-dimensional torus \
\[ \mathbb{T}^d = (\mathbb{R}/\mathbb{Z})^d . \]
First we define the Sobolev space $H^{\alpha} (\mathbb{T}^d)$ with index
$\alpha \in \mathbb{R}$ which is the Banach space of distribution $u$ such
that $(1 - \Delta)^{- \frac{\alpha}{2}} (u)$ is a function and
\begin{equation}
  H^{\alpha} (\mathbb{T}^d) \assign \left\{ u \in \mathcal{S}' (\mathbb{T}^d)
  : \left\| (1 - \Delta)^{\frac{\alpha}{2}} u \right\|_{L^2} < \infty \right\}
  .
\end{equation}
Next, we recall the definition of Littlewood-Paley blocks. We denote by $\chi$
and $\rho$ two non-negative smooth and compactly supported radial functions
$\mathbb{R}^d \rightarrow \mathbb{C}$ such that
\begin{enumerateroman}
  \item The support of $\chi$ is contained in a ball and the support of $\rho$
  is contained in an annulus $\{ x \in \mathbb{R}^d : a \leqslant | x |
  \leqslant b \}$
  
  \item For all $\xi \in \mathbb{R}^d$, $\chi (\xi) + \underset{j \geqslant
  0}{\sum} \rho (2^{- j} \xi) = 1$;
  
  \item For $j \geqslant 1$, $\chi (\cdummy) \rho (2^{- j} \cdummy) = 0$ and
  $\rho (2^{- j} \cdummy) \rho (2^{- i} \cdummy) = 0$ for $| i - j | > 1$.
\end{enumerateroman}
The Littlewood-Paley blocks $(\Delta_j)_{j \geqslant - 1}$ associated to $f
\in \mathcal{S}' (\mathbb{T}^d)$ are defined by
\[ \Delta_{- 1} f \assign \mathcal{F}^{- 1} \chi \mathcal{F}f \text{and }
   \Delta_j f \assign \mathcal{F}^{- 1} \rho (2^{- j} \cdummy) \mathcal{F}f
   \text{for} j \geqslant 0, \]
and we define the Littlewood-Paley function $K_j = \mathcal{F}^{- 1}
(\Delta_j)$, i.e. the function for which $K_j \ast f = \Delta_j f$. We also
set, for $f \in \mathcal{S}' (\mathbb{T}^d)$ and $j \geqslant - 1$
\[ S_j f \assign \underset{i = - 1}{\overset{j - 1}{\sum}} \Delta_i f. \]
Then the Besov space with parameters $p \in [1, \infty], q \in [1, \infty),
\alpha \in \mathbb{R}$ can now be defined as \
\[ B_{p, q}^{\alpha} (\mathbb{T}^d) \assign \{ u \in \mathcal{S}'
   (\mathbb{T}^d) : \| u \|_{B_{p, q}^{\alpha}} < \infty \}, \]
where the norm is defined as
\begin{equation}
  \| u \|_{B_{p, q}^{\alpha}} \assign \left( \underset{k \geqslant - 1}{\sum}
  ((2^{\alpha k} \| \Delta_j u \|_{L^p})^q) \right)^{\frac{1}{q}},
  \label{def:Besov}
\end{equation}
with the obvious modification for $q = \infty .$ In the paper we often omit
the dependence of $B^{\alpha}_{p, q}$ from the torus $\mathbb{T}^d$. There are
two special cases of Besov spaces: the \tmtextit{Besov-H{\"o}lder} spaces for
$p = q = \infty$, i.e.
\begin{equation}
  \mathcal{C}^{\alpha} \assign B_{\infty \infty}^{\alpha} (\mathbb{T}^d)
\end{equation}
and the Sobolev spaces $H^{\alpha} = B_{2, 2}^{\alpha} (\mathbb{T}^d)$
(defined above) for $p = q = 2$.

\

Using this notation, we can formally decompose the product $f \cdummy g$ of
two distributions $f$ and $g$ as \
\[ f \cdummy g = f \prec g + f \circ g + f \succ g, \]
where
\[ f \prec g \assign \underset{j \geqslant - 1}{\sum} S_{j - 1} f \Delta_j g
   \quad \text{and\quad} f \succ g \assign \underset{j \geqslant - 1}{\sum}
   \Delta_j f S_{j - 1} g \]
are referred to as the \tmtextit{paraproducts}, whereas
\begin{equation}
  f \circ g \assign \underset{j \geqslant - 1}{\sum} \underset{| i - j |
  \leqslant 1}{\sum} \Delta_i f \Delta_j g \label{eq:resonantproduct}
\end{equation}
is called the \tmtextit{resonant product}. An important point is that the
paraproduct terms are always well defined whatever the regularity of $f$ and
$g$. The resonant product, on the other hand, is a priori only well defined if
the sum of their regularities is positive. We collect some results.

\begin{lemma}
  \label{lem:para}Let $\alpha, \alpha_1, \alpha_2 \in \mathbb{R}$ and $p, p_1,
  p_2, q \in \{ 2, \infty \}$ be such that
  \[ \alpha_1 \neq 0 \quad \alpha = (\alpha_1 \wedge 0) + \alpha_2, \quad
     \frac{1}{p} = \frac{1}{p_1} + \frac{1}{p_2}  \text{and}  \frac{1}{q} =
     \frac{1}{q_1} + \frac{1}{q_2} . \]
  Then we have the bound
  \[ \| f \prec g \|_{B_{p, q}^{\alpha}} \lesssim \| f \|_{B_{p_1,
     q_1}^{\alpha_1}} \| g \|_{{B^{\alpha_2}_{p_2, q_2}} } \]
  and in the case where $\alpha_1 + \alpha_2 > 0$ we have the bound
  \[ \| f \circ g \|_{B_{p, q}^{\alpha_1 + \alpha_2}} \lesssim \| f
     \|_{B_{p_1, q_1}^{\alpha_1}} \| g \|_{{B^{\alpha_2}_{p_2, q_2}} } . \]
\end{lemma}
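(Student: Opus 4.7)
The plan is to exploit the Fourier-localization properties of the paraproduct and resonant decompositions, together with Hölder's inequality and standard summation. These are the classical Bony paraproduct estimates, so one could alternatively invoke Theorems~2.82 and 2.85 of \cite{BCD} directly; I sketch the structure of the argument below.

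First I would record the Fourier-support facts: $S_{j-1} f\, \Delta_j g$ has Fourier spectrum contained in an annulus $\{|\xi|\sim 2^j\}$, while for the resonant pieces $\Delta_i f\, \Delta_j g$ with $|i-j|\leqslant 1$ the spectrum lies in a ball $\{|\xi|\lesssim 2^j\}$. Therefore, applying $\Delta_k$ to $f\prec g$ kills all but $O(1)$ terms with $j\sim k$, whereas $\Delta_k(f\circ g)$ picks up only terms with $j\gtrsim k$.

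For the paraproduct bound, Hölder's inequality gives $\|\Delta_k(S_{j-1} f\,\Delta_j g)\|_{L^p}\lesssim \|S_{j-1} f\|_{L^{p_1}}\|\Delta_j g\|_{L^{p_2}}$. Using $S_{j-1} f=\sum_{i\leqslant j-2}\Delta_i f$ and the hypothesis $\alpha_1\neq 0$, one has a dichotomy: if $\alpha_1<0$ the sum is dominated by its top index, yielding $\|S_{j-1} f\|_{L^{p_1}}\lesssim 2^{-\alpha_1 j}\|f\|_{B^{\alpha_1}_{p_1,q_1}}$; if $\alpha_1>0$ the series is absolutely convergent and $\|S_{j-1} f\|_{L^{p_1}}\lesssim \|f\|_{L^{p_1}}\lesssim \|f\|_{B^{\alpha_1}_{p_1,q_1}}$ by Besov embedding. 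In either case one picks up exactly $2^{-(\alpha_1\wedge 0)j}$. Pairing with the standard decomposition $\|\Delta_j g\|_{L^{p_2}}=2^{-\alpha_2 j}(2^{\alpha_2 j}\|\Delta_j g\|_{L^{p_2}})$ and summing in $k\sim j$ via Hölder in the $\ell^q$ indices (using $\tfrac1q=\tfrac1{q_1}+\tfrac1{q_2}$) produces the required $\ell^q_k$-bound at regularity $\alpha=(\alpha_1\wedge 0)+\alpha_2$.

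For the resonant bound I would argue symmetrically. Hölder gives $\|\Delta_k(\Delta_i f\,\Delta_j g)\|_{L^p}\lesssim 2^{-\alpha_1 i}\,2^{-\alpha_2 j}\,a_i b_j$ where $a_i=2^{\alpha_1 i}\|\Delta_i f\|_{L^{p_1}}\in\ell^{q_1}$ and analogously for $b_j$. Since $|i-j|\leqslant 1$ and only $j\gtrsim k$ contribute, one bounds
\[
2^{(\alpha_1+\alpha_2)k}\|\Delta_k(f\circ g)\|_{L^p}\lesssim \sum_{j\gtrsim k} 2^{-(\alpha_1+\alpha_2)(j-k)} a_j b_j,
\]
and the hypothesis $\alpha_1+\alpha_2>0$ makes this a convolution of $\ell^q$-sequences with a geometrically decaying kernel, to which Young's inequality applies and yields the $B^{\alpha_1+\alpha_2}_{p,q}$-norm. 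The only place any care is required is the dichotomy on the sign of $\alpha_1$ for the paraproduct bound, and the strict positivity $\alpha_1+\alpha_2>0$ for the resonant sum -- both are dictated by the need for absolute convergence of a geometric series indexed by Littlewood--Paley frequencies; the rest is routine Hölder/Young bookkeeping.
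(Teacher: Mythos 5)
Your argument is correct and is precisely the standard Bony paraproduct/remainder proof that the paper delegates to the cited reference (Bahouri--Chemin--Danchin), so there is no substantive difference in approach. The dichotomy on the sign of $\alpha_1$ for the paraproduct and the geometric-series/Young's-inequality step for the resonant term under $\alpha_1+\alpha_2>0$ are exactly the points the reference handles, and your bookkeeping of the $\ell^q$ summation via $\tfrac1q=\tfrac1{q_1}+\tfrac1{q_2}$ is sound.
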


\begin{proof}
  The proof can be found in {\cite{BCD}} Theorem 2.47 and Theorem 2.52 for
  Besov spaces on $\mathbb{R}^d$. The proof for Besov spaces on $\mathbb{T}^d$
  is similar.
\end{proof}

\begin{lemma}[Bernstein's inequalities]
  \label{lem:bernstein}Let $\mathcal{A}$ be an annulus and $\mathcal{B}$ be a
  ball. For any $k \in \mathbb{N}, \lambda > 0,$and $1 \leqslant p \leqslant q
  \leqslant \infty$ we have
  \begin{enumerate}
    \item if $u \in L^p (\mathbb{R}^d) $is such that $\tmop{supp}
    (\mathcal{F}u) \subset \lambda \mathcal{B}$ then
    \[ \underset{\mu \in \mathbb{N}^d : | \mu | = k}{\max} \| \partial^{\mu} u
       \|_{L^q} \lesssim_k \lambda^{k + d \left( \frac{1}{p} - \frac{1}{q}
       \right)} \| u \|_{L^p} \]
    \item if $u \in L^p (\mathbb{R}^d) $is such that $\tmop{supp}
    (\mathcal{F}u) \subset \lambda \mathcal{A}$ then
    \[ \lambda^k \| u \|_{L^p} \lesssim_k \underset{\mu \in \mathbb{N}^d : |
       \mu | = k}{\max} \| \partial^{\mu} u \|_{L^p} . \]
  \end{enumerate}
\end{lemma}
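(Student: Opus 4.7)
The plan is to treat both inequalities by multiplication on the Fourier side with a compactly supported multiplier, transferring that multiplier back to the physical side as a Schwartz convolution kernel, and then applying Young's inequality together with the scaling behaviour of that kernel under $\lambda$-dilations. The single nontrivial ingredient is, for part 2, exhibiting an explicit identity that recovers $u$ from its derivatives of order $k$ via convolution with an $L^1$ function; once this identity is written down the argument is parallel to part 1.

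For part 1, I would first fix a radial bump $\widetilde{\chi}\in C_c^\infty(\mathbb{R}^d)$ such that $\widetilde{\chi}\equiv 1$ on $\mathcal{B}$ and $\mathrm{supp}(\widetilde{\chi})$ is contained in a slightly larger ball, and set $\phi\assign\mathcal{F}^{-1}\widetilde{\chi}\in\mathcal{S}(\mathbb{R}^d)$ together with its dilate $\phi_\lambda(x)\assign\lambda^d\phi(\lambda x)$, whose Fourier transform equals $\widetilde{\chi}(\cdummy/\lambda)$ and is therefore $1$ on $\lambda\mathcal{B}$. Hence if $\mathrm{supp}(\mathcal{F}u)\subset\lambda\mathcal{B}$ then $u=\phi_\lambda\ast u$, and consequently $\partial^{\mu}u=(\partial^{\mu}\phi_\lambda)\ast u$. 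I then apply Young's convolution inequality with exponents $1+\tfrac{1}{q}=\tfrac{1}{r}+\tfrac{1}{p}$, which is admissible since $p\leq q$, and compute
\[
\|\partial^{\mu}\phi_\lambda\|_{L^r}=\lambda^{k+d(1-1/r)}\|\partial^{\mu}\phi\|_{L^r}=\lambda^{k+d(1/p-1/q)}\|\partial^{\mu}\phi\|_{L^r},
\]
the last $L^r$-norm being finite and depending only on $k$ and the chosen cutoff. Taking the maximum over $|\mu|=k$ gives the claimed bound.

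For part 2, the key algebraic identity is the multinomial formula $\sum_{|\mu|=k}\binom{k}{\mu}\xi^{2\mu}=|\xi|^{2k}$, which on the support of a radial cutoff $\widetilde{\rho}\in C_c^\infty$ with $\widetilde{\rho}\equiv 1$ on $\mathcal{A}$ and $\mathrm{supp}(\widetilde{\rho})$ contained in a slightly larger annulus away from $0$, lets me define symbols
\[
m_\mu(\xi)\assign \widetilde{\rho}(\xi)\,\overline{(i\xi)^\mu}/|\xi|^{2k},\qquad |\mu|=k,
\]
which are smooth and compactly supported (since $|\xi|$ is bounded away from $0$ on $\mathrm{supp}(\widetilde{\rho})$). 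Their inverse Fourier transforms $g_\mu$ are therefore Schwartz, and by construction $\sum_{|\mu|=k}\binom{k}{\mu}(i\xi)^\mu m_\mu(\xi)=\widetilde{\rho}(\xi)$. Rescaling, the kernels $g_{\mu,\lambda}(x)\assign\lambda^{d-k}g_\mu(\lambda x)$ have Fourier transforms $\lambda^{-k}m_\mu(\cdummy/\lambda)$, and for $u$ with $\mathrm{supp}(\mathcal{F}u)\subset\lambda\mathcal{A}$ (so that $\widetilde{\rho}(\cdummy/\lambda)\equiv 1$ on that support), the identity above yields $u=\sum_{|\mu|=k}\binom{k}{\mu}\,g_{\mu,\lambda}\ast\partial^{\mu}u$. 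Young's inequality combined with the scaling $\|g_{\mu,\lambda}\|_{L^1}=\lambda^{-k}\|g_\mu\|_{L^1}$ gives
\[
\|u\|_{L^p}\leq \lambda^{-k}\sum_{|\mu|=k}\binom{k}{\mu}\|g_\mu\|_{L^1}\,\|\partial^{\mu}u\|_{L^p}\lesssim_k \lambda^{-k}\max_{|\mu|=k}\|\partial^{\mu}u\|_{L^p},
\]
which is the desired reverse bound.

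The only step that requires any thought is the construction of the $m_\mu$ in part 2; once the multinomial identity is noticed, the rest is bookkeeping, and the fact that $\widetilde{\rho}$ lives on an annulus bounded away from the origin is exactly what is needed to divide by $|\xi|^{2k}$ without destroying smoothness or compact support. The scaling exponents follow from the two rules $\|f(\lambda\cdummy)\|_{L^r}=\lambda^{-d/r}\|f\|_{L^r}$ and $\partial^{\mu}[f(\lambda\cdummy)]=\lambda^{|\mu|}(\partial^{\mu}f)(\lambda\cdummy)$, so no further delicacy is involved.
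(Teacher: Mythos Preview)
Your proof is correct and is precisely the standard argument for Bernstein's inequalities; the paper itself does not reproduce a proof but simply refers to \cite{BCD}, Lemma~2.1, where exactly this Fourier-multiplier-plus-Young approach is carried out. In particular your construction for part~2 --- building compactly supported symbols on the annulus via the multinomial identity so as to invert the differentiation --- is the same idea used there, so there is nothing to add.
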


\begin{proof}
  The proof can be found in {\cite{BCD}} Lemma 2.1
\end{proof}

\begin{lemma}
  \label{lemma:symbols}Let $\sigma : \mathbb{Z}^d \rightarrow \mathbb{R}_+$
  such that
  \[ | \sigma (k) |^{\pm 1} \lesssim (| k | + 1)^{\pm \alpha} \]
  for some $\alpha \in \mathbb{R}$ then, for every $p, q \in [1, \infty]$ and
  $s \in \mathbb{R}$, operator $\sigma (\nabla)$ with symbol $\sigma$ is a
  linear homeomorphism from $B^s_{p, q}$ into $B^{s - \alpha}_{p, q}$. 
\end{lemma}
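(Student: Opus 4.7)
The plan is to establish the multiplier bound by working block-by-block with the Littlewood-Paley decomposition and Young's inequality, then deduce the homeomorphism property by applying the same bound to the inverse symbol $1/\sigma$.

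First I would reduce matters to a block-wise estimate. Since $\rho(2^{-j}\cdot)$ is supported in an annulus of size $2^j$ for $j\geq 0$ (and $\chi$ is supported in a ball for $j=-1$), the operator $\sigma(\nabla)$ preserves these Fourier supports and in particular commutes with $\Delta_j$ up to a slight enlargement: writing $\tilde{\rho}_j$ for a fattened version of $\rho(2^{-j}\cdot)$ one has $\Delta_j \sigma(\nabla)f = \sigma(\nabla)\Delta_j f = (K_j \ast \Delta_j f)$ with kernel
\[ K_j(x) \;=\; \sum_{k\in\mathbb{Z}^d} \sigma(k)\, \tilde{\rho}_j(k)\, e^{2\pi i k \cdot x}. \]
The goal becomes proving $\|K_j \ast g\|_{L^p} \lesssim 2^{j\alpha}\|g\|_{L^p}$ whenever $g$ has Fourier support in the annulus/ball on which $\tilde{\rho}_j \equiv 1$.

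The main step is the bound $\|K_j\|_{L^1(\mathbb{T}^d)} \lesssim 2^{j\alpha}$, after which Young's convolution inequality closes the estimate. To prove this I would rescale: set $\tau_j(k) \assign 2^{-j\alpha}\sigma(k)\tilde{\rho}_j(k)$, so that $\tau_j$ is supported in $|k|\sim 2^j$ and, by the hypothesis $|\sigma(k)|\lesssim (|k|+1)^\alpha$, is uniformly bounded. The inverse Fourier transform of $\tau_j$ is then a Schwartz-like function at scale $2^{-j}$ with uniformly controlled $L^1$ norm; equivalently, writing $\tau_j(k) = \tau(2^{-j}k)$ for a bounded smooth $\tau$ (after smooth extension, using the combinatorics of the partition of unity on $\mathbb{Z}^d$), we obtain $\|\mathcal{F}^{-1}(\tau_j)\|_{L^1}\lesssim 1$ uniformly in $j$, whence $\|K_j\|_{L^1}\lesssim 2^{j\alpha}$. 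The genuine subtlety here -- and the part I expect to be the main obstacle -- is that the pointwise hypothesis on $\sigma$ is really being used together with the implicit smoothness that yields uniform $L^1$ bounds on the rescaled kernels; in practice one invokes a Mikhlin-type multiplier statement on the torus with the standard symbol bounds $|\partial^\beta\sigma(k)|\lesssim (|k|+1)^{\alpha-|\beta|}$ that one tacitly assumes (and which hold in all the applications of the lemma made in the paper, such as $\sigma(k)=(|k|^2+m^2)^{\alpha/2}$).

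Granting the block bound, the conclusion follows: for $f\in B^s_{p,q}$,
\[ \|\sigma(\nabla)f\|_{B^{s-\alpha}_{p,q}}^q \;=\; \sum_{j\geq -1} 2^{(s-\alpha)jq}\|\Delta_j\sigma(\nabla)f\|_{L^p}^q \;\lesssim\; \sum_{j\geq -1} 2^{(s-\alpha)jq}\,2^{j\alpha q}\|\tilde{\Delta}_j f\|_{L^p}^q \;\lesssim\; \|f\|_{B^s_{p,q}}^q, \]
with the obvious modification for $q=\infty$. Finally, to show $\sigma(\nabla)$ is a homeomorphism rather than merely continuous, I would observe that the hypothesis $|\sigma(k)|^{-1}\lesssim (|k|+1)^{-\alpha}$ means that $1/\sigma$ satisfies the same bounds as $\sigma$ but with $\alpha$ replaced by $-\alpha$. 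Applying the estimate just proved to $1/\sigma$ yields that $(1/\sigma)(\nabla)$ is bounded from $B^{s-\alpha}_{p,q}$ to $B^s_{p,q}$, and the two operators are inverse to each other since $\sigma\cdot(1/\sigma)=1$ pointwise on $\mathbb{Z}^d$.
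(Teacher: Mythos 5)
Your proposal is correct and follows essentially the same route as the paper, which simply cites the standard dyadic-block multiplier estimate from Bahouri--Chemin--Danchin Chapter 2 (your kernel $L^1$ bound plus Young's inequality is exactly the proof of that cited lemma, and the inversion via $1/\sigma$ is the intended argument for the homeomorphism). You are also right to flag that the pointwise two-sided bound alone only suffices for $p=2$; for general $p$ one needs the tacit Mikhlin-type derivative bounds $|\partial^\beta\sigma(k)|\lesssim(|k|+1)^{\alpha-|\beta|}$, which hold for every symbol the paper actually applies the lemma to, such as $(|k|^2+m^2)^{\alpha/2}$.
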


\begin{proof}
  The proof is an easy application of Lemma \ref{lem:bernstein} (see, e.g.
  {\cite{BCD}} Chapter 2 ). 
\end{proof}

\begin{remark}
  \label{remark:symbols}The hypotheses of Lemma \ref{lemma:symbols} are
  satisfied when $\sigma (k) = (| k |^2 + m^2)^{\alpha}$, for any $\alpha \in
  \mathbb{R}$ and $m > 0$, and thus $\sigma (\nabla) = (- \Delta +
  m^2)^{\alpha}$.
\end{remark}

\begin{lemma}[Besov embedding]
  \label{lem:besovem} Let $\alpha < \beta \in \mathbb{R}$ and $p > r \in [1,
  \infty]$ be such that
  \begin{equation}
    \beta \leqslant \alpha + d \left( \frac{1}{r} - \frac{1}{p} \right),
    \label{eq:Besovembedding}
  \end{equation}
  then we have the following bound
  \[ \| f \|_{B_{p, q}^{\alpha} (\mathbb{T}^d)} \lesssim \| f \|_{B_{r,
     q}^{\beta} (\mathbb{T}^d)} . \]
  If the inequality \eqref{eq:Besovembedding} is strict, the embedding is
  compact.
\end{lemma}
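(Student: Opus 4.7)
The plan is to combine Bernstein's inequality (Lemma~\ref{lem:bernstein}) with an $\ell^q$-summation argument over Littlewood--Paley blocks, and then handle the compactness claim by a truncation plus finite-dimensional precompactness. First, for each block $\Delta_j f$ with $j \geq 0$ (spectrally supported in an annulus of radius $\sim 2^j$), I would invoke Lemma~\ref{lem:bernstein}(1) to obtain
\[
\|\Delta_j f\|_{L^p(\mathbb{T}^d)} \lesssim 2^{jd(1/r - 1/p)} \|\Delta_j f\|_{L^r(\mathbb{T}^d)},
\]
which uses $r \leq p$ so that the Bernstein exponent $d(1/r - 1/p)$ is non-negative. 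Multiplying by $2^{j\alpha}$ and rewriting the right-hand side as $2^{j\beta}\|\Delta_j f\|_{L^r} \cdot 2^{j[\alpha + d(1/r - 1/p) - \beta]}$, the hypothesis on the exponents makes the last scalar factor uniformly bounded in $j$. Taking the $\ell^q$ norm in $j$ then yields $\|f\|_{B^\alpha_{p,q}} \lesssim \|f\|_{B^\beta_{r,q}}$. The low-frequency block $\Delta_{-1} f$ is treated separately via the inclusion $L^r(\mathbb{T}^d) \hookrightarrow L^p(\mathbb{T}^d)$, which holds on the compact torus for all $r \leq p$ by H\"older; this gives $\|\Delta_{-1} f\|_{L^p} \lesssim \|\Delta_{-1} f\|_{L^r}$, consistent with the bound above at $j=-1$.

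For the compactness under strict inequality, I would combine a spectral truncation with a finite-dimensional compactness argument. Set $S_N f = \sum_{j < N} \Delta_j f$. Under strict inequality, the computation above yields a genuine geometric gain: for some $\eta > 0$,
\[
\|f - S_N f\|_{B^\alpha_{p,q}} \lesssim 2^{-N\eta} \|f\|_{B^\beta_{r,q}}.
\]
Given a bounded sequence $\{f_n\} \subset B^\beta_{r,q}$, each $S_N f_n$ lies in the finite-dimensional subspace of $\mathcal{S}(\mathbb{T}^d)$ spanned by the Fourier modes $\{e_k : |k| \lesssim 2^N\}$, where boundedness in any norm implies precompactness in $B^\alpha_{p,q}$. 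A diagonal extraction over $N \to \infty$, combined with the uniform tail estimate above, produces a subsequence Cauchy in $B^\alpha_{p,q}$.

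The main obstacle is purely bookkeeping: one must apply the correct version of Bernstein (the ball variant for $j = -1$ vs.\ the annular variant for $j \geq 0$) and track the exponent relation carefully so that the factor $2^{j[\alpha + d(1/r - 1/p) - \beta]}$ is indeed uniformly controlled in $j$. There is no genuine analytic difficulty; this is a standard torus version of the Besov embedding proved e.g.\ in \cite{BCD}, and the proof uses nothing beyond the tools already collected in the appendix.
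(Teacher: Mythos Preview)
Your argument is the standard one and matches what the paper does, which is simply to cite Proposition~2.71 of \cite{BCD}; the proof there proceeds exactly via Bernstein's inequality block-by-block followed by $\ell^q$-summation, and the compactness statement is handled by spectral truncation as you outline.

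One slip to fix: your treatment of the block $\Delta_{-1}f$ claims the inclusion $L^r(\mathbb{T}^d)\hookrightarrow L^p(\mathbb{T}^d)$ for $r\leqslant p$ ``by H\"older''. On a finite-measure space H\"older goes the other way, $\|g\|_{L^r}\lesssim\|g\|_{L^p}$ for $r\leqslant p$, so this justification is wrong. The correct argument is the one you mention at the end: apply the ball version of Bernstein (Lemma~\ref{lem:bernstein}(1) with $k=0$ and $\lambda\sim1$) to $\Delta_{-1}f$, whose Fourier support lies in a fixed ball, obtaining $\|\Delta_{-1}f\|_{L^p}\lesssim\|\Delta_{-1}f\|_{L^r}$ directly. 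With this correction the proof is complete and essentially identical to the cited reference.
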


\begin{proof}
  The proof can be found in Proposition 2.71 of {\cite{BCD}}.
\end{proof}

\begin{proposition}[Commutator Lemma]
  \label{prop:commu}Given $\alpha \in (0, 1)$, $\beta, \gamma \in \mathbb{R}$
  such that $\beta + \gamma < 0$ and $\alpha + \beta + \gamma > 0$, the
  following trilinear operator $C$ defined for any smooth functions $f, g, h$
  by
  \[ C (f, g, h) \assign (f \prec g) \circ h - f (g \circ h) \]
  can be extended continuously to the product space $H^{\alpha} \times
  \mathcal{C}^{\beta} \times \mathcal{C}^{\gamma}$. Moreover, we have the
  following bound
  \[ || C (f, g, h) ||_{H^{\alpha + \beta + \gamma - \delta}} \lesssim || f
     ||_{H^{\alpha}} || g ||_{\mathcal{C}^{\beta}} || h
     ||_{\mathcal{C}^{\gamma}} \]
  for all $f \in H^{\alpha}$, $g \in \mathcal{C}^{\beta}$ and $h \in
  \mathcal{C}^{\gamma}$, and every $\delta > 0$.
  
  The analogous bound is true if we replace the Sobolev space by a
  H{\"o}lder-Besov space, i.e.
  \[ || C (f, g, h) ||_{\mathcal{C}^{\alpha + \beta + \gamma}} \lesssim || f
     ||_{\mathcal{C}^{\alpha}} || g ||_{\mathcal{C}^{\beta}} || h
     ||_{\mathcal{C}^{\gamma}}, \]
  as was shown in {\cite{GIP2015}}.
\end{proposition}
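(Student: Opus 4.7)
\textbf{Proof plan for Proposition \ref{prop:commu}.}

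The plan is to follow the classical paraproduct commutator estimate of Gubinelli--Imkeller--Perkowski by Littlewood--Paley localization. First I would write out both terms in $C(f,g,h)$ in terms of dyadic blocks. Using that $f \prec g = \sum_{m} S_{m-1} f \, \Delta_m g$ has frequencies essentially supported in the annulus $|\xi|\sim 2^m$ (inherited from $\Delta_m g$), the resonant product becomes
\[
(f\prec g)\circ h \;=\; \sum_{|j-k|\le 1}\Delta_j(f\prec g)\Delta_k h \;\approx\; \sum_{|j-k|\le 1} (S_{j-1} f)\, \Delta_j g\, \Delta_k h,
\]
up to an error that is Fourier-supported away from each dyadic annulus and can be absorbed into the commutator. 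On the other hand, expanding the second term,
\[
f\,(g\circ h) \;=\; \sum_{|j-k|\le 1} f\, \Delta_j g\, \Delta_k h \;=\; \sum_{|j-k|\le 1} (S_{j-1} f)\Delta_j g\,\Delta_k h \;+\; \sum_{|j-k|\le 1}(f-S_{j-1}f)\Delta_j g\,\Delta_k h.
\]
The leading pieces cancel, so
\[
C(f,g,h) \;=\; -\sum_{|j-k|\le 1} (f - S_{j-1} f)\,\Delta_j g \,\Delta_k h \;+\; R(f,g,h),
\]
where $R$ gathers the (harmless) contributions produced by the projections $\Delta_j$ failing to commute with multiplication; these can be handled by the same scheme after a standard paraproduct rearrangement.

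Next I would estimate the main term. Writing $f - S_{j-1}f = \sum_{m\ge j-1}\Delta_m f$, one has
\[
C_j(f,g,h) \;:=\; \sum_{|j-k|\le 1}\sum_{m\ge j-1} \Delta_m f\, \Delta_j g\, \Delta_k h,
\]
a sum of trilinear terms with Fourier support in an annulus of size $\sim 2^m$. For the H\"older case I apply Bernstein together with the bounds $\|\Delta_m f\|_{L^\infty} \lesssim 2^{-m\alpha}\|f\|_{\mathcal{C}^\alpha}$, $\|\Delta_j g\|_{L^\infty}\lesssim 2^{-j\beta}\|g\|_{\mathcal{C}^\beta}$, $\|\Delta_k h\|_{L^\infty}\lesssim 2^{-k\gamma}\|h\|_{\mathcal{C}^\gamma}$. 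The resulting sum is
\[
\sum_{m\ge j-1} 2^{-m\alpha} 2^{-j(\beta+\gamma)} \;\lesssim\; 2^{-j(\alpha+\beta+\gamma)}
\]
precisely because $\alpha>0$ makes the $m$-sum converge and $\alpha+\beta+\gamma>0$ will make the final $\ell^\infty_n$-norm in $n$ (the Littlewood--Paley index of the output) finite; one then localizes $C_j$ at the output frequency $\sim 2^j$ (noting that $\beta+\gamma<0$ forces the output frequency to coincide with the largest of $m,j,k$, hence with $m$ when $m\gg j$, which is why one loses no more than $\delta$ in the Sobolev version). For the Sobolev version one replaces $\|\Delta_m f\|_{L^\infty}$ by $\|\Delta_m f\|_{L^2}$ and pays a Bernstein factor $2^{md/2}$ absorbed by taking $\delta>0$, then sums via Cauchy--Schwarz/Young's convolution inequality in the $\ell^2$-norm of the dyadic coefficients.

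The main technical obstacle, and where I would spend the most care, is the bookkeeping of the non-exact cancellation created by the projections $\Delta_j$ not commuting with products: one must verify that the ``remainder'' $R(f,g,h)$ above enjoys the same bound. The trick here is to rewrite $\Delta_j(S_{m-1}f\,\Delta_m g) - S_{j-1}f\,\Delta_j g\,\mathbf{1}_{m=j}$ as a telescoping sum involving $(S_{m-1} f - S_{j-1}f)$ and a remainder that is already frequency-localized at scale $2^j$, and then apply the same three-index estimate with $|m-j|\le 2$. Summing in $j$ in $\ell^\infty$ for the H\"older estimate, or in $\ell^2$ for the Sobolev estimate (using that $\alpha+\beta+\gamma-\delta>0$ for some $\delta>0$), concludes the proof. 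Alternatively, as noted in the statement, the H\"older case can be quoted verbatim from {\cite{GIP2015}}, and the Sobolev version follows by repeating that argument with $L^p$-based Besov norms and paying the $\delta$-loss through the Besov embedding $B^{s}_{2,2}\hookrightarrow B^{s-\delta}_{2,\infty}$.
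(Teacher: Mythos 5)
The paper does not actually prove this proposition: it is quoted verbatim from Proposition 4.3 of \cite{allez_continuous_2015} (Sobolev version) and from \cite{GIP2015} (H\"older version). Your sketch is, in substance, exactly the argument of those references -- replace $\Delta_j(f\prec g)$ by $S_{j-1}f\,\Delta_j g$ up to a commutator that gains $2^{-j\alpha}$ from the regularity of $f$, cancel the leading terms against $f(g\circ h)$, and estimate the residual $\sum_{|j-k|\le 1}(f-S_{j-1}f)\Delta_j g\,\Delta_k h$ blockwise, using $\alpha+\beta+\gamma>0$ to sum the contributions with $j\ge n$ at output frequency $2^n$ and $\beta+\gamma<0$ to sum those with $j\le n$ -- so I regard it as essentially the same proof. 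Two small corrections: in the Sobolev case there is no Bernstein factor $2^{md/2}$ to pay, since $f$ stays in $L^2$ while $g,h$ stay in $L^\infty$ throughout; the $\delta$-loss comes solely from the fact that the $j\ge n$ summation only yields an $\ell^\infty_n$ (not $\ell^2_n$) bound on the dyadic coefficients, i.e. one lands in $B^{\alpha+\beta+\gamma}_{2,\infty}$ and must use the embedding $B^{s}_{2,\infty}\hookrightarrow B^{s-\delta}_{2,2}=H^{s-\delta}$ (your embedding is written in the reverse, and incorrect, direction). Neither point affects the viability of the plan.
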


\begin{proof}
  The proof can be found in Proposition 4.3 of {\cite{allez_continuous_2015}}.
\end{proof}

\begin{lemma}[Fractional Leibniz rule]
  \label{lem:fracleib} Let $1 < p < \infty$ and $p_1, p_2, p'_1, p'_2$ such
  that
  \[ \frac{1}{p_1} + \frac{1}{p_2} = \frac{1}{p'_1} + \frac{1}{p'_2} =
     \frac{1}{p} . \]
  Then for any $s, \alpha \geqslant 0$ there exists a constant s.t.
  \[ \| \langle \nabla \rangle^s (f g) \|_{L^p} \leqslant C \| \langle \nabla
     \rangle^{s + \alpha} f \|_{L^{p_2}} \| \nabla^{- \alpha} g \|_{L^{p_1}} +
     C \| \langle \nabla \rangle^{- \alpha} f \|_{L^{p'_2}} \| \nabla^{s +
     \alpha} g \|_{L^{p'_1}} . \]
\end{lemma}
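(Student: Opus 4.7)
The plan is to prove the fractional Leibniz rule by applying Bony's paraproduct decomposition
\[
fg = f \prec g + f \succ g + f \circ g
\]
and estimating each of the three pieces after applying $\langle \nabla \rangle^{s}$. Throughout, I use the Littlewood--Paley characterization $\|u\|_{L^{p}} \sim \bigl\|(\sum_{k} |\Delta_{k} u|^{2})^{1/2}\bigr\|_{L^{p}}$ (valid for $1 < p < \infty$), Bernstein's inequalities from Lemma~\ref{lem:bernstein} to redistribute derivatives among factors according to their frequency support, and Hölder's inequality with the prescribed exponent splits.

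For the low-high paraproduct $f \prec g = \sum_{j} S_{j-1} f \, \Delta_{j} g$, each summand is spectrally supported in an annulus of scale $2^{j}$, so $\Delta_{k}(f \prec g)$ reduces to a bounded sum over $j \sim k$. Writing $\|S_{j-1} f\|_{L^{p_{2}'}} \lesssim 2^{j\alpha}\|\langle \nabla \rangle^{-\alpha} f\|_{L^{p_{2}'}}$ (since $S_{j-1}\langle \nabla \rangle^{\alpha}$ is a Fourier multiplier with spectrum in $|\xi| \lesssim 2^{j}$ and uniformly bounded normalized kernel) and $\|\Delta_{j} g\|_{L^{p_{1}'}} \lesssim 2^{-j(s+\alpha)}\|\Delta_{j}\nabla^{s+\alpha} g\|_{L^{p_{1}'}}$ by Bernstein on the annulus, then applying Hölder with the split $(p_{2}', p_{1}')$ and resumming in $k$ via the Fefferman--Stein vector-valued maximal inequality to handle $S_{j-1}$, I obtain
\[
\|\langle \nabla \rangle^{s} (f \prec g)\|_{L^{p}} \lesssim \|\langle \nabla \rangle^{-\alpha} f\|_{L^{p_{2}'}} \, \|\nabla^{s+\alpha} g\|_{L^{p_{1}'}},
\]
which is exactly the second term of the claim. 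The high-low piece $f \succ g$ is treated symmetrically using the split $(p_{2}, p_{1})$ and produces the first term.

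For the resonant piece $f \circ g = \sum_{j} \sum_{|i-j| \leq 1} \Delta_{i} f \, \Delta_{j} g$, each summand has spectrum in a ball of radius $\sim 2^{j}$ rather than an annulus, so $\Delta_{k}(f \circ g)$ must be summed over all $j \geq k - N$ for a fixed constant $N$. Since both factors live at the comparable scale $2^{j}$, Bernstein allows me to place all of the $s+\alpha$ derivatives on either factor, and Hölder produces a bound of the schematic form $\sum_{j \geq k-N} 2^{(k-j)s} \|\Delta_{j}\langle \nabla \rangle^{-\alpha} f\|_{L^{p_{2}'}} \|\Delta_{j}\nabla^{s+\alpha} g\|_{L^{p_{1}'}}$; since $s \geq 0$, the weight $2^{(k-j)s} \leq 1$, and the extra $j$-sum is absorbed via $\ell^{2}$-summability combined with Schur's test, matching either of the two claimed terms.

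The main obstacle is the resonant term, where the lack of annular spectral support rules out direct orthogonality and forces one to trade an $\ell^{1}$-type summation against the geometric decay coming from $s \geq 0$; the paraproduct pieces also require the Fefferman--Stein inequality to control the maximal function generated by $S_{j-1}$ uniformly in $j$. A secondary technical point is the zero-frequency mode on the torus, where $\nabla^{-\alpha}$ is singular at $\xi = 0$: one interprets it as the Fourier multiplier $|k|^{-\alpha}$ for $k \neq 0$ and zero on the mean, and the contribution of the mean-zero component to $fg$ is then controlled directly by Hölder's inequality as $\|f\|_{L^{p_{2}}} \|g\|_{L^{p_{1}}}$, which is dominated by either of the claimed terms since $s, \alpha \geq 0$.
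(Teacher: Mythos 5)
The paper does not actually prove this lemma: its ``proof'' is a one-line citation of Theorem 1.4 of Gulisashvili--Kon. Your paraproduct argument is therefore a genuinely different, self-contained route, and it is the standard modern one for Kato--Ponce-type inequalities: Bony decomposition, Bernstein to shift $\pm\alpha$ derivatives from the low-frequency factor onto the high-frequency one, square functions and Fefferman--Stein to resum. Your treatment of the two paraproduct pieces is correct, and interpreting $\nabla^{-\alpha}$ on the torus as the multiplier $|k|^{-\alpha}$ on nonzero modes (with the mean handled separately by H\"older) is the right fix for the zero frequency.

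There are two soft spots, both in the resonant term. First, as written you bound $\|\Delta_k\langle\nabla\rangle^s(f\circ g)\|$ by $\sum_{j\geq k-N}2^{(k-j)s}\|\Delta_j\langle\nabla\rangle^{-\alpha}f\|_{L^{p_2'}}\|\Delta_j\nabla^{s+\alpha}g\|_{L^{p_1'}}$, i.e.\ you take $L^q$ norms of the individual blocks before summing. After the Schur step this leaves $\sum_j a_jb_j$ with $a_j=\|\Delta_j\langle\nabla\rangle^{-\alpha}f\|_{L^{p_2'}}$ and $b_j=\|\Delta_j\nabla^{s+\alpha}g\|_{L^{p_1'}}$, and controlling this by the product of the two $L^q$ norms needs $\ell^2$ summability of these sequences, which $L^q\hookrightarrow B^0_{q,2}$ provides only for $q\leq 2$; for general exponents the step fails. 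The repair is to do for the resonant term exactly what you did for the paraproducts: stay pointwise, write $2^{js}|\Delta_jf||\widetilde{\Delta}_jg|\leq\sup_j\bigl(2^{-j\alpha}|\Delta_jf|\bigr)\cdot 2^{j(s+\alpha)}|\widetilde{\Delta}_jg|$, run Young's inequality in $j$ inside the square function, and apply H\"older once at the end, using $\bigl\|\sup_j2^{-j\alpha}|\Delta_jf|\bigr\|_{L^{p_2'}}\lesssim\|M(\langle\nabla\rangle^{-\alpha}f)\|_{L^{p_2'}}$. Second, at $s=0$ the kernel $2^{(k-j)s}\mathbf{1}_{j\geq k-N}$ has no decay and Schur's test does not apply; the $s=0$ resonant piece needs a separate argument, e.g.\ Cauchy--Schwarz in $j$ pairing $2^{-j\alpha}|\Delta_jf|$ against $2^{j\alpha}|\widetilde{\Delta}_jg|$, or simply $f\circ g=fg-f\prec g-f\succ g$ together with H\"older and the paraproduct bounds. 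With these repairs the argument closes; note also that the square-function characterizations you invoke implicitly require the intermediate exponents to be finite and greater than $1$, as in the cited reference.
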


\begin{proof}
  The proof can be found in Theorem 1.4 of {\cite{gulisashvili1996exact}}.
\end{proof}

In the rest of the appendix, we discuss some properties of the operator $J_s$
introduced in Section \ref{section:coupling}, see Remark
\ref{notation:omegaprime}. Hereafter, we require a more explicit form of the
operator $J_s$. Let $\rho : \mathbb{R}^2 \rightarrow \mathbb{R}_+$ be a bump
function which is identically $1$ in the ball of center $0$ and radius $1 / 2$
and it has compact support in the ball of center $0$ and radius $1$. We
suppose also that $\rho$ is radially symmetric of the form $\rho (| x |^2)$.
We write, for $t \geqslant 0$, $\rho_t (x) = \rho \left( \frac{| x |^2}{t^2}
\right)$. It is clear that the operator
\begin{equation}
  J_t \assign \sigma_t (- \Delta) (- \Delta + m^2)^{- 1 / 2} \assign \left(
  \frac{\Delta}{t^3} \rho' \left( \frac{- \Delta}{t^2} \right) \right)^{1 / 2}
  (- \Delta + m^2)^{- 1 / 2} \label{eq:Js}
\end{equation}
satisfies the condition of Remark \ref{notation:omegaprime}.

\begin{proposition}
  \label{proposition:Js}Let $J_s$ be defined by equation \eqref{eq:Js} then
  for every $s \in \mathbb{R}_+$, $p, q \in [1, + \infty]$ and $0 < m
  \leqslant 1$
  \[ \| J_s \|_{\mathcal{L} (B^s_{p, q}, B^{s + m}_{p, q})} \lesssim (1 +
     s)^{- \frac{1}{2} - 1 + m} . \]
  where the constant in the symbol $\lesssim$ do not depend on $s \in
  \mathbb{R}_+$.
\end{proposition}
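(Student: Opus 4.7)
The plan is to exploit the fact that $J_s$ is a Fourier multiplier whose symbol is frequency-localized in an annulus of scale $\sim s$, and then bound its convolution kernel by a scaling argument. First I would observe that, by the definition $\int_0^t\sigma_s^2\,\mathrm{d}s=\rho_t$ with $\rho_t(k)=\rho(|k|^2/t^2)$ and $\rho$ supported in $|\cdot|\leqslant 1$, equal to $1$ on $|\cdot|\leqslant 1/2$, differentiation in $t$ shows that $\sigma_s(k)^2 = -2|k|^2\rho'(|k|^2/s^2)/s^3$ is smooth, non-negative and supported in the annulus $|k|\in[s/\sqrt{2},s]$ (assuming $\rho$ is chosen so that $-\rho'$ is a smooth square); consequently $\sigma_s$ itself is a smooth bump supported on this annulus with
\[
|\partial_k^\alpha\sigma_s(k)|\lesssim s^{-1/2-|\alpha|}
\]
uniformly for $s\geqslant 1$. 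Combining this with $(|k|^2+\mathfrak{m}^2)^{-1/2}\sim s^{-1}$ on that annulus, the full symbol $M_s(k):=\sigma_s(k)(|k|^2+\mathfrak{m}^2)^{-1/2}$ satisfies $|\partial_k^\alpha M_s(k)|\lesssim s^{-3/2-|\alpha|}$ on its support.

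The key ingredient is then a standard kernel estimate: if $M$ is smooth, supported in $|k|\sim s$, and $|\partial^\alpha M(k)|\lesssim A\, s^{-|\alpha|}$ for $|\alpha|\leqslant N$ large enough, then $\|\mathcal{F}^{-1}M\|_{L^1}\lesssim A$. This follows from the substitution $\widetilde{M}(\eta):=A^{-1}M(s\eta)$, which is a bounded smooth function on a fixed annulus, so that $\|\mathcal{F}^{-1}\widetilde{M}\|_{L^1}\lesssim 1$, and a scaling argument recovers $\|\mathcal{F}^{-1}M\|_{L^1}\lesssim A$. Applying this with $A=s^{-3/2}$ and Young's convolution inequality yields
\[
\|J_s\,\Delta_i u\|_{L^p}\lesssim s^{-3/2}\|\Delta_i u\|_{L^p}\qquad\text{for every }i\text{ with }2^i\sim s.
\]

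Next I would assemble the Besov bound. Since $\widehat{J_s u}$ is supported in $|k|\sim s$, only a bounded (independent of $s$) number of blocks $\Delta_j(J_su)$ with $2^j\sim s$ are nonzero, and each equals $\sum_{|i-j|\leqslant 1}\Delta_j(J_s\Delta_i u)$. For such a $j$ one has $2^{(\sigma+m)j}\sim s^{\sigma+m}$, and the trivial bound $\|\Delta_i u\|_{L^p}\leqslant 2^{-i\sigma}\|u\|_{B^{\sigma}_{p,q}}\sim s^{-\sigma}\|u\|_{B^{\sigma}_{p,q}}$ gives
\[
\|J_s u\|_{B^{\sigma+m}_{p,q}}\lesssim s^{\sigma+m}\cdot s^{-3/2}\cdot s^{-\sigma}\,\|u\|_{B^{\sigma}_{p,q}}=s^{m-3/2}\|u\|_{B^{\sigma}_{p,q}},
\]
which is exactly the desired exponent $-\tfrac12-1+m$. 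For the regime $s\leqslant 1$ the symbol $M_s$ is supported in a fixed ball and is simply bounded (with bounded derivatives) so that $\|J_s\|_{\mathcal{L}(B^{\sigma}_{p,q},B^{\sigma+m}_{p,q})}\lesssim 1$, yielding the $(1+s)$-version of the estimate.

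The main technical point, and the only step requiring care, is the verification that $\sigma_s$ is genuinely smooth rather than merely Hölder at the edge of its support: one needs $-\rho'$ to be a smooth square, which can be arranged by the choice of $\rho$, and one must check that the scaling of its derivatives $|\partial^\alpha\sigma_s|\lesssim s^{-1/2-|\alpha|}$ is uniform in $s$. Once this is established, the remainder of the argument is a routine application of the $L^1$-kernel bound for smooth multipliers supported on a single dyadic scale, combined with the fact that Fourier localization of $J_su$ forces only $O(1)$ Littlewood--Paley blocks to contribute.
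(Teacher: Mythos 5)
Your argument is correct and is essentially the fleshed-out version of the paper's one-line proof, which simply cites Bernstein's inequalities and the symbol lemma: the symbol of $J_s$ is supported in an annulus $|k|\sim s$ with size $\lesssim s^{-1/2}\cdot s^{-1}$, so only $O(1)$ Littlewood--Paley blocks contribute, the $L^1$-kernel scaling bound gives the $L^p$-multiplier norm $\lesssim s^{-3/2}$, and trading $m$ derivatives at frequency $\sim s$ costs $s^{m}$. Your observation that one must arrange $-\rho'$ to be the square of a smooth function (so that $\sigma_s$ is genuinely smooth and the kernel estimate applies for $p\neq 2$) is a legitimate point that the paper's ``it is clear that'' glosses over, and is consistent with the paper's standing assumption in its Notation that $\sigma_\cdot$ is smooth.
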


\begin{proof}
  This is a standard application of the regularization properties of the
  Laplacian (see Lemma \ref{lem:bernstein}, Lemma \ref{lemma:symbols} and
  Remark \ref{remark:symbols}).
\end{proof}

A consequence of Proposition \ref{proposition:Js} is the following:

\begin{proposition}
  \label{proposition:commutatorJs1}Suppose that $f_1 \in H^1$, $f_2, f_3 \in
  \mathcal{C}^{- 1 - \delta}$ (where $\delta < \frac{1}{6}$) then we have
  there is $0 < \eta (\delta) < 1$ such that
  \begin{eqnarray}
    &  & \left| \int_{\mathbb{T}^2} (J_s (f_1 \prec f_2) J_s (f_3 \prec f_4)
    - (J_s (f_2) \circ J_s (f_4)) f_1 f_3) \mathd x \right| \nonumber\\
    & \lesssim & (1 + s)^{- 1 - \eta (\delta)} \| f_1 \|_{H^{1 / 2 - \delta}}
    \| f_3 \|_{H^{1 / 2 - \delta}} \| f_2 \|_{\mathcal{C}^{- 1 - \delta}} \|
    f_3 \|_{\mathcal{C}^{- 1 - \delta}} ;  \label{eq:Nikolay1}
  \end{eqnarray}
  \begin{eqnarray}
    &  & \left| \int_{\mathbb{T}^2} (J_s (f_1 \prec f_2) J_s (f_3) - (J_s
    (f_2) \circ J_s (f_3)) f_1) \mathd x \right| \nonumber\\
    & \lesssim & (1 + s)^{- 2 + \eta (\delta)} \| f_1 \|_{H^1} \| f_2
    \|_{\mathcal{C}^{- 1 - \delta}} \| f_3 \|_{\mathcal{C}^{- 1 - \delta}} 
    \label{eq:Nikolay2}
  \end{eqnarray}
\end{proposition}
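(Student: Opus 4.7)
The plan is to pass to Fourier variables and reduce both estimates to a comparison of the Fourier symbol $K_s(k):=\sigma_s(|k|^2)(|k|^2+m^2)^{-1/2}$ of $J_s$ at nearby frequencies, exploiting two features: $K_s$ is supported on the annulus $\{|k|\sim s\}$ with $|K_s(k)|\lesssim(1+s)^{-3/2}$, and it is smooth there with $|\nabla^j K_s(k)|\lesssim s^{-j}|K_s(k)|$.

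First I would apply Plancherel to both sides of \eqref{eq:Nikolay2}. After matching the sums over $(k_1,k_2)\in(\mathbb{Z}^2)^2$ one obtains
\[
\int J_s(f_1\prec f_2)J_s(f_3)\,\mathd x-\int (J_sf_2\circ J_sf_3)f_1\,\mathd x=\sum_{k_1,k_2}K_s(k_1+k_2)\,E(k_1,k_2)\,\hat f_1(k_1)\hat f_2(k_2)\overline{\hat f_3(k_1+k_2)},
\]
where $E(k_1,k_2):=K_s(k_1+k_2)\chi(k_1,k_2)-K_s(k_2)\psi(k_2,k_1+k_2)$, with $\chi$ the paraproduct indicator $|k_1|\ll|k_2|$ and $\psi$ the resonant indicator $|k_2|\sim|k_1+k_2|$. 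On the support of $\chi$ one has $|k_1+k_2|\sim|k_2|\sim s$, so $\chi\equiv\psi$ away from a thin boundary layer; on the main region a Taylor expansion together with the smoothness of $K_s$ yields $|K_s(k_1+k_2)-K_s(k_2)|\lesssim(|k_1|/s)|K_s(k_2)|$, hence $|E(k_1,k_2)|\lesssim(|k_1|/s)|K_s(k_2)|$.

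Converting back to real space, the main error takes the form $s^{-1}\int(|\nabla|f_1)(J_sf_2\cdot J_sf_3)\,\mathd x$, bounded by H\"older by $s^{-1}\|\nabla f_1\|_{L^2}\|J_sf_2\cdot J_sf_3\|_{L^2}$. Using Proposition~\ref{proposition:Js} together with the frequency-localization of $J_sf$ at $|k|\sim s$, one deduces both $\|J_sf\|_{L^\infty}\lesssim(1+s)^{-1/2+\delta}\|f\|_{\mathcal{C}^{-1-\delta}}$ and $\|J_sf\|_{L^2}\lesssim(1+s)^{-1/2+\delta}\|f\|_{\mathcal{C}^{-1-\delta}}$, so $\|J_sf_2\cdot J_sf_3\|_{L^2}\lesssim(1+s)^{-1+2\delta}\|f_2\|_{\mathcal{C}^{-1-\delta}}\|f_3\|_{\mathcal{C}^{-1-\delta}}$. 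Multiplying by $s^{-1}$ gives the rate $(1+s)^{-2+2\delta}$, establishing \eqref{eq:Nikolay2}. The thin boundary contribution where $\chi\neq\psi$ is treated separately using its narrowness in Fourier and produces a term of the same or lower order.

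Estimate \eqref{eq:Nikolay1} follows by the same scheme: one now compares $K_s(k_1+k_2)K_s(k_3+k_4)$ with $K_s(k_2)K_s(k_4)$ under the constraint $k_1+k_2+k_3+k_4=0$, and Taylor expansion on both factors yields an $O((|k_1|+|k_3|)/s)$ correction; distributing the derivatives via H\"older and using the two-dimensional Sobolev embedding $H^{1/2-\delta}\hookrightarrow L^{4/(1+2\delta)}$ to accommodate the reduced regularity of $f_1,f_3$ produces the claimed rate $(1+s)^{-1-\eta(\delta)}$. The main obstacle will be a careful treatment of the boundary region $\chi\neq\psi$, where the Taylor commutator gain is unavailable and one must exploit the thinness of this Fourier region directly; a secondary concern is the precise H\"older/interpolation bookkeeping in \eqref{eq:Nikolay1} needed to close the argument with only $H^{1/2-\delta}$-regularity on $f_1,f_3$, which is exactly what fixes the precise value of $\eta(\delta)$.
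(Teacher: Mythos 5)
Your strategy is sound and the exponents check out, but note that the paper itself does not prove this proposition: it simply cites Proposition 11 of \cite{BG2022} for \eqref{eq:Nikolay1} and asserts that \eqref{eq:Nikolay2} follows by the same method. So your Fourier-side commutator argument is a genuinely self-contained alternative. Its content is exactly what one would expect such a proof to contain: the symbol $K_s(k)=\sigma_s(|k|^2)(|k|^2+m^2)^{-1/2}$ of $J_s$ is supported on $\{|k|\sim s\}$ with $|K_s|\lesssim s^{-3/2}$ and $|\nabla K_s|\lesssim s^{-5/2}$ (both bounds appear explicitly in the paper's proof of Lemma \ref{lemma:inequalitydoubleexpectation1}), so on the paraproduct region $|k_1|\ll|k_2|$ a first-order Taylor expansion gains $|k_1|/s$, which is absorbed by the $H^1$ (resp.\ $H^{1/2-\delta}$, after interpolating the gain to $(|k_1|/s)^{1/2-\delta}$) regularity of $f_1$; combined with $\|J_sf\|_{L^\infty},\|J_sf\|_{L^2}\lesssim(1+s)^{-1/2+\delta}\|f\|_{\mathcal{C}^{-1-\delta}}$ this yields $\eta(\delta)=2\delta$ for \eqref{eq:Nikolay2} and $\eta(\delta)=\tfrac12-3\delta$ for \eqref{eq:Nikolay1}, which also explains the hypothesis $\delta<\tfrac16$. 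What your route buys is transparency about where each power of $s$ comes from; what the citation buys the authors is not having to redo the boundary-region bookkeeping.

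Two points of caution on the details you defer. First, the pointwise bound $|E(k_1,k_2)|\lesssim(|k_1|/s)|K_s(k_2)|$ is not literally true: near the edge of the annulus $K_s(k_2)$ may vanish while $K_s(k_1+k_2)$ does not, so you should use the uniform bound $|k_1|\sup|\nabla K_s|\lesssim|k_1|s^{-5/2}$ on a slightly enlarged annulus instead. Second, in the region where $\chi\neq\psi$ (necessarily $|k_1|\gtrsim|k_2|\sim s$, so it is not ``thin'' so much as it forces $f_1$ to frequency $\sim s$) the crude $\ell^1$--$\ell^2$--$\ell^2$ Young bound on the trilinear sum only gives $s^{-1+2\delta}$ and does \emph{not} close \eqref{eq:Nikolay2}; you must instead pair $\widehat{f_1}$ restricted to $|k_1|\sim s$ (of $\ell^2$-size $s^{-1}\|f_1\|_{H^1}$) against the full product $J_sf_2\,J_sf_3$ in $L^2$, estimated as $\|J_sf_2\|_{L^\infty}\|J_sf_3\|_{L^2}\lesssim s^{-1+2\delta}$, to recover $s^{-2+2\delta}$. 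With that route the cancellation of the low modes of $f_1$ between the two terms (e.g.\ the $\hat f_1(0)$ contribution, which is of size $s^{-1+2\delta}$ in each term separately) is handled by the commutator structure, and the high modes by the Sobolev decay; the argument then closes.
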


\begin{proof}
  Inequality \eqref{eq:Nikolay1} is proved in Proposition 11 of
  {\cite{BG2022}}, meanwhile \eqref{eq:Nikolay2} can be proved in a similar
  way with slight modifications.
\end{proof}

\section{An alternative proof of the coupling existence}\label{app:AGFFdiff}

We give a shorter, alternative (but less general) proof of the existence of a coupling between
the Gaussian measures $\mu^{\mathbb{H}^{\omega, K}}$ (i.e. the
\tmtextit{Anderson free field}) and the Gaussian free field $\mu^{- \Delta +
K}$ from Theorem \ref{theorem:coupling}, see Section \ref{sec:prel} for the
relevant definitions.

\begin{proposition}
  \label{prop:AGFFdiff}Consider the Gaussian measures
  $\mu^{\mathbb{H}^{\omega, K}}$and $\mu^{- \Delta + K}$ from Section
  \ref{sec:prel}. Then we may write elements in the support of these measures
  as
  \begin{eqnarray*}
    (\mathbb{H}^{\omega, K})^{- \frac{1}{2}} \psi & \in & \tmop{supp}
    (\mu^{\mathbb{H}^{\omega, K}})\\
    (- \Delta + K)^{- \frac{1}{2}} \psi & \in & \tmop{supp} (\mu^{- \Delta +
    K})\\
    \text{where} &  & \\
    \psi & \in & \tmop{supp} \left( \mu^{\mathbb{I}_{L^2}} \right),
  \end{eqnarray*}
  where $\mu^{\mathbb{I}_{L^2}}$ is the white noise measure defined in Section
  \ref{sec:Wick}.
  
  Then we have that for any $\delta > 0$
  \[ \int \left\| \left( (\mathbb{H}^{\omega, K})^{- \frac{1}{2}} - (- \Delta
     + K)^{- \frac{1}{2}} \right) \psi \right\|^2_{H^{1 - \delta}} d
     \mu^{\mathbb{I}_{L^2}} (\psi) < \infty . \]
  In other words, we can see the Anderson free field $\mu^{\mathbb{H}^{\omega,
  K}}$as a random shift (of regularity $H^{1 - \delta}$) of the GFF.
\end{proposition}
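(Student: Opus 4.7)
The plan is to reduce the claim to a Hilbert--Schmidt bound for the difference of two inverse square roots, and then to exploit the integral representation of $X^{-1/2}$ together with the second resolvent identity at the regularized level, where the only singular ingredient is the Wick square of the noise controlled by Definition \ref{def:noisespace}.

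\textbf{Reduction and resolvent expansion.} Since $\psi$ has law $\mu^{\mathbb{I}_{L^2}}$, the Gaussian isometry reformulates the claim as
\[
\int \left\| (A-B)\psi \right\|^2_{H^{1-\delta}}\, d\mu^{\mathbb{I}_{L^2}}(\psi) = \left\|(1-\Delta)^{(1-\delta)/2}(A-B)\right\|^2_{\mathrm{HS}(L^2)},
\]
with $A = (\mathbb{H}^{\omega, K})^{-1/2}$, $B = (-\Delta + K)^{-1/2}$ and $M := (1-\Delta)^{(1-\delta)/2}$. Using the Balakrishnan formula $X^{-1/2}=\pi^{-1}\int_0^\infty t^{-1/2}(X+t)^{-1}\,dt$ valid for strictly positive self-adjoint $X$, together with the second resolvent identity applied at the regularized level $\mathbb{H}^{\omega,K}_\varepsilon = -\Delta + K + V_\varepsilon$ with $V_\varepsilon = \xi_\varepsilon - c_\varepsilon + (K(\omega)+1-K)$, one writes
\[
A_\varepsilon - B = -\frac{1}{\pi}\int_0^\infty t^{-1/2}\, \mathcal{R}^\varepsilon_t\, V_\varepsilon\, R_t \, dt,
\]
where $R_t := (-\Delta+K+t)^{-1}$ and $\mathcal{R}^\varepsilon_t := (\mathbb{H}^{\omega,K}_\varepsilon + t)^{-1}$.

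\textbf{Hilbert--Schmidt estimate with singular cancellation.} Writing $\|M(A_\varepsilon-B)\|^2_{\mathrm{HS}}$ as a double $t,s$-integral of $\operatorname{Tr}[M\mathcal{R}^\varepsilon_t V_\varepsilon R_t R_s V_\varepsilon \mathcal{R}^\varepsilon_s M]$ and using the spectral bound
\[
\|M\mathcal{R}^\varepsilon_t\|_{L^2 \to L^2} \lesssim (1+t)^{-(1+\delta)/2},
\]
which follows from the norm equivalence $\|\cdot\|_{H^{1-\delta}} \approx \|(\mathbb{H}^{\omega,K}_\varepsilon)^{(1-\delta)/2} \cdot\|_{L^2}$ of Theorem \ref{thm:AHprop}, together with the Sobolev-smoothing of $R_t$, one reduces everything to a quadratic estimate in $V_\varepsilon$ against smooth kernels of the form $R_t^2(z_1,z_2)\cdot (\mathcal{R}^\varepsilon_t M^2\mathcal{R}^\varepsilon_t)(z_1,z_2)$. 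Expanding $V_\varepsilon = (\xi_\varepsilon - c_\varepsilon)+\mathrm{const}$ under the trace produces the resonant product $\xi_\varepsilon\circ(-\Delta+K)^{-1}\xi_\varepsilon$, and the constant $c_\varepsilon$ is precisely the counter-term cancelling its pointwise divergence (Definition \ref{def:noisespace}), leaving the enhanced-noise component $\Xi^2\in\mathcal{C}^{-\delta}$ whose norm is $\mathbb{P}$-a.s.\ finite. This yields a bound on $\|M(A_\varepsilon-B)\|_{\mathrm{HS}}$ uniform in $\varepsilon$ and depending polynomially on the $\mathcal{X}^\alpha$-norm of $\Xi$. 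Finally, the norm-resolvent convergence $\mathbb{H}^{\omega,K}_\varepsilon \to \mathbb{H}^{\omega,K}$ (Proposition \ref{prop:normconv}) combined with Fatou's lemma applied to the HS norm transfers the bound to $\|M(A-B)\|_{\mathrm{HS}}$ in the limit.

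\textbf{Main obstacle.} The technical heart is the rigorous extraction of the Wick cancellation from the trace: as $\varepsilon\to 0$ the product $V_\varepsilon(z_1)V_\varepsilon(z_2)$ integrated against the logarithmically-singular 2D kernel $R_t^2(z_1,z_2)$ is divergent on the diagonal, and only the precise tuning of $c_\varepsilon$ produces the a.s.\ finite limit $\Xi^2\in\mathcal{C}^{-\delta}$. Checking that the resulting $(t,s)$-weighted integral is convergent (so that the HS norm is finite uniformly in $\varepsilon$), and then identifying the limiting object with $\|M(A-B)\|_{\mathrm{HS}}^2$, is the core of the argument; it runs in parallel with the stochastic estimates of Section \ref{section:stochasticestimates} but in a purely operator-theoretic setting, which is precisely why this shortcut is restricted to the Gaussian case.
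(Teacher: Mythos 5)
Your reduction of the claim to the Hilbert--Schmidt bound $\bigl\|(1-\Delta)^{(1-\delta)/2}(A-B)\bigr\|_{\mathrm{HS}}<\infty$ is correct and is essentially the same starting point as the paper (which computes this quantity in the eigenbasis of $\mathbb{H}^{\omega,K}$), and replacing the heat-semigroup representation of $X^{-1/2}$ by the Balakrishnan resolvent formula is harmless. The gap is in the step you yourself flag as the ``main obstacle'': the first-order resolvent expansion $A_\varepsilon-B=-\pi^{-1}\int_0^\infty t^{-1/2}\mathcal{R}^\varepsilon_t V_\varepsilon R_t\,\mathd t$ with $V_\varepsilon=\xi_\varepsilon-c_\varepsilon+\mathrm{const}$ does not isolate a perturbation that converges in any operator topology. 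Any factorization through $\|V_\varepsilon R_t\|_{\mathrm{HS}}$ is useless, since $\|\xi_\varepsilon R_t\|_{\mathrm{HS}}\sim\|\xi_\varepsilon\|_{L^2}\sim\varepsilon^{-1}$ and $c_\varepsilon\|R_t\|_{\mathrm{HS}}\sim\log(1/\varepsilon)$ both blow up; and the cancellation you invoke cannot occur at first order in $V_\varepsilon$: the resonant product $\xi_\varepsilon\circ(-\Delta+K)^{-1}\xi_\varepsilon$, whose expectation $c_\varepsilon$ is designed to subtract, arises only when two copies of $\xi_\varepsilon$ are separated by a \emph{single} $(-\Delta+K+t)^{-1}$, i.e.\ only after a second iteration of the resolvent identity. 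In your trace $\operatorname{Tr}[M\mathcal{R}^\varepsilon_tV_\varepsilon R_tR_sV_\varepsilon\mathcal{R}^\varepsilon_sM]$ the two copies of $V_\varepsilon$ are separated by $R_tR_s$ on one side and by $\mathcal{R}^\varepsilon_tM^2\mathcal{R}^\varepsilon_s$ on the other; the latter still depends nonlinearly on $\xi_\varepsilon$, so the expression is not a clean second-chaos object, and the divergences produced by the $c_\varepsilon^2$, $c_\varepsilon\otimes\xi_\varepsilon$ and $\xi_\varepsilon\otimes\xi_\varepsilon$ contributions are not matched by the specific constant $c_\varepsilon=\mathbb{E}[\xi_\varepsilon\circ(1-\Delta)^{-1}\xi_\varepsilon]$ in the way you assert. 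Iterating the resolvent identity far enough to expose the genuine cancellation amounts to redoing the paracontrolled construction of $\mathbb{H}^{\omega,K}$ inside the $t$-integral.

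This is exactly what the paper's proof short-circuits: it conjugates with the paracontrolled map $\Gamma$ of \eqref{def:Gamma}, for which $\mathbb{H}^{\omega,K}\Gamma-(K-\Delta)$ is a genuinely well-defined bounded operator from $H^{1+\kappa+\varepsilon}$ to $H^{\kappa}$ (estimate \eqref{eqn:diffH}), i.e.\ the renormalization has already been absorbed into the operator and the perturbation loses only $1+$ derivatives instead of $2$. The Duhamel formula for $e^{-t\mathbb{H}^{\omega,K}}\Gamma-e^{-t(K-\Delta)}$ then closes, yielding the summable bound $\|h_n\|_{H^\alpha}\lesssim\lambda_n^{-(1+\kappa)/2}$ over the eigenbasis via the Weyl law. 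To repair your argument you would need to insert $\Gamma$ (or an equivalent paracontrolled decomposition of $\mathcal{R}^\varepsilon_t$) before estimating; at that point the two proofs coincide in substance.
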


\begin{proof}
  This is somewhat similar to what was done in Sections 4 and 5 of
  {\cite{bailleul2022analysis}} but we adapt it to our notation and setting.
  
  We collect a few properties from previous sections. Firstly, from
  \eqref{eqn:Heigen}, we have an orthonormal eigenbasis of
  $\mathbb{H}^{\omega, K}$ (suppressing the $\omega$ for readability)
  \[ \mathbb{H}^{\omega, K} f_n = \lambda_n f_n, \ f_n \in \mathcal{D}
     (\mathbb{H}^{\omega .K}), \lambda_n \sim n \text{ as } n \rightarrow
     \infty . \]
  Thus we may write $\psi \in \tmop{supp} \left( \mu^{\mathbb{I}_{L^2}}
  \right) \subset \mathcal{C}^{- 1 - \varepsilon} \ \forall \varepsilon > 0$ as
  a random series
  \begin{equation}
    \psi = \underset{n \in \mathbb{N}}{\sum} g_n \underset{}{} f_n 
    \text{where the } g_n \sim \mathcal{N} (0, 1)  \text{ are i.i.d Gaussians}
    \label{eqn:wnsupp}
  \end{equation}
  then $(\mathbb{H}^{\omega, K})^{- \frac{1}{2}} \psi \in \tmop{supp}
  (\mu^{\mathbb{H}^{\omega, K}})$ and $(- \Delta + K)^{- \frac{1}{2}} \psi \in
  \tmop{supp} (\mu^{- \Delta + K})$. We further recall the following formula
  from functional calculus
  \begin{equation}
    (\mathbb{H}^{\omega, K})^{- \frac{1}{2}} = c \int^{\infty}_0 t^{-
    \frac{1}{2}} e^{- t\mathbb{H}^{\omega, K}} d t, \label{eqn:funccal}
  \end{equation}
  see Theorem 35 in {\cite{bailleul2022analysis}} and the discussion
  thereafter. Moreover, from the definition of $\Gamma$ in \eqref{def:Gamma}
  and its inverse in \eqref{def:Phi} together with the paraproduct bounds from
  Lemma \ref{lem:para}, one can readily check that the following regularizing
  property holds
  \begin{equation}
    (\Gamma^{\pm 1} - 1) : H^{- \sigma} \rightarrow H^{1 - \sigma -
    \varepsilon} \tmop{bounded } \forall \sigma, \varepsilon > 0.
    \label{Gamma-id}
  \end{equation}
  Moreover, we have using \eqref{def:Homega} and the paraproduct estimates
  \begin{equation}
    \| (\mathbb{H}^{\omega, K} \Gamma - (K - \Delta)) v \|_{H^{\kappa}}
    \lesssim \| v \|_{H^{1 + \kappa + \varepsilon}}\ \forall \kappa,
    \varepsilon > 0. \label{eqn:diffH}
  \end{equation}
  Lastly, we use that Theorem \ref{thm:AHprop} implies that
  \begin{equation}
    \text{$(\mathbb{H}^{\omega, K})^{- \frac{1}{2}} : H^{- \sigma} \rightarrow
    H^{1 - \sigma}$ for $\sigma \in (0, 1)$} . \label{H-1/2bound}
  \end{equation}
  We make a computation where we start with \eqref{eqn:funccal} and
  artificially insert the $\Gamma$ in order to use the fact that
  $\mathbb{H}^{\omega, K} \Gamma$ is a lower order perturbation of the
  Laplacian. In order to isolate the problematic term, we adopt the brief
  notation $\mathcal{O} (1 -)$ to mean a term which is bounded from
  $\mathcal{C}^{- 1 - \varepsilon} \rightarrow H^{1 - \varepsilon - \kappa}$
  for $\varepsilon, \kappa > 0$ i.e. for which one does not use the Gaussian
  property.
  
  We compute
  \begin{eqnarray*}
    (\mathbb{H}^{\omega, K})^{- \frac{1}{2}} & \overset{\eqref{Gamma-id},
    \eqref{H-1/2bound}}{=} & (\mathbb{H}^{\omega, K})^{- \frac{1}{2}} \Gamma +
    \underset{\mathcal{O} (1 -)}{\underbrace{(\mathbb{H}^{\omega, K})^{-
    \frac{1}{2}} (1 - \Gamma)}}\\
    & \overset{\text{\eqref{eqn:funccal}}}{=} & c \int^{\infty}_0 t^{-
    \frac{1}{2}} e^{- t\mathbb{H}^{\omega, K}} \Gamma  d t +\mathcal{O} (1
    -)\\
    & = & c \int^{\infty}_0 t^{- \frac{1}{2}} e^{- t (K - \Delta)} d t \Gamma
    + c \int^{\infty}_0 t^{- \frac{1}{2}} (e^{- t\mathbb{H}^{\omega, K}}
    \Gamma - e^{- t (K - \Delta)}) d t +\mathcal{O} (1 -)\\
    & = & c \int^{\infty}_0 t^{- \frac{1}{2}} e^{- t (K - \Delta)} d t +
    \underset{= (K - \Delta)^{- \frac{1}{2}} (\Gamma  - 1) =\mathcal{O} (1
    -)}{\underbrace{c \int^{\infty}_0 t^{- \frac{1}{2}} e^{- t (K - \Delta)} d
    t (\Gamma - 1)}} +\\
    &  & + c \int^{\infty}_0 t^{- \frac{1}{2}} (e^{- t\mathbb{H}^{\omega, K}}
    \Gamma - e^{- t (K - \Delta)}) d t +\mathcal{O} (1 -)\\
    & = & (K - \Delta)^{- \frac{1}{2}} + c \int^{\infty}_0 t^{- \frac{1}{2}}
    (e^{- t\mathbb{H}^{\omega, K}} \Gamma - e^{- t (K - \Delta)}) \Gamma^{- 1}
    d t +\mathcal{O} (1 -) .
  \end{eqnarray*}
  Thus it remains to control the integral term for elements in the support of
  $\mu^{\mathbb{I}_{L^2}}$ of the form \eqref{eqn:wnsupp}.
  
  We begin by rewriting it as follows
  \begin{eqnarray*}
    (e^{- t\mathbb{H}^{\omega, K}} \Gamma - e^{- t (K - \Delta)}) \Gamma^{- 1}
    \underset{n}{\sum} g_n f_n & = & \underset{n}{\sum} g_n \int^t_0 e^{- (t -
    s) (K - \Delta)} (\mathbb{H}^{\omega, K} \Gamma - (K - \Delta)) e^{-
    s\mathbb{H}^{\omega, K}} \Gamma \Gamma^{- 1} f_n\\
    & = & \underset{n}{\sum} g_n \int^t_0 e^{- (t - s) (K - \Delta)}
    (\mathbb{H}^{\omega, K} \Gamma - (K - \Delta)) e^{- s\mathbb{H}^{\omega,
    K}} f_n\\
    & = & \underset{n}{\sum} g_n \int^t_0 e^{- s \lambda_n} e^{- (t - s) (K -
    \Delta)} (\mathbb{H}^{\omega, K} \Gamma - (K - \Delta)) f_n
  \end{eqnarray*}
  now we may compute the $L_{\mathbb{P}}^2 H^{\alpha}$ $\alpha < 1$ norm.
  Recall that $H^{\sigma} =\mathcal{D} \left( (\mathbb{H}^{\omega,
  K})^{\frac{\sigma}{2}} \right)$ for $\sigma \in (- 1, 1)$) with equivalent
  norms, see Theorem \ref{thm:AHprop}. For a sequence of functions $h_n$, we
  can thus bound, using the independence of the Gaussians and the
  orthogonality of the eigenfunctions,
  \begin{eqnarray*}
    \left\| \underset{n}{\sum} g_n h_n \right\|^2_{L_{\mathbb{P}}^2
    H^{\alpha}} & \approx & \mathbb{E} \left( \underset{j}{\sum} \left(
    \underset{n}{\sum} g_n h_n, \sqrt{H}^{\alpha} f_j \right)^2 \right)\\
    & = & \mathbb{E} \left( \underset{j}{\sum} \left( \underset{n}{\sum} g_n
    h_n, \sqrt{H}^{\alpha} f_j \right) \left( \underset{n}{\sum} g_m h_m,
    \sqrt{H}^{\alpha} f_j \right) \right)\\
    & = & \underset{j}{\sum} \underset{n, m}{\sum} \underset{= \delta_{n,
    m}}{\underbrace{\mathbb{E} (g_n, g_m)}} \left( h_n, \sqrt{H}^{\alpha} e_j
    \right) \left( h_m, \sqrt{H}^{\alpha} e_j \right)\\
    & = & \underset{j}{\sum} \underset{n}{\sum} \left( h_n, \sqrt{H}^{\alpha}
    e_j \right)^2\\
    & = & \underset{n}{\sum} \left( \sqrt{H}^{\alpha} h_n, \sqrt{H}^{\alpha}
    h_n \right)\\
    & = & \underset{n}{\sum} \| h_n \|^2_{H^{\alpha}} .
  \end{eqnarray*}
  Now if we apply this to $h_n = \int^{\infty}_0 t^{- \frac{1}{2}} \int^t_0
  e^{- s \lambda_n} e^{- (t - s) (K - \Delta)} (\mathbb{H}^{\omega, K} \Gamma
  - (K - \Delta)) f_n d s d t$ and we bound its $H^{\alpha}$ norm for $\alpha
  < 1$ norm and show its square summability this implies the desired result.
  
  We compute for some parameters $0 < \delta, \kappa \ll 1$ to be fixed later and $\bar{\lambda}_n\assign\lambda_n-K$
  \begin{eqnarray*}
    \| h_n \|_{H^{\alpha}} & \lesssim & \int^{\infty}_0 t^{- \frac{1}{2}}
    \int^t_0 e^{- s \lambda_n} \| e^{- (t - s) (K - \Delta)}
    (\mathbb{H}^{\omega, K} \Gamma - (K - \Delta)) f_n \|_{H^{\alpha}} d s d
    t\\
    & \overset{\eqref{heatkernel}}{\lesssim} & \int^{\infty}_0 t^{-
    \frac{1}{2}} \int^t_0 e^{- s \lambda_n} | t - s |^{- \frac{1}{2} (\alpha -
    \delta)} e^{- (t - s) K} \| (\mathbb{H}^{\omega, K} \Gamma - (K - \Delta))
    f_n \|_{H^{\delta}} d s d t\\
    & \overset{\eqref{Gamma-id}, \eqref{eqn:diffH}}{\lesssim} &
    \int^{\infty}_0 t^{- \frac{1}{2}} e^{- K t} \int^t_0 e^{- s
    \bar{\lambda}_n| t - s |^{- \frac{1}{2} (\alpha - \delta)}} (\| (\mathbb{H}^{\omega, K}
    \Gamma - (K - \Delta)) \Gamma^{- 1} f_n \|_{H^{\delta}} + \| f_n
    \|_{H^{\delta + \kappa}}) d s d t\\
    & \overset{\eqref{bound:eigenfct}}{\lesssim} & \int^{\infty}_0 t^{-
    \frac{1}{2}} e^{- K t} \int^t_0 e^{- s \overline{\lambda}_n} | t - s |^{-
    \frac{1}{2} (\alpha - \delta)} \underset{\lesssim \lambda^{\frac{1}{2} +
    \frac{\delta}{2} + \frac{\kappa}{2}}_n}{\underbrace{\| \Gamma^{- 1} f_n
    \|_{H^{1 + \delta + \kappa}}}} d s d t\\
    & \underset{s \overline{\lambda}_n \backassign \sigma}{\overset{t
    \overline{\lambda}_n \backassign \tau}{\lesssim}} & \overline{\lambda}^{-
    2}_n \lambda^{\frac{1}{2} + \frac{\delta}{2} + \frac{\kappa}{2}}_n
    \int^{\infty}_0 \overline{\lambda}^{\frac{1}{2}}_n \tau^{- \frac{1}{2}}
    e^{- \frac{\tau}{\overline{\lambda}_n}} \int^{\tau}_0 e^{- \sigma} | \tau
    - \sigma |^{- \frac{1}{2} (\alpha - \delta)} \lambda^{\frac{1}{2} (\alpha
    - \delta)}_n d \sigma d \tau\\
    & \sim & \overline{\lambda}^{- 1 + \frac{\alpha}{2} + \frac{\kappa}{2}}_n
    \int^{\infty}_0 \tau^{- \frac{1}{2}} e^{-
    \frac{\tau}{\overline{\lambda}_n}} \int^{\tau}_0 e^{- \sigma} | \tau -
    \sigma |^{- \frac{1}{2} (\alpha - \delta)}\\
    & \overset{\eqref{3}}{\lesssim} & \lambda^{- \frac{1}{2} (1 + \kappa)}_n 
    \text{choosing} \alpha = 1 - 2 \kappa \text{ hence}\\
    \underset{n}{\sum} \| h_n \|^2_{H^{\alpha}} & \lesssim &
    \underset{n}{\sum} \lambda^{- 1 - \kappa}_n \lesssim \infty
  \end{eqnarray*}
  having used the Weyl asymptotic $\lambda_n \sim n$ for large $n,$ the bound
  \begin{equation}
    \| \Gamma^{- 1} f_n \|_{H^{1 + \sigma}} \sim \left\| (\mathbb{H}^{\omega,
    K})^{\frac{1 + \sigma}{2}} f_n \right\|_{L^2} \sim \lambda^{\frac{1 +
    \sigma}{2}}_n \label{bound:eigenfct}
  \end{equation}
  which is true for $\sigma \in \{ 0, 1 \}$ by Theorem \ref{thm:AHprop} and
  for $\sigma \in (0, 1)$ by interpolation, and the standard heat kernel bound
  \begin{equation}
    \| e^{t \Delta} g \|_{H^{\beta}} \lesssim t^{- \frac{\beta - \gamma}{2}}
    \| g \|_{H^{\gamma}}  \text{for all } t > 0 \text{ and } \beta > \gamma .
    \label{heatkernel}
  \end{equation}
  Lastly we prove the finiteness of the integral that we used to conclude. We
  compute
  \begin{eqnarray}
    \int^{\infty}_0 \tau^{- \frac{1}{2}} e^{-
    \frac{\tau}{\overline{\lambda}_n}} \int^{\tau}_0 e^{- \sigma} | \tau -
    \sigma |^{- \frac{1}{2} (\alpha - \delta)} & \overset{\rho =
    \frac{\sigma}{\tau}}{=} & \int^{\infty}_0 \tau^{- \frac{1}{2}} e^{-
    \frac{\tau}{\overline{\lambda}_n}} \int^1_0 \tau e^{- \rho \tau} \tau^{-
    \frac{1}{2} (\alpha - \delta)} (1 - \rho)^{- \frac{1}{2} (\alpha -
    \delta)}  \label{3}\\
    & = & \int^{\infty}_0 \tau^{\frac{1}{2} (1 - \alpha + \delta)} e^{-
    \frac{\tau}{\overline{\lambda}_n}} \int^1_0 e^{- \rho \tau} (1 - \rho)^{-
    \frac{1}{2} (\alpha - \delta)} \nonumber\\
    & \overset{\nu = \rho \tau}{=} & \int^{\infty}_0 {\nu^{\frac{1}{2} (1 -
    \alpha + \delta)}}  e^{- \nu} \int^1_0 \underset{\leqslant
    1}{\underbrace{e^{- \frac{\nu}{\rho \overline{\lambda}_n}}}}
    \frac{1}{\rho^{\frac{1}{2} (1 - \alpha + \delta)}} (1 - \rho)^{-
    \frac{1}{2} (\alpha - \delta)} \nonumber\\
    & \lesssim & 1 \nonumber
  \end{eqnarray}
  having used that
  \[ \int^1_0 \frac{1}{x^a} \frac{1}{(1 - x)^b} d x < \infty \text{ if } a, b
     < 1. \]
  This finishes the proof.
\end{proof}

\bibliographystyle{plain}
\bibliography{paracontrolled-wave3}

\end{document}